\documentclass[a4paper,11pt]{article}
\usepackage{hyperref}
\usepackage{mathtools,mathrsfs,amssymb,amsthm,array,booktabs,longtable}
\usepackage[margin=26mm]{geometry}
\usepackage{Article}
\usepackage{Symbols}
\usepackage[backend=biber,style=numeric,sorting=nyt,giveninits=true,url=false,eprint=false,isbn=false,maxbibnames=99]{biblatex}
\AtEveryBibitem{%
  \ifentrytype{article}{\clearfield{series}\iffieldundef{eid}{}{\clearfield{pages}}}{}%
  \iffieldundef{howpublished}{}{\clearfield{eprint}}%
}
\allowdisplaybreaks
\numberwithin{equation}{section}
\numberwithin{theorem}{section}
\hypersetup{hidelinks,pdftitle={Weighted configurations and the nonexistence of arithmetic ball quotient compactifications},pdfauthor={Ryunosuke Nakano and Hisato Matsukawa}}
\title{Weighted configurations and the nonexistence of arithmetic ball quotient compactifications}
\author{Ryunosuke Nakano \and Hisato Matsukawa}
\date{October 16, 2026}
\begin{document}
\maketitle
\begin{abstract}
We prove that, for every \(n\ge9\), no ordered GIT quotient of \(n\) points on \(\mathbb P^1\) with positive rational weights smaller than one and of total weight two is isomorphic to the Baily--Borel compactification of an arithmetic complex ball quotient.
The weights may lie on stability walls, and the lattice may have torsion and ramify along divisors.
No compatibility with a prescribed period map is assumed.
We compare homomorphisms from the tangent bundle pulled back to complete curves to line bundles of negative degree with normal bundles of arithmetic fixed subvarieties.
We describe these homomorphisms through coincident marked points and prove vanishing statements for their kernels.
The comparison, together with integrality of Chern classes and the invariant theory of finite reflection groups, gives the nonexistence theorem without a Fano assumption on the GIT quotient.
\end{abstract}

\noindent\textit{Use of AI.} The authors used OpenAI Codex as a discussion partner for mathematical arguments, to assist with revising proofs and their presentation, and to proofread the manuscript.
The authors take responsibility for the mathematics, computations, and references.

\section{Introduction}\label{si:introduction}
Let \(n\ge9\), and choose rational weights
\begin{equation}\label{split-weights}
 0<\mu_i<1,\qquad \sum_{i=1}^{n}\mu_i=2.
\end{equation}
For \(I\subset\{1,\ldots,n\}\), write \(\mu(I)=\sum_{i\in I}\mu_i\).
After clearing denominators, let
\[
 P_n=(\mathbb P^1)^n,\qquad X_\mu=P_n/\!/_{\mathcal O(m\mu_1,\ldots,m\mu_n)}\mathrm{SL}(2,\mathbb C).
\]
The labels are retained, and the quotient is independent of the integer \(m>0\) used to clear denominators.
A configuration is stable precisely when the weight at every point is smaller than one.
We use \(\mathbb B^d\) for the complex ball of dimension \(d\), and the superscript \(\mathrm{BB}\) for the Baily--Borel compactification.
\begin{theorem}\label{si:main}\label{split-weighted-main}\label{split-nine-main}
For every \(n\ge9\) and every weight vector satisfying~\eqref{split-weights}, there is no arithmetic lattice \(\Gamma\subset\mathrm{PU}(1,n-3)\) and no algebraic isomorphism
\[
 X_\mu\simeq(\mathbb B^{n-3}/\Gamma)^{\mathrm{BB}}.
\]
\end{theorem}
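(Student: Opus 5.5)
We argue by contradiction: suppose \(\Phi\colon X_\mu\xrightarrow{\ \sim\ }(\mathbb B^{n-3}/\Gamma)^{\mathrm{BB}}\) is an algebraic isomorphism. The strategy, following the abstract, is to find a complete curve \(C\) in the stable locus on which the two sides are incompatible. On the GIT side, \(T_{X_\mu}|_C\) should admit a quotient line bundle of negative degree with a kernel we can control. On the ball-quotient side, the tangent bundle restricted to a curve lying in a totally geodesic arithmetic fixed subvariety splits into the tangent directions and a normal bundle whose degree is fixed by Hermitian geometry and the finite isotropy. The required compatibility will fail.

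\textbf{Step 1: locate the boundary and special subvarieties.} Cusps of \((\mathbb B^{n-3}/\Gamma)^{\mathrm{BB}}\) are isolated points, and the interior has only finite quotient singularities. Hence \(\Phi\) matches the strictly semistable points of \(X_\mu\), which are the partitions \(I\sqcup I^c\) with \(\mu(I)=1\), with the cusps. Consequently the stable locus \(X_\mu^s\) is an orbifold ball quotient. Loci of coincident marked points, \(D_{ij}=\{x_i=x_j\}\), are divisors in \(X_\mu^s\), isomorphic to weighted quotients of \(n-1\) points. Since \(n\ge9\), we can choose strata \(\{x_{i_1}=\dots=x_{i_k}\}\) of total weight less than one whose intersections contain complete curves \(C\) avoiding the strictly semistable points. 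A typical choice is a one-parameter family of four clusters of points, i.e.\ a complete \(\overline M_{0,4}\)-type curve. Choosing such curves will require a case split on the weights, including walls \(\mu(I)=1\).

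\textbf{Step 2: GIT side.} Deformation theory of weighted configurations gives \(T_{X_\mu}|_C\) as a quotient of \(\bigoplus_i x_i^*T_{\mathbb P^1}\) by \(\mathfrak{sl}_2\). Along a coincidence stratum, separating colliding points yields explicit homomorphisms \(T_{X_\mu}|_C\to L\) with \(\deg L<0\), computed from the \(\psi\)-type classes of the clusters. I would show that every such homomorphism arises this way, by describing \(\mathrm{Hom}(T|_C,L)\) via coincident marked points, and then prove a vanishing statement for the kernel, e.g.\ \(H^0\) of its dual twisted by \(L\) vanishes. This pins down the negative-degree quotients of \(T|_C\) up to a few possibilities.

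\textbf{Step 3: ball side and the contradiction.} A negative-degree quotient of the restricted tangent bundle of a ball quotient along a complete curve is strongly constrained. By semi-negativity of the Bergman curvature, such quotients must come from normal directions to a totally geodesic subvariety. The subvariety is forced to be a fixed component of a complex reflection in \(\Gamma\), i.e.\ the ramification divisor, and it has to be the image of \(D_{ij}\) or a coincidence stratum. The reflection has finite order \(m\), so the normal bundle degree is determined by the proportionality \(c_1(N)=\tfrac{1}{?}(\ldots)\) through the ramification formula \(K=\Phi^*K_{\mathrm{orb}}-\sum(1-\tfrac1m)D\). Comparing with the Step 2 degrees yields rational identities in the \(\mu_i\). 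Integrality of Chern classes restricts the \(m\) to finitely many values. The local isotropy groups at deep coincidence points then have to be finite complex reflection groups whose invariant theory, via Chevalley's theorem on polynomial invariants and degrees, matches the smooth or weighted structure of \(X_\mu\) there. For \(n\ge9\), the strata are deep enough that the required group would have an order or degree set that no Shephard--Todd group realizes.

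\textbf{Main obstacle.} The main obstacle is Step 2's classification with kernel vanishing, uniformly in the weights, together with the wall cases of Step 1 where the curve must avoid cusps. I would first attempt the uniform case of four equal clusters, and then reduce arbitrary weights to it by combining points.
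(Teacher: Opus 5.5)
There is a genuine gap. Two of your structural claims are false, and, more importantly, nothing in the plan could actually produce a contradiction.

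\textbf{The two false claims.} In Step~1 only one direction holds. A genuine wall vertex is a cone over \(\mathbb P^a\times\mathbb P^b\) whose link has \(b_2=1\). So it cannot be an interior finite quotient point and must be a cusp. But cusps need not be strictly semistable: a Baily--Borel cusp can be a smooth point, with spherical link and irreducible horizontal group. In a chamber such as the example \(X_\mu\simeq\mathbb P^{n-3}\) of the introduction there are no strictly semistable points at all, yet the lattice may a priori be nonuniform. So \(X_\mu^{\mathrm s}\) is not in general the orbifold ball quotient, and your complete curves may run into cusps. Cusp widths and section degrees then have to enter every degree count; the paper carries them as ends, with terms \(A_c/\kappa_c\) and \(\nu_c\).

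In Step~3, negative curvature of the ball controls subsheaves of the orbifold tangent bundle on a curve (equivalently, quotients of \(\Omega^1\)), not quotients of the tangent bundle. Nothing forces a negative-degree quotient of \(f^*T\) to be the normal direction of a totally geodesic subvariety. Nothing identifies reflection mirrors with coincidence loci a priori either, since no compatibility with a period map is assumed. The normal-bundle formula you need is also left as \(c_1(N)=\tfrac1?(\ldots)\).

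\textbf{The missing mechanism.} Nothing in your plan uses \(n\ge9\). Every ingredient you list is equally present for eight equal weights, which \emph{is} a Baily--Borel compactification: negative quotients from coincident labels, fixed loci of reflections, and Chevalley--Shephard--Todd at smooth points. Any finite reflection group has a smooth quotient, so no ``degree set that no Shephard--Todd group realizes'' can come from local invariant theory. Reducing to four clusters lands in a one-dimensional quotient, and \(\mathbb P^1=(\mathbb B^1/\mathrm{PSL}_2(\mathbb Z))^{\mathrm{BB}}\), so no contradiction can be transported there. The paper's merging keeps \(n-s\ge9\) labels, and it is forced by a persistent factor rather than chosen.

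What is needed is a quantitative global comparison, and the paper's has two sides:
\begin{itemize}
\item \emph{Lower bound from fixed curves.} The paper takes curves from the arithmetic side: fixed balls of reflection subgroups modulo their full normalizers. Their normal directions are invariant lines of degree \(-qt\), corrected at enlargements and cusps. The GIT classification in Propositions~\ref{sg:negative}--\ref{sg:determinant} says that negative quotients factor through \(M_I^{\oplus(\ell-1)}\), that \(a=(n-2\ell)e+2\delta\), and that a rank-two quotient with negative determinant forces \(\ell\ge3\). This yields \(a\ge\frac{N^2-4N+1}{2}t\) on an \(A_{d-1}\) fixed curve (Proposition~\ref{sc:type-a}).
\item \emph{Upper bound from nefness.} The paper proves \(K_W+C_NL\) nef by testing the GIT Mori-cone generators. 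Coordinate curves are handled by the branch-value count \(L\cdot\gamma\ge\frac{n-5}{2(n-2)}\). Collision lines are handled by Chern integrality, which gives \(N^2\mid b^3\) when \(2NL|_Z=bH\).
\end{itemize}
The inequality \(C_N<\frac{N^2-4N+1}{2}\) is where the number of labels enters. It holds for \(N\ge9\); for \(n=9,10\) the paper needs sharper nef bounds.

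Reaching the type~\(A\) situation also requires ingredients absent from your plan:
\begin{itemize}
\item generation of \(\Gamma\) and of all its stabilizers by reflections;
\item identification of a persistent factor's fixed compactification with the merged-weight quotient on \(n-s\) labels, \(s\le2\), which drives strong induction from the base cases \(n=9,10\);
\item case-by-case exclusion of monomial and exceptional stabilizers.
\end{itemize}
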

We impose no compatibility with label permutations or with a specified period map.
Failure of an integrality condition for a Deligne--Mostow period map alone would not prove the theorem~\cite{DM}.
The weights may lie on stability walls.
The lattice may have torsion, need not be a normal or congruence subgroup of a prescribed group, and may ramify along divisors.
No Fano hypothesis is imposed on \(X_\mu\).

The lower bound $n\ge9$ is sharp for a theorem that applies to every admissible weight vector: eight equal weights give an arithmetic ball quotient compactification \cite{DM,HulekMaedaEight}.
This does not classify all weights with fewer than nine labels.
The theorem also includes chambers whose quotient is projective space.
For example, choose a rational number $0<\varepsilon<1/(n-2)$ and put
\[
 \mu=\left(1-\varepsilon,\left(\frac{1+\varepsilon}{n-1}\right)^{n-1}\right).
\]
The heavy point cannot meet any light point, every proper subset of the light points may coincide, and all $n-1$ light points cannot coincide.
Place the heavy point at infinity.
The affine coordinates of the light points, modulo translations and nonzero scalar multiplication, give
\[
 X_\mu\simeq\mathbb P\bigl(\mathbb C^{n-1}/\mathbb C\mathbf1\bigr)\simeq\mathbb P^{n-3},
 \qquad \mathbf1=(1,\ldots,1).
\]
The unstable affine configurations are those in which all light points coincide, and the residual group is the affine group of the line.
Thus, for $d\ge6$, the theorem implies that $\mathbb P^d$ is not isomorphic to the Baily--Borel compactification of an arithmetic ball quotient.

The case of twelve equal weights answers Question~2.3 of Hulek--Kond\"o--Maeda \cite{HKM}.
The companion paper \cite{NakanoSevenConfigurations2026} proves the nonexistence theorem for seven equal weights by a comparison involving all fixed components.
It cites the common tangent and arithmetic propositions proved here.
The proof of Theorem~\ref{si:main} does not use that result.

The universal marked projective line determines homomorphisms from the tangent bundle to lines of negative degree and controls their kernels.
The original arithmetic lattice gives fixed balls whose normal directions are invariant polynomials, with finite characters and central periods that descend together.
We compare these structures on complete curves of the full normalizer.
Three reductions shorten the proof.
The marked degrees first give effective curve decompositions and the initial degree estimates without an external theorem on generators of the Mori cone.
Next, a persistent irreducible factor has equal basic degrees and rank at most 2, and its fixed compactification is a quotient with one or two labels merged.
Strong induction from \(n=9,10\) excludes every such factor for \(n\ge11\).
Finally, after the stabilizers are of type \(A\), write \(N=n-2\) and
\begin{equation}\label{si:constant}
 b_N=\prod_{p\mid N}p^{\lceil2v_p(N)/3\rceil},\qquad C_N=\frac{2N^2}{b_N}.
\end{equation}
Chern integrality and complete moving marked curves give \(K_W+C_NL\) nef, including a possible chamber model isomorphic to projective space.
For \(N\ge11\),
\[
 C_N<\frac{N^2-4N+1}{2},
\]
contradicting a uniform type \(A\) comparison on fixed curves.

The proof proceeds through the following comparisons.
The first row gives the common geometric input; the remaining rows indicate where the arguments depend on the dimension.
\begin{center}
\small
\begin{tabular}{@{}p{.15\linewidth}p{.79\linewidth}@{}}
\toprule
Range & Main steps\\
\midrule
All $n$ & Propositions~\ref{sg:negative}--\ref{sg:determinant} control homomorphisms from the tangent bundle to lines of negative degree and their kernels; Proposition~\ref{sc:curve-determinant} compares fixed components, retaining the full normalizer, inertia characters, and cusp widths.\\
$n=9$ & Theorems~\ref{sn:compact-terminal} and~\ref{sn:nonuniform-terminal} treat the initial case; the $D_5$ argument uses successive kernels, and the $G_{33}$ argument uses the theta calculation in Lemma~\ref{sa:g33-theta}.\\
$n=10$ & Theorem~\ref{st:ten-main} gives the second induction case by improving the nef estimate and treating the exceptional stabilizers.\\
$n=11,12$ & Propositions~\ref{sh:eleven} and~\ref{sh:twelve} combine the induction with exceptional invariant calculations and the exact finite ranges in Appendix~\ref{sf:finite}.\\
$n\ge13$ & Proposition~\ref{sh:large} reduces the remaining stabilizers to type $A$ and applies the strict comparison following~\eqref{si:constant}.\\
\bottomrule
\end{tabular}
\end{center}
After the induction and the stabilizer reductions, the argument of Proposition~\ref{sh:large} applies uniformly for $n\ge13$.
The numerical inequality itself already holds for $N\ge9$; the separate treatments of $n=11,12$ establish the required geometric hypotheses, including the exclusion of the remaining exceptional stabilizers.
The appendices retain the local algebra, projective comparisons, and finite ranges used in the proof.
Throughout, \(L\) is the automorphic rational line bundle of the original lattice or its specified pullback.
The notation \(H=-K_W\) does not assert ampleness; in a calculation on projective space its hyperplane class is identified separately.
\section{Marked curves, arithmetic groups, and cusps}
\label{sg:section}

Put \(d=n-3\), \(N=d+1=n-2\), and \(\mu(I)=\sum_{i\in I}\mu_i\).
Throughout this section \(n\ge6\).
Stability means that the weight at every point is less than \(1\).
Stable configurations have at least three distinct points, so the action of \(\mathrm{PGL}(2,\mathbb C)\) is free.

\subsection{The marked family and its quotients onto lines of negative degree}

\begin{lemma}\label{sg:resolution}\label{ew-weighted-resolution}
There is a smooth chamber quotient \(W=X_\nu\) and a small projective morphism \(\pi:W\to X_\mu\) such that \(K_W\) is \(\pi\)-nef.
The original stable quotient \(X_\mu^{\mathrm s}\) is an open subset of \(W\).
At a genuine wall vertex the local model is
\[
 \operatorname{Cone}\bigl(\mathbb P^{|I|-2}\times
 \mathbb P^{|I^c|-2},\mathcal O(1,1)\bigr).
\]
The resolution retains the smaller factor \(\mathbb P^u\), where \(u=\min(|I|,|I^c|)-2\), and
\[
 \operatorname{codim}_W\operatorname{Exc}(\pi)
 \ge \lceil(n-2)/2\rceil.
\]
\end{lemma}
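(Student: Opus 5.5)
We will use variation of GIT for the weight space of \(n\) points on \(\mathbb P^1\), following Thaddeus and Dolgachev--Hu. The walls are the hyperplanes \(\mu(I)=1\) inside the simplex~\eqref{split-weights}. If \(\mu\) lies on no wall, every semistable configuration is stable. Since \(\mathrm{PGL}(2,\mathbb C)\) acts freely on stable configurations, \(X_\mu\) is then smooth, and we take \(W=X_\mu\) with \(\pi\) the identity.

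In general we choose a nearby generic weight \(\nu=\mu+\epsilon\delta\), where \(\delta\) is a small perturbation with \(\sum\delta_i=0\). Then \(\mathrm{ss}(\nu)\subset\mathrm{ss}(\mu)\) and \(\mathrm s(\mu)\subset\mathrm s(\nu)\), and the standard VGIT morphism \(\pi:X_\nu\to X_\mu\) is projective and an isomorphism over \(X_\mu^{\mathrm s}\). The strictly semistable points of \(X_\mu\) correspond to partitions \(\{1,\ldots,n\}=I\sqcup I^c\) with \(\mu(I)=1\). Each such polystable point is represented by the configuration in which the points indexed by \(I\) sit at \(0\) and those indexed by \(I^c\) sit at \(\infty\); its stabilizer is \(\mathbb C^*\).

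Luna's étale slice theorem gives the local model at such a point. The normal slice is \(\mathbb C^{|I|-1}\oplus\mathbb C^{|I^c|-1}\), with \(\mathbb C^*\) acting by weights \(+1\) and \(-1\): these are the infinitesimal motions of the clusters at \(0\) and \(\infty\) modulo translation. The quotient of this slice is the affine cone over \(\mathbb P^{|I|-2}\times\mathbb P^{|I^c|-2}\) embedded by \(\mathcal O(1,1)\), which is the stated local model. The point is a genuine vertex when both \(|I|\) and \(|I^c|\) are at least \(3\); if \(|I|=2\) or \(|I^c|=2\), the cone is smooth.

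Perturbing \(\nu\) to the side where \(\nu(I)<1\) destabilizes the cluster \(I^c\), and one checks that on the slice this sets the \(I^c\)-coordinates nonzero. The local model of \(\pi\) is then the total space of \(\mathcal O(-1)^{\oplus(|I|-1)}\) over \(\mathbb P^{|I^c|-2}\), so the exceptional fibre over the vertex is \(\mathbb P^{|I^c|-2}\). We choose the sign of each \(\delta\)-component wall by wall, so that the retained fibre is the smaller factor \(\mathbb P^u\) with \(u=\min(|I|,|I^c|)-2\). We must check that these choices are consistent. The vertices are isolated points of \(X_\mu\): they correspond to the finitely many partitions with \(\mu(I)=1\), and \(\mu\) can lie on several walls at once only at distinct vertices. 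So a single generic \(\delta\) with \(\delta(I)\) of the prescribed sign for each such \(I\) exists, since these are finitely many open half-space conditions. The one thing to verify is that the conditions are compatible, i.e. that no nontrivial positive combination of the required strict inequalities vanishes identically. This could fail only when several walls through \(\mu\) give dependent linear conditions on \(\delta\). I expect this compatibility step to be the main obstacle. First I would try perturbing toward the weight vector obtained by slightly increasing the weights of the larger side of each partition; if that fails, I would reduce the problem to a chamber-by-chamber check.

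Over each vertex the exceptional locus has dimension \(u\), and \(\dim W=n-3\). Hence
\[
\operatorname{codim}_W\operatorname{Exc}(\pi)=n-3-u=n-1-\min(|I|,|I^c|)\ge n-1-\lfloor n/2\rfloor=\lceil (n-2)/2\rceil .
\]
Since \(n\ge6\), this is at least \(2\), so \(\pi\) is small. Smoothness of \(W\) is the generic-chamber case of the first paragraph.

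For \(\pi\)-nefness of \(K_W\), it suffices to compute the degree of \(K_W\) on a line \(\ell\) in an exceptional fibre \(\mathbb P^u\), \(u=|I^c|-2\). From the local model, \(N_{\mathbb P^u/W}=\mathcal O(-1)^{\oplus(|I|-1)}\), so adjunction gives
\[
K_W\cdot\ell=-(u+1)+(|I|-1)=|I|-|I^c|\ge0,
\]
because the retained factor is the smaller one. This sign is exactly why we keep the smaller projective factor. Nefness on all curves in fibres then follows because every such curve lies in some \(\mathbb P^u\), whose Picard group is generated by the line class.
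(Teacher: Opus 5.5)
Your local analysis matches the paper's and is correct: the Luna slice with weights $\pm1$, the cone over $\mathbb P^{|I|-2}\times\mathbb P^{|I^c|-2}$, the normal bundle $\mathcal O(-1)^{\oplus(|I|-1)}$ on the retained factor, the adjunction value $K_W\cdot\ell=|I|-|I^c|$, and the codimension count. The gap is the step you flag and leave open. You need a single perturbation $\delta$ that makes the larger side short at \emph{every} wall through $\mu$ at the same time.

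Saying that finitely many open half-space conditions can be met is not enough, because such conditions can be inconsistent. Your proposed remedy also fails, for two reasons. It is formulated partition by partition, so it does not produce a single vector. And increasing the weights of the larger side makes that side long, which by your own sign convention retains the \emph{larger} factor. That gives $K_W\cdot\ell<0$, and at walls with $|I|=2$ it gives a divisorial contraction rather than a small one.

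The missing idea is to move radially away from the equal-weight point, as the paper does:
\[
 \nu=\mu+\epsilon\Bigl(\mu-\tfrac2n\mathbf 1\Bigr)+\epsilon^2\eta,\qquad \textstyle\sum_i\eta_i=0,
\]
with $\eta$ generic and $\epsilon>0$ small. This sums to two because $\sum_i\mu_i=2$. For every wall subset $I$ through $\mu$,
\[
 \nu(I)=1+\epsilon\bigl(1-2|I|/n\bigr)+\epsilon^2\eta(I),
\]
so the first-order sign depends only on $|I|$. The larger side has average weight $1/|I|<2/n$, so it loses weight and becomes short, while the smaller side becomes long. Ties with $|I|=n/2$ are broken by $\eta$, and either choice then retains a factor of the same dimension. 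Walls not through $\mu$ are avoided because $\epsilon$ is small. Hence $\nu$ lies in an open chamber, $W=X_\nu$ is smooth, and your local conclusions hold simultaneously at every vertex, with no chamber-by-chamber check needed.
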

\begin{proof}
After fixing one mark at each of \(0,\infty\), the transverse torus weights are \((1^{|I|-1},-1^{|I^c|-1})\).
Products of opposite coordinates generate its invariant ring, with the two by two minors as relations.
Keeping the smaller factor gives normal bundle \(\mathcal O(-1)^{\oplus(v+1)}\), \(v=\max(|I|,|I^c|)-2\), and degree against \(-K_W\) \(u-v\le0\).
All these choices are made simultaneously by
\[
 \nu=(1+\epsilon)\mu-\epsilon(2/n,\ldots,2/n)+\epsilon^2\eta,
 \qquad \sum_i\eta_i=0,
\]
with \(\eta\) generic and \(\epsilon>0\) sufficiently small.
A wall subset of size less than \(n/2\) becomes long; \(\eta\) separates the equal sizes.
The exceptional codimension is \(d-u\).
In particular, it is at least four for \(n\ge9\).
\end{proof}

The principal \(\mathrm{PGL}(2,\mathbb C)\)-bundle of stable configurations defines \(\mathcal P\to W\), with its labeled sections \(\sigma_i\).
For a nonconstant map \(f:C\to W\) from a smooth projective connected curve, write its pullback as \(\mathbb P_C(V)\), using the convention of one-dimensional quotients.
A section supplies a divisor of relative degree one, so such a bundle of rank 2 \(V\) exists.
Write \(V\twoheadrightarrow L_i\) for the marked quotients and put \(M_i=L_i^2(\det V)^{-1}\), \(m_i=\deg M_i\), and \(a=-K_W\cdot f_*C\).
The infinitesimal action and the brackets of distinct sections give
\begin{equation}\label{sg:tangent}
\begin{gathered}
 0\longrightarrow\operatorname{End}_0(V)\longrightarrow\bigoplus_iM_i\longrightarrow f^*T_W\longrightarrow0,\\
 a=\sum_i m_i,\qquad m_i+m_j=2\deg D_{ij}\ge0\quad(\sigma_i\ne\sigma_j).
\end{gathered}
\end{equation}
Here \(D_{ij}\) is the effective zero divisor of the bracket.
If a set \(I\) of sections coincides, its smooth coincidence locus satisfies
\begin{equation}\label{ew-git-coincidence}
 N_{Z_I^{\mathrm s}/W}
 =M_I\otimes_{\mathbb C}(\mathbb C^I/\mathbb C\mathbf1).
\end{equation}

\begin{proposition}\label{sg:negative}\label{ew-git-negative}
Suppose \(Q\) is a line bundle of negative degree and \(\operatorname{Hom}(f^*T_W,Q)\ne0\).
Every homomorphism \(f^*T_W\to Q\) factors through the stalkwise quotient
\[
 D=M^{\oplus(\ell-1)},\qquad \psi:f^*T_W\twoheadrightarrow D,\qquad
 \psi\,df=0.
\]
Its \(\ell\ge2\) labels form a set \(I\) of identical sections of weight \(\nu(I)<1\), and \(M=M_I\) has degree \(-e<0\).
For \(j\notin I\), write \(D_j\) for the bracket divisor with this section and \(\delta=\sum_{j\notin I}\deg D_j\).
Then
\begin{equation}\label{ew-git-degree}
 M_j=M^{-1}(2D_j),\quad
 a=(n-2\ell)e+2\delta,\quad
 \operatorname{Hom}(f^*T_W,Q)
 =H^0(C,QM^{-1})^{\oplus(\ell-1)}.
\end{equation}
In particular, \(\deg Q=-b<0\) implies \(e\ge b\).
If this Hom space has dimension one, then \(\ell=2\) and \(a\ge(n-4)b\).
\end{proposition}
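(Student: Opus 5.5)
The plan is to use the presentation~\eqref{sg:tangent} of \(f^*T_W\) as a quotient of \(\bigoplus_iM_i\) by \(\operatorname{End}_0(V)\). A homomorphism \(f^*T_W\to Q\) is the same as a family of maps \(\phi_i:M_i\to Q\) whose sum kills the image of \(\operatorname{End}_0(V)\). Since \(\deg Q<0\), a nonzero \(\phi_i\) requires \(m_i\le\deg Q<0\). By~\eqref{sg:tangent}, \(m_i+m_j=2\deg D_{ij}\ge0\) whenever \(\sigma_i\ne\sigma_j\), so two distinct sections cannot both have negative \(m\). Hence the labels with \(m_i<0\) form a set \(I\) of identical sections with a common \(M_I=M\) of degree \(-e<0\). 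Stability of the generic configuration gives \(\nu(I)<1\).

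Next I determine the compatibility condition. The map \(\operatorname{End}_0(V)\to\bigoplus_iM_i\) is evaluation of the infinitesimal action at each section. Its components for \(i\in I\) all coincide, being the normal projection at the common section. So \((\phi_i)_{i\in I}\) kills the image exactly when \(\sum_{i\in I}\phi_i=0\), because \(\operatorname{End}_0(V)\to M\) is surjective: the group acts transitively on the line. The components \(\phi_j\) for \(j\notin I\) vanish for degree reasons. This identifies the maps with \(\operatorname{Hom}(M,Q)\otimes(\mathbb C^I/\mathbb C\mathbf1)^\vee\). It also shows that they factor through the stalkwise quotient
\[
f^*T_W\twoheadrightarrow M\otimes(\mathbb C^I/\mathbb C\mathbf1)\simeq M^{\oplus(\ell-1)},
\]
which is~\eqref{ew-git-coincidence} pulled back along \(C\). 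We need \(\ell\ge2\), since \(\ell=1\) gives a zero quotient. The conclusion \(\operatorname{Hom}(f^*T_W,Q)=H^0(C,QM^{-1})^{\oplus(\ell-1)}\) follows, and nonvanishing forces \(e\ge b\).

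For \(\psi\,df=0\), note that \(df:T_C\to f^*T_W\) is given by the moving sections. The components at indices of \(I\) agree, because the sections are identically equal along \(C\), so the difference projection kills them.

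For the degree formula, take \(j\notin I\) and \(i\in I\). The bracket \(\sigma_i\wedge\sigma_j\) is a section of \(L_iL_j(\det V)^{-1}\), vanishing on \(D_j\). Squaring gives \(M_iM_j\simeq\mathcal O(2D_j)\), that is, \(M_j=M^{-1}(2D_j)\). Summing, \(a=\sum m_i=-\ell e+(n-\ell)e+2\delta=(n-2\ell)e+2\delta\).

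If the Hom space is one-dimensional, then \((\ell-1)h^0(QM^{-1})=1\), so \(\ell=2\). Then \(a=(n-4)e+2\delta\ge(n-4)b\), using \(\delta\ge0\) and \(e\ge b\).

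The main obstacle is justifying the first step: showing that every \(\phi\) is supported on labels with \(m_i<0\), and that the image of \(\operatorname{End}_0(V)\) imposes only the sum-zero condition. In particular, one must check that \(\phi_j\) for \(j\notin I\) cannot be nonzero on torsion parts when \(m_j\ge0\). This holds because each \(M_j\) is a line bundle and \(\operatorname{Hom}(M_j,Q)=H^0(QM_j^{-1})=0\) when \(\deg QM_j^{-1}<0\). I would also verify that the evaluation map at the common section is surjective stalkwise onto \(M\).
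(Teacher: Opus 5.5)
Your proposal is correct and follows essentially the paper's own argument. The bracket relation \(m_i+m_j=2\deg D_{ij}\ge0\) confines the negative marked lines to one coincidence class, and outside lines are killed by degree. Stalkwise surjective evaluation of \(\operatorname{End}_0(V)\), with diagonal image in the \(I\) components, identifies the Hom space with \(H^0(C,QM^{-1})\otimes(\mathbb C^I/\mathbb C\mathbf1)^\vee\). Equal derivatives of identical sections give \(\psi\,df=0\), and squared brackets give the degree formulas.

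You leave one step implicit: a label whose section equals the common one has the same \(M\), hence lies in \(I\). So every \(j\notin I\) has a genuine bracket divisor \(D_j\). This is immediate.
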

\begin{proof}
By \eqref{sg:tangent}, two negative marked lines cannot belong to distinct sections.
Thus all negative lines arise from one section, and its brackets give the first two formulas.
All outside lines have positive degree and map trivially to \(Q\).
Evaluation of \(\operatorname{End}_0(V)\) onto the common tangent line is surjective at every stalk, and its image in the \(I\) components is exactly the diagonal.
Quotienting those components by the diagonal proves the assertion and the Hom formula.
If there is only one such component it is removed entirely, so \(\ell\ge2\).
The derivatives of identical sections coincide, proving \(\psi\,df=0\).
\end{proof}

\begin{proposition}\label{sg:kernel}\label{ew-git-kernel}
Put \(F=f^*T_{Z_I^{\mathrm s}}\), let \(T_f\subset F\) be the saturated image of \(df\), and put \(\mathcal K=F/T_f\).
Then
\[
 f^*T_W=F\oplus D,\qquad
 N_f^{\mathrm{sat}}=\mathcal K\oplus D,\qquad
 \bigoplus_{j\notin I}M_j\twoheadrightarrow F\twoheadrightarrow\mathcal K .
\]
For every line bundle \(Q\) and every permitted \(q\),
\[
 \deg Q<qe\ \Longrightarrow\
 \operatorname{Hom}(\textstyle\bigwedge^qF,Q)=
 \operatorname{Hom}(\textstyle\bigwedge^q\mathcal K,Q)=0.
\]
For \(C=\mathbb P^1\), all line summands of \(F\) have degree at least \(e\).
A line direct summand of \(f^*T_W\), or of \(N_f^{\mathrm{sat}}\), of degree less than \(e\) is isomorphic to \(M\).
If \(f\) is immersed, \(N_f^{\mathrm{sat}}\) is the ordinary normal bundle.
\end{proposition}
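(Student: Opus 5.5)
The plan is to work in the setting of Proposition~\ref{sg:negative}: the sections labelled by \(I\) coincide identically along \(C\), so \(f(C)\subset Z_I^{\mathrm s}\). The splitting of \(f^*T_W\) will come from restricting the exact sequence~\eqref{sg:tangent} to the coincidence locus, and the vanishing statements from degree bounds on the outside marked lines.

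\textbf{Splitting.} Restrict~\eqref{sg:tangent} to \(Z_I^{\mathrm s}\).
\begin{itemize}
\item The tangent sequence \(0\to T_{Z_I}\to T_W|_{Z_I}\to N\to0\), together with~\eqref{ew-git-coincidence}, identifies \(f^*N\) with \(M\otimes(\mathbb C^I/\mathbb C\mathbf1)=D\).
\item For the splitting, the summand \(\bigoplus_{i\in I}M_i=M^{\oplus\ell}\) of \(\bigoplus_iM_i\) maps onto \(D\) through the quotient by the diagonal.
\item The complementary summand \(\bigoplus_{j\notin I}M_j\), together with the diagonal copy of \(M\), maps onto \(F=f^*T_{Z_I^{\mathrm s}}\). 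This is because \(Z_I\) is the quotient of the configurations with \(n-\ell+1\) points, and \(\operatorname{End}_0(V)\) acts compatibly.
\item In the quotient the diagonal \(M\) is absorbed by the image of \(\operatorname{End}_0(V)\); the proof of Proposition~\ref{sg:negative} shows that evaluation onto the common tangent line is stalkwise surjective. Hence \(\bigoplus_{j\notin I}M_j\twoheadrightarrow F\).
\end{itemize}
This gives a section of \(f^*T_W\to D\), namely the complementary projection, so \(f^*T_W=F\oplus D\). Since \(\psi\,df=0\), the image of \(df\) lies in \(F\). Saturating gives \(N_f^{\mathrm{sat}}=f^*T_W/T_f=\mathcal K\oplus D\), and \(F\twoheadrightarrow\mathcal K\) is immediate. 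When \(f\) is an immersion, \(df\) is already saturated, so \(N_f^{\mathrm{sat}}\) is the ordinary normal bundle.

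\textbf{Vanishing.} By~\eqref{ew-git-degree}, \(M_j=M^{-1}(2D_j)\) has degree \(e+2\deg D_j\ge e\). Every \(q\)-fold wedge of \(\bigoplus_{j\notin I}M_j\) is a direct sum of line bundles of degree at least \(qe\). The surjection \(\bigwedge^q\bigoplus M_j\twoheadrightarrow\bigwedge^qF\) therefore injects \(\operatorname{Hom}(\bigwedge^qF,Q)\) into a sum of spaces \(H^0(Q\otimes M_J^{-1})\), each of negative degree, hence zero. The same argument applies to \(\bigwedge^q\mathcal K\), a quotient of \(\bigwedge^qF\); "permitted \(q\)" means \(q\le\operatorname{rank}\), so the wedges are nonzero.

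\textbf{Line summands.} On \(\mathbb P^1\), write \(F=\bigoplus\mathcal O(c_k)\). Projecting onto a summand gives a nonzero map from some \(M_j\), so \(c_k\ge e\). For a line direct summand \(Q\) of \(f^*T_W\), or of \(N_f^{\mathrm{sat}}\), with \(\deg Q<e\), we have a split surjection onto \(Q\).
\begin{itemize}
\item Its restriction to \(F\), respectively \(\mathcal K\), vanishes by the \(q=1\) case.
\item So it factors through \(D=M^{\oplus(\ell-1)}\), giving a surjection \(M^{\oplus(\ell-1)}\to Q\) that is split by the composite \(Q\to D\).
\item Then \(Q\to M^{\oplus(\ell-1)}\to Q\) is the identity. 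Some coordinate map \(Q\to M\) is nonzero, and the projection \(M\to Q\) along the other coordinates is nonzero.
\item Hence \(\deg Q\le\deg M\le\deg Q\), forcing \(Q\simeq M\).
\end{itemize}

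\textbf{Main obstacle.} The step I expect to be hardest is making the surjection \(\bigoplus_{j\notin I}M_j\twoheadrightarrow F\) and the splitting precise, rather than only obtaining an extension. This requires checking that the diagonal \(M\) lies in the image of \(\operatorname{End}_0(V)\) stalkwise on the coincidence locus, including at points where outside sections meet the merged point (where \(D_j>0\)).
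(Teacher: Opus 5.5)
Your proposal is correct and takes essentially the paper's route: the image of \(\operatorname{End}_0(V)\) in the \(I\)-components is diagonal, so the sum-zero part splits off as \(D\). Stalkwise surjectivity of evaluation onto the common tangent line then gives \(\bigoplus_{j\notin I}M_j\twoheadrightarrow F\), and the vanishing and line-summand claims follow from \(\deg M_j\ge e\). For the last step the paper uses the constant idempotent of \(D\otimes M^{-1}\), whereas your degree comparison works once you choose an index \(k\) with \(\beta_k\alpha_k\ne0\), which exists because \(\sum_k\beta_k\alpha_k=\mathrm{id}_Q\).

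The obstacle you anticipate is not a real difficulty. Evaluation of \(\operatorname{End}_0(V)\) at a single section is surjective at every stalk, regardless of where the other sections lie, because \(\mathfrak{sl}_2\) surjects onto every tangent line of \(\mathbb P^1\).
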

\begin{proof}
Split the \(I\) components of \eqref{sg:tangent} into their diagonal and the subbundle with sum zero.
The infinitesimal action has zero projection to the latter, giving the displayed splitting.
Stalkwise evaluation onto the diagonal removes that diagonal and gives the outside surjection onto \(F\).
Its exterior powers have summands of degrees at least \(qe\), proving the vanishing.
On \(\mathbb P^1\), this also says that \(F(-e)\) is globally generated.
For a line summand of degree less than \(e\), its retraction vanishes on \(F\), or on \(\mathcal K\), and factors through \(D\).
The resulting idempotent of \(D\), after tensoring by \(M^{-1}\), is a constant matrix; its image of rank 1 is trivial.
\end{proof}

\begin{proposition}
\label{sg:determinant}\label{ew-git-ranktwo}
Let \(\mathcal Q\) be a vector bundle of rank \(q\) on \(C\), with negative determinant degree, admitting a surjection from \(f^*T_W\) or \(N_f^{\mathrm{sat}}\).
Then the coincidence set and bundles in Proposition~\ref{sg:kernel} exist.
If the restriction to \(F\), respectively \(\mathcal K\), has generic rank \(k\), then
\begin{equation}
\label{ew-git-determinant}
\deg\det\mathcal Q\ge(2k-q)e\ge-qe.
\end{equation}
If \(\deg\det\mathcal Q<-(q-2)e\), the surjection factors through \(D\) and \(\ell\ge q+1\).
If \(\deg\det\mathcal Q=-qe\), then \(\mathcal Q=M^{\oplus q}\).
If \(f\) takes values in the stable locus of the original equal weight quotient with \(n=7,8\) and \(q=2\), we have \(\ell=3\), \(\mathcal Q=M^{\oplus2}\), and \(e=a/\varepsilon_n\), where \(\varepsilon_7=1\), \(\varepsilon_8=2\).
On that stable locus for \(n=6\), no such quotient of rank \(2\) exists.
\end{proposition}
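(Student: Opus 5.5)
Since \(\deg\det\mathcal Q<0\), some quotient line bundle of \(\mathcal Q\) (or of an exterior power) has negative degree. We may take the last graded piece of a Harder--Narasimhan filtration of \(\mathcal Q\), or the line bundle \(\det\mathcal Q\) itself as a quotient of \(\bigwedge^q f^*T_W\), respectively of \(\bigwedge^q N_f^{\mathrm{sat}}\). In the first case Proposition~\ref{sg:negative} applies directly. In the second we argue as follows: \(\bigwedge^q(F\oplus D)\) has summands \(\bigwedge^{q-j}F\otimes\bigwedge^jD\), and if no negative \(M_I\) existed, every \(M_i\) would have nonnegative degree by \eqref{sg:tangent}, so \(f^*T_W\) would be a quotient of a bundle with nonnegative line summands and could not map onto a determinant of negative degree. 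This produces the coincidence set \(I\), the line bundle \(M\) of degree \(-e\), and the splittings of Proposition~\ref{sg:kernel}. For \(N_f^{\mathrm{sat}}\) the same argument runs with \(\mathcal K\oplus D\) in place of \(F\oplus D\).

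For the inequality \eqref{ew-git-determinant}, let \(\mathcal Q_F\subset\mathcal Q\) be the saturation of the image of \(F\) (resp.\ \(\mathcal K\)), of rank \(k\). Then \(\mathcal Q/\mathcal Q_F\) has rank \(q-k\) and is a quotient of \(D=M^{\oplus(\ell-1)}\). Hence \(\deg(\mathcal Q/\mathcal Q_F)\ge-(q-k)e\), since a quotient of a direct sum of copies of \(M\) has determinant of degree at least that of \(M^{q-k}\): its determinant receives a nonzero map from \(\bigwedge^{q-k}D\), which is a sum of copies of \(M^{q-k}\). The image of \(\bigwedge^kF\) in \(\det\mathcal Q_F\) is nonzero, so the vanishing in Proposition~\ref{sg:kernel} gives \(\deg\det\mathcal Q_F\ge ke\); saturation only increases the degree. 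Adding the two estimates gives \(\deg\det\mathcal Q\ge ke-(q-k)e=(2k-q)e\ge-qe\).

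The consequences follow from this. If \(\deg\det\mathcal Q<-(q-2)e\), then \(2k-q<-(q-2)\), so \(k=0\), the surjection kills \(F\), and it factors through \(D\). A surjection \(M^{\oplus(\ell-1)}\twoheadrightarrow\mathcal Q\) of rank \(q\) needs \(\ell-1\ge q\). If \(\deg\det\mathcal Q=-qe\), then \(k=0\) and \(\mathcal Q\otimes M^{-1}\) is a globally generated quotient of a trivial bundle with degree zero, hence trivial, so \(\mathcal Q=M^{\oplus q}\).

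For the equal-weight cases, the weights are \(2/n\) and stability requires \(\ell<n/2\). For \(n=7,8\) this gives \(\ell\le3\), and combined with \(\ell\ge3\) forced by \(q=2\) we get \(\ell=3\). For \(n=6\) stability gives \(\ell\le2<3\), so no such quotient exists. The step I expect to be delicate is this forcing: I need \(\deg\det\mathcal Q<0=-(q-2)e\) when \(q=2\), which is exactly the hypothesis, so the surjection factors through \(D\). With \(\ell=3\), \(D=M^{\oplus2}\) surjects onto \(\mathcal Q\) of rank \(2\), which gives an isomorphism, so \(\mathcal Q=M^{\oplus2}\). Finally, \eqref{ew-git-degree} gives \(a=(n-6)e+2\delta\), and on the original stable locus of the equal-weight quotient no outside section meets the triple point, so \(\delta=0\). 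I will need to check this carefully: for \(n=8\), four points may coincide only on the wall, which lies outside the stable locus. This gives \(e=a/(n-6)\), that is, \(\varepsilon_7=1\) and \(\varepsilon_8=2\).
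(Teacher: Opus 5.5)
Your proof follows the paper's route. The nonzero minor on \(F\) (resp.\ \(\mathcal K\)) gives at least \(ke\), the quotient generated by \(D\) gives at least \(-(q-k)e\), and equality makes \(M^{-1}\mathcal Q\) globally generated of degree zero. For equal weights, stability forces \(\ell=3\) and \(\deg D_j=0\) for \(n=7,8\), and excludes \(\ell\ge3\) for \(n=6\). Those parts, including \(\delta=0\) from the instability of four coincident points, are correct. The existence step, however, is not actually proved as written.

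\textbf{The gap.} You show that some marked line is negative: if all \(M_i\) had nonnegative degree, \(\bigwedge^q f^*T_W\) would be a quotient of nonnegative lines. By \eqref{sg:tangent}, all negative lines then lie on one section. Your paragraph never excludes that this section carries a single label, yet \(\ell\ge2\) is exactly what ``the coincidence set and bundles of Proposition~\ref{sg:kernel} exist'' asserts. Propositions~\ref{sg:negative} and~\ref{sg:kernel} are stated with \(\ell\ge2\).

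\textbf{The fix.} You need the observation in the paper's first sentence. Stalkwise evaluation of \(\operatorname{End}_0(V)\) onto a single component \(M_i\) is surjective. Hence \(f^*T_W\) is a quotient of \(\bigoplus_{j\ne i}M_j\), whose lines have degree \(e+2\deg D_{ij}>0\). Then \(\bigwedge^q f^*T_W\) is a quotient of positive lines and cannot surject onto \(\det\mathcal Q\). Equivalently, your own estimate with \(D=0\) forces \(k=q\) and \(\deg\det\mathcal Q\ge qe>0\); you should say this explicitly.

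Two smaller points. First, the last Harder--Narasimhan quotient of \(\mathcal Q\) need not be a line bundle, so Proposition~\ref{sg:negative} does not apply directly to it. Drop that option. Second, decomposing \(\bigwedge^q(F\oplus D)\) before \(F\) and \(D\) are known to exist is circular. Argue with \(\bigoplus_iM_i\) and the evaluation map instead, and introduce the splitting only once \(\ell\ge2\) is established.
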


\begin{proof}
Negative determinant degree requires a repeated negative section: stalkwise evaluation removes a single occurrence.
Nonzero minors give determinant degree at least \(ke\) for the saturated image of \(F\), or \(\mathcal K\); the quotient generated by \(D\) contributes at least \(-(q-k)e\).
This proves \eqref{ew-git-determinant} and the strict case.
Equality makes \(M^{-1}\mathcal Q\) globally generated of determinant degree zero, hence trivial.
For \(q=2\), negativity gives \(k=0\) and \(\ell\ge3\).
Original equal weight stability forces \(\ell=3\) and \(D_j=0\) for \(n=7,8\), so \(D\twoheadrightarrow\mathcal Q\) is an isomorphism and \eqref{ew-git-degree} gives \(e\); \(n=6\) forbids \(\ell\ge3\).
\end{proof}

\subsection{Effective curve classes without a Mori theorem for moduli spaces}

For the equal weight quotients \(X_n\), with \(\mu_i=2/n\), put \(s_n=\lfloor(n-1)/2\rfloor\) and \(\varepsilon_n=n-2s_n\).

\begin{proposition}
\label{ew-git-geometry}
The smooth stable quotient \(X_n^{\mathrm s}\) is simply connected, and the descent \(H\) of \(\mathcal O_{P_n}(2,\ldots,2)\) is ample and equals \(-K\).
For odd \(n\), we have \(X_n=X_n^{\mathrm s}\) and \(\operatorname{Pic}(X_n)=\{b\in\mathbb Z^n:\sum_i b_i\in2\mathbb Z\}, \, H=(2,\ldots,2)\).
For even \(n=2h\), the descent \(J\) of \(\mathcal O(1,\ldots,1)\) is very ample, \(\operatorname{Pic}(X_n)=\mathbb ZJ\), and \(H=2J=-K_{X_n}\).
The smooth locus equals the stable locus; each of the \(\frac12\binom nh\) singularities is the cone over \(\mathbb P^{h-2}\times\mathbb P^{h-2}\) with polarization \(\mathcal O(1,1)\).
Its link is simply connected, and its two small resolutions are crepant.
For curves \(f:C\to X_n^{\mathrm s}\), Proposition~\ref{sg:negative} gives \(\ell\le s_n\), \(a\ge\varepsilon_ne\), and vanishing whenever \(\deg Q<-a/\varepsilon_n\).
\end{proposition}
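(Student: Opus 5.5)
The proof splits into a topological part, a Picard-group part, a local analysis at the strictly semistable points, and a short numerical consequence of Proposition~\ref{sg:negative}. Throughout, \(X_n^{\mathrm s}=P_n^{\mathrm s}/\mathrm{PGL}(2,\mathbb C)\) is a free quotient.

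\textbf{Simple connectivity.} The unstable locus of \(P_n\) is a union of diagonals \(\{x_i=x_j,\ i\in I\}\) with \(|I|\ge n/2\). Its codimension is \(|I|-1\ge2\) once \(n\ge6\), so \(P_n^{\mathrm s}\) is simply connected. The homotopy sequence of the free principal \(\mathrm{PGL}(2,\mathbb C)\)-bundle ends with \(\pi_1(P_n^{\mathrm s})\to\pi_1(X_n^{\mathrm s})\to\pi_0(\mathrm{PGL}_2)=1\). Hence \(X_n^{\mathrm s}\) is simply connected.

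\textbf{Picard groups and the canonical class.} Since the unstable locus has codimension \(\ge2\), \(\operatorname{Pic}(P_n^{\mathrm s})=\mathbb Z^n\). Line bundles on the free quotient are the \(\mathrm{PGL}_2\)-linearized ones. A line bundle \(\mathcal O(b)\) admits an \(\mathrm{SL}_2\)-linearization, and it descends exactly when \(-1\) acts trivially, i.e.\ when \(\sum b_i\) is even. Uniqueness of the linearization follows from \(\mathrm{PGL}_2\) having no characters.

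For odd \(n\) no point is strictly semistable, so \(X_n=X_n^{\mathrm s}\) and \(\operatorname{Pic}(X_n)\) is the stated lattice. For even \(n\) we have \(\operatorname{Pic}(X_n)=\operatorname{Pic}(X_n^{\mathrm s})^{\mathrm{inv}}\); I would then use that a Cartier divisor near a cone point over \(\mathbb P^{h-2}\times\mathbb P^{h-2}\) must be locally trivial, i.e.\ restrict to a multiple of \(\mathcal O(1,1)\) on the link. Since all \(\mathcal O(b)\) with \(b\) non-constant are distinguished at some vertex, this should force \(b\) to be a constant vector, giving \(\operatorname{Pic}(X_n)=\mathbb ZJ\).

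For the canonical class, \(K_{P_n}=\mathcal O(-2,\dots,-2)\). The adjoint representation of \(\mathrm{SL}_2\) is trivial in determinant, so \(K_{X_n^{\mathrm s}}\) is the descent of \(\mathcal O(-2,\ldots,-2)\). This gives \(H=-K\), extended across codimension \(\ge2\) in the even case. Ampleness of \(H\) is the GIT polarization for \(\mu_i=2/n\), up to scaling. Very ampleness of \(J\) is the classical Kempe generation of invariants by degree-one tableaux.

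\textbf{Singularities for even \(n\).} By the local model of Lemma~\ref{sg:resolution} with \(|I|=|I^c|=h\), the strictly semistable closed orbits are the pairs \(\{I,I^c\}\), giving \(\frac12\binom nh\) points. At each point the singularity is the cone over the Segre embedding of \(\mathbb P^{h-2}\times\mathbb P^{h-2}\) in \(\mathcal O(1,1)\); it is singular since \(h\ge3\).

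The link of this cone is a circle bundle over \(\mathbb P^{h-2}\times\mathbb P^{h-2}\) with primitive Euler class \((1,1)\), so the Gysin/homotopy sequence shows it is simply connected. The two small resolutions replace the vertex by either factor \(\mathbb P^{h-2}\), with normal bundle \(\mathcal O(-1)^{h-1}\). Their canonical degree on lines is \(u-v=0\), as computed in Lemma~\ref{sg:resolution}, so both resolutions are crepant.

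\textbf{The curve statement.} This is immediate. A stable configuration allows at most \(s_n\) coincident points, since \(2\ell/n<1\) gives \(\ell\le\lfloor(n-1)/2\rfloor\). By \eqref{ew-git-degree}, \(a=(n-2\ell)e+2\delta\ge\varepsilon_n e\). So if \(\deg Q<-a/\varepsilon_n\), then \(\deg Q<-e\), contradicting \(e\ge b\); hence \(\operatorname{Hom}(f^*T_W,Q)=0\).

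The main obstacle is the even-\(n\) Picard computation, namely showing that local Cartier-ness at all the vertices forces a constant weight vector. I would handle it via the torus weights at each vertex, as in the proof of Lemma~\ref{sg:resolution}: there \(\mathcal O(b)\) has character \(\sum_I b_i-\sum_{I^c}b_i\), which must vanish for every balanced partition \(\{I,I^c\}\).
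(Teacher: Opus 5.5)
Your proposal is correct and follows essentially the same route as the paper. The paper also uses the homotopy sequence of the free principal bundle for simple connectedness, the character $(-1)^{\sum b_i}$ for odd descent, and vanishing of the torus characters $\sum_I b_i-\sum_{I^c}b_i$ at the closed orbits for even descent. It likewise uses Kempe generation by perfect-matching brackets for very ampleness, the tangent sequence for $H=-K$, and Lemma~\ref{sg:resolution} with the primitive class $(1,1)$ and equal factor dimensions for the cones, links and crepancy. It derives the curve bounds from stability and \eqref{ew-git-degree} as you do; the one step you leave implicit, $\deg Q<0$ before invoking Proposition~\ref{sg:negative}, follows from $a>0$ since $H$ is ample.
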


\begin{proof}
General position and the homotopy sequence of the principal bundle give simple connectedness.
The character \((-1)^{\sum b_i}\) gives odd descent.
Even descent requires \(\sum_{i\in I}b_i=\sum_{i\notin I}b_i\) at every closed orbit, forcing constant multidegree; perfect matching brackets give very ampleness \cite[Theorem~2.3]{HMSV}.
The tangent sequence gives \(H=-K\).
Lemma~\ref{sg:resolution}, the primitive circle class \((1,1)\), and equal factor dimensions give the cones, links, and crepancy.
Stability and \eqref{ew-git-degree} give the curve bounds.
\end{proof}

\begin{proposition}
\label{ew-odd-nef}
Let \(n=2h+1\), and let \(E\) be the line bundle of multidegree \(b\) on \(X_n\).
Set \(q=\frac12\sum_i b_i\) and \(d_I=q-\sum_{i\in I}b_i\) for \(|I|=h\).
Then \(Z_I=\mathbb P^{h-1},\, M_I=\mathcal O(-1),\, H|_{Z_I}=\mathcal O(1),\, E|_{Z_I}=\mathcal O(d_I)\).
The conditions \(d_I\ge0\) for every \(I\), global generation of \(E\), and nefness of \(E\) are equivalent.
If \(\ell_I\) is a line in \(Z_I\), then \(\overline{\operatorname{NE}}(X_n)=\sum_I\mathbb R_{\ge0}\ell_I\).
Every ample integral \(E\) has \(E-H\) globally generated.
\end{proposition}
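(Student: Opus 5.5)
We identify the coincidence loci first, then show that every effective curve decomposes into lines \(\ell_I\), and finally read off nefness, global generation and the statement about \(E-H\).

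\emph{Step 1: the loci \(Z_I\).} For \(n=2h+1\) with equal weights \(2/n\), a set \(I\) of coinciding points has weight \(2|I|/n<1\) exactly when \(|I|\le h\). For \(|I|=h\), put the common point at \(\infty\). The remaining \(h+1\) points then cannot all coincide, so their affine coordinates modulo translation and scaling give \(Z_I\simeq\mathbb P(\mathbb C^{h+1}/\mathbb C\mathbf1)=\mathbb P^{h-1}\), exactly as in the introduction. Two facts follow from this presentation.
\begin{itemize}
\item The section at \(\infty\) has marked quotient \(L_\infty=\mathcal O(-1)\) relative to the trivial \(\det V\) up to twist. Hence \(M_I=\mathcal O(-1)\), which is consistent with \eqref{ew-git-coincidence}, since \(N_{Z_I/X_n}=\mathcal O(-1)^{h-1}\).
\item The tangent sequence \eqref{sg:tangent} on a line \(\ell_I\) gives \(a=\sum_i m_i\) with \(m_i=-1\) for \(i\in I\) and \(m_j=1\) for \(j\notin I\). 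Here the degrees \(m_j\) come from \eqref{ew-git-degree} with \(e=1\) and \(\delta\) equal to the number of collisions with the moving line. This gives \(a=(h+1)-h=1\), so \(H|_{Z_I}=\mathcal O(1)\).
\end{itemize}
For a general multidegree \(b\), the pullback of \(\mathcal O(b)\) to the family restricted to \(\ell_I\) has degree \(\sum_i b_i\deg L_i\) after normalising \(\det V\). The marks in \(I\) contribute \(-1\) each and the others contribute \(+1\) in the \(M_i\) normalisation. Writing \(L_i^2=M_i\det V\) and \(\sum b_i=2q\), this yields \(E\cdot\ell_I=\tfrac12\sum_i b_i m_i=q-\sum_{i\in I}b_i=d_I\).

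\emph{Step 2: the cone of curves.} Let \(f:C\to X_n\) be any curve, using \(X_n=X_n^{\mathrm s}\). From \eqref{sg:tangent}, \(\deg f^*E=\tfrac12\sum_ib_im_i\). It therefore suffices to write the vector \((m_i)\) as a nonnegative combination of the vectors \(v_I=(-\mathbf1_I,\mathbf1_{I^c})\) with \(|I|=h\). The available constraints are:
\begin{itemize}
\item \(m_i+m_j\ge0\) for \(i\ne j\), since generic sections are distinct;
\item by Proposition~\ref{sg:negative}, the negative \(m_i\) all come from a single coincident set \(I_0\) with \(|I_0|\le h\), and they take a common value \(-e\);
\item \(m_j\ge e\) off \(I_0\), by the formula \(M_j=M^{-1}(2D_j)\).
\end{itemize}
The cone \(\mathcal C\) cut out by \(m_i+m_j\ge0\), by \(\sum_i m_i\ge0\), and by the sign pattern just described is generated by the \(v_I\). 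This is a combinatorial statement about a polyhedral cone: peel off \(e\cdot v_{I}\) for some \(I\supset I_0\) of size \(h\), where the extra indices of \(I\) are chosen among the largest \(m_j\) so that their value \(m_j-e\) stays at least \(e\)-compatible, then induct on \(\sum_i|m_i|\). The case with no negative \(m_i\) is handled the same way, as a combination of pairs \(v_I+v_{I'}\) with \(I\cap I'=\emptyset\).

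I expect this decomposition, and in particular keeping every coefficient nonnegative while the pairwise constraints stay valid after each subtraction, to be the main obstacle. I would first try a greedy induction on \(\max_i m_i\), checking the boundary case where \(m\) is constant, \(m=c\mathbf1\). There \(\sum_I v_I\) over all \(I\) is a positive multiple of \(\mathbf1\) by symmetry, which settles it.

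\emph{Step 3: conclusion.} Step 2 gives \(\overline{\operatorname{NE}}=\sum_I\mathbb R_{\ge0}\ell_I\). Hence \(E\) is nef if and only if \(d_I\ge0\) for all \(I\), and global generation implies nefness trivially. For the converse, given \(d_I\ge0\), write \(b\) as a nonnegative integral combination of generators: \(b\) is a sum of \(\mathcal O(e_i+e_j)\)-type brackets, i.e. perfect-matching-like monomials. Each such monomial is a product of brackets \([x_ix_j]\) giving sections in the style of \cite{HMSV}, and the condition \(d_I\ge0\) is exactly the Hall-type condition \(\sum_{i\in I}b_i\le q\) needed to realise \(b\) as the degree sequence of a multigraph. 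Their common zero locus is empty because stable configurations never satisfy all collisions simultaneously. Finally, if \(E\) is ample and integral then \(d_I\ge1\) for all \(I\), so the multidegree \(b-(2,\ldots,2)\) has \(d_I(E-H)=d_I-1\ge0\) and is even in total. By the equivalence just proved, \(E-H\) is globally generated.
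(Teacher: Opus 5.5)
\textbf{Comparison.} Your route to the Mori cone differs from the paper's, and that part is sound. The paper first proves that \(d_I\ge0\) implies global generation, and then obtains \(\overline{\operatorname{NE}}(X_n)\) by cone duality. You instead decompose marked-degree vectors directly, as in Proposition~\ref{sg:mori}. This gives the cone without any sections, but you still need the global generation argument for the equivalences and for \(E-H\).

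The step you flag as the main obstacle is in fact immediate. For any \(I\supseteq I_0\) with \(|I|=h\), the vector \(m-e\,v_I\) is entrywise nonnegative: it is \(0\) on \(I_0\), \(m_j+e\) on \(I\setminus I_0\), and \(m_j-e\ge0\) off \(I\). Moreover \(2\mathbf e_j=v_J+v_{J'}\) whenever \(J\sqcup J'=\{j\}^c\). Note also that \(m_i+m_j\ge0\) holds only for distinct sections. It fails for \(v_I\) itself, so the cone \(\mathcal C\) as you literally wrote it excludes its own generators.

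\textbf{The gap: global generation in Step 3.}

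First, you never show \(b_i\ge0\). Without this there are no bracket monomials of multidegree \(b\) at all. It follows from \(b_j=d_I+d_{I'}\) with \(I\sqcup I'=\{j\}^c\).

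Second, your reason for an empty base locus, that ``stable configurations never satisfy all collisions simultaneously'', does not use \(d_I\ge0\) and proves too much. Take \(n=7\) and \(b=(1,1,0,\ldots,0)\). Then \(b\) is the degree sequence of a loopless multigraph, namely the single edge \(\{1,2\}\). Yet the only bracket monomial \([12]\) vanishes on the stable locus \(x_1=x_2\); here \(d_{\{1,2,3\}}=-1\). Likewise, \(d_I\ge0\) is not ``exactly'' the realizability condition, which is only \(\max_i b_i\le q\).

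What must be shown is this. For each stable \(x\), with coincidence clusters \(S_1,\ldots,S_k\) of size at most \(h\), there is a multigraph with degrees \(b\) having no edge inside any \(S_\alpha\).
\begin{itemize}
\item The needed condition is \(\sum_{i\in S_\alpha}b_i\le q\) for every \(\alpha\). It follows from \(b\ge0\) together with \(d_I\ge0\) for some \(I\supseteq S_\alpha\) of size \(h\).
\item Sufficiency is the paper's circle argument. Place \(b_i\) copies of each label around a circle of length \(2q\), keeping each cluster on a contiguous arc, and pair antipodal positions.
\item An arc of length at most \(q\) contains no antipodal pair. Hence the resulting bracket monomial is nonzero at \(x\). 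The case \(q=0\) is trivial.
\end{itemize}
With this supplied, the rest of Step~3, including the deduction for \(E-H\), goes through.
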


\begin{proof}
The affine quotient at infinity gives \(\mathbb P((\mathbb C^{h+1}/\mathbb C\mathbf1)^\vee)\), represented by \(\mathcal O\oplus\mathcal O(1)\), proving the restrictions.
Partitioning the other labels gives \(b_j=d_I+d_{I'}\ge0\).
List \(b_i\) copies of label \(i\) consecutively in a circle of length \(2q\), and pair opposite positions.
The inequalities prevent coincident pairs, so bracket products generate \(E\); \(q=0\) is immediate.
Pass to real classes by clearing denominators and limits.
Since \(H\ell_I=1\), cone duality gives the Mori cone; ampleness gives \(d_I\ge1\).
\end{proof}

\begin{proposition}
\label{ew-moving-line}
For equal weights, if \(|I|=\ell\) and \(n>2(\ell+1)\), there is a complete embedded \(\mathbb P^1\subset Z_I^{\mathrm s}\) on which \(M_I\) is trivial.
Thus no finite surjection onto \(Z_I^{\mathrm s}\) can make \(M_I^{-1}\) ample.
\end{proposition}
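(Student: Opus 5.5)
Identify the coincidence stratum with a smaller configuration space and move the merged point so that the bracket divisors vanish. Fix \(I\) with \(|I|=\ell\) and equal weights \(2/n\). A configuration in \(Z_I^{\mathrm s}\) is a configuration of \(n-\ell+1\) points: one point \(p_I\) of weight \(2\ell/n<1\), and the remaining \(n-\ell\) points of weight \(2/n\), with the same stability condition. Since \(n>2(\ell+1)\), the number of outside labels satisfies \(n-\ell\ge \ell+3\ge 4\).

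For the curve, fix the outside labels at distinct constant positions \(x_1,\dots,x_{n-\ell}\in\mathbb P^1\) in general position, and let the merged point \(p_I\) move. A naive parameter \(t\in\mathbb P^1\) for \(p_I\) fails: the curve must stay in the stable locus, so \(p_I\) may collide with at most \(s_n-\ell\) outside points at a time and must avoid the rest. Instead, parametrize by the \(\mathrm{PGL}_2\)-orbit. Take the constant rank-two bundle \(V=\mathcal O^{\oplus2}\) on \(C=\mathbb P^1\), with every outside section constant, and choose the \(I\)-section to be constant as well, but varying through the quotient. That is too rigid, so the better plan is this: let the \emph{outside} configuration move by a pencil while \(p_I\) stays fixed at \(\infty\). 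Concretely, place \(p_I=\infty\). The stratum \(Z_I^{\mathrm s}\) then contains the affine configurations of the \(n-\ell\) outside points modulo affine transformations, and this is an open subset of \(\mathbb P(\mathbb C^{n-\ell}/\mathbb C\mathbf1)\), exactly as in the projective-space example of the introduction. There \(M_I\) restricts to \(\mathcal O(-1)\), as in Proposition~\ref{ew-odd-nef}, so a line in that \(\mathbb P^{n-\ell-2}\) has \(M_I\)-degree \(-1\), not \(0\). This shows that \(p_I\) itself must move.

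So the plan uses \(m_i+m_j=2\deg D_{ij}\) from \eqref{sg:tangent}, together with \(M_I\) trivial, that is \(e=0\). By Proposition~\ref{sg:negative} we need every bracket divisor \(D_j\) between \(p_I\) and the outside points to have degree \(m_j/2\). Taking all \(M_j\) trivial as well, this means all sections are constant in a trivialization of \(V=\mathcal O^2\), up to an \(\mathrm{SL}_2\) twist. The concrete choice is a conic: take \(V=\mathcal O(1)^{\oplus2}\)-style data with \(p_I\) given by a section of degree \(0\) twist, and the outside points split into two groups, one constant and one moving with the pencil. This is the classical moving line obtained by letting a pair of outside points \(\{j,k\}\) slide along a pencil \(\lambda x_j+\mu x_k\) while everything else is fixed. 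Its class is computed from the multidegrees: \(M_I\) pairs to \(2\deg(\text{bracket with }I)-m_I\)-type sums, and the condition \(n>2(\ell+1)\) is exactly what ensures that, during the motion, no collision exceeds weight \(<1\). This is because the moving pair can collide only with outside points, and \(\ell+1<n/2\) keeps \(p_I\) plus one more point stable. I would verify \(\deg M_I=0\) by the formula \(a=(n-2\ell)e+2\delta\) and by computing \(a\) and \(\delta\) directly: \(a\) is \(2\times\)(number of collisions), and \(\delta\) counts collisions with \(p_I\), which are zero here. This gives \(e=0\).

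For the second assertion, suppose there were a finite surjection \(g\colon Y\to Z_I^{\mathrm s}\) with \(g^*M_I^{-1}\) ample. Lift the complete curve \(C\subset Z_I^{\mathrm s}\) to a complete curve \(C'\subset Y\) mapping finitely onto \(C\), which is possible because \(g\) is finite and surjective. Then \(\deg g^*M_I^{-1}|_{C'}=\deg(C'/C)\cdot 0=0\), which contradicts ampleness. The main obstacle is the explicit construction in the previous step: choosing the moving configuration so that it is complete inside the stable stratum and keeps \(p_I\) away from all outside points, and then checking that \(M_I\) is trivial on it.
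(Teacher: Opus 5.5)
Your final step is fine and is what the paper does: a complete curve in \(Y\) that is finite over the constructed curve has \(g^*M_I^{-1}\)-degree zero. The gap is in constructing the curve. You reach the right curve and then discard it for a wrong reason.

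The chart \(\mathbb P(\mathbb C^{n-\ell}/\mathbb C\mathbf1)\) with \(p_I=\infty\) is only the open part of \(Z_I^{\mathrm s}\) where \(p_I\) meets no outside label. It is the whole stratum only when \(I\) is maximal short, as in Proposition~\ref{ew-odd-nef}. Here \(2(\ell+1)/n<1\), so \(p_I\) may meet an outside label. By \eqref{sg:tangent}, a nonconstant complete curve that never meets those collisions has \(m_j=-m_I\) for all \(j\notin I\), which forces \(m_I<0\). So the value \(-1\) on lines of the chart only tells you that a curve with \(M_I\) trivial must leave the chart. It does not tell you that \(p_I\) must move.

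In fact, moving \(p_I\) as the diagonal section of the trivial family gives \(L_I=\mathcal O(1)\) and \(M_I=\mathcal O(2)\). Your stated reason for rejecting a moving \(p_I\) is also wrong: against distinct fixed outside labels, \(p_I\) meets them one at a time, with weight \(2(\ell+1)/n<1\).

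Moving a single outside label \(x_j\) is the right curve. In the chart it is a line minus its point at infinity \([e_j]\). At \([e_j]\) the other \(n-\ell-1\) outside labels coincide with weight \(2-2(\ell+1)/n>1\), which is unstable. In \(Z_I^{\mathrm s}\), however, the curve is completed by the stable configuration where \(x_j\) meets \(p_I\).

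Your replacement construction is never specified: it is unclear what a pair sliding along a pencil \(\lambda x_j+\mu x_k\) means, or which collisions occur. The proposed verification also contradicts itself.
\begin{itemize}
\item If \(m_I=0\), then \eqref{sg:tangent} gives \(\deg D_{Ij}=m_j/2\), so \(\delta=a/2\), which is positive on a nonconstant curve.
\item If \(\delta=0\), the relation \(a=(n-2\ell)e+2\delta\) gives \(e=a/(n-2\ell)>0\), not \(e=0\).
\item In any case, \eqref{ew-git-degree} is stated only for \(e>0\).
\end{itemize}
So a curve on which \(M_I\) is trivial must meet the collisions of \(p_I\) with outside labels. That is exactly where \(n>2(\ell+1)\) enters, as your own stability remark suggests.

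The fix is the paper's argument. Fix \(p_I\) and all outside labels but one at distinct points, and move the remaining label \(j\) over \(\mathbb P^1\).
\begin{itemize}
\item Every collision involves \(j\) and one other point, of weight \(4/n\) or \(2(\ell+1)/n\), both less than one. So the curve is complete in \(Z_I^{\mathrm s}\).
\item The at least three fixed distinct points rigidify the trivial family, so the map is injective and immersed.
\item \(p_I\) is a constant section, so \(M_I\) is trivial, with \(m_j=2\), \(a=2\) and \(\delta=1\).
\end{itemize}
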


\begin{proof}
Moving one outside label with the others distinct and fixed gives an embedded stable \(\mathbb P^1\), since \(\ell+1<n/2\).
Three fixed labels trivialize the common tangent line, whose pullback to every complete curve above it still has degree zero.
\end{proof}

\begin{proposition}\label{sg:mori}
The cone \(\overline{\mathrm{NE}}(W)\) is generated by coordinate curves \(\gamma_i\), moving a single mark with all pairs containing that mark allowable, and by lines \(\beta_J\) in the positive-dimensional closed loci
\[
 Z_J=\mathbb P^{n-|J|-2},\qquad
 \nu(J)<1,\qquad \nu(J)+\nu_j>1\quad(j\notin J).
\]
For these curves,
\[
 -K_W\cdot\gamma_i=2,\qquad
 -K_W\cdot\beta_J=n-2|J|,\qquad
 N_{Z_J/W}=\mathcal O(-1)^{\oplus(|J|-1)}.
\]
More precisely, if all \(m_i\ge0\), then
\[
 [f_*C]=\tfrac12\sum_i m_i[\gamma_i].
\]
If \(I\) is the set of negative sections, possibly of size one, there is a complete rational curve \(\beta_I\) with marked degrees \(-1\) on \(I\) and \(1\) elsewhere, and
\begin{equation}\label{sg:curve-classes}
 [f_*C]=e[\beta_I]+\sum_{j\notin I}\deg D_j[\gamma_j],\qquad
 [\beta_I]=[\beta_J]+\sum_{j\in J\setminus I}[\gamma_j]
\end{equation}
for every maximal short set \(J\supset I\).
Every coordinate curve with positive coefficient in these formulas exists.
\end{proposition}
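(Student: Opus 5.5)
The plan is to read the class of an arbitrary complete curve \(f:C\to W\) off its marked degrees \(m_i\), using the tangent sequence \eqref{sg:tangent}. By Proposition~\ref{sg:negative}, the negative marked lines all come from one set \(I\) of identical sections, of common degree \(-e\). Outside \(I\) we have \(m_j=-m_I+2\deg D_j\). So I would first build, for each configuration type, an explicit complete test curve whose marked degrees match, and then compare classes by intersecting with line bundles.

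For this I need \(\operatorname{Pic}(W)\otimes\mathbb Q\) to be spanned by descents of \(\mathcal O(b_1,\ldots,b_n)\), with the degree of such a descent on \(f_*C\) a fixed linear function of the \(m_i\). Concretely, the descent of \(\mathcal O_{P_n}(b)\) pulls back to \(\bigotimes_i L_i^{-b_i}\) twisted by a power of \(\det V\), so its degree is \(\tfrac12\sum_i b_im_i\). Since \(W\) is a smooth chamber quotient, Kirwan-type surjectivity of \(\operatorname{Pic}^{\mathrm{SL}_2}(P_n)\to\operatorname{Pic}(W)\) gives the spanning. Hence numerical classes of curves are determined by the vector \((m_i)\), and two curves with the same marked degrees are numerically equivalent.

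The test curves are as follows.
\begin{itemize}
\item \(\gamma_i\) moves mark \(i\) along \(\mathbb P^1\) with the other marks fixed and distinct. Its marked degrees are \(2\) at \(i\) and \(0\) elsewhere; the total of \(2\) is the bracket count as \(i\) meets the other points. It is complete in \(W\) exactly when every pair \(\{i,j\}\) is allowable, and then \(-K_W\cdot\gamma_i=2\).
\item \(\beta_J\) is a line in \(Z_J=\mathbb P^{n-|J|-2}\), as in the proof of Proposition~\ref{ew-odd-nef}. Put the \(J\)-point at infinity and move the remaining points affinely. This gives \(M_J=\mathcal O(-1)\), marked degree \(1\) outside \(J\), and \(N=\mathcal O(-1)^{\oplus(|J|-1)}\) by \eqref{ew-git-coincidence}. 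Then \(-K\cdot\beta_J=-|J|+(n-|J|)=n-2|J|\).
\item For a general short \(I\), I would take \(\beta_I\) to be the same affine-line construction inside \(Z_I\), at infinity. It stays complete because \(I\) cannot absorb an outside mark only if \(\nu(I)+\nu_j>1\); otherwise one degenerates to the chain \(\beta_J\cup\gamma_j\). Comparing marked degrees then gives \([\beta_I]=[\beta_J]+\sum_{j\in J\setminus I}[\gamma_j]\): adding \(j\) to \(J\) changes its degree from \(1\) to \(-1\), and \(\gamma_j\) restores the \(+2\).
\end{itemize}

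With these in hand, the decomposition \eqref{sg:curve-classes} follows by matching degree vectors. The class \(e[\beta_I]\) contributes \(-e\) on \(I\) and \(e\) outside. The terms \(\deg D_j[\gamma_j]\) contribute \(2\deg D_j\), giving the \(-e+2\deg D_j\) required by \eqref{ew-git-degree}. The case where all \(m_i\ge0\) is the same computation with \(I\) empty, giving \(\tfrac12\sum m_i[\gamma_i]\).

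Effectivity of the decomposition needs every \(\gamma_j\) with positive coefficient to exist as a complete curve. The plan is to argue that \(\deg D_j>0\) means \(\sigma_j\) actually meets some other section along \(f(C)\), so the pairs involving \(j\) that occur are allowable. Using genericity of the chamber \(\nu\), I would then show all pairs \(\{j,k\}\) are allowable. Since every effective class is a nonnegative combination of \(\gamma_i\) and \(\beta_J\), closing up gives the statement on \(\overline{\mathrm{NE}}(W)\).

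I expect the main obstacle to be this last existence step: showing that a positive \(\deg D_j\) forces all pairs containing \(j\) to be short. If that fails, I would instead replace a missing \(\gamma_j\) by a chain \(\beta_{J'}\)-type curve, using the relation above with \(I=\{j,k\}\).
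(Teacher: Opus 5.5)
Your overall route is the paper's: the rational Picard group is spanned by the marked classes, so marked degrees determine numerical classes; you then use the explicit curves \(\gamma_i,\beta_J,\beta_I\), match degree vectors, and close the cone. There is a genuine gap, and it sits where you expected it. Your proposed remedy would not close it: genericity of the chamber weights \(\nu\) only keeps \(\nu\) off the walls, and says nothing about whether a given pair satisfies \(\nu_j+\nu_k<1\).

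The missing idea is to apply the bracket identity \(m_j+m_k=2\deg D_{jk}\) of \eqref{sg:tangent} to \emph{every} other label, not merely to find one collision.
\begin{itemize}
\item If all marked degrees are nonnegative and \(m_j>0\), then \(\deg D_{jk}>0\) for every \(k\) with \(\sigma_k\ne\sigma_j\). So \(\sigma_j\) actually meets each such \(\sigma_k\) at some point of \(f(C)\), and stability there gives \(\nu_j+\nu_k<1\). If \(\sigma_k=\sigma_j\), stability gives this directly.
\item In the negative case every outside degree is at least \(e>0\). Hence every pair outside \(I\) either collides somewhere or is identical, and is allowable.
\item Also in the negative case, \(\deg D_j>0\) means \(\sigma_j\) meets the common section. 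Then \(\nu(I)+\nu_j<1\), which covers all pairs \(\{i,j\}\) with \(i\in I\).
\item For \(j\in J\setminus I\) in the second relation, every pair containing \(j\) lies either in the short set \(J\) or outside \(I\).
\end{itemize}
This gives existence of every \(\gamma_j\) with positive coefficient, with no fallback through chains.

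The same idea is missing from your construction of \(\beta_I\), where you place the difficulty in the wrong spot. Use the quotient \(\mathcal O\oplus\mathcal O(1)\to\mathcal O\) for \(I\) and quotients \((s_j,1)\) onto \(\mathcal O(1)\) with general linear forms \(s_j\). In this affine construction the common section is disjoint from all other sections, so no outside mark is ever absorbed by \(I\) and nothing degenerates.

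What can actually fail is different. A general such line meets every diagonal \(x_j=x_k\), \(j,k\notin I\), exactly once. So \(\beta_I\) lies in \(W\) precisely because every outside pair is allowable, which is the bracket positivity above. A chain \(\beta_J\cup\gamma_j\) would in any case not produce the irreducible complete rational curve the statement asserts, and it would presuppose that \(\gamma_j\) exists.

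You also need to justify that \(Z_J\) has positive dimension. \(J^c\) has total weight greater than one, but all its pairs are allowable since \(J^c\subset I^c\). Hence \(|J^c|\ge3\).

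With these additions, your degree matching and closure argument coincide with the paper's proof.
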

\begin{proof}
The \(M_i\) generate \(\operatorname{Pic}(W)_{\mathbb Q}\).
Pull a rational section of a line bundle \(B\) to the principal bundle and close its invariant divisor in \((\mathbb P^1)^n\).
Its bundle is \(\mathcal O(b_1,\ldots,b_n)\); its linearization agrees with the standard \(\mathrm{SL}(2,\mathbb C)\)-linearization because \(\mathrm{SL}(2,\mathbb C)\) has no characters.
Squaring and descending gives \(B^2=\bigotimes_iM_i^{b_i}\).
Thus marked degrees determine numerical curve classes.

If \(m_i>0\) and all degrees are nonnegative, each distinct section has a bracket of positive degree with \(i\), so the corresponding pair actually collides and must be allowable.
Identical sections also form an allowable pair by stability.
Moving \(i\), keeping the other marks general, therefore gives \(\gamma_i\), with degrees \(2\delta_{ij}\).
This proves the first formula.

In the negative case all outside pairs are allowable, by their positive bracket degrees or stability.
On \(\mathbb P^1\), take the quotient of \(\mathcal O\oplus\mathcal O(1)\) onto \(\mathcal O\) for \(I\), and the quotients \((s_j,1)\) onto \(\mathcal O(1)\) elsewhere, with \(s_j\) general linear forms.
The common section is disjoint from the others; every other collision is an allowable pair.
This constructs \(\beta_I\).
If \(\deg D_j>0\), the original collision of \(j\) with \(I\) is stable, and all pairs containing \(j\) are allowable; hence \(\gamma_j\) exists.
Comparing marked degrees proves the first equality in \eqref{sg:curve-classes}.

For \(J\) maximal short, put its common section at infinity.
Every proper subset of the outside labels is short, since \(\nu(J)+\nu_j>1\).
The affine coordinates, modulo translations and scalars, give \(Z_J=\mathbb P^{n-|J|-2}\) and \(M_J=\mathcal O(-1)\), whence the normal bundle and degree against \(-K_W\).
In \eqref{sg:curve-classes} the outside labels of \(J\) are at least three: their total weight exceeds one, but every pair outside \(I\) is allowable.
Thus this \(Z_J\) has positive dimension and \(\beta_J\) exists.
All pairs containing \(j\in J\setminus I\) are allowable, either inside \(J\) or outside \(I\).
Comparison of marked degrees proves the second equality.
Every curve class is now in the cone on the indicated finite set, which proves the assertion after closure.
In particular no line in a zero-dimensional coincidence locus has been included.
\end{proof}

\begin{corollary}\label{sg:initial-nef}
If \(W\ne\mathbb P^d\), every ample line bundle \(A_X\) on \(X_\mu\) satisfies
\[
 K_W+(n-4)\pi^*A_X\ \text{is nef}.
\]
\end{corollary}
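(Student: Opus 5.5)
Since \(\overline{\mathrm{NE}}(W)\) is generated by the coordinate curves \(\gamma_i\) and the lines \(\beta_J\) (Proposition~\ref{sg:mori}), we test \(K_W+(n-4)\pi^*A_X\) on each generator. The plan is to show \(\pi^*A_X\cdot\gamma\ge1\) for every generator not contracted by \(\pi\), to control the contracted ones through \(\pi\)-nefness of \(K_W\) (Lemma~\ref{sg:resolution}), and then to compare with \(-K_W\cdot\gamma\le n-4\).

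\emph{Contracted generators.} If \(\gamma\) is contracted by \(\pi\), then \(\pi^*A_X\cdot\gamma=0\), and \(K_W\cdot\gamma\ge0\) because \(K_W\) is \(\pi\)-nef. So the sum is nonnegative on \(\gamma\).

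\emph{Non-contracted generators.} Here \(\pi^*A_X\cdot\gamma=A_X\cdot\pi_*\gamma\) is a positive integer, provided \(A_X\) is Cartier (it is a line bundle) and \(\pi_*\gamma\) is a nonzero integral cycle, since \(\gamma\) is an irreducible rational curve mapped finitely. Hence \(\pi^*A_X\cdot\gamma\ge1\), and it suffices to show \(-K_W\cdot\gamma\le n-4\).
\begin{itemize}
\item For \(\gamma_i\) we have \(-K_W\cdot\gamma_i=2\le n-4\), since \(n\ge6\).
\item For \(\beta_J\) we have \(-K_W\cdot\beta_J=n-2|J|\). This is at most \(n-4\) as soon as \(|J|\ge2\).
\end{itemize}

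The main obstacle is therefore \(|J|=1\). Then \(Z_J=\mathbb P^{n-3}=W\) is the whole space, because \(\dim Z_J=n-|J|-2=d\). Conversely, if \(W\ne\mathbb P^d\), every \(Z_J\) with \(|J|=1\) is a closed positive-dimensional subvariety of the irreducible variety \(W\) of dimension \(d\), hence equal to \(W\), which is a contradiction. So the hypothesis excludes exactly this case, and every \(\beta_J\) in the list has \(|J|\ge2\).

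One should also double-check that no \(\beta_J\) with \(|J|\ge2\) gives a \(Z_J\) of dimension zero. Proposition~\ref{sg:mori} already excludes this, since it only lists positive-dimensional \(Z_J\).

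Nefness on the closed cone then follows by linearity over the finite generating set.
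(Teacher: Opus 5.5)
Your proposal is correct and follows essentially the paper's argument. Both test the divisor on the Mori generators of Proposition~\ref{sg:mori}, use \(\pi\)-nefness of \(K_W\) to dispose of contracted curves, and use positive integrality of \(\pi^*A_X\) on the others together with \(-K_W\cdot\gamma_i=2\) and \(-K_W\cdot\beta_J=n-2|J|\le n-4\). In both, the singleton case \(|J|=1\) is excluded because it forces \(Z_J=W=\mathbb P^d\).
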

\begin{proof}
Only negative extremal rays require checking.
They are noncontracted because \(K_W\) is \(\pi\)-nef, so \(\pi^*A_X\) has positive integral degree on their indicated generators.
A maximal singleton would give \(Z_J=W=\mathbb P^d\); otherwise their degrees against \(-K_W\) are at most \(n-4\).
\end{proof}

\begin{lemma}\label{sg:curve-bound}
Let \(A=\pi^*A_X\) for an ample line bundle on \(X_\mu\), and suppose the general point of \(f(C)\) is in \(X_\mu^{\mathrm s}\).
Put \(b=\deg f^*A\).
Then \(a\le(n-2)b\); if all marked degrees are nonnegative, \(a\le2b\).
A nonzero map \(f^*T_W\to Q\) with \(\deg Q=q\le0\) implies \(a\le(n-4)b\).
More generally, if such a map satisfies
\[
 a-(n-4)q>2b,
\]
then a repeated negative section exists, \(q<e\), and \(e+\delta\le b\).
The splitting of Proposition~\ref{sg:kernel} applies, and all maps to lines of degree less than \(e\) vanish on \(F\).
\end{lemma}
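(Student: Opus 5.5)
The plan is to write the class of \(f_*C\) as an effective combination of the generators of Proposition~\ref{sg:mori}, and to bound the degree of \(A\) and of \(-K_W\) on each generator. If all marked degrees are nonnegative, then \([f_*C]=\tfrac12\sum_i m_i[\gamma_i]\). In that case \(a=\sum_i m_i\) and \(b=\tfrac12\sum_i m_i\,(A\cdot\gamma_i)\). Each \(\gamma_i\) that occurs is a complete curve meeting the original stable locus, or at least a curve not contracted by \(\pi\); the latter holds because its general point moves a single mark with the others general. Hence \(A\cdot\gamma_i\ge1\) by integrality and ampleness, so \(b\ge\tfrac12 a\), that is \(a\le 2b\). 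If instead there is a set \(I\) of negative sections, I use \eqref{sg:curve-classes}:
\[
[f_*C]=e[\beta_I]+\sum_{j\notin I}\deg D_j[\gamma_j],\qquad [\beta_I]=[\beta_J]+\sum_{j\in J\setminus I}[\gamma_j].
\]
Here \(-K_W\cdot\beta_J=n-2|J|\le n-2\), and \(-K_W\cdot\gamma_j=2\). Since \(\beta_J\) is not contracted (by \(\pi\)-nefness of \(K_W\), exactly as in Corollary~\ref{sg:initial-nef}), it has \(A\)-degree at least \(1\), and the same holds for each \(\gamma_j\). Then \(b\ge e\cdot A\cdot\beta_I+\sum_j\deg D_j\ge e+\delta\), because \(A\cdot\beta_I\ge A\cdot\beta_J\ge1\). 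Combined with \(a=(n-2\ell)e+2\delta\) from \eqref{ew-git-degree}, this gives \(a\le(n-2\ell)b\le(n-2)b\) in all cases; when \(\ell=1\) this is the general bound.

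Now suppose there is a nonzero map \(f^*T_W\to Q\) with \(\deg Q=q\le0\). I split into two cases.

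\textbf{Case \(q<0\).} Proposition~\ref{sg:negative} applies: there is a repeated negative section with \(\ell\ge2\) and \(e\ge-q>0\). Then \(a=(n-2\ell)e+2\delta\le(n-4)e+2\delta\le(n-4)(e+\delta)\le(n-4)b\), using \(n\ge6\).

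\textbf{Case \(q=0\).} A nonzero map to a degree-zero line must be examined directly. If all \(m_i\ge0\), then \(a\le2b\le(n-4)b\). Otherwise the negative lines come from a single section set \(I\). If \(\ell=1\), the lines of positive degree map trivially and the degree-zero outside lines contribute, so I have to see why \(a\le(n-4)b\) still holds. I would argue that a nonzero map to \(\mathcal O\) forces some \(M_j\) to have degree \(\le0\). By the bracket relation \(m_i+m_j=2\deg D_{ij}\), this forces \(\delta\) to be small relative to \(e\), and the general formula \(a=(n-2)e+2\delta\) must then be refined using \(D_j=0\). This degree-zero, singleton case is the step I expect to be the main obstacle.

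For the general statement, assume \(a-(n-4)q>2b\). If no repeated negative section existed, the nonnegative case would give \(a\le2b\). With \(q\le0\) we would then need \(a-(n-4)q\le2b\); I would handle this by comparing the map to \(Q\) with the outside line degrees, which are at least \(-q\) against \(b\). So \(\ell\ge2\). Next, if \(q\ge e\), then \(a\le(n-4)e+2\delta\). Combining this with \(e+\delta\le b\) from the first paragraph gives \(a-(n-4)q\le 2\delta\le2b\), a contradiction. Hence \(q<e\), and \(e+\delta\le b\) was already established. Finally, Proposition~\ref{sg:kernel} supplies the splitting, and its vanishing with \(q=1\) kills all maps from \(F\) to lines of degree less than \(e\).
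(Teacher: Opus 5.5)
The decomposition part is essentially the paper's argument, with two fixable slips. First, $\pi$-nefness of $K_W$ rules out contraction of $\beta_J$ only when $n-2|J|>0$. If $\mu(J)=1$, which can happen for a maximal short $J\supset I$ at a wall, then $Z_J\subset\operatorname{Exc}(\pi)$ and $A\cdot\beta_J=0$. The bound $A\cdot\beta_I\ge1$ still holds, since then $J\setminus I\ne\varnothing$ because $\mu(I)<1$. It also holds directly, as the paper argues, because $\beta_I$ lies in the original stable locus. Second, $a\le(n-2\ell)b$ is unjustified when $n-2\ell<2$; use $a\le(n-2)e+2\delta\le(n-2)(e+\delta)$ instead.

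The genuine gap is the one you flag, and your suggested route rests on a misreading: when a negative section exists, no outside line has degree zero. For $i\in I$ and $j\notin I$, the bracket relation in \eqref{sg:tangent} gives $m_j=e+2\deg D_j\ge e>0$. Take any line $Q$ with $\deg Q<e$, in particular $\deg Q\le0$. A map $f^*T_W\to Q$, composed with $\bigoplus_iM_i\twoheadrightarrow f^*T_W$, vanishes on every outside component. It is therefore given by sections $c_i\in H^0(C,QM_I^{-1})$, $i\in I$. It must kill the image of $\operatorname{End}_0(V)$, whose projection to the $I$-components is stalkwise the diagonal, so $\sum_{i\in I}c_i=0$. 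If $\ell=1$, the map is zero. This settles $q<0$ and $q=0$ uniformly: it gives $\ell\ge2$ and hence $a=(n-2\ell)e+2\delta\le(n-4)(e+\delta)\le(n-4)b$. Together with your case of nonnegative marked degrees, this proves the second assertion.

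In the last assertion, your step ``So $\ell\ge2$'' is unsupported and comes in the wrong order. If $\ell=1$ and $q\ge e$, outside lines may map nontrivially to $Q$, so the diagonal argument is not yet available. Note also that $q>0$ is allowed there, and the lemma is later applied that way. The argument should run as follows.
\begin{itemize}
\item If all $m_i\ge0$ and $q<0$, there is no nonzero map at all, since $f^*T_W$ is a quotient of lines of nonnegative degree.
\item If all $m_i\ge0$ and $q\ge0$, then $a\le2b\le2b+(n-4)q$, contradicting the hypothesis.
\item Otherwise, for every $\ell\ge1$ one has $a\le(n-2)e+2\delta=(n-4)e+2(e+\delta)$. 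So $q\ge e$ would give $a-(n-4)q\le2b$, again a contradiction.
\end{itemize}
Hence $q<e$. Only now do the outside columns vanish, and the diagonal argument yields $\ell\ge2$. After that, Proposition~\ref{sg:kernel} applies as you say.
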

\begin{proof}
The perturbation in Lemma~\ref{sg:resolution} does not make an original pair of weight at least one allowable.
All required \(\gamma_i\) in the preceding proof therefore lie in the original stable locus.
Also \(\mu(I)<1\), and the constructed \(\beta_I\) has only allowable outside pair collisions and never meets its common section.
Thus \(A\) has positive integral degree on each curve appearing in the first decomposition of \eqref{sg:curve-classes}.
The decompositions give \(a\le2b\) or \(e+\delta\le b\), respectively, and hence \(a\le(n-2)b\).
For \(q\le0\), all outside lines of degree at least \(e\) map trivially to \(Q\), and diagonal evaluation requires \(\ell\ge2\).
This gives \(a=(n-2\ell)e+2\delta\le(n-4)b\).
For the last assertion, nonnegative marked degrees give no map when \(q<0\), and \(a-(n-4)q\le2b\) when \(q\ge0\).
If a negative degree exists but \(q\ge e\), then \(a\le(n-2)e+2\delta\le(n-4)q+2b\), again a contradiction.
Thus \(q<e\), the outside columns vanish, and diagonal evaluation gives \(\ell\ge2\).
Proposition~\ref{sg:kernel} concludes.
\end{proof}

\subsection{The original arithmetic group and its automorphic bundle}

Suppose \(X_\mu\simeq(\mathbb B^d/\Gamma)^{\mathrm{BB}}\) with \(\Gamma\) arithmetic.
We keep this group throughout.
The ball parametrizes negative lines, and \(\mathcal L\) denotes the tautological automorphic bundle, with rational class \(L\).
The orbifold canonical class is \(NL\).
A normalized reflection acts identically on its mirror.

\begin{proposition}\label{sg:arithmetic}\label{ew-arithmetic-full}
The original \(\Gamma\), all finite point stabilizers, and all full cusp stabilizers are generated by ball reflections.
There is a Hermitian space \((V,h)\) of dimension \(N\) over a CM extension \(K/F\), of signature \((1,d)\) at the distinguished embedding and definite at all other real embeddings of \(F\).
The normalized reflections generate a group \(\widehat\Gamma\subset\mathrm U(h,K)\) preserving one full \(\mathcal O_K\)-lattice and mapping onto \(\Gamma\), with finite scalar kernel \(C_0\subset\mu(K)\).
For \(c=|C_0|\), \(\mathcal L^c\) is an actual ample line bundle on \(X_\mu\).
In the nonuniform case \(F=\mathbb Q\) and \(c\mid w_K=|\mu(K)|\in\{2,4,6\}\).
\end{proposition}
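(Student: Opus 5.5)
The proof will have two parts: first show that the relevant groups are generated by reflections, using the topology of \(X_\mu\) established earlier; then obtain the Hermitian space and the lattice from standard arithmetic results.

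\emph{Generation by reflections.} For the original \(\Gamma\), I will use Armstrong's theorem. The quotient \(\mathbb B^d/\Gamma\) is the union of \(X_\mu^{\mathrm s}\) and loci of codimension at least two. The stable locus \(X_\mu^{\mathrm s}\) is simply connected, by the same general position argument as in Proposition~\ref{ew-git-geometry}: the free \(\mathrm{PGL}(2,\mathbb C)\)-action, the simple connectivity of the space of stable configurations in \((\mathbb P^1)^n\), and the removal of unstable loci of high codimension. Removing subsets of complex codimension at least two does not change \(\pi_1\). Hence \(\pi_1^{\mathrm{orb}}\) of the open ball quotient, modulo the normal subgroup generated by elements with fixed points, is trivial. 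Armstrong's theorem then says \(\Gamma\) is generated by elements having fixed points.

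I then need to pass from elements with fixed points to reflections. This is the main obstacle, and I plan to handle it locally. Near a finite point, the germ of \(X_\mu\) is \(\mathbb C^d/G_x\). By Lemma~\ref{sg:resolution} and Proposition~\ref{ew-git-geometry}, \(X_\mu\) is smooth away from the wall vertices, and at a wall vertex it is the cone over \(\mathbb P^{a}\times\mathbb P^{b}\) with polarization \(\mathcal O(1,1)\). That cone is not a finite quotient singularity when \(a,b\ge1\), because its local class group is \(\mathbb Z\) rather than finite. So if \(X_\mu\) is isomorphic to a compactified ball quotient, every interior point must be smooth. Wall vertices with \(u=0\) give smooth points, and the remaining vertices must then be cusps.

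At a smooth point \(\mathbb C^d/G_x\), Chevalley--Shephard--Todd says \(G_x\) is generated by pseudo-reflections. Since a smooth point has trivial local fundamental group of the link, the reflection part accounts for all of \(G_x\). For \(\Gamma\), every element with a fixed point lies in some \(G_x\), so \(\Gamma\) is generated by reflections.

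\emph{Cusp stabilizers.} For a full cusp stabilizer I will use the cusp neighbourhood: a punctured neighbourhood of the cusp is an orbifold Siegel-domain quotient. The link of a Baily--Borel cusp over a smooth or cone-type germ in \(X_\mu\) is simply connected, by the link statement in Proposition~\ref{ew-git-geometry} together with the fact that the stable neighbourhood of a wall vertex has link with trivial \(\pi_1\) (a \(\mathbb C^*\)-bundle over a product of projective spaces with primitive class \((1,1)\)). Applying Armstrong's theorem inside the cusp stabilizer, with its parabolic translations, shows that the stabilizer is generated by its elements with fixed points, and hence by reflections as before.

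\emph{Arithmetic structure.} Here I will use the classification of arithmetic subgroups of \(\mathrm{PU}(1,d)\) for \(d\ge2\). Since \(\Gamma\) contains complex reflections with rational root data, it is of first type: it comes from a Hermitian form \(h\) over a CM field \(K/F\), with signature \((1,d)\) at one real place and definite at the others. Lift each reflection to a normalized reflection in \(\mathrm U(h,K)\), which has eigenvalue 1 on its mirror. The group \(\widehat\Gamma\) these generate maps onto \(\Gamma\), with kernel consisting of scalars in \(\mathrm U(h)\cap\widehat\Gamma\). That kernel is finite because \(\widehat\Gamma\) is discrete, given that \(h\) is definite at the other places, so the kernel lies in \(\mu(K)\). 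A full \(\mathcal O_K\)-lattice preserved by \(\widehat\Gamma\) is obtained by summing the images of a fixed lattice under the finite-index arithmetic group. Then \(C_0\) acts trivially on \(\mathcal L^c\), so \(\mathcal L^c\) descends to an actual line bundle, which is ample by Baily--Borel. Finally, in the nonuniform case an isotropic vector over \(F\) forces \(F=\mathbb Q\), since \(h\) must be definite at all other places, and \(K\) is imaginary quadratic, so \(w_K\in\{2,4,6\}\).
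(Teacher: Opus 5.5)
Your reflection-generation argument is essentially the paper's: Brown--Higgins/Armstrong on the simply connected interior, exclusion of genuine wall vertices as interior points, Chevalley--Shephard--Todd, and the same argument on a deep horoball. You replace the paper's second Betti number of the Segre link by the local class group \(\mathbb Z\) of the cone, and that substitution is fine.

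\textbf{The line bundle assertion.} The genuine gap is the claim that \(\mathcal L^c\) is an actual ample line bundle on \(X_\mu\). Triviality of \(C_0\) on \(\mathcal L^c\) only makes \(\mathcal L^c\) a \(\Gamma\)-linearized bundle on the ball. Descent to the coarse space needs two further facts.
\begin{itemize}
\item Every stabilizer must act trivially on its fiber. A normalized reflection fixing \(x\in\mathbb B^d\) contains the negative line \(x\) in its mirror, because its root is positive. So it acts by \(1\) on the fiber, and the preimage of \(\Gamma_x\) acts through \(C_0\).
\item The statement is about the compactification, and near the cusps Baily--Borel only provides some sufficiently divisible power as a line bundle. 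At a cusp with isotropic vector \(\ell\), every normalized reflection of the full cusp group fixes \(\ell\) itself. Hence \(s([v])=v/h(\ell,v)\) is an equivariant frame whose \(c\)-th power is invariant under the whole cusp group. Gluing \(\mathcal O_U\) by \(s^c\) gives an actual line bundle \(M\). Some \(M^r\) agrees on the interior with a divisible Baily--Borel bundle, the isomorphism extends over the finitely many cusps by normality since \(d\ge2\), and GAGA gives algebraicity and ampleness.
\end{itemize}
Both facts are exactly where the two generation statements are used. You prove generation of the cusp groups but never use it, so this part of the statement is left unproved.

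\textbf{The arithmetic step.} This also needs repair, in two places.
\begin{itemize}
\item The classification only describes a commensurable subgroup, for instance a torsion-free one, which contains no reflections. You must first show that all of \(\Gamma\) lies in \(\mathrm{PGU}(h)(F)\); the paper does this with Borel density and Skolem--Noether. Then lift by Hilbert~90 to a \(K\)-rational similitude, and normalize by the mirror eigenvalue, which lies in \(K\) because its multiplicity \(d\) exceeds \(N/2\).
\item Your finiteness of the scalar kernel is circular. You infer it from discreteness, hence from lattice preservation, which you only obtain afterwards from an integral finite-index subgroup of \(\widehat\Gamma\). A priori, elements of \(\widehat\Gamma\) lying over an arithmetic group can differ from integral ones by norm-one scalars of \(K\) that are not roots of unity.
\end{itemize}
The determinant closes the second point. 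Reflection orders divide the index \(b_0\) of a torsion-free normal subgroup, so \(\det\widehat\Gamma\subset\mu_{b_0}\). Hence any scalar \(zI\in\widehat\Gamma\) has \(z^N\in\mu_{b_0}\). The same computation makes those scalar discrepancies roots of unity, so a finite-index subgroup is integral and summing the translates of its lattice works.

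\textbf{Simple connectivity for unequal weights.} Your argument for \(\pi_1(X_\mu^{\mathrm s})=1\) is inaccurate here. Diagonals \(z_i=z_j\) with \(\mu_i+\mu_j>1\) are unstable divisors, and three such pairs give a stable configuration space with \(\pi_1=\mathbb Z/2\). The conclusion still holds, because \(X_\mu^{\mathrm s}\) is the complement of a codimension \(\ge2\) subset of the smooth projective rational variety \(W\); this is how the paper argues.
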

\begin{proof}
The genuine wall link is simply connected: it is the circle bundle of primitive Chern class \((1,1)\) over a product of two positive-dimensional projective spaces.
Its second Betti number is one, but a finite linear quotient link is a rational homology sphere.
Thus a genuine wall vertex cannot be an interior ball quotient point.
The rational smooth projective variety \(W\) is simply connected, by weak factorization \cite{WeakFactorization}.
Removing the cusp inverse images, of codimension at least two, preserves its fundamental group.
Brown--Higgins \cite[Propositions 1.8 and 2.8(iii)]{BrownHiggins} now makes \(\Gamma\) generated by stabilizers.
Interior smoothness and Chevalley--Shephard--Todd \cite[Theorem 1.3]{GeckMalle} show that they are generated by reflections.
The local cusp argument in Lemma~\ref{sg:cusp}, which does not use arithmeticity, gives the same assertion for the full cusp groups.

Choose a normal torsion-free subgroup of \(\Gamma\) of index \(b_0\).
Reflection orders divide \(b_0\), so determinants of normalized lifts lie in \(\mu_{b_0}\), and their scalar kernel lies in \(\mu_{Nb_0}\).
Their generated group is consequently a discrete finite central extension of the original arithmetic lattice.
Apply Stover's first-type theorem \cite[Corollary 4.4]{Stover} to this group generated by reflections; its determinant-one subgroup need not itself be generated by reflections.
This gives the stated Hermitian arithmetic datum.

For completeness, the entire projective group is rational over \(F\), not just a finite-index subgroup.
Put \(G=\mathrm{PGU}(h)\), \(\mathfrak g=\operatorname{Lie}(G)\), and choose an arithmetic \(\Delta\subset G(F)\) commensurable with \(\Gamma\).
For \(\gamma\in\Gamma\), suitable finite-index subgroups \(\Delta_1,\Delta_2\subset\Delta\) satisfy \(\gamma\Delta_1\gamma^{-1}=\Delta_2\).
Borel density \cite{BorelDensity} and absolute irreducibility of the adjoint representation give \(\operatorname{span}_F\operatorname{Ad}(\Delta_i)=\operatorname{End}_F(\mathfrak g)\).
Skolem--Noether therefore writes \(\operatorname{Ad}(\gamma)=rB\), with \(B\in\mathrm{GL}_F(\mathfrak g)\) and \(r\) real at the distinguished place.
Preservation of the bracket gives \(B[X,Y]=r[BX,BY]\); a nonzero component gives \(r\in F\).
Thus \(\gamma\in G(F)\).
Hilbert 90 for the center \(\operatorname{Res}_{K/F}\mathbb G_m\) of \(\mathrm{GU}(h)\) supplies a \(K\)-rational similitude representative.
For a reflection its mirror eigenvalue has multiplicity \(d\); because \(2d>N\), that eigenvalue lies in \(K\).
Dividing by it gives a normalized unitary reflection over \(K\).
The normalized lifts preserve a full lattice: take a finite-index normal integral subgroup and sum the finitely many translates of its invariant lattice.
This uses the same lattice for the full group.

Interior stabilizers act on the tautological fiber through \(C_0\), so \(\mathcal L^c\) descends to the coarse interior.
At a cusp let \(\ell\) generate its rational isotropic line.
Every normalized cusp reflection fixes \(\ell\).
With \(h\) linear in its second variable,
\[
 s([v])=v/h(\ell,v)
\]
is a nonzero tautological frame whose \(c\)-th power is invariant under the full cusp group.
On a small cusp neighborhood \(U\), glue \(\mathcal O_U\) to the interior bundle by \(1\mapsto s^c\) on the punctured neighborhood.
This first constructs an actual holomorphic line bundle \(M\) on \(X_\mu\).
A sufficiently divisible ample Baily--Borel bundle \cite{BailyBorel}, killing its finite determinant character, agrees with \(M^r\) on the interior for some \(r>0\).
The isomorphism and its inverse extend across the finitely many cusps by normality, since \(d\ge2\).
They remain inverse; GAGA and the ampleness of that Baily--Borel bundle show that \(M\) is algebraic and ample.
Finally, a nonzero isotropic \(K\)-vector remains isotropic at every embedding.
In the nonuniform case this excludes all other real places of \(F\), so \(F=\mathbb Q\).
\end{proof}

\subsection{Fixed balls and persistent factors}

\begin{lemma}\label{sg:fixed}\label{ew-fixed-quotient}
Let \(T\) be a finite reflection group in \(\Gamma\), let \(V_0\subset V\) be its common fixed space, and put \(E=V_0^\perp\), \(s=\dim E\), \(m=d-s\ge1\).
Assume \(\mathbb B(V_0)\ne\varnothing\) and \(T\) is its full generic stabilizer.
The effective image \(\Pi\) of its full normalizer is arithmetic and gives a finite map
\[
 j:(\mathbb B(V_0)/\Pi)^{\mathrm{BB}}\longrightarrow X_\mu
\]
birational onto its image.
The normalizer has finite image on \(E\), after retaining \(C_0\).
If every full interior stabilizer along this ball is \(T\times J_x\), with \(J_x\) acting trivially on \(E\), the interior map is immersed and its ordinary normal bundle has components indexed by the basic polynomial invariant degrees \(q\) of \(T\).
A component of multiplicity \(u_q\) has finite projective monodromy and first Chern class \(-qu_qj^*L\).
The same assertion applies over the original stable open subset of \(W\).
\end{lemma}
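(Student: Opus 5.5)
I will argue in the order of the statement, working throughout with the lift \(\widehat\Gamma\subset\mathrm U(h,K)\) from Proposition~\ref{sg:arithmetic}, so that \(T\) is a finite group of normalized reflections in \(\mathrm U(V)\) and \(V=V_0\oplus E\) is an orthogonal \(K\)-rational decomposition. \(V_0\) is \(K\)-rational because \(T\) is generated by \(K\)-rational reflections. Since \(\mathbb B(V_0)\neq\varnothing\), the form \(h|_{V_0}\) has signature \((1,m)\) at the distinguished place, and \(h|_E\) is definite at every place.

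\textbf{Arithmeticity and finiteness on \(E\).} Let \(\widehat N\) be the normalizer of \(T\) in \(\widehat\Gamma\). It preserves \(V_0\) and \(E\), and it preserves the full \(\mathcal O_K\)-lattice \(\Lambda\), hence also the lattices \(\Lambda\cap V_0\) and \(\Lambda\cap E\). The image of \(\widehat N\) in \(\mathrm U(E)\) preserves a lattice and a form that is definite at all places, so it is finite; this image retains the scalars \(C_0\). Its image in \(\mathrm U(V_0)\) is therefore commensurable with the stabilizer of \(\Lambda\cap V_0\), which is arithmetic. Conversely, that stabilizer has a finite-index subgroup which extends by the identity on \(E\), lies in \(\widehat\Gamma\) after passing to finite index (commensurability of arithmetic groups), and normalizes \(T\). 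So \(\Pi\) is an arithmetic lattice in \(\mathrm{PU}(V_0)\).

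\textbf{The map \(j\).} The inclusion \(\mathbb B(V_0)\subset\mathbb B^d\) is equivariant, giving \(\mathbb B(V_0)/\Pi\to\mathbb B^d/\Gamma\). This map is proper, because rational boundary components of \(V_0\) are rational for \(V\). It extends to the Baily--Borel compactifications by the standard extension property of automorphic forms, using that \(L\) restricts to the tautological bundle of \(V_0\). Finiteness holds because \(j^*L\) is ample. For birationality, take a generic point \(x\) of \(\mathbb B(V_0)\). Its stabilizer in \(\Gamma\) is \(T\), and any \(\gamma\) with \(\gamma x\in\mathbb B(V_0)\) generically sends \(\mathbb B(V_0)\) to itself. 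Then \(\gamma\) conjugates \(T\), the generic stabilizer, to itself, so \(\gamma\in\widehat N\). Hence \(j\) is injective on a dense open set.

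\textbf{Normal bundle.} At an interior point with stabilizer \(T\times J_x\), where \(J_x\) is trivial on \(E\), the local model of \(X_\mu\) is \((V_0\text{-}\mathrm{chart})/J_x\times E/T\). Chevalley--Shephard--Todd gives \(E/T\cong\mathbb C^s\) with coordinates the basic invariants \(p_1,\dots,p_s\). The image of the ball is \(\{p=0\}\times(\cdots)\). Consequently \(j\) is an immersion there, with normal space spanned by \(\partial/\partial p_i\), of weight \(-q_i\) in \(E\). Globally, \(E\) is \(\mathcal L^{-1}\)-twisted: it is the bundle \(\mathcal L^{-1}\otimes E\) along the ball, since the ball parametrizes negative lines in \(V\). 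The invariant polynomials of degree \(q\) then pull back to sections of \(\mathcal L^{q}\) twisted by the finite \(\widehat N\)-action on the degree-\(q\) invariant space. So the normal component for degree \(q\) is \(\mathcal L^{-q}\otimes U_q\), where \(U_q\) is a flat bundle of rank \(u_q\) with finite monodromy, namely the image of \(\widehat N\) on \(E\) acting on the invariants modulo decomposables. This gives \(c_1=-qu_q\,j^*L\) rationally. Over the original stable locus of \(W\), \(\pi\) is an isomorphism, so the same computation applies.

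\textbf{Main obstacle.} I expect the hardest part to be the normalizer's action on \(\mathbb C[E]^T\). The decomposition into degree components must be canonical even when a degree repeats, and the action must descend compatibly with the central character \(C_0\) acting on both \(\mathcal L\) and \(E\). The way through is to use the graded space of invariants modulo decomposables, which is canonical, and to track the scalars in \(C_0\) acting by \(\zeta^{q}\) on it.
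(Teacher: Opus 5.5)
Your proposal is correct and follows essentially the same route as the paper: $K$-rationality of $V_0,E$; finiteness on $E$ from an invariant lattice with a totally definite form; arithmeticity of $\Pi$ by commensurability with the stabilizer of $\Lambda\cap V_0$; birationality because a generic identification normalizes the generic stabilizer $T$; and the normal components from the indecomposable $T$-invariants of degree $q$, twisted by $\mathcal L^{-q}$. Using ampleness of $j^*L$ for finiteness, in place of the paper's discrete fibers and finitely many cusp preimages, is a harmless variation.

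The only imprecision is the one you flag yourself. Because $C_0$ generally acts on the degree-$q$ indecomposables by a nontrivial character, $U_q$ descends to $\mathbb B(V_0)/\Pi$ only projectively. This is why the statement asserts finite projective monodromy, and it leaves the Chern class $-qu_q\,j^*L$ unaffected.
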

\begin{proof}
The spaces \(V_0,E\) are defined over \(K\).
Its matrices on \(E\) preserve the same \(\mathcal O_K\)-lattice and are bounded at every Archimedean embedding.
Minkowski discreteness therefore makes that image finite.
The algebraic stabilizer of their orthogonal decomposition has a compact factor on \(E\); its arithmetic subgroup projects to an arithmetic lattice on the ball by Borel--Harish-Chandra \cite{BorelHarishChandra}.
The effective full normalizer is now a discrete overgroup of this lattice and hence has finite index over it.
After a finite-index passage for this construction, a rational flag of hyperplanes and the local extension in \cite[Section 5]{Looijenga} give the cusp extension; the flag lies in \(E^\perp\), and all its normals are positive.
When \(F\ne\mathbb Q\) there are no cusps and properness is immediate.
The extended map is proper because its source is projective; its interior fibers are discrete by discreteness and the local finite point groups, hence compact and finite.
There are finitely many cusp preimages, so the extended map is finite.
A generic identification carries the full generic stabilizer \(T\) to itself and therefore belongs to the full normalizer, proving birationality.
Under the product hypothesis the local coarse map is \(y\mapsto(0,y)\) in invariant coordinates for \(T\).
The linear indecomposable invariants of degree \(q\) transform by finite matrices times the inverse \(q\)-th automorphic factor.
Decomposable invariants have zero normal differential.
This gives the stated ordinary normal bundles, including their finite characters.
\end{proof}

\begin{lemma}\label{sg:marked}
On a weighted quotient of \(m\ge5\) labels, a marked relative tangent line \(M_*\) with an antiample rational Cartier extension forces
\[
 Z=\mathbb P^{m-3},\qquad M_*=\mathcal O(-1).
\]
\end{lemma}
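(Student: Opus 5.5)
We read the statement as follows: \(Z\) is a weighted quotient of \(m\ge5\) labels (a chamber model \(W\) as in Lemma~\ref{sg:resolution}, or its stable part), \(M_*\) is one of the marked lines \(M_i\) of \eqref{sg:tangent}, and some rational Cartier class extending it has negative degree on every complete curve. The plan is to exploit Proposition~\ref{sg:mori}, which forces the marked degrees of every complete curve into a rigid pattern, and to show that antiampleness of a single \(M_i\) leaves room only for the projective-space chamber described in the introduction.

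First, antiampleness gives \(m_i<0\) on every complete curve \(f:C\to Z\) meeting the open part. By Proposition~\ref{sg:negative}, all negative marked lines of such a curve come from one coincident section. Hence on every curve the label \(i\) lies in the set \(I\) of negative sections, and by \eqref{sg:tangent} every \(j\ne i\) with \(\sigma_j\ne\sigma_i\) has \(m_j=-m_i+2\deg D_{ij}>0\).

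Next we test this against the coordinate curves \(\gamma_j\) of Proposition~\ref{sg:mori}, whose marked degrees are \(2\delta_{jk}\), so that \(m_i=0\) on \(\gamma_j\) for \(j\ne i\). Antiampleness therefore forbids every \(\gamma_j\) with \(j\ne i\). Curves of class \(\gamma_i\) have \(m_i=2>0\) and are likewise forbidden. So no coordinate curve exists, and the Mori cone is generated by the lines \(\beta_J\) alone. Each of these must have \(m_i=-1\), so \(i\in J\) for every maximal short set \(J\) whose locus \(Z_J\) has positive dimension.

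The step we expect to be the main obstacle is showing that this forces a single such \(J\) with \(Z_J=Z\), i.e.\ \(J=\{i\}\). We would argue as follows. If two distinct sets \(J,J'\) contributed, both containing \(i\), then some label \(k\in J\setminus J'\) exists. Moving \(k\) with the others fixed, as in the proof of Proposition~\ref{ew-moving-line}, should give a complete rational curve on which \(m_i=0\); this requires that \(k\) be allowed to collide with every other label, which is where \(m\ge5\) and the weight inequalities \(\nu(J)+\nu_j>1\) would be used. The same construction rules out \(|J|\ge2\): moving an outside label while three labels stay fixed, which needs at least five labels in total, trivializes \(M_J\) along a complete curve, exactly as in Proposition~\ref{ew-moving-line}. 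Once \(J=\{i\}\) is maximal short and \(\beta_{\{i\}}\) spans the whole cone, the affine-coordinate description in the proof of Proposition~\ref{sg:mori} identifies \(Z=Z_{\{i\}}=\mathbb P^{m-3}\) with \(M_{\{i\}}=\mathcal O(-1)\). Since the Picard group is cyclic and the cone is this single ray, the given extension of \(M_*\) agrees with \(\mathcal O(-1)\) up to this identification.
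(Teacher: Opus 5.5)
Your opening reduction is correct. An antiample extension gives $\deg M_i<0$ on every complete curve, so no coordinate curve $\gamma_j$ can exist, since it would have $m_i=0$ or $2$. Hence every line $\beta_J$ spanning the cone of Proposition~\ref{sg:mori} has $i\in J$.

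The gap is the step you flag as the main obstacle: neither construction you propose there can exist. A family that moves $k$ with the other marks fixed, and with every collision of $k$ allowable, is by definition the coordinate curve $\gamma_k$, which you excluded in the previous paragraph. So the argument against two distinct $J,J'$ has nothing to work with. Moving an outside label $j$ inside $Z_J$, for a maximal short $J$, makes $j$ collide with the common $J$-point, of weight $\nu(J)+\nu_j>1$. The family then leaves the stable locus and is not complete. Indeed $M_J=\mathcal O(-1)$ on $Z_J=\mathbb P^{m-|J|-2}$, so no complete curve there can trivialize it. Proposition~\ref{ew-moving-line} works only because, for equal weights with $n>2(\ell+1)$, every collision it meets is stable.

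The missing idea, which is how the paper argues, is to complete the moving family across unstable collisions. Suppose $\mu_*+\mu_j<1$, and move $j$ with the other marks fixed and distinct. At each collision of $j$ with some $k$ having $\mu_j+\mu_k>1$, blow up the collision point and contract the strict transform of the old fiber. The remaining marks then coincide with weight $2-\mu_j-\mu_k<1$, so the completed family is stable. The centers avoid $\sigma_*$, and each contraction raises $\sigma_*^2=\deg M_*$ by one, starting from $0$. This complete curve has $\deg M_*\ge0$, a contradiction. Hence every pair $\{*,j\}$ is long, $\{*\}$ is maximal short, and the affine description gives $Z=\mathbb P^{m-3}$ and $M_*=\mathcal O(-1)$.

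Alternatively, your own route can be closed combinatorially, using your conclusion that no $\gamma_k$ exists:
\begin{itemize}
\item every label then has a long partner;
\item two disjoint long pairs would have total weight above $2$, so long pairs pairwise intersect;
\item a triangle of long pairs would leave the other $m-3\ge2$ labels without partners;
\item so the long pairs form a star with center $h$, $\{h\}$ is maximal short, and $W=Z_{\{h\}}=\mathbb P^{m-3}$;
\item finally $i=h$, because lines there have $m_k=+1$ for $k\ne h$.
\end{itemize}

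Either way, you also omit walls. The lemma is applied in Proposition~\ref{sg:persistent} to a merged quotient that may lie on stability walls. The pullback of the extension to the small model of Lemma~\ref{sg:resolution} is trivial on exceptional curves, so your chamber argument does not apply verbatim. The paper handles this separately. At a genuine wall vertex the local class group is $\mathbb Z$ and the marked character is non-torsion there, hence not $\mathbb Q$-Cartier. Walls with a zero-dimensional factor are locally isomorphic to the chamber quotient.
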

\begin{proof}
First take chamber weights.
If \(\mu_*+\mu_j<1\), keep all marks except \(j\) at distinct fixed points and move \(j\) over \(\mathbb P^1\).
At each unstable pair collision \(j,k\), blow up that collision and contract the original fiber.
The complementary marks then coincide with weight \(2-\mu_j-\mu_k<1\), so the completed family is stable.
The centers avoid \(*\); each contraction increases its self-intersection by one.
Thus \(M_*\) has nonnegative degree on a complete nonconstant curve, a contradiction.
All pairs with \(*\) are therefore long.
Putting \(*\) at infinity and dividing affine coordinates by translations and scalars gives \(\mathbb P^{m-3}\), with \(M_*=\mathcal O(-1)\).
At a genuine wall vertex the local class group is \(\operatorname{Pic}(\mathbb P^{a-2}\times\mathbb P^{b-2})/\mathbb Z[\mathcal O(1,1)]\simeq\mathbb Z\).
The marked character is its non-torsion generator, with sign determined by the side containing \(*\).
It cannot have a rational Cartier extension.
At a wall with a zero-dimensional factor the chamber retaining that factor is locally isomorphic to the wall quotient.
The simultaneous perturbation away from equal weights makes these choices consistently, and the chamber argument applies.
\end{proof}

\begin{proposition}\label{sg:persistent}\label{ew-persistent-factor}
Under the product hypothesis of Lemma~\ref{sg:fixed}, suppose \(T\) is irreducible on \(E\) and \(s\le d-2\).
Its image is a closed coincidence locus \(Z_I=\mathbb P^{d-s}\), with \(|I|=s+1\).
More precisely, the fixed compactification is isomorphic to the GIT quotient of the original merged weights
\[
 \mu^I=(\mu(I),(\mu_j)_{j\notin I}),\qquad
 0<\mu(I)<1,\qquad \sum\mu^I_i=2,
\]
on \(n-s\) labels, including its stability walls.
Its automorphic rational bundle is the restriction of the original \(\mathcal L\).
All its basic polynomial invariant degrees equal one number \(\delta\), and the marked tangent line has antiample rational Cartier extension with class \(-\delta L|_{Z_I}\).
In particular \(s\le2\).
For a compact lattice an irreducible persistent factor of rank \(d-1\ge3\) is also impossible.
For equal weights, a factor satisfying the product hypothesis is also impossible when \(n>2(s+2)\).
\end{proposition}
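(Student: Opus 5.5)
The plan is to combine the normal bundle description of Lemma~\ref{sg:fixed} with the tangent-bundle analysis of Propositions~\ref{sg:negative}--\ref{sg:determinant} on complete curves in the fixed ball quotient, then apply Lemma~\ref{sg:marked} to the resulting antiample marked line.

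\emph{Step 1: negative quotients.} Let \(j\) be the map of Lemma~\ref{sg:fixed}. Its normal bundle is \(\bigoplus_q U_q\), where \(U_q\) has rank \(u_q\), finite projective monodromy, and \(c_1(U_q)=-qu_qj^*L\), with \(\sum_q u_q=s\). Take a complete curve \(f:C\to\mathbb B(V_0)/\Pi\) through the interior; it exists since the fixed compactification has dimension \(m\ge2\) and the cusps are finitely many points. After a finite cover of \(C\) trivializing the monodromy, each \(U_q\) is a semistable bundle of negative slope \(-q\deg f^*L\). The normal bundle is a quotient of \(f^*T_W\), so Proposition~\ref{sg:determinant}, with \(\mathcal Q\) the whole normal bundle, produces a coincidence set \(I\). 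By Proposition~\ref{sg:kernel} the normal bundle contains \(D=M^{\oplus(\ell-1)}\) as a summand, and every summand of degree less than \(e\) is isomorphic to \(M\). Semistability and finite monodromy force every \(U_q\) to be \(M\)-isotypic, and \(F\) contributes no negative part. Hence there is only one degree \(\delta\), the normal bundle equals \(D\), \(\ell-1=s\), and \(M_I|_C\) has degree \(-\delta\deg f^*L\). This holds for every such curve, and curves through general points dominate. So the image of the fixed ball is contained in \(Z_I\) with \(|I|=s+1\), and by dimension it is a component of \(Z_I\). Irreducibility of \(T\) on \(E\) will be used at this point: it identifies \(E\) with \(\mathbb C^I/\mathbb C\mathbf1\) as in \eqref{ew-git-coincidence}, and rules out a splitting into several coincidence sets.

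\emph{Step 2: identification.} The closure of \(Z_I\) is the quotient for the merged weights \(\mu^I\). Stability gives \(\mu(I)<1\), and walls persist because semistability of the merged configurations is inherited. Birationality and finiteness of \(j\) onto the normal variety \(\overline{Z_I}\), from Lemma~\ref{sg:fixed}, give an isomorphism. The tautological line of \(\mathbb B(V_0)\) is the restriction of that of \(\mathbb B(V)\), so the automorphic bundle is \(\mathcal L|_{Z_I}\). The relation \(M_I=-\delta L\) as rational Cartier classes extends across the cusps and wall points, since both sides are defined there and agree in codimension one. This yields the antiample extension. Lemma~\ref{sg:marked} on \(n-s\ge5\) labels then gives \(Z_I=\mathbb P^{m}\) with \(M_I=\mathcal O(-1)\).

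\emph{Step 3: the rank bound.} Compare canonical classes on \(\mathbb P^m\). The orbifold canonical class of the sub-ball quotient is \((m+1)L\), and branching only lowers the coarse class, so \(K_{\mathbb P^m}=-(m+1)H\) with \(L=H/\delta\) gives \(\delta\le1\), or an integrality constraint tying \(\delta\) to the reflection orders. On the other hand, an irreducible reflection group of rank \(s\) with all basic degrees equal must have all degrees equal to \(\delta\ge2\). Since \(\prod\deg=|T|\) and \(\sum(\deg-1)\) counts reflections, this leaves only rank \(s\le2\): the cyclic and dihedral-type cases. Reconciling these two pieces of information is the step I expect to be the main obstacle, namely getting the correct comparison of \(K_{\mathbb P^m}\) with \(L\) including ramification along divisors.

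\emph{Remaining assertions.} In the compact case with rank \(d-1\), the quotient \(\mathbb P^1\) would carry the extension, but a compact lattice has no cusps and a rational curve cannot be a compact ball quotient of dimension one with these degrees. For equal weights with \(n>2(s+2)\), Proposition~\ref{ew-moving-line} gives a complete curve on which \(M_I\) is trivial, which contradicts the antiampleness.
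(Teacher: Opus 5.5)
Your Steps~1--2 and the equal-weight clause follow the paper's route, but Step~1 needs one repair. $D$ is a summand of $f^*T_W$ (Proposition~\ref{sg:kernel}), not of the normal bundle $N$ of the fixed locus. What Proposition~\ref{sg:negative} gives is that the surjection $f^*T_W\to N|_C$, onto a sum of negative lines, factors through $D$; hence $\ell-1\ge s$. The reverse inequality comes only afterwards, from the containment of the image in $Z_I$ together with $\dim Z_I=d-\ell+1$. Then \eqref{ew-git-coincidence} gives $N|_C\cong M_I^{\oplus s}$, and semistability forces a single degree $\delta$. Irreducibility of $T$ plays no role at this point, since Proposition~\ref{sg:negative} already forbids several coincidence sets; it enters only in the rank bound.

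That rank bound is where the proposal has a genuine gap. The canonical-class comparison yields nothing. With $L=H/\delta$, the identity $K_{\mathbb P^m}=(m+1)L-\sum_i(1-e_i^{-1})B_i$ only says that the branch divisor has class $(m+1)(1+\delta^{-1})H$, so no bound such as $\delta\le1$ follows, and no comparison of this kind is needed. Nor do the identities $\prod_i d_i=|T|$ and $\sum_i(d_i-1)=\#\{\text{reflections}\}$ suffice: the reducible group $A_1^3$ satisfies them with degrees $(2,2,2)$. Once Step~1 has made all basic degrees equal, $s\le2$ is exactly the classification fact that no irreducible complex reflection group of rank at least three has equal basic degrees \cite[Table~1]{GeckMalle}; in rank two, $G(m,2,2)$ with even $m\ge4$, $G_7$, $G_{11}$ and $G_{19}$ do occur.

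Your compact rank $d-1$ argument also fails. The merged quotient would have only four labels, so Lemma~\ref{sg:marked} does not apply. Moreover, a compact arithmetic disk quotient can have coarse space $\mathbb P^1$ (triangle groups), so rationality of the curve gives no contradiction. The correct argument is that, for a compact lattice, the fixed curve itself is complete and meets no boundary. Your Step~1 therefore applies to the whole curve and forces equal basic degrees, which the same classification forbids in rank $d-1\ge3$.
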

\begin{proof}
In the fixed compactification choose a general complete intersection curve avoiding the finite boundary and wall vertices, and not contained in any proper inverse image of a coincidence locus.
After a finite cover its normal bundle is a sum of negative lines with degrees \(-q_\alpha t\), \(t>0\).
Proposition~\ref{sg:negative} factors its stalkwise normal quotient through \(M_I^{\oplus(|I|-1)}\), so \(|I|-1\ge s\).
The curve selection implies that the entire image lies in \(Z_I\); dimension gives the reverse inequality.
The selected curve lies in the original stable open set, so \(\mu(I)<1\).
Thus \(|I|-1=s\), and exactness of \(\mathrm{SL}(2,\mathbb C)\)-invariants identifies the closed diagonal image with the normal GIT quotient of the original merged weights \(\mu^I\).
Finite birationality then identifies the fixed compactification with this quotient.
Negative lines in \(V_0\) are the same lines in \(V\), so their actual automorphic rational bundles agree.
The ordinary normal bundle is \(M_I^{\oplus s}\).
Its semistability on the curve forces all the \(q_\alpha\) to coincide.
The reflection classification \cite[Table 1]{GeckMalle} has no irreducible example of equal basic degrees in rank at least 3.

On the stable interior, the determinant transition differs from the automorphic factor of degree \(-s\delta\) by a finite character.
For an integer \(r>0\) killing that character and all scalar denominators, there is an actual isomorphism
\[
 M_I^{sr}\simeq(\mathcal L|_{Z_I})^{-s\delta r}.
\]
Since \(\dim Z_I\ge2\), this constructs the rational Cartier extension across the finite omitted set; its inverse is ample.
Lemma~\ref{sg:marked} gives the projective space assertion.
In the compact case of rank \(d-1\), use the whole fixed curve, which has no boundary, for the same quotient onto a line of negative degree and semistability argument.
The resulting contradiction of equal basic degrees occurs before any use of Lemma~\ref{sg:marked}; that lemma requires at least five merged labels.
For equal weights, Proposition~\ref{ew-moving-line} excludes the antiample extension when \(n>2(s+2)\).
\end{proof}

\begin{lemma}\label{sg:meyer}\label{ew-meyer}
In the nonuniform case, a finite reflection group of rank \(s\le d-2\) embeds, with its same reflection representation and orders, in the horizontal group of an actual full cusp stabilizer of the original \(\Gamma\).
\end{lemma}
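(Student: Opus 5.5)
The plan is to start from the finite reflection group \(T\subset\Gamma\) of rank \(s\le d-2\) with common fixed space \(V_0\), \(E=V_0^\perp\), \(\dim E=s\), and to push its fixed ball to the boundary. Since \(\dim V_0=N-s\ge3\) and \(V_0\) has signature \((1,d-s)\), the first step is to find an isotropic \(K\)-rational vector in \(V_0\). In the nonuniform case Proposition~\ref{sg:arithmetic} gives \(F=\mathbb Q\) and an isotropic \(K\)-vector in \(V\). The restricted form \(h|_{V_0}\) is an indefinite Hermitian form over \(K/\mathbb Q\) in dimension at least three. Its trace form is an indefinite rational quadratic form in at least six variables, so Meyer's theorem provides an isotropic vector, and hence a rational isotropic line \(\ell\subset V_0\). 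Then \(\ell\) is orthogonal to \(E\), and every element of \(T\) fixes \(\ell\) pointwise.

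The second step realizes \(T\) inside the cusp group \(P=\mathrm{Stab}_\Gamma(\ell)\). The cusp of \(\mathbb B(V)\) determined by \(\ell\) is a cusp of \(\Gamma\), because \(\ell\) is \(K\)-rational and \(\Gamma\) is arithmetic with \(\widehat\Gamma\) preserving an \(\mathcal O_K\)-lattice. Each normalized reflection \(r\in T\) lifts to \(\widehat\Gamma\), fixes \(\ell\), and therefore lies in the full cusp stabilizer. Its action on \(\ell\) is trivial, and so is its induced action on \(V/\ell^\perp\). It therefore acts on \(\ell^\perp/\ell\), a positive definite space of dimension \(N-2=d-1\), as a reflection with the same nontrivial eigenvalue. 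That eigenvalue acts on \(E\), which maps injectively into \(\ell^\perp/\ell\) because \(E\cap\ell=0\) and \(E\subset\ell^\perp\).

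The third step shows that the resulting homomorphism from \(T\) to the horizontal (Levi) quotient \(\mathrm U(\ell^\perp/\ell)\) is injective and preserves the reflection representation. This follows because \(E\) embeds isometrically into \(\ell^\perp/\ell\) and \(T\) acts trivially on the image of \(V_0\cap\ell^\perp\). The representation on \(\ell^\perp/\ell\) is therefore \(E\) plus a trivial summand, with the same reflection orders. Finally, the scalar kernel \(C_0\) must be tracked. The group \(T\subset\Gamma\) is projective, but elements of \(\widehat\Gamma\) fixing \(\ell\) and acting as scalars must be excluded. Normalized reflections act trivially on their mirrors, which contain \(\ell\), so the lift is canonical and injectivity modulo \(C_0\) holds.

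I expect the main obstacle to be the first step, namely producing an isotropic line inside \(V_0\) itself and not merely in \(V\). The dimension count \(N-s\ge3\), coming from \(s\le d-2\), is exactly what makes Meyer's theorem applicable to the trace form. A secondary subtlety is confirming that the horizontal group is taken in the full cusp stabilizer and not in a neat subgroup. Lemma~\ref{sg:cusp} handles this, since it shows the full cusp groups are generated by reflections.
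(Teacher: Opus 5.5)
Your proof is correct and follows the paper's route: Meyer's theorem on the trace form of the fixed Hermitian space of dimension \(N-s\ge3\) gives a rational isotropic line fixed by \(T\), and \(T\) then injects into the horizontal quotient of that full cusp stabilizer. The paper gets injectivity from torsion-freeness of the Heisenberg lattice, whereas you read it off from the isometric embedding \(E\hookrightarrow\ell^\perp/\ell\), which also exhibits the representation directly as \(E\) plus a trivial summand; your appeal to Lemma~\ref{sg:cusp} is unnecessary, since \(T\subset\Gamma\) fixes \(\ell\) and so lies in its full stabilizer by definition.
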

\begin{proof}
Its fixed Hermitian space over the imaginary quadratic field has dimension at least three.
The underlying indefinite rational quadratic form has at least six variables, so Meyer \cite[Chapter IV, Section 3.2]{Meyer} supplies a fixed rational isotropic vector.
The Heisenberg lattice is torsion-free, hence the finite group injects in the horizontal quotient, with unchanged nonzero roots and eigenvalues.
This argument does not apply in rank \(d-1\).
\end{proof}

\subsection{The full central period and Segre cusps}

These local arguments apply to any nonuniform ball lattice.
For a full cusp group \(P\), let \(N_P\) be its intersection with the Heisenberg group, \(F_P=P/N_P\), and
\[
 P\cap Z=N_P\cap Z=c_*\mathbb Z,\qquad
 q_*=\exp(2\pi iw/c_*).
\]
The affine action on \(A_P=\mathbb C^r/\Lambda_P\) is faithful and has no nonidentity pure translations; no common fixed point is assumed.
The actual translation multipliers \(\exp(2\pi ia_\lambda(z)/c_*)\) define a negative line bundle \(\mathcal V_*\), with dual ample bundle \(M_*\) and its induced \(F_P\)-linearization.
The invariant metric is \(|q_*|^2\exp(4\pi\|z\|^2/c_*)\), of negative curvature.
The cusp contracts the zero section of \(\mathcal V_*/F_P\), with ring
\begin{equation}\label{sg:cusp-ring}
 R_P=\bigoplus_{k\ge0}H^0(A_P,M_*^k)^{F_P}.
\end{equation}
It is enough to use a high Veronese to construct the contraction; generation in degree one is not assumed.
This is the horoball germ \cite[Sections 2.1--2.3]{MokCompactification} and, in the arithmetic case, the Baily--Borel germ \cite[Section 4.3]{Looijenga}.
For a neat subgroup with central period \(c_0=e_0c_*\), its bundle satisfies \(q_*=q_0^{e_0}\) and \(f_0^*M_*=M_0^{e_0}\).
Both the constant multipliers and the remaining finite action are retained in these identities.

\begin{lemma}\label{sg:cusp}\label{ew-arithmetic-link}
If the punctured cusp germ is smooth and simply connected, \(P\) is generated by ball reflections and \(F_P\) is a reflection group.
A smooth cusp has irreducible horizontal representation.
A Segre cusp \(\operatorname{Cone}(\mathbb P^a\times\mathbb P^b,\mathcal O(1,1))\), \(a,b\ge1\), has exactly two irreducible horizontal factors, of ranks \(a,b\).
An irreducible factor, with its actual polarized lift, is strongly crystallographic and its ring of invariant sections is polynomial.
\end{lemma}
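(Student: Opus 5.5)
We treat the four assertions of Lemma~\ref{sg:cusp} in order, working throughout on the cusp neighbourhood described by \eqref{sg:cusp-ring}. The punctured germ is the complement of the zero section in \(\mathcal V_*/F_P\). Equivalently, it is a quotient by \(P\) of a horoball neighbourhood, and that horoball is contractible.

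For the first assertion we follow the argument of Proposition~\ref{sg:arithmetic}. Since the punctured germ is simply connected, Brown--Higgins \cite{BrownHiggins} shows that \(P\) is generated by the stabilizers of points in the horoball. Since the germ is smooth, Chevalley--Shephard--Todd \cite{GeckMalle} shows that each such stabilizer is generated by ball reflections. Every ball reflection fixing the cusp lies in \(P\), so \(P\) is generated by reflections. We then pass to \(F_P=P/N_P\). The Heisenberg group contains no reflections, and a ball reflection fixing the isotropic line acts on \(A_P\) as an affine complex reflection. Hence \(F_P\) is generated by the linear parts of these affine reflections, so it is a reflection group in the horizontal representation.

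For the structure of the horizontal representation we compare local class groups and links. Decompose the horizontal representation into irreducible factors, \(\mathbb C^r=\bigoplus_k U_k\), with \(\Lambda_P\) commensurable with \(\bigoplus(\Lambda_P\cap U_k)\). Then \(A_P/F_P\) is, up to an isogeny that is harmless rationally, a product of the quotients \(A_k/F_k\). After a finite cover, \(M_*\) becomes an exterior tensor product of ample bundles. The link of the cusp is therefore a circle bundle over \(A_P/F_P\), and its rational cohomology is \(H^*(A_P/F_P)\) with the first Chern class of \(M_*\) killed. Each irreducible factor contributes exactly one invariant class to \(H^2(A_k)^{F_k}\), namely its invariant Hermitian form, by irreducibility and Schur's lemma. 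So \(b_2\) of the link equals (number of factors) minus one, provided the odd-degree invariant classes vanish, which they do because \(F_k\) acts irreducibly and nontrivially on \(H^1\). A smooth cusp has a link that is a rational homology sphere, which forces a single factor. For the Segre cone the link is the circle bundle of primitive class \((1,1)\) over \(\mathbb P^a\times\mathbb P^b\), which has \(b_2=1\), so there are exactly two factors. Matching dimensions of the projectivized factors gives the ranks \(a,b\).

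For the last assertion we apply Looijenga's theorem on root systems and theta functions to each factor. By the previous paragraph, the ring of invariant sections of that factor is the coordinate ring of a weighted projective space (smooth case) or of one factor \(\mathbb P^a\) of the Segre cone. This ring is a normal graded ring whose Proj has Picard rank one, and we aim to show it is polynomial. Such a polynomial ring forces \(F_k\) to act as an affine reflection group whose translation lattice is \(F_k\)-stable and contains the coroots, which is the strong crystallographic condition. Conversely, Looijenga's theorem identifies the invariants of a strongly crystallographic group with a polynomial ring, and we use it to confirm the grading.

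The main obstacle is this last step: deducing polynomiality from a smooth, or Segre, ambient cone. The plan is to reduce it to the first step, using that the quotient of the cone by the \(\mathbb C^*\)-action is smooth in codimension one. The Segre factor \(\mathbb P^a\) then carries the actual polarized lift \(M_*\) with \(\mathcal O(1)\) as its invariant generator, which we then need to show lifts to the polynomial generators.
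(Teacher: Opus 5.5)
Your first assertion follows the paper's own argument: Brown--Higgins on a contractible deep horoball, then Chevalley--Shephard--Todd at its smooth points. Counting factors through the Gysin sequence of the link is also the paper's idea. But the rank statement does not follow from what you compute.

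\(b_2\) of the link only counts factors. It does not give the ranks \(a,b\) of a Segre cusp. ``Matching dimensions of the projectivized factors'' presupposes an identification of \(A_k/F_k\) with \(\mathbb P^a\) or \(\mathbb P^b\) that is not available at this stage. The product structure is only derived later, in Theorem~\ref{sg:segre}, from this lemma. What is needed is the whole invariant cohomology ring,
\(H^*(A_P)^{F_P}=\bigotimes_k\mathbb C[\omega_k]/(\omega_k^{r_k+1})\), which the paper takes from Hu's theorem on invariants in \(\bigwedge^pV\otimes\bigwedge^qV^*\) for reflection groups. The Gysin sequence then gives \(b_{2i}=h_i-h_{i-1}\), with \(\sum_ih_it^i=\prod_k(1+t+\cdots+t^{r_k})\), in the first half of the link's dimensions. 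Comparing with the Segre link, where in that range \(b_{2i}=1\) exactly for \(i\le\min(a,b)\), gives \(\min_kr_k=\min(a,b)\).

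Even your \(b_2\) count needs more than Schur's lemma. Schur covers only the \((1,1)\) part of \(H^2\); the vanishing of \((\bigwedge^2U_k^*)^{F_k}\) is a fact about reflection groups. A trivial horizontal factor must also be excluded via \(b_1\) of the link, not assumed away.

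The last assertion is not proved. Looijenga's theorem concerns Weyl groups acting on \(Q^\vee\otimes E\). It does not cover groups such as \(G(m,p,r)\) with \(m=3,4,6\), \(G_{33}\) or \(G_{34}\), which occur here. A normal graded ring whose Proj has Picard rank one need not be polynomial; the cone over a smooth quadric threefold is a counterexample. Your sentence ``by the previous paragraph, the ring \ldots\ is the coordinate ring of a weighted projective space'' is not supported by anything you proved.

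The missing input is Rains's criterion. The paper applies Rains's Proposition~2.6 to the actual bundle lift, that is, to the whole cusp group \(P\) with its central translations. Your first assertion is exactly what makes that group generated by reflections. This gives strong crystallography, and Rains's Theorem~2.7 then gives the polynomial ring of invariant sections. In the smooth case, polynomiality of \(R_P\) alone does follow from graded Nakayama, because the vertex is a regular point. That gives neither strong crystallography nor polynomiality of the factor rings of a Segre cusp.
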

\begin{proof}
Apply Brown--Higgins to a contractible deep horoball and then Chevalley--Shephard--Todd at its smooth points.
The first Betti number of the link excludes fixed horizontal vectors.
Write the orthogonal irreducible factors as \(F_j\) of ranks \(r_j\).
Their invariant cohomology is \(\mathbb C[\omega_j]_j/(\omega_j^{r_j+1})_j\), \(\deg\omega_j=2\), by \cite[Theorem 1.2]{Hu}.
The Gysin sequence for the negative bundle gives \(b_{2i}=h_i-h_{i-1}\), where \(\sum h_it^i=\prod_j(1+t+\cdots+t^{r_j})\), in the first half of the link dimensions.
Comparison with the spherical link or with the link of the Segre cone gives, respectively, one factor or two factors of ranks \(a,b\).
Rains \cite[Proposition 2.6]{Rains} applies to the actual bundle lift, making the action strongly crystallographic; \cite[Theorem 2.7]{Rains} gives the polynomial ring.
Evaluation and averaging on free horizontal orbits give invariant sections in every sufficiently large degree.
Thus their generator degrees have greatest common divisor one in this actual grading.
If a fiber kernel \(\mu_g\) is first removed, the correct new coordinate is \(q_{\mathrm{new}}=q_{\mathrm{old}}^g\), with period \(c_{\mathrm{new}}=c_{\mathrm{old}}/g\), and generator degrees divided by \(g\).
\end{proof}

\begin{lemma}\label{sg:cusp-lift}\label{ew-cusp-lift}
Let \((F_P)_x\) be the full stabilizer of \(x\in A_P\), and let \(\chi_x\) be its character on \((\mathcal V_*)_x\).
Its kernel \(T\) lifts to the full finite stabilizer of a nonzero point over \(x\) in a sufficiently deep horoball.
If \(T\) has no horizontal fixed vector, its fixed ball is a disk ending at this cusp.
For the disk modulo its full normalizer, the coarse end coordinate is
\[
 Q=q_*^{\kappa_x},\qquad \kappa_x=|\operatorname{im}\chi_x|.
\]
\end{lemma}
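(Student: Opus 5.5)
The plan is to work entirely in the Siegel domain model of the cusp used before \eqref{sg:cusp-ring}. We write the deep horoball as $\{(w,z): \operatorname{Im} w-\|z\|^2>t\}$, with $z\in\mathbb C^r$ horizontal and $w$ the central coordinate. The full cusp group $P$ acts by Heisenberg translations composed with unitary parts. The quotient by $N_P$ and by the central translations $w\mapsto w+c_*$ realizes the punctured horoball as a neighbourhood of the zero section in $\mathcal V_*$ over $A_P$, with fibre coordinate $q_*=\exp(2\pi i w/c_*)$ twisted by the translation multipliers $\exp(2\pi i a_\lambda(z)/c_*)$.

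\emph{Lifting the stabilizer.} Fix $x\in A_P$ and a lift $\tilde x\in\mathbb C^r$. Every element of $(F_P)_x$ lifts to an element of $P$ fixing $\tilde x$ up to a central translation; it acts on the fibre line over $x$ by the character $\chi_x$. An element $g$ in the kernel $T$ of $\chi_x$ acts trivially on the fibre over $x$. We adjust the lift of $g$ by a central element of $P\cap Z=c_*\mathbb Z$, and check that the resulting affine map fixes the entire vertical line $\{(w,\tilde x)\}$ in the horoball. Since this uses only the actual multipliers and the faithfulness of the affine action, the adjusted lifts form a finite group isomorphic to $T$. It fixes every point $(w,\tilde x)$, and in particular a nonzero point over $x$ lying deep in the horoball. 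For full stabilizer, note that any element of $\Gamma$ fixing such a deep point preserves the cusp, by the standard precise-invariance of deep horoballs. It therefore lies in $P$, maps to $(F_P)_x$, and acts trivially on the fibre coordinate, hence lies in $T$.

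\emph{The fixed disk.} If $T$ has no horizontal fixed vector, the fixed set of $T$ in the ball is cut out by $z=\tilde x$ in these coordinates. It is therefore the complex geodesic through the isotropic line $\ell$ and the point, that is, a disk whose boundary contains the cusp. Its full normalizer contains the stabilizer of the cusp restricted to the disk. The group acting on the disk near the end is generated by the central translations $w\mapsto w+c_*$ together with the lifts of $(F_P)_x$. A lift of an element with $\chi_x(g)=\zeta$ acts on $w$ by $w\mapsto w+\alpha_g$ with $\exp(2\pi i\alpha_g/c_*)=\zeta$. This is a translation because the unitary part fixes $\ell$ and the horizontal point, so its vertical eigenvalue is $1$ after normalization.

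\emph{The end coordinate.} The translations of the disk along the end form the lattice generated by $c_*$ and the $\alpha_g$. This lattice is $(c_*/\kappa_x)\mathbb Z$ with $\kappa_x=|\operatorname{im}\chi_x|$. Hence the coarse coordinate is $\exp(2\pi i w\kappa_x/c_*)=q_*^{\kappa_x}$, and it remains to check that no further normalizer elements act near the end. The main obstacle is this last point together with the central adjustment: we must ensure that elements of the normalizer of $T$ not in $P$ cannot identify deep points of the disk. I would handle this again by the precise invariance of sufficiently deep horoballs. I would also track the multipliers carefully, so that the translation amounts are measured against the actual bundle $\mathcal V_*$ rather than against the neat cover. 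This is where the identities $q_*=q_0^{e_0}$ stated before the lemma ensure consistency.
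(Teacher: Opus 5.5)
Your proposal is correct and follows essentially the same route as the paper. The paper lifts \(\ker\chi_x\) by a single correction with the freely acting Heisenberg lattice \(N_P\); you split this into a horizontal correction followed by a central one in \(c_*\mathbb Z\). Both then take the fixed Hermitian plane as the disk and read off \(Q=q_*^{\kappa_x}\) from the central period \(c_*\) and the residual action of \((F_P)_x/T\) through \(\chi_x\).

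One attribution should be corrected. The uniqueness of the corrected lift comes from the free action of \(N_P\): an element of \(N_P\) fixing a point is trivial. This, not faithfulness of the affine action on \(A_P\), gives both the isomorphism of the lifted group with \(T\) and its equality with the full stabilizer.
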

\begin{proof}
Choose a lift of a nonzero fiber point.
Unique correction by the freely acting Heisenberg lattice lifts exactly \(\ker\chi_x\).
Its fixed Hermitian plane gives the disk when the horizontal fixed space is zero.
The full central translations retain the period \(c_*\), and the residual stabilizer acts by \(\chi_x\), giving the coordinate \(Q\).
\end{proof}

\begin{lemma}\label{sg:torus}
Let \(\mathbb C^*\) act on \(\mathbb C^{p+q}\), \(p,q\ge2\), with positive weights \(u_i\) and negative weights \(-v_j\).
Assume that the greatest common divisor after deleting any one weight is one.
If the affine quotient is smooth off the vertex and has embedding dimension \(pq\) there, all weights equal one.
\end{lemma}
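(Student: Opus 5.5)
The idea is to compare the invariant semigroup with the set of pq ``pair'' invariants. Write \(x_i\) for the coordinates of weight \(u_i\) and \(y_j\) for those of weight \(-v_j\). The invariant ring is the semigroup ring of
\[
 S=\{(a,b)\in\mathbb Z_{\ge0}^{p}\times\mathbb Z_{\ge0}^{q}:\ \textstyle\sum_ia_iu_i=\sum_jb_jv_j\}.
\]
Put \(g_{ij}=\gcd(u_i,v_j)\). For each pair \((i,j)\) the monomial
\[
 z_{ij}=x_i^{v_j/g_{ij}}\,y_j^{u_i/g_{ij}}
\]
is an indecomposable element of \(S\): any invariant monomial dividing it involves only \(x_i,y_j\), and \(z_{ij}\) is the smallest such. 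The \(z_{ij}\) are pairwise distinct, so they give \(pq\) linearly independent elements of \(\mathfrak m/\mathfrak m^2\) at the vertex. The hypothesis that the embedding dimension is \(pq\) therefore says that the \(z_{ij}\) generate the invariant ring, i.e.\ \(S\) is generated as a monoid by the pair vectors.

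I will then extract divisibilities from this generation statement, using the gcd hypothesis to produce invariant monomials with small exponents. Because \(\gcd\) of the weights other than \(u_1\) is one, \(u_1\) is an integer combination of the remaining weights with appropriate signs. Adding large multiples of the \(z_{kl}\) with \(k\ne1\) makes all exponents nonnegative. This gives an invariant monomial whose \(x_1\)-exponent is exactly \(1\).

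Writing this monomial as a product of pair generators, exactly one factor contains \(x_1\). That factor is some \(z_{1j}\) whose \(x_1\)-exponent \(v_j/g_{1j}\) must equal \(1\), so \(v_j\mid u_1\). Symmetrically, every weight is divisible by some weight of the opposite sign.

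This alone does not force equality, and I expect this to be the main obstacle. To close the gap I would use smoothness off the vertex. At a point with only \(x_i,y_j\) nonzero the orbit is closed, with stabilizer \(\mu_{g_{ij}}\). By Luna's slice theorem the quotient there is locally \(\mathbb C^{p+q-2}/\mu_{g_{ij}}\), with weights \(u_k\bmod g_{ij}\) and \(-v_l\bmod g_{ij}\) on the remaining coordinates.

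Smoothness forces this to be a reflection group action. Since \(p,q\ge2\), there are at least two remaining coordinates, and any two of them acting nontrivially destroy smoothness. Combined with the gcd condition after deleting one weight, this should force \(g_{ij}=1\) for every pair \((i,j)\).

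Then \(v_j\mid u_1\) with \(\gcd(u_1,v_j)=1\) gives \(v_j=1\). Symmetrically, every \(u_i\) is divisible by some \(v\)-weight, and by the same coprimality that weight is one. Hence each \(u_i\) has a \(v\)-weight equal to \(1\) dividing it, and each \(v_j\) has a \(u\)-weight equal to \(1\) dividing it.

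Finally I rerun the exponent-one argument for a variable \(x_i\) with \(u_i>1\). Using generators with \(v_l=1\), the resulting monomial has \(x_i\)-exponent \(1\) and forces a factor \(z_{il}\) with \(v_l=u_i\). That factor's \(y_l\)-exponent is \(u_i/g_{il}=u_i>1\). Comparing with the minimal invariant \(x_iy_{l'}^{u_i}\cdots\), built using a weight-one \(y_{l'}\) and a second \(x\) to adjust degrees, should exhibit an indecomposable invariant outside the \(z\)'s, contradicting generation. Hence all weights equal one.
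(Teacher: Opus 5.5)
\textbf{The decisive last step is missing.} The final step is not carried out, and the sketch you give for it does not work. Once $\gcd(u_i,v_l)=1$, the pair generators are $x_i^{v_l}y_l^{u_i}$. So an $x_i$-exponent equal to one forces a factor with $v_l=1$, not $v_l=u_i$. Also, your comparison monomial $x_iy_{l'}^{u_i}$ with $v_{l'}=1$ is itself a pair generator, and you never say which invariant, built with a second $x$-variable, would fail to be generated.

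What you need is an invariant monomial whose exponent in some $y_j$ is smaller than the $u$-weight of every $x$ in its support. Take $u_i>1$ and two distinct labels $j\ne k$; this is where $q\ge2$ enters. Choose $1\le e<u_i$ with $v_j+ev_k\equiv0\pmod{u_i}$. Such an $e$ exists because $v_k$ is a unit modulo $u_i$ and $u_i\nmid v_j$. Then $x_i^{(v_j+ev_k)/u_i}y_jy_k^e$ is invariant. In any factorization into pair monomials, the factor containing $y_j$ would be $x_{i'}^{v_j}y_j^{u_{i'}}$ with $i'=i$ and $u_i=1$, a contradiction. Exchanging signs treats the $v_j$, using $p\ge2$. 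This is the paper's argument, and it makes your exponent-one detour (which only produces some weights equal to one) unnecessary.

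\textbf{The coprimality step needs a different justification.} Your reason for $g_{ij}=1$ is false as stated: two coordinates acting nontrivially do not by themselves destroy smoothness. For example, $\mu_6$ acting on $\mathbb C^2$ with weights $(2,3)$ is generated by the pseudoreflections $\zeta^2,\zeta^3$, and its quotient is smooth.

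The correct argument uses the deletion hypothesis directly. An element of $\mu_{g_{ij}}$ already acts trivially on $x_i,y_j$. If it acts nontrivially on at most one further coordinate, its order divides the greatest common divisor of all weights but one, so it is the identity. Hence the slice action is effective and contains no pseudoreflections, and Chevalley--Shephard--Todd forces $g_{ij}=1$.

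Apart from these two points, your setup agrees with the paper's: indecomposable pair monomials, generation from embedding dimension $pq$, and coprimality from smoothness at the closed orbits $x_iy_j\ne0$.
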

\begin{proof}
At the orbit \(X_iY_j\ne0\) the transverse stabilizer is \(\mu_{\gcd(u_i,v_j)}\).
The deletion condition excludes ineffective elements and pseudoreflections, so smoothness forces \(\gcd(u_i,v_j)=1\).
The \(pq\) pair monomials \(X_i^{v_j}Y_j^{u_i}\) are therefore indecomposable.
If \(u_i>1\), take \(j\ne k\) and \(1\le e<u_i\) with \(v_j+ev_k\equiv0\pmod{u_i}\).
The invariant \(X_i^{(v_j+ev_k)/u_i}Y_jY_k^e\) has an indecomposable factor divisible by none of the pair monomials, contradicting the embedding dimension.
Exchange signs to conclude.
\end{proof}

\begin{theorem}\label{sg:segre}
For a Segre cusp as in Lemma~\ref{sg:cusp}, there are actual products \(A_P=A_1\times A_2\), \(F_P=F_1\times F_2\), of dimensions \(a,b\).
The subgroup \(P_j\) generated by all reflections whose roots lie in the \(j\)-th factor has the same least positive central period as \(P\).
For this period, its ring of invariant sections is polynomial on \(r_j+1\) generators of degree one, where \(r_1=a,r_2=b\).
The contraction is that of \(\mathcal O_{\mathbb P^a\times\mathbb P^b}(-1,-1)\).
For \(r_j\ge3\), \(F_j\) is \(S_{r_j+1}\) or \(G(m_j,1,r_j)\), with \(m_j\in\{2,3,4,6\}\).
The full \(F_P\) occurs as a finite stabilizer arbitrarily deep in the cusp.
\end{theorem}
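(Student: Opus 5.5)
We start from Lemma~\ref{sg:cusp}, which splits the horizontal representation of a Segre cusp into exactly two irreducible factors of ranks \(a\) and \(b\). The first task is to promote this splitting of \(\mathbb C^{a+b}=\mathbb C^a\oplus\mathbb C^b\) to a splitting of the abelian variety and of the finite group. Since \(F_P\) is generated by reflections, each of its reflections has its root in exactly one factor. We let \(F_j\) be the subgroup generated by the reflections with roots in the \(j\)-th factor, so that \(F_P=F_1\times F_2\). The lattice \(\Lambda_P\) need not split a priori. We would compare \(\Lambda_P\) with \(\Lambda_1\oplus\Lambda_2\), where \(\Lambda_j=\Lambda_P\cap\mathbb C^{r_j}\). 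The tool is the translations produced by reflections: for \(\lambda\in\Lambda_P\) and a reflection \(s\) of \(F_1\) lifted to \(P\), the element \(\lambda-s\lambda\) lies in the first factor. Combined with the absence of fixed horizontal vectors, this should show that \(\Lambda_P/(\Lambda_1\oplus\Lambda_2)\) is killed by the relevant reflection orders.

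To remove this finite group we would use the embedding dimension of the cone. The isomorphism \(\operatorname{Cone}(\mathbb P^a\times\mathbb P^b,\mathcal O(1,1))\) fixes the embedding dimension at \((a+1)(b+1)\). We would pass to the torus quotient presentation: restricting the ring \eqref{sg:cusp-ring} to the sub-ring generated by the \(P_j\)-invariants gives a \(\mathbb C^*\)-quotient of the kind in Lemma~\ref{sg:torus}. The deletion gcd condition comes from the last part of Lemma~\ref{sg:cusp}, where the generator degrees have gcd one. Applying Lemma~\ref{sg:torus} with \(p=a+1\) and \(q=b+1\) forces all weights to equal one. This yields the central period statement, namely that the least period of \(P_j\) agrees with that of \(P\), since a strictly larger period would multiply the degrees by a common factor. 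It also yields \(r_j+1\) invariant generators of degree one. Any residual nontrivial glue group in \(\Lambda_P/(\Lambda_1\oplus\Lambda_2)\) would then produce extra invariant monomials, contradicting the embedding dimension \((a+1)(b+1)\). From this we obtain the product decompositions \(A_P=A_1\times A_2\) and \(F_P=F_1\times F_2\), and the contraction of \(\mathcal O(-1,-1)\).

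The next step is the classification for \(r_j\ge3\). Each factor is a strongly crystallographic irreducible reflection group whose polarized invariant ring is polynomial on \(r_j+1\) generators all of degree one. Degree-one invariant sections correspond to theta functions of level one, so their number equals the number of \(F_j\)-orbits on the relevant level structure. We would go through the crystallographic complex reflection groups of rank at least three allowed by Rains' classification. Only the symmetric group acting on the \(A_{r_j}\)-type abelian variety, and \(G(m,1,r)\) acting on \(E^{r}\) with \(E\) an elliptic curve with CM by \(\mu_m\), give \(r+1\) generators of degree one. Exceptional groups such as \(G_{33}\), \(G_{34}\), \(E_6\), \(E_7\) and \(E_8\), and the groups \(G(m,p,r)\) with \(p>1\), would each be excluded by a degree count. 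The values \(m\in\{2,3,4,6\}\) are forced by crystallographicity.

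Finally, we locate a point \(x\in A_P\) fixed by all of \(F_P\), for instance the origin once the group is seen to be linear on each factor. Applying Lemma~\ref{sg:cusp-lift} at such a point gives a lift to a finite stabilizer arbitrarily deep in the cusp. The character \(\chi_x\) must then be trivial; this follows from the degree-one generators, since a nontrivial \(\chi_x\) would force a generator to vanish at \(x\) to positive order. I expect the splitting of the lattice together with the control of the glue group, that is, the step where Lemma~\ref{sg:torus} is actually applied, to be the main obstacle. There I would first try to compute the invariant ring of the glue extension explicitly and count its generators.
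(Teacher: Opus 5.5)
\textbf{There are genuine gaps, the first one in your treatment of the lattice.} Lemma~\ref{sg:cusp} gives more than reflection generation of \(F_P\): the full cusp group \(P\) is itself generated by ball reflections. A reflection's horizontal displacement and central multiplier lie in its root factor, and reflections with roots in different factors are orthogonal and commute. Hence \(P=P_1P_2\). An element \(p_1p_2\) is Heisenberg only if each \(p_j\) is, since the linear parts of \(P_j\) act only on the \(j\)-th factor. So \(N_P=N_1N_2\) and \(\Lambda_P=\Lambda_1\oplus\Lambda_2\), and there is no glue group at all. The same argument gives \(P\cap Z=(P_1\cap Z)(P_2\cap Z)\), i.e.\ \(c_j=d_jc_*\) with \(\gcd(d_1,d_2)=1\), which you need later.

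Your alternative, bounding the glue by reflection orders and then killing it by embedding dimension, is circular as stated. The torus presentation \(R_P=\bigoplus_k(R_1)_{d_1k}\otimes(R_2)_{d_2k}\), to which Lemma~\ref{sg:torus} is applied, already presupposes \(A_P=A_1\times A_2\) and the factorization of central periods. With nontrivial glue, \(R_P\) would be a ring of finite-group invariants inside such a ring, which is not of the form in Lemma~\ref{sg:torus}. Your claim that glue ``would produce extra invariant monomials'' is not argued.

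\textbf{Second, the deletion hypothesis of Lemma~\ref{sg:torus} does not follow from the gcd-one statement of Lemma~\ref{sg:cusp}.} The weights are \(d_2\alpha_i\) and \(-d_1\beta_j\). Gcd one for each factor gives \(\gcd_j\beta_j=1\). But deleting \(d_2\alpha_i\) leaves \(\gcd(\gcd_{k\ne i}\alpha_k,d_1)\), which fails to be one for, say, \(\alpha=(1,2)\) and \(d_1=2\).

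The missing step is that the residual group \(\mu_{d_j}=P_jZ_*/P_j\) acts freely off the vertex of the \(j\)-th factor cone. Otherwise, combine a fixed point there with a free horizontal orbit in the other factor. The resulting finite stabilizer lies in \(P_jZ_*\), yet smoothness of the punctured germ would force it to be generated by reflections of \(P_j\), a contradiction. Reading this freeness on the coordinate axes \(x_i\ne0\) gives \(\gcd(d_1,\alpha_i)=\gcd(d_2,\beta_j)=1\). Only then is the deletion condition verified and Lemma~\ref{sg:torus} forces \(d_j=\alpha_i=\beta_j=1\).

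\textbf{Third, a common fixed point of \(F_j\) is not automatic.} The affine action is explicitly not assumed to have one, and torsor actions without full fixed points do occur in Rains' list. So ``the origin once the group is seen to be linear'' is unsupported. The paper obtains both the fixed point and the identification \(F_j=S_{r_j+1}\) or \(G(m_j,1,r_j)\) from the Auffarth--Lucchini Arteche classification of smooth quotients, applied to \(A_j/F_j\simeq\mathbb P^{r_j}\). The fixed point follows there because the generators have fixed points. Your case-by-case route through Rains' tables might be made to work. However, the heuristic that degree-one invariant sections are counted by orbits on a level structure is not a proof. That route would also still need to handle torsors and the Veronese normalizations.
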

\begin{proof}
A reflection's displacement and central multiplier belong to its root factor, and orthogonal reflections commute.
Since \(P\) is generated by reflections, \(P=P_1P_2\), \(N_P=N_1N_2\), and \(\Lambda_P=\Lambda_1\oplus\Lambda_2\).
This gives the actual product of abelian varieties.
Write the factor central periods as \(c_j=d_jc_*\), so \(\gcd(d_1,d_2)=1\).
Rains gives \(R_1=\mathbb C[x_0,\ldots,x_a]\) and \(R_2=\mathbb C[y_0,\ldots,y_b]\), with degrees \(\alpha_i,\beta_j\) of greatest common divisor one.
Including constants and characters in the multipliers gives
\[
 q_*=q_1^{d_1}q_2^{d_2},\qquad
 M_*=M_1^{d_1}\boxtimes M_2^{d_2},\qquad
 R_P=\bigoplus_{k\ge0}(R_1)_{d_1k}\otimes(R_2)_{d_2k}.
\]
Thus \(R_P\) is the torus invariant ring of weights \(d_2\alpha_i,-d_1\beta_j\).

The residual \(\mu_{d_j}=P_jZ_*/P_j\) acts freely off the vertex of the \(j\)-th factor cone.
Otherwise combine a fixed factor point with a free horizontal orbit in the other factor.
Its finite stabilizer lies in \(P_jZ_*\), but smoothness of the punctured germ makes it generated by reflections in \(P_j\), a contradiction.
On the coordinate axis \(x_i\ne0\), the residual group \(\mu_{d_1}\) has stabilizer of order \(\gcd(d_1,\alpha_i)\), since its action is \(x_i\mapsto\zeta^{\alpha_i}x_i\).
Freeness therefore gives \(\gcd(d_1,\alpha_i)=1\) for every \(i\), and similarly \(\gcd(d_2,\beta_j)=1\) for every \(j\).
After deleting the positive weight \(d_2\alpha_i\), the remaining weights have greatest common divisor
\[
 \gcd\left(d_2\gcd_{k\ne i}\alpha_k,d_1\gcd_j\beta_j\right)
 =\gcd\left(d_2\gcd_{k\ne i}\alpha_k,d_1\right)=1.
\]
Deleting a negative weight gives the same conclusion with the factors exchanged.
This argument includes \(a=1\) or \(b=1\), because each factor still has a generator after one weight is deleted.
The analytic germ comparison gives smoothness away from the vertex and embedding dimension \((a+1)(b+1)\).
Lemma~\ref{sg:torus} forces \(d_1=d_2=\alpha_i=\beta_j=1\).
Consequently \(A_j/F_j=\mathbb P^{r_j}\), all stabilizer fiber characters are trivial, and the negative bundles descend to \(\mathcal O(-1)\) and \(\mathcal O(-1,-1)\).
The faithful affine actions have no pure translations.
The classification of smooth quotients \cite[Theorem 1.1(3)]{ALA2} gives a common fixed point, since the generators have fixed points; \cite[Theorem 1.1]{ALA1} gives the indicated irreducible groups.
Their product has trivial fiber character.
Lemma~\ref{sg:cusp-lift}, followed by central imaginary translation, realizes the full group arbitrarily deep.
\end{proof}

\section{One comparison for all fixed curves}\label{sc:comparison}
The common comparisons in this section apply for \(n\ge6\), subject to the hypotheses of each statement.
Put \(d=n-3\), \(N=d+1\), \(H=-K_W\), \(a=H\cdot f_*C\), and \(t=L\cdot f_*C>0\).
Here \(W\) is the smooth chamber model of Lemma~\ref{sg:resolution}, and \(f:C\to W\) comes from the fixed ball quotient of Lemma~\ref{sg:fixed} in the original lattice.
We do not assume \(a>0\), immersion, or separate descent of an automorphic factor and a finite character.
All cusp section degrees below use the least positive central period of the full cusp group.

\subsection{The determinant and the selected inverse direction}
\begin{proposition}
\label{sc:curve-determinant}\label{ew-curve-determinant}
Let \(X\) be the Baily--Borel compactification of an arithmetic ball quotient of dimension \(d\), and let \(K\) be the full generic stabilizer of a fixed ball of dimension \(m=d-k\ge1\).
Assume that \(K\) is generated by reflections and has rank \(k\).
Let \(f:Y\to X\) be the finite morphism from the compactified quotient by its full effective normalizer in the original lattice.
Assume that \(K_X\) is \(\mathbb Q\)-Cartier near \(f(Y)\), and that the general point of every interior divisor of \(Y\) maps to the smooth locus of \(X\).
Put \(H=-K_X\), and write \(D_G\) for the sum of the basic invariant degrees of a reflection group \(G\).
There is an effective \(\mathbb Q\)-divisor on \(Y\) with class
\[
f^*H+(D_K+m+1)f^*L.
\]
Its interior support consists of the divisors with a larger full stabilizer.
For \(m>1\), let \(Z_K\) be the sum of all reduced images of the specified representation type, and let \(n_{K,J}\) count axes in the normal representation of \(J\) with full pointwise group of type \(K\).
Then
\[
(H+(D_K+m+1)L)Z_K
=\sum_{\operatorname{rank}J=k+1}
(D_J-D_K-1)\frac{|K|n_{K,J}}{|J|}Z_J.
\]
There is no cusp divisor term when \(m>1\).

For \(m=1\), write \(Y=C\), let \(S\subset C\) be the cusp set, and assume that \(X\) is smooth near \(f(C)\), with polynomial rings of invariant sections at its ends.
Put \(a=\deg f^*H\), \(t=\deg f^*L\), and \(e_y=|N_{J_y}(K_y)|/|K_y|\), where \(J_y\) is the full interior stabilizer and \(K_y\) the pointwise stabilizer of the corresponding local axis.
At an end \(c\), let \(b_{c,1},\ldots,b_{c,d}\) be the generator degrees measured by the full least positive central period, and put \(A_c=\sum_i b_{c,i}\).
Let \(\kappa_c\) be the order of the full fiber character, so that the coarse curve coordinate and ambient central coordinate satisfy \(Q=q^{\kappa_c}\).
Then
\begin{equation}
\label{ew-curve-determinant-equation}
a+(D_K+2)t
=\sum_{y\in C\setminus S,\ J_y\ne K_y}
\frac{D_{J_y}-D_K-1}{e_y}
+\sum_{c\in S}\frac{A_c}{\kappa_c}.
\end{equation}
No immersion or separate descent of an invariant line is required.
If \(H\) is nef, every component with \(m>1\) contains an interior stratum of rank \(k+1\).
\end{proposition}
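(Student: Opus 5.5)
We build an explicit section on the fixed ball out of the ambient canonical form and the Jacobian of the invariants of \(K\), and read off its zeros and poles.

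\textbf{Construction of the section.} Take local invariant coordinates \(x_1,\ldots,x_k\) for \(K\) on the normal space \(E\), of degrees \(q_1,\ldots,q_k\) with \(\sum q_i=D_K\), and coordinates \(y\) along the fixed ball \(\mathbb B(V_0)\). On the ball, a top-degree ambient form \(dz_1\wedge\cdots\wedge dz_d\) transforms by the automorphic factor of degree \(N=d+1\). This is the orbifold statement \(K^{\mathrm{orb}}=NL\) of Section~\ref{sg:section}. On the fixed ball the form \(dx_1\wedge\cdots\wedge dx_k\wedge dy\) is a section of
\[
 f^*K_X\otimes(\text{automorphic factor of degree }-D_K-m-1),
\]
up to a finite character. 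The reason is that the Jacobian of the invariant map has degree \(\sum(q_i-1)\), so passing from linear coordinates on \(E\) to the invariants \(x_i\) shifts the weight by \(D_K-k\). Combined with the standard descent of \(dy\) on \(\mathbb B^m\), whose weight is \(m+1\), this gives total weight \(D_K+m+1\) against \(N=k+m+1\). The section is invariant under the full normalizer up to characters, so some power descends to \(Y\). This is where the \(\mathbb Q\)-Cartier hypothesis on \(K_X\) enters, and it means no separate descent of an invariant line is needed. The result is a nonzero rational section of \(f^*(-H)^{\vee}\otimes L^{D_K+m+1}\), that is, an effective \(\mathbb Q\)-divisor of class \(f^*H+(D_K+m+1)f^*L\) once we show it has no poles.

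\textbf{Local orders at interior points.} At a general interior point the full stabilizer is \(K\times J_x\) with \(J_x\) trivial on \(E\). There the form is a local generator, because \(X\) is smooth and \(Y\to X\) is the invariant-coordinate embedding of Lemma~\ref{sg:fixed}. At a point whose full stabilizer \(J\) is larger, with \(\operatorname{rank}J=k+1\), we compare two Jacobians. The \(J\)-invariants contribute \(D_J-(k+1)\), and the \(K\)-invariants plus the transverse coordinate contribute \(D_K-k\). Their difference gives vanishing order \(D_J-D_K-1\) in the orbifold chart. Dividing by the local degree of the chart, which is \(|J|/(|K|n_{K,J})\) when we sum over the \(n_{K,J}\) axes of type \(K\), and by \(e_y\) on curves, gives the stated coefficients. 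The order is nonnegative because the degrees of \(J\) dominate: by Chevalley--Shephard--Todd, \(D_J-(k+1)\ge D_K-k\) for a parabolic subgroup. This settles interior effectivity and identifies the support.

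\textbf{Cusps.} For \(m>1\), cusps have codimension \(\ge2\) in \(Y\), so there is no cusp term. For \(m=1\), use Lemma~\ref{sg:cusp-lift} with the polynomial cusp ring~\eqref{sg:cusp-ring}. Near an end the ambient \(K_X\) is generated by the Jacobian in the generators of degrees \(b_{c,i}\), measured by the full central period. In the coordinate \(q\) the form \(dq/q\wedge\cdots\) accounts for the vanishing order \(A_c\), and the coarse curve coordinate \(Q=q^{\kappa_c}\) divides it by \(\kappa_c\). Taking degrees on \(C\) then gives \eqref{ew-curve-determinant-equation}.

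\textbf{The final assertion.} If \(H\) is nef and \(L\) is ample, the class \(H+(D_K+m+1)L\) is big and nonzero on \(Z_K\). An effective divisor of this class therefore has nonempty support, which must consist of rank \(k+1\) strata.

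The main obstacle is the cusp bookkeeping. We have to verify that the generator degrees and the fiber character order \(\kappa_c\), both measured with the full least central period, combine exactly to \(A_c/\kappa_c\) with no extra constant.
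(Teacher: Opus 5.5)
Your overall route is the paper's. The effective section is the determinant of the differential from the source bundle to $f^*T_X$; the source is the tangent of the fixed ball plus the basic invariant normal directions of $K$, of total automorphic weight $D_K+m+1$. Interior orders $D_J-D_K-1$ come from the Jacobian of the $J$-invariants in the coordinates $(u,\eta)$ and are divided by the residual axis group.

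\textbf{The gap is at the ends.} You yourself call the end term ``the main obstacle'' and do not prove it, yet it is exactly what makes \eqref{ew-curve-determinant-equation} an equality rather than an inequality. Two facts are needed.
\begin{itemize}
\item The source frames at an end are the logarithmic radial field $q\partial_q$ and the horizontal invariant directions, with no power of $q$. This holds because the tautological frame $v/h(\ell,v)$ on the full punctured cusp (Proposition~\ref{sg:arithmetic}) carries no power of the central coordinate. It is what justifies your $dq/q$.
\item Write the contraction as $U_j=q^{b_{c,j}}\bar\theta_j(\eta)$. In that frame the differential is $\operatorname{diag}(q^{b_{c,j}})J_c$, with $q$-independent matrix $J_c=[(b_{c,j}\bar\theta_j(0))_j,\,D_\eta\bar\theta(0)]$, and you need $\det J_c\ne0$. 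This holds because at points with $\eta=0$ and small $q\ne0$ the full stabilizer is exactly $K$ and $X$ is smooth. So both $(q,\eta)$ and $(U_j)$ are local coordinates there, and the differential is invertible.
\end{itemize}
Without the second fact you only get order at least $A_c$. With it the order is exactly $A_c$ in $q$, hence $A_c/\kappa_c$ in $Q=q^{\kappa_c}$. The paper does this on a neat cover with $Q=(q')^r$ and then descends.

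\textbf{Product enlargements.} You claim the section is a local generator wherever the stabilizer is $K\times J_x$ with $J_x$ trivial on $E$. This fails along divisors where $J_x=\mu_e$ acts as a reflection on the ball. There $\partial_u=eu^{e-1}\partial_v$ in the coarse coordinate $v=u^e$, so the section vanishes to order $1-1/e$. This is the case $J=K\times\mu_e$ of your next paragraph, with $D_J-D_K-1=e-1$ and $e_y=e$. It is the source of the later $\sigma$ terms, so the generator claim must be restricted to trivial $J_x$.

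\textbf{Direction of the section.} Regarded as a section of $f^*K_X\otimes L^{-(D_K+m+1)}$, the form $dx_1\wedge\cdots\wedge dx_k\wedge dy$ has poles, not zeros, at larger stabilizers. The effective section is its inverse: the pullback map $f^*K_X\to L^{D_K+m+1}$ of ambient top forms. The ambient orbifold class $NL$ plays no role in this computation.
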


\begin{proof}
Lemma~\ref{sg:fixed} gives \(Y\) and the finite coefficient representations on invariant normal directions.
Indecomposable invariants of degree \(e\) transform by \(\mathcal L^{-e}\) with their finite representation; terms in the square of the invariant ideal disappear from the normal differential.
Repeated degrees are kept in one vector bundle.
The tangent determinant along the fixed ball has automorphic factor \(\mathcal L^{-(m+1)}\) and a finite character.
Thus the source determinant of the quotient differential is \(\mathcal L^{-(D_K+m+1)}\) with a finite character.
Take a sufficiently divisible power \(N\) to remove this character, make \(NK_X\) Cartier, and make \(\mathcal L^N\) descend to the actual Baily--Borel line bundle.
The determinant then gives a nonzero section of the line bundle
\[
f^*\mathcal O_X(-NK_X)\otimes
(f^*\mathcal L^N)^{\otimes(D_K+m+1)}.
\]
The finite coefficient representations are removed on a normal neat cover; their automorphic factors are retained.

At an interior divisor with full group \(J\), discard the common fixed directions and write \(u\) for the remaining axis coordinate, \(\eta_i\) for the basic invariants of \(K\), and \(F_j\) for those of \(J\).
Homogeneity and generic invertibility give
\[
\det\frac{\partial(F_1,\ldots,F_{k+1})}
{\partial(u,\eta_1,\ldots,\eta_k)}(u,0)
=c_0u^{D_J-D_K-1},\qquad c_0\ne0.
\]
The residual axis group has order \(e=|N_J(K)|/|K|\), so the divisor coefficient after division by \(N\) is \((D_J-D_K-1)/e\).
Changes of invariant coordinates multiply the determinant by a unit, and the tangent column is included.
The section is regular in codimension one away from cusps and extends across the remaining interior subset by normality of \(Y\).
When \(m>1\), the cusp set also has codimension at least two, so the same extension applies.
In a toroidal cover, a boundary divisor maps to a point and has zero pushforward as a divisor on \(Y\).

For \(m=1\), extend in the logarithmic radial frame at each end.
The frame on the full punctured cusp in Proposition~\ref{sg:arithmetic} introduces no power of its central coordinate.
At an end the contraction and its constant differential are
\begin{equation}\label{sc:theta-map}
U_j=q^{b_{c,j}}\bar\theta_j(\boldsymbol\eta),\qquad J_c=[(b_{c,j}\bar\theta_j(0))_j,D_{\boldsymbol\eta}\bar\theta(0)].
\end{equation}
The differential is \(\operatorname{diag}(q^{b_{c,j}})J_c\), and \(J_c\) is invertible by smoothness of the punctured quotient with full stabilizer \(K\).
If \(Q=(q')^r\) on the neat cover, its determinant order is \(rA_c/\kappa_c\), giving \(A_c/\kappa_c\) after descent.
This proves effectivity and the curve formula.
For \(m>1\), summing the local axis orbits gives \(\sum1/e=|K|n_{K,J}/|J|\); pushing the divisors forward proves the cycle formula.
Finally, \((f^*H+(D_K+m+1)f^*L)(f^*L)^{m-1}>0\) if \(H\) is nef, so the effective divisor is nonzero and contains the asserted interior stratum.
\end{proof}

\begin{lemma}\label{sc:monomial-normalizer}
For \(r\ge5\), the linear normalizer of \(G(m,p,r)\), \(m\ge2\), \(p\mid m\), lies in \(\mathbb C^*G(m,1,r)\) and preserves its individual symmetric and product invariant lines.
\end{lemma}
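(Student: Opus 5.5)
The plan is to identify the normalizer through the structure that \(G=G(m,p,r)\) itself determines. This structure is the system of coordinate lines \(\mathbb C e_1,\ldots,\mathbb C e_r\), together with the algebra of invariants.

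Let \(g\in\mathrm{GL}_r(\mathbb C)\) normalize \(G\). Then \(g\) permutes the reflections of \(G\), and hence permutes their root lines.

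For \(m\ge2\) and \(r\ge5\), the reflections of \(G(m,p,r)\) fall into two kinds. The first kind are the transposition-type reflections \(e_i-\zeta e_j\), of order two, with roots \(e_i-\zeta e_j\) for \(\zeta\in\mu_m\). The second kind are the diagonal reflections, which occur only when \(p<m\), have roots \(e_i\), and have order \(m/p\).

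When \(p<m\), I would first check that \(g\) cannot exchange the two kinds. This follows by counting: there are \(r(r-1)m/2\) root lines of the first kind and only \(r\) of the second, and conjugation preserves both the number of reflections with a given root line and their orders.

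I would then recover the coordinate lines intrinsically, so that the argument also covers \(p=m\), where no diagonal reflections exist. Two roots \(e_i-\zeta e_j\) and \(e_i-\zeta' e_j\) with \(\zeta\ne\zeta'\) generate a dihedral subgroup of order \(2m\). Its reflections have roots spanning the plane \(\langle e_i,e_j\rangle\). The sets \(\{i,j\}\) can therefore be read off from the commutation graph of the first-kind reflections: two such reflections commute exactly when their index pairs are disjoint, or when they coincide.

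Because \(r\ge5\), the line graph of \(K_r\) determines its vertex set, by Whitney's theorem. Here \(r\ge5\) excludes the small exceptional isomorphisms, and this is the step where the hypothesis is used. In concrete terms, the coordinate line \(\mathbb C e_i\) is recovered as the intersection of the planes \(\langle e_i,e_j\rangle\) taken over \(j\ne i\).

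Consequently \(g\) permutes the coordinate lines, so \(g=\sigma\cdot\mathrm{diag}(c_1,\ldots,c_r)\) with \(\sigma\) a permutation matrix. Since \(g\) carries the root lines \(e_i-\zeta e_j\), \(\zeta\in\mu_m\), to root lines of the same form, each ratio \(c_i/c_j\) must lie in \(\mu_m\). Hence \(g\in\mathbb C^*G(m,1,r)\).

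For the invariant lines, the basic invariants of \(G(m,1,r)\) are the power sums \(s_k=\sum_i x_i^{mk}\) for \(k=1,\ldots,r\). For \(G(m,p,r)\) the last one is replaced by the product \(\prod_i x_i^{m/p}\). An element \(c\cdot h\) with \(h\in G(m,1,r)\) fixes each \(s_k\) up to the scalar \(c^{mk}\). For the product we have \(h^*\prod x_i^{m/p}=\chi(h)\prod x_i^{m/p}\) with \(\chi(h)\) a root of unity, so the product line is also preserved. I take "symmetric invariant lines" to mean the lines \(\mathbb C s_k\).

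The main obstacle I anticipate is making the recovery of the coordinate lines fully rigorous when \(p=m\) and \(m=2\). In that case \(G(2,2,r)=W(D_r)\), whose normalizer includes the diagram symmetry. For \(r\ge5\) this symmetry is still monomial: it is the sign change \(e_r\mapsto -e_r\), which lies in \(G(2,1,r)\). Triality appears only for \(r=4\), which the hypothesis excludes. I would handle this case directly, by observing that the roots \(\pm e_i\pm e_j\) determine the coordinate frame uniquely once \(r\ge5\).
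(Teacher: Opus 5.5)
Your overall structure matches the paper's: once $g$ is known to permute the coordinate axes, your phased-transposition argument forcing $c_i/c_j\in\mu_m$ and your treatment of the symmetric and product invariant lines are what is needed. The gap is in the step that is supposed to recover the axes.

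The commutation rule you state holds only for $m=2$. Let $s^\zeta_{ij}$ be the involution with root $e_i-\zeta e_j$. The product $s^\zeta_{ij}s^{\zeta'}_{ij}$ is diagonal on $\langle e_i,e_j\rangle$ with entries $\zeta'/\zeta$ and $\zeta/\zeta'$. So two reflections with the same index pair commute only if $\zeta'=-\zeta$, and never when $m$ is odd.

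In $G(3,3,r)$, for instance, compare a pair class $\{s^\zeta_{12}:\zeta\in\mu_3\}$ with a triangle $\{s_{12},s_{13},s_{23}\}$. The data you invoke cannot tell them apart: both consist of pairwise noncommuting involutions whose products have order three, and in both cases the roots span a plane containing exactly three root lines. Hence the claim that the index pairs ``can be read off from the commutation graph'' is unsupported. Whitney's theorem cannot be applied until you prove that $G$ itself determines the partition of the first-kind reflections into index pairs.

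That proof is a separate argument, and it depends on the parity of $m$. For $m=3$, one can use that some outside reflection commutes with exactly one member of a triangle, but never with exactly one member of a pair class. For even $m\ge4$, a pair class is a cocktail-party graph inside the noncommutation graph, and yet another argument is needed. This step is missing from your proposal.

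A smaller point concerns separating the two kinds of reflections. When $m/p=2$, both kinds of root lines carry a single involution, so the number and orders of reflections per line do not separate them; in $B_2$ the normalizer does exchange them. Use instead the $G$-conjugacy class sizes $mr(r-1)/2$ and $r$, which conjugation by $g$ preserves.

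The paper replaces the reflection combinatorics by point stabilizers, which handles all $m$ and $p$ at once. Let $v\ne0$ have $k$ zero coordinates. An element of $G_v$ preserves the support of $v$, and its phases on the support are determined by the induced permutation. Hence $|G_v|\le m^kk!(r-k)!/p$ for $k\ge1$, and $|G_v|\le r!$ for $k=0$. These bounds are logarithmically convex in $k$. The inequality $m^{r-1}>pr$ holds for $r\ge5$, since $m^{r-1}\ge m\,2^{r-2}>mr$. It shows that the largest value $m^{r-1}(r-1)!/p$ is attained exactly on the coordinate axes. Since $G_{gv}=gG_vg^{-1}$ for $g$ in the normalizer, $g$ permutes the axes, and your last two steps complete the proof.
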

\begin{proof}
The bounds \(m^kk!(r-k)!/p\), \(k>0\), and \(r!\), \(k=0\), for point stabilizers are logarithmically convex.
The strict inequality \(m^{r-1}>pr\) identifies the coordinate axes by maximal stabilizers.
A normalizer permutes these axes; phased transpositions then force all diagonal ratios into \(\mu_m\).
\end{proof}

\begin{lemma}\label{sc:age}
If \(u\mapsto\zeta_eu\) acts on a line by \(\zeta_e^a\), \(0\le a<e\), its coarse extension has degree equal to its orbifold degree minus \(a/e\).
\end{lemma}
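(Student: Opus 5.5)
The plan is to compute everything in a local model. Let \(Y\) be the orbifold curve and \(y\) a point with local uniformizing coordinate \(u\) on the orbifold chart. The residual group \(\mu_e\) acts by \(u\mapsto\zeta_eu\). The coarse coordinate is \(v=u^e\). The line bundle in question is an orbifold line bundle, say \(\mathcal E\), on which \(\mu_e\) acts at the fiber over \(y\) by \(\zeta_e^a\), \(0\le a<e\). Its orbifold degree is the degree of \(\mathcal E^{\otimes N}\) on a neat cover, divided by \(N\) times the covering degree. Equivalently, it is the degree of any invariant rational section, with local orders measured in the \(v\)-coordinate, so that an order in \(u\) is divided by \(e\). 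The coarse extension is the sheaf of invariant sections pushed forward to the coarse curve. So the task is to compare the coarse divisor of an invariant rational section with its orbifold divisor, one point at a time.

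First fix a local frame \(s\) of \(\mathcal E\) near \(y\) on the chart, transforming by \(g\cdot s=\zeta_e^a s\) under the generator. A local section \(h(u)s\) is invariant exactly when \(h(\zeta_eu)\zeta_e^a=h(u)\), that is, when only the monomials \(u^k\) with \(k\equiv -a\pmod e\) occur. So the invariant sections near \(y\) form the free \(\mathbb C\{v\}\)-module generated by \(u^{e-a}s\) when \(a>0\), and by \(s\) when \(a=0\). This identifies the coarse extension as a line bundle, generated by \(\sigma=u^{(e-a)\bmod e}s\).

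Next compare degrees. Take a global invariant rational section and write it near \(y\) as \(h\sigma\), with \(h\) a meromorphic function of \(v\) of order \(k\) in \(v\). In the orbifold count its local contribution is its order in \(u\) divided by \(e\), namely \((ek+(e-a))/e=k+1-a/e\) when \(a>0\), and \(k\) when \(a=0\). The coarse contribution is \(k\). The difference at \(y\) is therefore \(1-a/e\) when \(a>0\). This gives the asserted \(a/e\) only under the opposite convention for which character is called \(a\). So the key step is to pin down the convention: the statement says \(u\mapsto\zeta_eu\) ``acts on a line by \(\zeta_e^a\)''. This should be read as the weight of the action on the fiber coordinate, that is, on the dual of the frame, or equivalently the invariant monomial condition \(k\equiv a\). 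With that reading the generating invariant section is \(u^{a}s\), the difference is exactly \(a/e\), and the orbifold degree exceeds the coarse degree by \(a/e\).

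I expect the main obstacle to be exactly this convention check. It has to be matched with how the lemma is used later, namely the residual axis group of order \(e_y\) in Proposition~\ref{sc:curve-determinant}. A sanity test is the tangent bundle of a cone point. There \(du\) has weight \(1\) as a function-type coordinate, the invariant generator is \(u^{e-1}\partial_u=e\,v\,\partial_v\), and orbifold Riemann--Hurwitz \(\deg T^{\mathrm{orb}}=\deg T_{\mathrm{coarse}}-(1-1/e)\) should come out right. I would fix the sign of \(a\) by this example. After that, summing over the finitely many orbifold points, where \(a=0\) contributes nothing, gives the global statement.
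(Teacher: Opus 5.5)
Your argument is correct and is the paper's own: once the character is read as the action on fiber vectors, the invariant local sections are \(u^a\) times functions of \(u^e\), so at the point each orbifold order exceeds the coarse order by \(a/e\). This is exactly the paper's one-line proof. There is one slip in your sanity check: the invariant generator of the tangent line at a cone point is \(u\partial_u=e\,v\,\partial_v\) (the case \(a=1\)), not \(u^{e-1}\partial_u\). With that correction, the check confirms the reading you adopted.
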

\begin{proof}
The invariant frame is \(u^a\) times a function of \(u^e\).
\end{proof}

For \(r=d-1\ge3\), \(m\ge2\), and \(p\mid m\), put \(T=G(m,p,r)\) and
\[
 f_j=e_j(x_1^m,\ldots,x_r^m)\ (1\le j<r),\quad
 P_0=(x_1\cdots x_r)^{m/p},\quad
 D=\frac{mr(r-1)}2+\frac{rm}{p}.
\]
Set \(k=rm\) if \(p=1\), and \(k=(r-1)m\) otherwise.
\begin{proposition}\label{sc:monomial-comparison}\label{ew-monomial}
Suppose every larger interior group on a fixed disk is \(G(m,p,r+1)\), with the prescribed coordinate axis, or \(T\times\mu_e\), with \(\mu_e\) trivial on the normal space.
Suppose every end is a smooth cusp whose actual horizontal stabilizer \(H_c\) has \(\ker\chi_c=T\), \(|\operatorname{im}\chi_c|=\kappa_c\).
Suppose the full normalizer preserves the degree \(k\) indecomposable direction and its effective curve stabilizers fix that direction.
Let \(R\) count monomial enlargements, \(s\) count ends, and \(\sigma=\sum(1-e^{-1})\) count product enlargements.
Put
\[
 B_c=\frac{\sum_i b_{c,i}}{\kappa_c},\quad
 \nu_c=\left\lfloor\frac{\max_i b_{c,i}}{\kappa_c}\right\rfloor,\quad
 \beta=r+\frac1p-\frac1m .
\]
There is an actual line \(E_k\) of degree \(-kt\), and
\begin{align}
 2g-2+s+(1-m^{-1})R+\sigma&=2t,\label{sc:monomial-log}\\
 a&=\sum_cB_c+\beta R+\sigma-(D+2)t,\label{ew-comparison-canonical}\\
 0\ne f^*T_W&\longrightarrow E_k(D_R+\sum_c\nu_c[c]),\qquad
 h=-kt+R+\sum_c\nu_c.\label{sc:monomial-inverse}
\end{align}
\end{proposition}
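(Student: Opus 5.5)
The plan is to prove the three displays separately. All three are computed on the compactified fixed disk \(C\) coming from Lemma~\ref{sg:fixed}, with the group \(T=G(m,p,r)\) of rank \(r=d-1\), so \(m_{\mathrm{disk}}=1\).

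\textbf{The line \(E_k\).} The indecomposable invariant of degree \(k\) is either \(P_0^{p}=(x_1\cdots x_r)^m\), when \(p=1\), or \(f_{r-1}\), when \(p>1\). In the first case this is \(f_r\); in the second it is the top elementary function that is not replaced by \(P_0\). By Lemma~\ref{sc:monomial-normalizer} the full normalizer preserves its line, and by hypothesis the effective curve stabilizers fix it. Hence the finite character of Lemma~\ref{sg:fixed} is trivial on that component. The component then descends as an actual line whose automorphic factor is \(\mathcal L^{-k}\), which gives degree \(-kt\).

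\textbf{The orbifold Euler characteristic \eqref{sc:monomial-log}.} The disk quotient is a complex hyperbolic curve, and \(\mathcal L\) restricts to the tautological bundle of the disk, whose orbifold canonical class is \(2L\). So \(2t\) equals the orbifold degree of \(K_C+S\), namely
\[
 2g-2+s+\sum_y\bigl(1-|\text{axis group at }y|^{-1}\bigr).
\]
The remaining work is to identify the axis groups \(e_y=|N_J(K)|/|K|\) at each kind of point.
\begin{itemize}
\item For a monomial enlargement \(J=G(m,p,r+1)\) with the prescribed axis, \(N_J(T)/T\) acts on the extra coordinate through \(\mu_m\). The sizes are \(m^{r+1}(r+1)!/p\) and \(m^rr!/p\) respectively, so \(e=m\).
\item For a product enlargement \(T\times\mu_e\), the axis group is \(\mu_e\).
\end{itemize}
Cusps contribute \(1\) each, through \(s\).

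\textbf{The canonical equation \eqref{ew-comparison-canonical}.} Apply \eqref{ew-curve-determinant-equation} with \(D_K=D\), using \(D_T=\sum\deg f_j+\deg P_0=m\,r(r-1)/2+rm/p\). The terms are as follows.
\begin{itemize}
\item For \(J=G(m,p,r+1)\), \(D_J-D=m\,r+(r+1)m/p-rm/p=mr+m/p\). Then
\[
 \frac{D_J-D-1}{e}=r+\frac1p-\frac1m=\beta .
\]
\item For \(T\times\mu_e\), one has \(D_J-D-1=e-1\), and the normalizer quotient is \(e\). This gives \(1-e^{-1}\), summing to \(\sigma\).
\item The cusps give \(\sum_c B_c\).
\end{itemize}
Rearranging yields \(a=\sum B_c+\beta R+\sigma-(D+2)t\).

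\textbf{The map \eqref{sc:monomial-inverse}.} This is the main obstacle. On the interior stable part, Lemma~\ref{sg:fixed} gives the immersion, and projecting the normal bundle onto the degree \(k\) component gives
\[
 f^*T_W\to N\to E_k .
\]
The question is how this map extends over the special points.

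At a monomial enlargement, the ambient invariants of \(G(m,p,r+1)\), restricted near the axis, express the degree \(k\) invariant of \(T\) as a derivative with respect to the extra coordinate \(u\). Along the axis, the coordinate \(u^m\) is the coarse coordinate. The plan is to compute that the projection acquires at most a simple pole there, using Lemma~\ref{sc:age} with age \((m-1)/m\) on the relevant line, rounded to an integral twist. This explains the term \(D_R\).

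At an end \(c\), use the theta chart \eqref{sc:theta-map}. The ambient coordinates are \(U_j=q^{b_{c,j}}\bar\theta_j\) and \(Q=q^{\kappa_c}\). The \(E_k\) coordinate is a function of the \(\eta\)'s, so expressing it through the \(U_j\) costs at most \(q^{-\max b_{c,j}}\), that is, a pole of order \(\lfloor\max b/\kappa\rfloor=\nu_c\) in \(Q\).

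At product enlargements, \(\mu_e\) acts trivially on the normal space, so no twist is needed there. Nonvanishing of the extended map holds because it is generically surjective on the interior. The degree \(h=-kt+R+\sum\nu_c\) then follows by adding the twists. I expect the local pole-order bookkeeping at the monomial points and at the cusps, getting the exact integers rather than merely rational bounds, to be the delicate part.
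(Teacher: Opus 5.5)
The genuine gap is the twist \(D_R\) in \eqref{sc:monomial-inverse}, which is the substantive part of the statement. The mechanism you propose cannot produce it.

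At a monomial enlargement the residual \(\mu_m\) acts trivially on the selected direction. For \(p>1\) this is because \(f_{r-1}\) is \(G(m,1,r)\)-invariant. For \(p=1\) it is because the residual action is \(P_0\mapsto\zeta_m^{-m/p}P_0\) with \(P_0=(x_1\cdots x_r)^m\). So the age is zero and Lemma~\ref{sc:age} gives nothing for \(E_k\). The only age there is \(1-1/p\) on the product line when \(p>1\); it yields the paper's separate correction \(-(1-1/p)R\) for that other line. In any case an age lowers the coarse degree of a line; it does not create a pole of a map.

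The pole comes from the quotient map itself. Near the axis the \(G(m,p,r+1)\) invariants are \(g_1=u^m+f_1\), \(g_j=f_j+u^mf_{j-1}\) for \(2\le j<r\), \(g_r=P_0^p+u^mf_{r-1}\), and \(Z=u^{m/p}P_0\), with coarse axis coordinate \(v=u^m\). All \(f_j\) and \(P_0\) vanish on the axis. Hence \(dZ=v\,dP_0\) there when \(p=1\), and \(dg_r=v\,df_{r-1}\) when \(p>1\), because \(d(P_0^p)=0\) on the axis. The selected row of the inverse differential is therefore exactly \(v^{-1}dZ\), respectively \(v^{-1}dg_r\). This is a simple pole in the coarse coordinate, even though the other rows of this bidiagonal inverse have higher poles. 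Describing the degree \(k\) invariant as ``a derivative with respect to \(u\)'' does not capture this. What you need is the explicit restriction of the ambient invariants and the factor \(u^m\) multiplying the selected invariant in one of them.

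Your derivations of \eqref{sc:monomial-log} and \eqref{ew-comparison-canonical} otherwise follow the paper: Riemann--Hurwitz for the disk, and Proposition~\ref{sc:curve-determinant} with the quotients \(\beta\), \(1-1/e\), and \(B_c\). Three smaller points need repair.
\begin{itemize}
\item The group orders you quote do not give \(e=m\), since \(|J|/|T|=m(r+1)\). The reason is that \(N_J(T)\) consists of the elements of \(J\) preserving \(\mathbb C^r\oplus\mathbb C\), and \(N_J(T)/T\) acts faithfully on the axis through \(\mu_m\).
\item For \(E_k\), the hypotheses do not make the finite character trivial, and \(\mathcal L^{-k}\) alone need not descend to the effective normalizer. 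Keep the transition \(j_\gamma^{-k}\chi_k(\gamma)\) and note that scalar lifts cancel between the two factors, so it is defined on the effective group. Use the hypothesis only to get trivial stabilizer action on the fiber, and compute the degree on a neat cover, where it is \(-kt\) times the covering degree.
\item At an end, the bound \(q^{-\max_ib_{c,i}}\) requires invertibility of \(J_c\) in \eqref{sc:theta-map}. Passing to the integer \(\nu_c\) requires that the trivial character on the selected direction make the row entries single-valued functions of \(Q=q^{\kappa_c}\).
\end{itemize}
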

\begin{proof}
On an invariant line of degree \(q\), a transition is \(j_\gamma^{-q}\chi_q(\gamma)\).
Scalar lifts cancel between the two factors, so their product is effective.
A neat cover of degree \(v\), of any genus, computes degree \(-qvt\).
This proves the assertion for \(E_k\); Riemann--Hurwitz for the disk gives \eqref{sc:monomial-log}.
Put \(\delta=m/p\).
The local map at an enlargement is
\begin{equation}\label{sc:monomial-matrix}
 g_1=u^m+f_1,\quad
 g_j=f_j+u^mf_{j-1}\ (2\le j<r),\quad
 g_r=P_0^p+u^mf_{r-1},\quad Z=u^\delta P_0 .
\end{equation}
Its axis Jacobian is \(mu^{rm+\delta-1}\), so Proposition~\ref{sc:curve-determinant} gives \(\beta\).
The residual action sends \(P_0\) to \(\zeta_m^{-\delta}P_0\), giving the coarse correction \(-(1-1/p)R\).
For \(p>1\), write \(P_0=u^{m-\delta}\psi(u^m)\).
After tangent elimination, the symmetric block has diagonal \(v=u^m\) and superdiagonal \(1\).
For \(p=1\), the same follows from \(dg_1=dv+df_1\).
The selected inverse row has a simple pole: \(\dot P_0=v^{-1}\dot Z\), or \(\dot f_{r-1}=v^{-1}\dot g_r\).
At a product enlargement there is no normal pole.
At a cusp invert the differential in \eqref{sc:theta-map}.
Its horizontal rows have pole at most \(q^{-\max b_{c,i}}\).
The selected trivial character makes their entries functions of \(Q=q^{\kappa_c}\), giving \(\nu_c\).
Equation~\eqref{ew-curve-determinant-equation} now gives \eqref{ew-comparison-canonical}.
\end{proof}
Whenever a unique degree \(m\) line descends, \(mt\) is integral.
Repeated degrees justify only the greatest common divisor of degrees of lines actually shown to descend.

\subsection{Elimination and vector bundle tests}
\begin{lemma}\label{sc:coefficient-elimination}\label{ew-coefficient}
Let the maps under consideration have zero tangent coefficient.
Suppose their pullback coefficients \(b_i\in H^0(C,QE_i^{-1})\) determine them injectively.
At a reduced divisor \(D\), suppose their extension equations come from the upper bidiagonal differential with first row \((a(u),1,0,\ldots)\), \(a(u)\not\equiv0\), and remaining diagonal \(u\) and superdiagonal \(1\).
At each end \(c\), suppose the differential is \(\operatorname{diag}(u^{\nu_i(c)})J(u)\).
Assume the exponents are nondecreasing and every leading block at a strict increase is invertible at zero.
If
\[
 \deg QE_i^{-1}(-D-\sum_c\nu_i(c)[c])<0\quad(i<q),
\]
and the last such degree is zero, the Hom space has dimension at most one.
\end{lemma}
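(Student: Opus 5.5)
The plan is to reduce the statement to a triangular linear algebra argument on the coefficient vector \((b_1,\ldots,b_q)\). By the injectivity hypothesis it suffices to show that the space of admissible tuples \((b_i)\) has dimension at most one. I would use the extension equations at \(D\) and at the ends to show that \(b_1,\ldots,b_{q-1}\) are forced to be sections of the twisted bundles \(QE_i^{-1}(-D-\sum_c\nu_i(c)[c])\). These bundles have negative degree, so those coefficients vanish. The last coefficient \(b_q\) then lies in a line bundle of degree zero, and \(H^0\) of such a bundle has dimension at most one.

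First I treat the reduced divisor \(D\). A map \(f^*T_W\to Q\) has zero tangent coefficient, so it is determined by its values on the normal rows. Extending across a point of \(D\) means that the row vector \((b_1,\ldots,b_q)\), composed with the inverse of the local differential, must be regular. The differential is upper bidiagonal, with first row \((a(u),1,0,\ldots)\), remaining diagonal \(u\), and superdiagonal \(1\). Back-substitution from the bottom row expresses the regularity conditions as a triangular system. I would show inductively that regularity forces \(b_i(p)=0\) at every \(p\in D\) for \(i<q\), using the unit superdiagonal to propagate the vanishing from one coefficient to the next. The coefficient \(b_q\) is absorbed by the first row; this is where \(a(u)\not\equiv0\) enters, because it ensures that the first row does not itself impose a nontrivial pole condition. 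So the twist by \(-D\) applies to \(b_1,\ldots,b_{q-1}\).

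Second, at each end \(c\) the differential is \(\operatorname{diag}(u^{\nu_i(c)})J(u)\). Regularity of \(b\circ(\text{differential})^{-1}\) is equivalent to regularity of \(b\,J^{-1}\operatorname{diag}(u^{-\nu_i})\). I group the indices into blocks of constant exponent, using that the exponents are nondecreasing. Then I argue block by block, starting from the largest exponent. Since the leading block at each strict increase is invertible at zero, a block-triangular elimination shows that each \(b_i\) must vanish to order \(\nu_i(c)\) at \(c\). The invertibility is what prevents cancellation between blocks of different exponents.

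Combining the two local steps, \(b_i\in H^0\bigl(C,QE_i^{-1}(-D-\sum_c\nu_i(c)[c])\bigr)\) for all \(i\). For \(i<q\) the degree is negative, so \(b_i=0\). The last coefficient lies in \(H^0\) of a degree zero line bundle, which has dimension at most one, and injectivity then gives \(\dim\operatorname{Hom}\le1\).

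The main obstacle is the bookkeeping at \(D\). The bidiagonal inversion produces alternating sums \(\sum_j(-1)^{j}u^{-j}b_{i+j}\), and one must check that regularity kills each coefficient separately rather than only certain combinations. I would handle this by expanding in powers of \(u\) and comparing the most negative power first, peeling off one coefficient at a time.
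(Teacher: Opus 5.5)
There is a genuine gap. Both of your local steps claim more than the extension conditions give.

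Write the map as $b=cM$, where $c$ are the regular target coefficients and $M$ is the local differential, with the tangent column first. At a point of $D$ this reads $b_{\rm tan}=a(u)c_0$, $b_1=c_0+uc_1$, and $b_i=c_{i-1}+uc_i$. Zero tangent coefficient and $a\not\equiv0$ give $c_0=0$, hence $u\mid b_1$. They say nothing about $b_2,\ldots$ individually. With three normal coefficients, the regular vector $c=(0,1,0,1)$ gives $(b_1,b_2,b_3)=(u,1,u)$, so $b_2$ does not vanish on $D$.

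The admissible tuples form the image of the bidiagonal block. This image has colength $q$ but is not $u\mathcal O^{q-1}\oplus\mathcal O$: its conditions are triangular combinations of jets. No expansion in powers of $u$ separates them, so the obstacle you flag cannot be overcome locally.

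The same happens at an end. For $\nu=(0,1)$ and $J=\begin{pmatrix}1&1\\0&1\end{pmatrix}$, the tuple $b=(1,1)$ extends although $u\nmid b_2$; the condition is only $u\mid b_2-b_1$. Hence your step placing every $b_i$ in $H^0\bigl(C,QE_i^{-1}(-D-\sum_c\nu_i(c)[c])\bigr)$ before any degree argument is false in general.

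The missing idea, which is the paper's proof, is to interleave the local conditions with global vanishing.

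First, $b_1$ does receive the full twist: $u\mid b_1$ on $D$, and $u^{\nu_1(c)}\mid b_1$ at each end because $\nu_1$ is the least exponent. Negative degree then gives $b_1\equiv0$ on $C$. Then $c_1=0$ and $b_2=uc_2$ on $D$.

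At each end, suppose all coefficients before a strict increase of exponents vanish identically. Then the invertible leading block eliminates the corresponding target coefficients, and the remaining coefficients become divisible by $u^{\nu_i(c)}$. The negative degree hypotheses for $i<q$ drive exactly this induction.

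Note also that $b_q$ does acquire the twist by $-D$ once its predecessors vanish, since then $b_q=uc_q$. Your remark that $b_q$ is ``absorbed by the first row'' would leave it in a bundle whose degree is larger by $\deg D$, and the degree-zero hypothesis does not apply there. The role of $a(u)\not\equiv0$ is to start the chain by forcing $c_0=0$, not to avoid a pole condition.
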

\begin{proof}
The tangent equation kills the first target coefficient at \(D\).
Once preceding source coefficients vanish, the next is divisible by \(u\).
The invertible leading blocks at ends give the analogous successive divisibility.
Negative degree kills the first \(q-1\) coefficients, and degree zero bounds the last by one.
\end{proof}

\begin{lemma}\label{sc:normal-splitting}
Suppose \(\bigoplus_{i=1}^r\mathcal O_{\mathbb P^1}(-ij)\to V\), \(j\ge1\), is invertible away from \(R\) points and has the bidiagonal matrix just described in its individual invariant frames.
Then \(V=\bigoplus_{i=1}^r\mathcal O(R-ij)\).
For \(p>1\), the symmetric block has length \(r-1\), and the separate product direction has degree \((R-rj)/p\).
\end{lemma}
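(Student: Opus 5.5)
We prove Lemma~\ref{sc:normal-splitting} by induction on \(r\), using that every vector bundle on \(\mathbb P^1\) splits (Grothendieck). We read the bidiagonal matrix as the inverse transition in the individual invariant frames. At each of the \(R\) points, with local coordinate \(u\), the map \(\phi:\bigoplus_i\mathcal O(-ij)\to V\) has a frame matrix that is upper bidiagonal. Its first row is \((a(u),1,0,\ldots)\), and the rest has diagonal \(u\) and superdiagonal \(1\). Away from these points \(\phi\) is an isomorphism.

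The first step computes the determinant. At each special point the determinant of the bidiagonal block equals \(a(u)u^{r-1}\) times a unit. We normalize so that the tangent (first) column is eliminated and the remaining normal block has determinant of order exactly \(r-1\) in the symmetric part, plus one more order from the selected row, as in the proof of Proposition~\ref{sc:monomial-comparison}. Hence the total order at each point is \(r\), and \(\deg V=-j\binom{r+1}2+rR\). This agrees with the claimed \(\sum_i(R-ij)\).

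The second step obtains the splitting from a filtration. Because the matrix is upper bidiagonal, the flag spanned by the last \(s\) frame vectors is preserved. We therefore get a filtration \(V_1\subset\cdots\subset V_r=V\) whose graded pieces are line bundles. Each piece is \(\mathcal O(-ij)\) twisted by the diagonal divisor, so each graded piece \(\mathcal O(-ij)\) acquires exactly one zero at each of the \(R\) points. This gives graded pieces \(\mathcal O(R-ij)\), and we choose the ordering so that the first row's extra order from \(a\) and the shift from the superdiagonal compensate.

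The third step shows that the extensions split. An extension of \(\mathcal O(R-ij)\) by \(\mathcal O(R-i'j)\) with \(i'>i\) is classified by \(H^1(\mathcal O((i-i')j))\), which need not vanish. So a bare filtration does not suffice. Instead we use the superdiagonal \(1\)'s, which give sections of \(V\otimes\mathcal O(ij-R)\) that map isomorphically onto the graded piece. Concretely, the map \(u^{-1}\) times the \(i\)-th frame, corrected by the \((i+1)\)-st frame, is regular. It glues to a global section, giving an injection \(\mathcal O(R-ij)\hookrightarrow V\) with saturated image for each \(i\). These injections have independent images at every point, so their sum is an isomorphism by the determinant count of the first step.

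The main obstacle is constructing these global sections compatibly at all \(R\) points simultaneously, including the first row with \(a(u)\). We would first try the row reduction of Lemma~\ref{sc:coefficient-elimination}, which makes successive divisibility explicit.

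For \(p>1\), the block has length \(r-1\) and the same argument applies to it. The product direction \(P_0\) descends with \(\zeta_m^{-\delta}\) character. By Lemma~\ref{sc:age} and \(P_0^p=g_r-\cdots\), its degree is \(1/p\) times that of the degree-\(rj\) slot. This gives \((R-rj)/p\).
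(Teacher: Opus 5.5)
\paragraph{The flag in Step~2 has the wrong orientation.} In the frames of the lemma, the extension equations are \(b_1=vc_1\) and \(b_i=c_{i-1}+vc_i\). Equivalently, \eqref{sc:monomial-matrix} gives \(\partial_{f_1}\mapsto v\,\partial_{g_2}\) and \(\partial_{f_i}\mapsto\partial_{g_i}+v\,\partial_{g_{i+1}}\), modulo the tangent direction \(\partial_{g_1}\). So the \(i\)-th source frame goes to \(vf_i+f_{i-1}\). The preserved flag is therefore spanned by the \emph{first} \(s\) frames, i.e.\ by the summands \(\mathcal O(-j),\ldots,\mathcal O(-sj)\) of larger degree, not by the last \(s\).

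This orientation matters: with the one you chose, the conclusion is false. Take \(r=2\), \(R=1\), \(j=2\), with \(e_2\mapsto vf_2\) and \(e_1\mapsto vf_1+f_2\). A map \(V\to\mathcal O(-3)\) has \(b_1\in H^0(\mathcal O(-1))=0\). This forces \(b_2\in H^0(\mathcal O(1))\) to vanish to order two at the point, so \(b_2=0\). Hence \(\operatorname{Hom}(V,\mathcal O(-3))=0\) and \(V\simeq\mathcal O(-2)^{\oplus2}\), not \(\mathcal O(-1)\oplus\mathcal O(-3)\). So Step~3 cannot be completed as set up, and you leave the compatible gluing of sections open in any case. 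Step~1 should also not involve \(a(u)\): the source here has no tangent summand, and the normal block with diagonal \(v\) has determinant \(v^r\) directly.

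\paragraph{With the correct flag, your filtration idea finishes at once.} This gives a different and shorter route than the paper's. Let \(V_s\) be the saturation of the image of \(\bigoplus_{i\le s}\mathcal O(-ij)\). Near each point \(V_s\) is spanned by \(f_1,\ldots,f_s\), and \(e_s\mapsto vf_s \bmod V_{s-1}\) gives \(V_s/V_{s-1}\simeq\mathcal O(R-sj)\). By induction \(V_{s-1}\simeq\bigoplus_{i<s}\mathcal O(R-ij)\). The extension class then lies in \(\bigoplus_{i<s}H^1(\mathcal O((s-i)j))=0\), so \(V_s\) splits.

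The paper instead computes \(\dim\operatorname{Hom}(V,\mathcal O(q))=\sum_i\max(q+ij+1-R,0)\) for every \(q\). The coefficients with \(q+ij<R\) are killed successively, after which evaluation at the \(R\) points is surjective; these dimensions determine the splitting type. Your route makes the role of the orientation transparent, while the paper's avoids choosing a flag. Both depend on the orientation.

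\paragraph{The product direction for \(p>1\).} Replace the heuristic ``\(1/p\) of the degree-\(rj\) slot'' by the actual count. The orbifold degree is \(-rj/p\). Lemma~\ref{sc:age} gives the age correction \(-(1-1/p)R\) for the character \(\zeta_m^{-\delta}\). The simple modification \(\dot P_0=v^{-1}\dot Z\) adds \(+R\). The total is \((R-rj)/p\).
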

\begin{proof}
A map \(V\to\mathcal O(q)\) has coefficients \(b_i\in H^0(\mathcal O(q+ij))\) satisfying \(b_1=vc_1\), \(b_i=c_{i-1}+vc_i\).
Induction kills the coefficients with \(q+ij<R\); thereafter evaluation at \(R\) points is surjective.
Thus its Hom dimension is \(\sum_i\max(q+ij+1-R,0)\), which determines the splitting.
For the product line add its local simple modification \(R\) to the age correction \(-(1-1/p)R\) and initial degree \(-rj/p\).
\end{proof}

\begin{proposition}\label{sc:bundle-tests}
Suppose \(C=\mathbb P^1\) satisfies Proposition~\ref{sc:monomial-comparison} with every invariant direction preserved.
Put \(j=mt\), \(\rho=\operatorname{lcm}_c\kappa_c\), and \(B=\sum_c\max_i b_{c,i}/\kappa_c\).
When at most two widths are nontrivial, use the cyclic degree \(\rho\) cover \(\varrho:C'\to C\) branched at two values containing those ends and avoiding enlargements.
Use another end as the second value if possible; let \(s_0\) count auxiliary branch values, and put \(B_T=B+s_0(1-1/\rho)\).
For \(\rho=1\) use the identity cover and \(s_0=0\).
Let \((d_i)=(m,2m,\ldots,rm)\) for \(p=1\), or \((m,\ldots,(r-1)m,rm/p)\) otherwise.
Then \(C'=\mathbb P^1\), and the following injections have actual integral line degrees:
\begin{align}
 (f\varrho)^*T_W&\hookrightarrow
 \mathcal O(\rho B_T+2)\oplus\bigoplus_i\mathcal O(\rho(B_T-d_it))
 &&(R=0),\label{sc:tangent-test}\\
 \varrho^*N_f^{\rm sat}&\hookrightarrow
 \begin{cases}
 \bigoplus_{i=1}^r\mathcal O(\rho(B+R-ij)),&p=1,\\
 \bigoplus_{i=1}^{r-1}\mathcal O(\rho(B+R-ij))
 \oplus\mathcal O(\rho(B+(R-rj)/p)),&p>1 .
 \end{cases}\label{sc:normal-test}
\end{align}
For \(R=0\), the comparison using the normal bundle allows arbitrarily many nontrivial widths and any finite cover clearing them; its genus need not be zero.
The critical divisor of the original map has degree
\[
 c_f=\sum_c\left(\min_{i:\theta_{c,i}(x_c)\ne0}b_{c,i}/\kappa_c-1\right).
\]
If the marked quotient has \(\ell\) coincident labels of common degree \(-e<0\), sort the normalized target degrees as \(v_1\ge\cdots\ge v_q\).
A necessary condition is
\begin{equation}\label{sc:sorted-test}
 \#\{i:v_i\ge e\}\ge k_0,\qquad A\le\sum_{i=1}^{k_0}v_i ,
\end{equation}
where \((k_0,A)=(n-\ell-2,a+(\ell-1)e)\) for the tangent test, and \((k_0,A)=(n-\ell-3,a-2-c_f+(\ell-1)e)\) for the comparison using the normal bundle.
Failure at the least allowed \(e\) excludes every larger \(e\) for the same record.
\end{proposition}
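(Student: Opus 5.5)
We pass to the cyclic cover \(\varrho:C'\to C\) and build the injections there. Then we read off \eqref{sc:sorted-test} from Propositions~\ref{sg:negative}--\ref{sg:determinant} applied to \((f\varrho)^*T_W\).

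First, the cover. A cyclic cover of \(\mathbb P^1\) of degree \(\rho\), totally branched at two values, is again \(\mathbb P^1\). Its ramification index at a chosen end is \(\rho\), which \(\kappa_c\) divides. Hence the coarse coordinate \(Q=q^{\kappa_c}\) pulls back to \((q')^{\rho}\)-type powers, and every pole order \(\nu_c\) and every \(b_{c,i}/\kappa_c\) becomes the integer \(\rho b_{c,i}/\kappa_c\). At an auxiliary branch value that is not an end, the pullback picks up the extra \(\rho-1\) recorded by \(s_0(1-1/\rho)\). Since enlargements are avoided, the local bidiagonal matrices at \(R\) points are unchanged.

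Second, the normal test \eqref{sc:normal-test}. Start from the actual lines \(E_{d_i}\) of degree \(-d_it\) supplied by Proposition~\ref{sc:monomial-comparison}, one for each invariant direction, all preserved by hypothesis. Write down the quotient differential
\[
\bigoplus_iE_{d_i}\longrightarrow N_f^{\rm sat}.
\]
It is an isomorphism away from the enlargements and the ends. At an enlargement it has the bidiagonal shape of \eqref{sc:monomial-matrix}, so Lemma~\ref{sc:normal-splitting} gives twist \(R\) on the symmetric block and \((R-rj)/p\) on the product line. At an end, inverting \eqref{sc:theta-map} shows the inverse has horizontal poles of order at most \(\max_i b_{c,i}\) in \(q\). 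After pullback, \(\varrho^*N_f^{\rm sat}\) injects into the same sum twisted by \(\rho B\).

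Third, the tangent test \eqref{sc:tangent-test} for \(R=0\). Add the tangent column: \(T_C\) has degree \(2\) on \(\mathbb P^1\), and the logarithmic frame at ends contributes \(\rho B_T\) on every row. Integrality is automatic because all degrees on \(C'\) are degrees of actual pulled-back line bundles. For \(R=0\) no bidiagonal structure is needed, so any finite cover killing all widths works, of any genus. The formula for \(c_f\) comes from the lowest nonvanishing order of \(\theta_{c,i}\) in the coarse coordinate.

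Fourth, \eqref{sc:sorted-test}. Suppose the marked quotient has \(\ell\) coincident labels of degree \(-e<0\). Proposition~\ref{sg:kernel} splits \(f^*T_W=F\oplus D\), with \(\operatorname{rank}F=n-\ell-2\). The outside surjection \(\bigoplus_{j\notin I}M_j\twoheadrightarrow F\) gives \(F(-e)\) globally generated on \(\mathbb P^1\), so every summand of \(F\) has degree at least \(e\).

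Composing the injection with the projections, the \(F\)-part must land in target lines of degree at least \(e\). A nonzero map \(F\to\mathcal O(v)\) with \(v<e\) vanishes, because \(\operatorname{Hom}\) into lines of degree below \(e\) is zero by Proposition~\ref{sg:kernel}. Hence at least \(k_0\) target degrees satisfy \(v_i\ge e\). Moreover, \(\bigwedge^{k_0}F\) maps nonzero into a sum of \(k_0\) of these lines. So
\[
\deg F\le\sum_{i\le k_0}v_i,
\]
and
\[
\deg F=a+(\ell-1)e,
\]
since \(\deg D=-(\ell-1)e\). For the normal version, replace \(F\) by \(\mathcal K\), whose degree is lower by \(2+c_f\) from the saturated tangent image, and whose rank is one less.

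Finally, monotonicity in \(e\). Raising \(e\) increases \(A\) and shrinks the set of \(v_i\ge e\), so failure at the least \(e\) persists for every larger \(e\).

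The main obstacle is the end analysis in the second step. The pole bound must be uniform across all rows after passing to \(C'\), and the chosen branch values must genuinely clear all widths.
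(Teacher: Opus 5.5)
Your proposal is correct and follows essentially the same route as the paper. Both use the cyclic cover, the source bundles from Lemma~\ref{sc:normal-splitting}, cusp inverse poles of total degree \(\rho B\), and then the sorted test from the kernel of Proposition~\ref{sg:kernel}, via Hom vanishing below \(e\), a nonzero maximal minor, and monotonicity in \(e\). Two small corrections: the paper justifies splitting off the tangent line in the \(R=0\) tangent test by \(H^1(\mathcal O(2+\rho d_it))=0\) on \(C'=\mathbb P^1\), and the freedom to use a cover of any genus belongs to the normal comparison, not to the tangent test as your third step places it.
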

\begin{proof}
Riemann--Hurwitz makes the two point cover rational.
At an end of ramification index \(e_c\), write \(q=x^{e_c/\kappa_c}\).
Saturating the actual inertia gives symmetric degrees \(-\rho ij\) and product degree \(-\rho rj/p-\rho(1-1/p)R\).
Extend through enlargements by \eqref{sc:monomial-matrix}; Lemma~\ref{sc:normal-splitting} gives the normal source bundles.
For \(R=0\), the full normalizer charts glue the first order tangent bundle, whose extension splits by \(H^1(\mathcal O(2+\rho d_it))=0\).
The cusp inverse has total pole degree \(\rho B\); an auxiliary branch value adds only the tangent pole \(\rho-1\).
This proves both injections.
For the arbitrary genus option, no interior modification is needed and the actual invariant lines already form a direct sum.
A cover clearing widths is the normalization of \(y^\rho=\prod_c(Q-a_c)^{\rho/\kappa_c}\); extra branching adds no horizontal pole.
Saturation of a primitive derivative commutes with finite pullback, and differentiating the actual cusp image gives \(c_f\).
The marked kernel of Proposition~\ref{sg:kernel}, or its quotient by the saturated tangent, has rank \(k_0\), degree \(A\), and is generated by line bundles of degrees at least \(e\).
Maps to target lines of degree below \(e\) vanish.
A nonzero maximal minor proves the degree inequality in \eqref{sc:sorted-test}.
Its left requirement and the required degree both increase with \(e\).
\end{proof}

\subsection{Two universal geometric inequalities}
\begin{lemma}\label{sc:complete-curves}
At a fixed horizontal point the cusp width is \(\kappa=\gcd\{b_i:\theta_i(x)\ne0\}\).
If \(b_i\in\{1,3\}\) and \(\kappa=1\), the fixed curve is immersed at the end.
\end{lemma}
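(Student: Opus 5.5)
We work in the local cusp model of Lemma~\ref{sg:cusp} and Lemma~\ref{sg:cusp-lift}. Near the end of a fixed curve, the contraction map is given coordinatewise, as in \eqref{sc:theta-map}, by \(U_i=q^{b_i}\bar\theta_i(\boldsymbol\eta)\). Here \(\bar\theta_i\) are the polynomial generators of \(R_P\), of degrees \(b_i\) for the full least positive central period, and \(\boldsymbol\eta\) is the horizontal coordinate. The plan has two parts: compute the width \(\kappa\), then deduce immersion when \(b_i\in\{1,3\}\) and \(\kappa=1\).

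For the width, fix the horizontal point \(x\). The fixed disk of Lemma~\ref{sg:cusp-lift} is \(\boldsymbol\eta=x\) with \(q\) varying. Its image is the curve
\[
q\longmapsto\bigl(q^{b_i}\bar\theta_i(x)\bigr)_i .
\]
Only the coordinates with \(\theta_i(x)\ne0\) are nonzero along this curve. Two values \(q,q'\) have the same image exactly when \(q'/q\) is a \(b_i\)-th root of unity for every such \(i\). The group of these ratios is therefore \(\mu_g\), with \(g=\gcd\{b_i:\theta_i(x)\ne0\}\).

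On the other hand, Lemma~\ref{sg:cusp-lift} says that the coarse end coordinate is \(Q=q^{\kappa}\), where \(\kappa=|\operatorname{im}\chi_x|\). I would identify the two descriptions as follows. The residual stabilizer acts on the fiber through \(\chi_x\), and its effect on the fiber coordinate \(q\) is precisely the identification \(q\sim\zeta q\) coming from sections, since the actual lift has the \(\bar\theta_i\) as sections of \(M_*^{b_i}\). The image of \(\chi_x\) is the subgroup of \(\mu\) acting trivially on all nonvanishing \(\bar\theta_i(x)\). To make this rigorous I need that the nonvanishing invariant sections at \(x\) separate the \(\mu_\infty\)-orbits in the fiber. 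This holds because the ring \(R_P\) is polynomial with \(\gcd\) of degrees one (Lemma~\ref{sg:cusp}), and invariant sections generate the coordinate ring of the cone over \(A_P/F_P\). Hence the fiber over \(x\) in the cone \(\operatorname{Spec}R_P\) is \(\mathbb C^*/\mu_g\), and this gives \(\kappa=g\).

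For immersion, assume \(\kappa=1\), so the coarse coordinate is \(Q=q\). The map \(Q\mapsto(Q^{b_i}\bar\theta_i(x))\) is immersed at \(Q=0\) if some \(b_i=1\) has \(\theta_i(x)\ne0\). Otherwise all nonvanishing degrees equal \(3\), so \(\gcd=3\ne1\), contradicting \(\kappa=1\). In the first case the derivative of that coordinate is the nonzero constant \(\bar\theta_i(x)\), so the curve is immersed at the end. This also matches the critical divisor formula \(c_f\) of Proposition~\ref{sc:bundle-tests}, whose local term \(\min b_i/\kappa-1\) vanishes.

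The main obstacle is the identification of \(\kappa\) with the \(\gcd\). One must check that the coarse coordinate on the fixed disk modulo its full normalizer agrees with the coordinate induced in \(\operatorname{Spec}R_P\) along the image. In particular, no further identification of the disk arises from normalizer elements outside \(P\) near the end, and no nontrivial normalization occurs. I would handle this using the birationality part of Lemma~\ref{sg:fixed} together with the fact that \(\chi_x\) and the reduced image curve's local parametrization both describe the same stabilizer quotient.
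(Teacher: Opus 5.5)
Your argument is correct and is the paper's proof: the fiber stabilizer over $x$ is $\{\lambda:\lambda^{b_i}=1 \text{ whenever } \theta_i(x)\ne0\}$, and width one with $b_i\in\{1,3\}$ forces a nonvanishing degree-one coordinate $U_i=Q\bar\theta_i(x)$ with nonzero derivative; the orbit separation you need is just that the contraction in \eqref{sg:cusp-ring} is injective off the zero section, so polynomiality of $R_P$ plays no role there. The only step you omit is passing to the small model, which the paper handles in one line: the curve extends to $W$ by properness, and $d\pi\,df_W=df_X$ transfers the immersion.
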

\begin{proof}
Its fiber stabilizer is defined by \(\lambda^{b_i}=1\) for all nonzero coordinates.
Width one forces a nonzero degree one section, giving a nonzero derivative in \(U_i(Q)\).
Properness extends the curve to \(W\), and \(d\pi\,df_W=df_X\) preserves this immersion.
\end{proof}

\begin{lemma}\label{sc:moving-coordinate}
A \(k\)-dimensional family of distinct complete embedded rational curves in the smooth coarse ball quotient has at least \(k+3\) branch values on a general member, provided it avoids higher stabilizer strata and meets the branch divisor transversely.
In particular, a coordinate curve \(\gamma_i\) on \(W\) satisfies
\begin{equation}\label{sc:coordinate-bound}
 L\cdot\gamma_i\ge\frac{d-2}{2(d+1)}=\frac{n-5}{2(n-2)}.
\end{equation}
\end{lemma}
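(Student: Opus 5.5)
The plan is to prove the general count with Riemann--Hurwitz and a dimension count for maps to an orbifold quotient, and then to specialize to coordinate curves using the orbifold canonical class \(K=NL\).

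For the first assertion, let \(\Sigma\) be a general member of the family and \(b\) the number of points where it meets the branch divisor. Transversality and the absence of higher stabilizer strata mean that at each such point the local uniformizing group is a single cyclic reflection group, of some order \(e\ge2\). So the ball quotient pulls back to an orbifold structure on \(\Sigma\simeq\mathbb P^1\) with \(b\) cone points. The preimage of \(\Sigma\) in \(\mathbb B^d\) is a union of complex curves, and each component is a complete totally geodesic or at least hyperbolic Riemann surface uniformized by the orbifold fundamental group. Hence this orbifold must be hyperbolic. That already forces \(b\ge3\); the extra \(k\) comes from rigidity.

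To get \(b\ge k+3\), I would compare deformations. Maps of a fixed compact orbifold curve into a compact (or Baily--Borel, with cusps avoided) ball quotient that are nonconstant have deformation space controlled by the negativity of the tangent bundle of the ball: the holomorphic sectional curvature is negative, and the orbifold normal sheaf injects into a bundle whose sections are governed by the uniformization. Concretely, a \(k\)-dimensional family of distinct curves, with the cone points fixed, is impossible: by Schwarz--Pick rigidity (or the Ahlfors lemma), a holomorphic family of equivariant maps \(\mathbb D\to\mathbb B^d\) from a fixed Fuchsian orbifold group is locally constant up to the centralizer, which is discrete. Hence the family must vary the moduli of the orbifold \((\mathbb P^1,\)\(b\) points\()\), which has dimension \(b-3\). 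Together with the finiteness of the map to moduli, this gives \(k\le b-3\). This step, turning rigidity into injectivity of the family into the moduli of pointed \(\mathbb P^1\)'s, is where I expect the main obstacle. I would try the Ahlfors--Schwarz lemma applied to a one-parameter subfamily with fixed branch data, showing its developing maps coincide.

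For \eqref{sc:coordinate-bound}, a coordinate curve \(\gamma_i\) moves in a \((d-1)\)-dimensional family: the other \(n-1\) marks vary modulo \(\mathrm{PGL}_2\). So \(b\ge d+2\), and general members avoid the codimension-two strata. The orbifold Euler characteristic gives
\[
 \deg K_{\mathrm{orb}}|_{\gamma_i}=-2+\sum(1-1/e_p)\ge-2+\tfrac{b}{2}\ge\tfrac{d-2}{2}.
\]
The logarithmic-type inequality for the orbifold canonical class of the ball restricted to a curve bounds \(K_{\mathbb B}|_{\gamma}\ge K_{\gamma,\mathrm{orb}}\), using the negative curvature comparison with the second fundamental form. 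Since \(K=NL=(d+1)L\), this gives \(L\cdot\gamma_i\ge(d-2)/(2(d+1))\). One must also check that pulling back from \(W\) to the coarse quotient does not change \(L\cdot\gamma_i\), because \(\gamma_i\) lies in the original stable locus by Lemma~\ref{sg:curve-bound}.
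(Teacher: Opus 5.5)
Your count $k\le b-3$ follows the paper's plan: rigidity for a fixed source curve, leaving only the $b-3$ moduli of the branch values. The tool you name for the rigidity step is not sufficient, though. The Ahlfors--Schwarz lemma only makes maps distance-decreasing; it does not force two equivariant developing maps to coincide. The paper instead works on a neat cover. Each cyclic inertia acts freely on sheets, so every component of a pulled-back curve keeps all $b$ branch values. With the branch values fixed, Riemann existence leaves finitely many monodromy tuples. Infinitesimal rigidity then comes from $H^0(D,v^*T)=0$ for nonconstant $v$, which follows from negative bisectional curvature and the integrated Bochner formula. Something of this kind is needed to close the step you flagged.

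The genuine error is in the final inequality. You claim $(d+1)L\cdot\gamma=\deg K_X^{\mathrm{orb}}|_\gamma\ge\deg K_\gamma^{\mathrm{orb}}$ from negative curvature and the second fundamental form, but that comparison goes the other way. By the Gauss equation, the second fundamental form makes the curve more negatively curved than the ambient holomorphic sectional curvature. This yields $\deg K_\gamma^{\mathrm{orb}}\ge2L\cdot\gamma$, which is an upper bound on $L\cdot\gamma$, not a lower one.

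On the smooth coarse interior, $NL=K_W+\sum_j(1-1/e_j)B_j$ holds. Combined with adjunction, for a curve meeting the branch divisor transversely this gives $(d+1)L\cdot\gamma=\deg K_\gamma^{\mathrm{orb}}-\deg N_\gamma$. So your inequality is equivalent to $\deg N_\gamma\le0$, which fails in general: a very ample smooth curve $C$ on a compact ball quotient surface has $K_X\cdot C-\deg K_C=-C^2<0$.

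What rescues the coordinate curves is not curvature. They are fibers of the smooth $\mathbb P^1$-bundle over $M_{0,n-1}$, so their normal bundle is trivial; equivalently, $-K_W\cdot\gamma_i=2$ as in Proposition~\ref{sg:mori}. The canonical formula then gives the exact identity $(d+1)L\cdot\gamma_i=-2+\sum_x(1-1/e_x)\ge-2+b/2$, which is the paper's argument. Together with $b\ge d+2$ this gives the bound.

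You should also verify two hypotheses of the first assertion, as the paper does. First, a general $\gamma_i$ avoids the finite cusp set, because the cusps have proper image in the base. Second, it meets the branch divisor transversely, by generic smoothness of that divisor's projection to the base when the projection is dominant.
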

\begin{proof}
On a normal neat cover the pullback families are smooth proper after restricting parameters; choose connected components of fixed degree after finite base change.
Each cyclic inertia acts freely on sheets, so every component retains all branch values.
With \(b\) branch values, finite permutation monodromy tuples leave at most \(b-3\) parameters.
The cases \(b\le2\) would give a rational covering curve, impossible in a ball quotient.
For a fixed covering curve \(D\), negative bisectional curvature and the integrated Bochner formula give \(H^0(D,v^*T)=0\) for every nonconstant \(v\) \cite{Kobayashi}.
Hence \(k\le b-3\).
The canonical formula gives \((d+1)L\gamma=-2+\sum(1-e_x^{-1})\ge-2+b/2\).
For an existing coordinate curve \(\gamma_i\), all pairs containing \(i\) are allowable.
The open set where the other marks are distinct is therefore the universal smooth projective line bundle over \(M_{0,n-1}\).
Its fibers are the complete embedded coordinate curves and its base has dimension \(d-1\).
The chamber perturbation never allows a pair of original weight at least one, so the family lies in the original stable locus.
The finite cusp set has proper image in the base.
A stabilizer stratum of codimension at least two has base image of dimension at most \(d-2\), so a general fiber avoids it.
A branch divisor with nondominant base image is also avoided; otherwise its projection is generically smooth in characteristic zero, making a general fiber transverse.
The family thus satisfies the hypotheses already proved, and \(H\cdot\gamma_i=2\) gives the bound.
\end{proof}

\begin{lemma}\label{sc:negative-normal}
Let \(Z=\mathbb P^k\), \(k\ge2\), be closed in a coarse arithmetic ball compactification, with \(Z^\circ=Z\setminus S\) in its smooth interior for a finite set \(S\) containing the boundary points.
Suppose \(N_{Z^\circ}=\mathcal O(-1)^{\oplus c}\) and the original automorphic class restricts to \(\lambda\mathcal O(1)\).
Retain the finite maps from the full fixed normalizers in Lemma~\ref{sg:fixed} and their restricted automorphic class.
Let the full generic fixed group have normal rank \(s\le c\).
If \(s=c\), its actual fixed compactification is \(Z\).
If \(s<c\), then \(\lambda\ge1\); equality makes the regular inverse image locally totally geodesic in the generic fixed ball.
\end{lemma}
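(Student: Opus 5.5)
The plan is to compare the given normal bundle \(\mathcal O(-1)^{\oplus c}\) of \(Z^\circ\) with the normal bundle of the fixed ball quotient through the general point of \(Z\), using Lemma~\ref{sg:fixed}.

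\emph{Setting up the fixed ball.} Take a general point \(x\in Z^\circ\) and let \(T\) be its full generic pointwise stabilizer along \(Z\), of normal rank \(s\). The component of \(Z\) through \(x\) is a connected subvariety. Lift it to the ball and take an analytic branch \(\widetilde Z\) through a lift of \(x\). All points of \(\widetilde Z\) have stabilizer containing \(T\), so \(\widetilde Z\subset\mathbb B(V_0)\). Here \(V_0\) is the fixed space of \(T\), and \(\dim\mathbb B(V_0)=d-s\ge k\).

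\emph{Case \(s=c\).} Then \(\dim\mathbb B(V_0)=d-c=k\), so \(\widetilde Z\) is open in \(\mathbb B(V_0)\). Lemma~\ref{sg:fixed} gives a finite map \(j\) from the fixed compactification, birational onto its image \(Z\). Since \(Z=\mathbb P^k\) is normal, Zariski's main theorem identifies the fixed compactification with \(Z\).

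\emph{Case \(s<c\).} Now \(\widetilde Z\) is a proper complex submanifold of \(\mathbb B(V_0)\) of positive codimension \(c-s\). Consider the chain
\[
 N_{Z^\circ/\mathbb B(V_0)/\Pi}\hookrightarrow N_{Z^\circ}\twoheadrightarrow N_{\mathbb B(V_0)},
\]
with the last quotient taken along \(Z^\circ\). By Lemma~\ref{sg:fixed}, the normal directions of \(\mathbb B(V_0)\) are components of class \(-q\,j^*L\) up to finite characters, so they have degree \(-q\lambda\) on lines. The normal bundle of \(Z^\circ\) inside the fixed ball is a quotient of \(T_{\mathbb B(V_0)}|_{Z^\circ}\). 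By negative holomorphic sectional curvature of the ball and the Gauss equation, that quotient has curvature bounded above by the ambient curvature. In automorphic terms this says \(T_{\mathbb B}\otimes\mathcal L^{-1}\) is a quotient of a trivial (flat) bundle, i.e. \(T_{\mathbb B}\) is globally generated after twisting by \(\mathcal L^{-1}\).

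Restrict to a general line \(\ell\subset Z\) that avoids \(S\), which has codimension at least two because \(k\ge2\). Then \(N_{\widetilde Z/\mathbb B(V_0)}|_\ell\otimes\mathcal L^{-1}\) is a quotient of a flat bundle with finite monodromy. Hence every line-bundle quotient has degree at least \(-\lambda\) on \(\ell\). This quotient is also a subbundle of \(N_{Z^\circ}|_\ell=\mathcal O(-1)^{\oplus c}\), so every line quotient of it has degree \(\le-1\) up to the torsion argument. We also need the quotient to be nonzero on \(\ell\), which holds because \(c-s\ge1\). Together these give \(-\lambda\le-1\), that is \(\lambda\ge1\).

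\emph{Equality case.} If \(\lambda=1\), the map from the flat bundle has degree-zero kernel behaviour on every general line. The second fundamental form of \(\widetilde Z\) in \(\mathbb B(V_0)\) is a section of \(\mathrm{Sym}^2T^*_{\widetilde Z}\otimes N\), and it contributes strictly negative degree unless it vanishes. Via Lemma~\ref{sc:age} and the flat-frame comparison, the section would have degree \(<0\) on the moving lines, so it vanishes on the regular locus. Therefore \(\widetilde Z\) is locally totally geodesic.

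\emph{Main obstacle.} The step I expect to be hardest is making the curvature inequality algebraic: expressing the ball's tangent bundle as a quotient of \(\mathcal L\otimes(\text{flat})\) via the Euler sequence \(0\to\mathcal L\to V\otimes\mathcal L\to T_{\mathbb B}\otimes\ldots\) with the correct twist. It also has to be checked that the finite characters and the omitted set \(S\) do not spoil the degree count on lines; passing to a neat cover restricted to \(\ell\) should handle both.
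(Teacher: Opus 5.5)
Your case \(s=c\) is fine and matches the paper: the fixed compactification is finite and birational onto the normal \(Z\).

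The case \(s<c\) has a genuine gap at the positivity step. It is true holomorphically that \(T_{\mathbb B(V_0)}\otimes\mathcal L=V_0/\mathcal L\) is a quotient of the flat bundle \(V_0\); the twist is by \(\mathcal L\), not \(\mathcal L^{-1}\). But that flat bundle carries the monodromy of the infinite arithmetic group \(\Pi\), which preserves an indefinite form. It does not have finite monodromy, and its quotients need not have nonnegative degree. Indeed \(T_{\mathbb B(V_0)}\otimes\mathcal L\) is itself such a quotient, and its determinant \(\mathcal L^{-(m+1)}\otimes\mathcal L^{m}=\mathcal L^{-1}\) has degree \(-t<0\) on every complete curve. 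So it is not globally generated after any finite cover. Nothing in your argument distinguishes normal directions from tangent directions, so this step cannot be repaired by bookkeeping. The Gauss equation also goes the other way: a quotient bundle has curvature bounded \emph{below} by the ambient one, and an upper bound by the negatively curved ball would give nothing.

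The missing input is a pointwise curvature computation for normal directions. With the metric normalized by \(\mathcal L\), the curvature of \(T\otimes\mathcal L\) at \((v,\xi)\) is \(-|\langle v,\xi\rangle|^2\). This vanishes when \(v\) is tangent to \(\widetilde Z\) and \(\xi\) is orthogonal to \(T\widetilde Z\). Passing to the quotient \(\mathcal Q=N_{\widetilde Z/\mathbb B(V_0)}\) of rank \(q=c-s\) adds the square of the second fundamental form. Thus \(\mathcal Q\otimes\mathcal L\) is semipositive along \(\widetilde Z\). Chern--Weil on a complete inverse line in a neat cover, of arbitrary genus and hyperplane degree \(v\), then gives \(\deg\det(\mathcal Q\otimes\rho^*\mathcal L)\ge0\).

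The comparison with \(\mathcal O(-1)^{\oplus c}\) should likewise be made with determinants. Generic injectivity gives \(\deg\det\mathcal Q\le-qv\), hence \(0\le q(\lambda-1)v\) and \(\lambda\ge1\). Working with line quotients on \(\ell\cong\mathbb P^1\) does not achieve this, for two reasons. The inverse curve need not be rational. And a subsheaf of \(\mathcal O(-1)^{\oplus c}\) can have line quotients of degree zero, e.g.\ \(\mathcal O(-1)^{\oplus 2}\twoheadrightarrow\mathcal O\).

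The equality case then follows because the integrated \(|{\rm II}|^2\) term must vanish along general inverse lines, and these cover all tangent directions. Lemma~\ref{sc:age} plays no role there.
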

\begin{proof}
For \(s=c\), the finite actual fixed quotient has a unique generic preimage over \(Z^\circ\), so its compactification is finite birational to normal \(Z\), hence isomorphic to it.
For \(q=c-s>0\), work on a neat cover of the fixed ball and resolve the Gauss map graph of the inverse image of \(Z^\circ\).
Its saturated tangent plane gives a subbundle \(\mathcal S\subset\rho^*T\) of rank \(k\).
The quotient \(\mathcal Q\) has rank \(q\), maps generically injectively to the pullback of \(\mathcal O(-1)^c\), and \(\mathcal Q\otimes\rho^*\mathcal L\) is semipositive.
Ball curvature in a direction perpendicular to \(\mathcal S\) is \(-\|d\rho(v)\|^2\|\xi\|^2\); the quotient curvature adds the nonnegative square of the second fundamental form \cite[Section~1.6]{Kobayashi}.
On a complete inverse line of hyperplane degree \(v>0\), avoiding \(S\), this gives
\[
0\le\deg\det(\mathcal Q\otimes\rho^*\mathcal L)\le q(\lambda-1)v.
\]
Thus \(\lambda\ge1\).
At equality the integral of the nonnegative curvature term is zero, so the second fundamental form vanishes along every such curve.
General inverse lines cover all tangent directions on the regular immersion locus, proving local total geodesy.
This argument uses the actual fixed quotient; it does not lift a coarse tangent splitting through ramification.
\end{proof}
The conclusion at equality is supplementary and is not needed for Theorem~\ref{si:main}.

\begin{proposition}\label{sc:type-a}
Let a fixed curve have full generic group \(A_r\), \(r\ge2\), larger groups only \(A_{r+1}\) and \(A_r\times\mu_e\), and degree one full horizontal cusp models.
Put \(q=r+1\), let \(R\) count \(A_{r+1}\) points, and put \(h=-qt+R+s\).
If \(h<0\), the space of homomorphisms to the line bundle associated with the invariant of greatest degree has dimension one.
In all cases
\begin{equation}\label{sc:type-a-bound}
 a\ge\frac{r^2-3}{2}t+\sigma .
\end{equation}
\end{proposition}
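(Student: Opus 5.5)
The plan is to specialize the two comparisons of Section~\ref{sc:comparison} to the case \(K=A_r\) and combine them. Here \(A_r\) acts on its reflection representation of rank \(r\) with basic degrees \(2,\dots,r+1\), the fixed curve has \(m=1\), \(d=r+1\) and \(n=r+4\). So \(D_K=\tfrac{(r+1)(r+2)}{2}-1\) and \(D_{A_{r+1}}-D_K-1=r+1\).

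\textbf{Step 1 (determinant identity).} I will compute the local terms of \eqref{ew-curve-determinant-equation}.
\begin{itemize}
\item At an \(A_{r+1}\) point the prescribed axis has residual group \(N_{S_{r+2}}(S_{r+1})/S_{r+1}\). I will check it has order \(e_y=2\), acting by \(u\mapsto-u\), as in the monomial case with \(m=2\). This point then contributes \((r+1)/2\).
\item At a product enlargement \(A_r\times\mu_e\), the extra generator has degree \(e\) on the axis. The contribution is \((e-1)/e\), and these terms sum to \(\sigma\).
\item At a degree one cusp model, all \(b_{c,i}=1\), so \(A_c/\kappa_c=(r+1)/\kappa_c\).
\end{itemize}
This gives
\[
a=\tfrac{r+1}{2}R+\sigma+\sum_c\tfrac{r+1}{\kappa_c}-\tfrac{(r+1)(r+2)+2}{2}\,t .
\]
The Riemann--Hurwitz relation \eqref{sc:monomial-log} with \(m=2\) is \(2g-2+s+\tfrac12R+\sigma=2t\). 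I will also use the analogue of \eqref{sc:monomial-inverse} for the top invariant \(e_{r+1}\) of degree \(q=r+1\). It gives an actual line \(E_q\) of degree \(-qt\) and a nonzero map \(f^*T_W\to E_q(D_R+\sum_c\nu_c[c])\), with \(\nu_c\le1\) because the cusp degrees are one. The target has degree at most \(h\).

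\textbf{Step 2 (the case \(h<0\)).} I will show that the Hom space is one-dimensional using Lemma~\ref{sc:coefficient-elimination}. The local matrix at an \(A_{r+1}\) point is the bidiagonal one of \eqref{sc:monomial-matrix} with \(v=u^2\), and the ends have \(\nu_i(c)\le1\) with invertible leading blocks. The successive twisted degrees \(-it+R+s\) for \(i<q\) are then all negative, and elimination leaves at most the last coefficient. Proposition~\ref{sg:negative} then forces \(\ell=2\) and gives \(a\ge(n-4)(-h)=r(qt-R-s)\). I will add a suitable multiple of this inequality to the determinant identity, using \(\kappa_c\ge1\) in one direction and Riemann--Hurwitz to trade \(R+s\) for \(t\) and \(\sigma\) (with \(g=0\), or \(g\ge0\) in general). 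The aim is to eliminate \(R\) and \(s\) and reach \eqref{sc:type-a-bound}.

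\textbf{Step 3 (the case \(h\ge0\)).} Here \(R+s\ge(r+1)t\). I will insert this directly into the determinant identity, bounding each cusp term below via \(1/\kappa_c\ge\) its minimal value and using Riemann--Hurwitz to control \(s\) against \(R\). The right-hand side then dominates \(\tfrac{r^2-3}{2}t+\sigma\).

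The main obstacle is Step 2, in two places. First, the exact residual order \(e_y\) and the age correction at \(A_{r+1}\) points must be verified, and the elimination hypotheses checked at cusps whose width \(\kappa_c>1\), where \(\nu_c\) could be \(0\). Second, the two linear inequalities must be weighted so that \(R\) and \(s\) cancel exactly, producing the constant \(-3/2\) rather than something weaker. I would first try the combination that makes the coefficient of \(R\) vanish and then check that the residual coefficient of \(s\) is nonnegative.
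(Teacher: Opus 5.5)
There is a genuine gap. Step 1 has wrong local data at the \(A_{r+1}\) points, and Step 2 cannot reach the stated constant.

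\textbf{The local data at \(A_{r+1}\) points.} Type \(A\) is not the monomial case \(m=2\); that case is \(B_r\). The pointwise group \(S_{r+1}\) is self-normalizing in \(S_{r+2}\), and the axis \(u\mapsto(-qu,u,\ldots,u)\) has no symmetry \(u\mapsto-u\). So \(e_y=1\), not \(2\). Each \(A_{r+1}\) point therefore contributes \(D_J-D_K-1=q\) to \eqref{ew-curve-determinant-equation}, and these points do not enter the logarithmic identity, which reads \(2g-2+s+\sigma=2t\). With width-one ends, \(A_c/\kappa_c=q\), the determinant formula becomes exactly
\[
qh=a-\tfrac{(q-2)(q+1)}{2}t-\sigma .
\]
This settles the case \(h\ge0\) at once. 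A smaller point in Step 2: after removing the forced zeros along \(D_R+S\), the residual coefficient degrees are \(-(q-j)t\). Your degrees \(-it+R+s\) need not be negative when \(h<0\).

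\textbf{Why Step 2 fails.} No weighting of \(a\ge r(-h)\), which is all you extract from \(\ell=2\), against the determinant identity can reach \((r^2-3)/2\). Eliminating \(h\) gives at best
\[
a\ge\frac{r}{2r+1}\Bigl(\frac{(r-1)(r+2)}{2}t+\sigma\Bigr),
\]
which is roughly \(r^2t/4\). For \(r=5\) this is \(70t/11\), below the bound \(10t\) that the proposition must beat in Theorems~\ref{sn:compact-terminal} and~\ref{sn:nonuniform-terminal}.

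The missing idea is a second, rank-2 comparison that yields \(h\ge-t/2\):
\begin{itemize}
\item Project the inverse normal differential onto \(E_{q-1}\oplus E_q\), and take its image \(\mathcal Q\), not its saturation.
\item Compute the stalks. At \(A_{r+1}\) points they are generated by \((\alpha^{-1},0)\) and \((-\alpha^{-2},\alpha^{-1})\), with \(\alpha=(q+1)u\). At product points the stalk is \(E\), and at ends it is \(v^{-1}E\).
\item These stalks give an extension \(0\to E_{q-1}(D_R+S)\to\mathcal Q\to E_q(D_R+S)\to0\), so \(\deg\det\mathcal Q=2h+t\).
\item If \(h<0\) and \(2h+t<0\), Proposition~\ref{sg:determinant} forces \(\ell\ge3\). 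Proposition~\ref{sg:negative} then gives a Hom space of dimension at least two, contradicting one-dimensionality.
\end{itemize}
Hence \(h\ge-t/2\), and substituting this into the identity above gives \eqref{sc:type-a-bound}.
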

\begin{proof}
The lines \(E_j\), \(2\le j\le q\), associated with the polynomial invariants on the normal space have degrees \(-jt\), with their actual combined characters.
At \(A_{r+1}\), expand
\[
 (z+qu)\left((z-u)^q+\sum_{j=2}^q f_j(z-u)^{q-j}\right).
\]
The differential has tangent diagonal \(-q(q+1)u\), normal diagonals \((q+1)u\), and superdiagonal one.
Let \(D_R\) be the reduced divisor of \(A_{r+1}\) points, and let \(S\) be the reduced divisor of ends.
The line bundle associated with the invariant of greatest degree is \(Q=E_q(D_R+S)\).
Its tangent coefficient has residual degree \(-(q-2)t-\sigma<0\).
Lemma~\ref{sc:coefficient-elimination} kills all \(j<q\), with residual degrees \(-(q-j)t\); the last is constant.
Let \(N=N_f^{\mathrm{sat}}\), and put \(E=E_{q-1}\oplus E_q\).
The inverse normal differential followed by projection onto these two directions defines a morphism \(\psi:N\to E(2D_R+S)\).
Define \(\mathcal Q=\operatorname{im}\psi\), without taking its saturation in the target.
This image is a vector bundle of rank 2 on \(C\), and \(N\to\mathcal Q\) is surjective.
At an \(A_{r+1}\) point, put \(\alpha=(q+1)u\); the stalk is generated, in the frames of \(E_{q-1},E_q\), by \( (\alpha^{-1},0)\) and \( (-\alpha^{-2},\alpha^{-1})\).
At a point with stabilizer \(A_r\times\mu_e\), the normal differential is invertible and the stalk is \(E\).
At a cusp with parameter \(v\), the invertibility of \(J_c\) in \eqref{sc:theta-map}, after removing the radial direction, gives the stalk \(v^{-1}E\).
Projection onto the second coordinate is consequently surjective onto \(Q=E_q(D_R+S)\) at every point, and its kernel is \(E_{q-1}(D_R+S)\).
Thus
\[
 0\longrightarrow E_{q-1}(D_R+S)\longrightarrow\mathcal Q
 \longrightarrow Q\longrightarrow0,\qquad
 \deg\det\mathcal Q=-(2q-1)t+2R+2s=2h+t.
\]
These equalities of stalks exclude any additional quotient of finite length.
If both \(h<0\) and \(2h+t<0\), Proposition~\ref{sg:determinant} forces at least three coincident labels.
Proposition~\ref{sg:negative} then gives at least two independent Hom directions, a contradiction.
Thus \(h<0\) implies \(2h+t\ge0\).
The determinant and logarithmic identities give
\[
 qh=a-\frac{(q-2)(q+1)}2t-\sigma .
\]
Substitute \(h\ge-t/2\); for \(h\ge0\) this identity is stronger.
\end{proof}

\subsection{Integral classes and local flags}
\begin{proposition}\label{sc:chern-integrality}
On a smooth coarse quotient of dimension \(d\), let \(c_i^{\rm orb}\) denote the average of tangent Chern classes on a neat cover.
Then \(c_i^{\rm orb}=(-1)^i\binom{d+1}{i}L^i\).
If all stabilizers in codimension at most \(i\) have orders dividing \(M_i\), then \(M_ic_i^{\rm orb}\) is integral.
It extends over a removed subset of codimension at least \(i+1\).
If all reflections have order two,
\[
 \mathcal T=2NL=2K_W+B\in\operatorname{Pic}(W)
\]
for the reduced branch divisor \(B\).
For products of type \(A\), the denominators \(M_2=12\), \(M_3=24\) suffice.
On a closed \(\mathbb P^k\), \(k\ge3\), write \(\mathcal T=b\mathcal O(1)\), \(b>0\).
Then
\begin{equation}\label{sc:chern-divisibility}
 \frac{3(N-1)b^2}{2N},\
 \frac{(N-1)(N-2)b^3}{2N^2}\in\mathbb Z,\qquad
 N^2\mid b^3,\qquad L\cdot\text{line}\ge\frac1{2N^{1/3}} .
\end{equation}
\end{proposition}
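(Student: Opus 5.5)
The argument has five steps: the orbifold Chern class formula, integrality after multiplying by \(M_i\), the order-two reflection identity, the type \(A\) denominators, and the divisibility computation on \(\mathbb P^k\).

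\emph{Orbifold Chern classes.} I would start from the Euler-type sequence on the ball. Write \(V\) for the flat Hermitian bundle of rank \(N\) and \(\mathcal L\subset V\) for the tautological negative line. Then \(T_{\mathbb B}\simeq\operatorname{Hom}(\mathcal L,V/\mathcal L)\), so \(T\oplus\mathcal O\simeq V\otimes\mathcal L^{-1}\) equivariantly. On a neat cover the flat bundle has vanishing rational Chern classes. Hence \(c(T)=(1-L)^N\) rationally, with the sign fixed by the normalization \(K=NL\). Averaging over the cover and pushing to the coarse space gives \(c_i^{\rm orb}=(-1)^i\binom Ni L^i\).

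\emph{Integrality and extension.} Let \(U\) be the complement of the stabilizer strata of codimension at least \(i+1\). Over \(U\) the coarse space is smooth and all stabilizers have orders dividing \(M_i\). The tangent bundle of the orbifold is a genuine vector bundle on the smooth stack \(\mathcal U\to U\). Its Chern class \(c_i\) lies in the integral Chow group of the stack. Pushforward to the coarse space, composed with the identification of rational classes, is multiplication by the inverse of the generic stabilizer order on each stratum. Thus \(M_i\,c_i^{\rm orb}\) is the image of an integral class. I would make this precise with Edidin--Graham equivariant Chow groups, where \(M_i\) kills the kernel and cokernel of \(A^i(\mathcal U)\to A^i(U)\) for stacks whose stabilizers have orders dividing \(M_i\). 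I expect this step to be the main technical obstacle; if the general statement is awkward, I would instead argue locally on linear charts \(\mathbb C^d/G\) and then glue. Extension over the removed set is automatic, since \(A^i(W)\to A^i(U)\) is an isomorphism when the complement has codimension at least \(i+1\).

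\emph{Order-two reflections and type \(A\) denominators.} When all reflections have order two, the orbifold canonical formula \(K_{\rm orb}=K_W+\tfrac12B\) together with \(K_{\rm orb}=NL\) gives \(2NL=2K_W+B\), which is integral. For type \(A\) products, the stabilizers in codimension at most two are \(A_1\), \(A_1\times A_1\) and \(A_2\), of orders \(2,4,6\), all dividing \(12\). In codimension three they are \(A_3\), \(A_1\times A_2\) and \(A_1^3\), of orders \(24,12,8\), all dividing \(24\). This gives \(M_2=12\) and \(M_3=24\).

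\emph{Restriction to \(\mathbb P^k\).} For closed \(\mathbb P^k\), \(k\ge3\), I would restrict integral classes to \(\mathbb P^k\), where \(h^2\) and \(h^3\) generate integral cohomology. With \(L=\frac{b}{2N}h\) one gets
\[
12\binom N2\frac{b^2}{4N^2}=\frac{3(N-1)b^2}{2N}
\qquad\text{and}\qquad
24\binom N3\frac{b^3}{8N^3}=\frac{(N-1)(N-2)b^3}{2N^2},
\]
so both are integers. For \(N^2\mid b^3\), I would work prime by prime. For odd \(p\ne3\), \(p\) cannot divide both \(N\) and \((N-1)(N-2)\), so the cubic condition gives \(v_p(b^3)\ge2v_p(N)\) directly. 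For \(p=2\) and \(p=3\), where \(N-2\) or the factor \(3\) interferes, I would combine the quadratic and cubic conditions. This bookkeeping at \(2\) and \(3\) is the second delicate point. Finally, \(L\cdot\text{line}=b/(2N)\ge N^{2/3}/(2N)=1/(2N^{1/3})\).
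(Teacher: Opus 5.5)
Your Euler-sequence computation of \(c_i^{\rm orb}\), the identity \(2NL=2K_W+B\), the type \(A\) orders, and the substitution on \(\mathbb P^k\) match the paper. The genuine gap is the integrality of \(M_ic_i^{\rm orb}\), which is the heart of the proposition, and the general principle you propose for it is false.

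The inference ``coarse pushforward divides by the generic stabilizer order on each stratum, so \(M_i\pi_*\) of an integral stack class is integral'' holds only for integer combinations of classes of closed substacks. Integral Edidin--Graham classes are not such combinations in general. Products, and Chern classes of bundles on which a generic stabilizer acts by a nontrivial character, acquire further denominators.

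Here is a concrete counterexample to your claim that \(M_i\) kills the cokernel of the comparison map. Let \(\mathcal U\) be the square root stack of \(\mathbb P^2\) along a line. All stabilizers have order at most two, so \(M_2=2\) is allowed. The reduced gerbe \(\mathcal D\) satisfies \(2\mathcal D=\pi^*H\), and \(c_1(T_{\mathcal U})=\pi^*(3H)-\mathcal D\) is an integral class. Yet
\[
 \pi_*\bigl(c_1(T_{\mathcal U})^2\bigr)=9-3+\tfrac14=\tfrac{25}{4}.
\]
If \(2\) killed the cokernel of \(\pi^*:A^2(U)\to A^2(\mathcal U)\), then \(2c_1(T_{\mathcal U})^2=\pi^*\beta\). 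Since \(\pi_*\pi^*=\mathrm{id}\) rationally, \(\beta=25/2\) would have to be an integer, which is false.

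So integrality with the sharp factor \(M_i\) is special to the classes \(c_i\), and your argument never uses anything specific to them. A Leray-type filtration argument would bound each graded piece separately and so give only a power of \(M_i\). That would not yield the denominators \(12\) and \(24\) used later. Working on linear charts \(\mathbb C^d/G\) and gluing cannot help either, because integrality of a global class is not a local property.

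The missing idea is the paper's use of Chern--Schwartz--MacPherson classes. For the Galois neat cover \(p:Y\to U\) of degree \(v\), one has \(p_*\boldsymbol1_Y(x)=v/|G_x|\). MacPherson's naturality therefore gives
\[
 c_{\rm SM}\bigl(x\mapsto|G_x|^{-1}\bigr)=v^{-1}p_*\bigl(c(T_Y)\cap[Y]\bigr).
\]
By duality on the smooth \(U\), its codimension \(i\) component is \(c_i^{\rm orb}\). Setting the function to zero on strata of codimension greater than \(i\) does not change that component. After multiplying by \(M_i\) the function is integer-valued, so its CSM class is integral, which gives exactly the factor \(M_i\). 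Your Chow excision for the extension is fine once such an integral class exists.

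Finally, \(p=3\) is not delicate. Any odd prime \(p\mid N\) is prime to \((N-1)(N-2)\), so the cubic condition alone gives \(3v_p(b)\ge2v_p(N)\). Only \(p=2\) needs care:
\begin{itemize}
\item when \(v_2(N)\ge2\), use \(v_2(N-2)=1\) in the cubic condition;
\item when \(v_2(N)=1\), the quadratic condition forces \(b\) to be even.
\end{itemize}
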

\begin{proof}
The following identities do not require a lower bound on the number of labels.
Let \(U\) be a smooth coarse ball quotient of dimension \(d\), let \(p:Y\to U\) be a finite neat cover of degree \(v\), and let \(L=c_1(\mathcal L)\).
The flat Euler sequence gives \(c(T_Y)=(1-p^*L)^{d+1}\) in rational cohomology.
If \(G_x\) is the full stabilizer at \(x\in U\), proper pushforward of constructible functions and the normalization of Chern--Schwartz--MacPherson classes give
\begin{equation}
\label{ew-comparison-chern}c_{\mathrm{SM}}\bigl(x\longmapsto|G_x|^{-1}\bigr)
=v^{-1}p_*\bigl(c(T_Y)\cap[Y]\bigr),
\qquad c_i^{\mathrm{orb}}=(-1)^i\binom{d+1}{i}L^i.
\end{equation}
Here the orbifold classes are defined by the displayed average, and the cohomology equality uses the duality of the smooth \(U\).
We use the integral transformation underlying \cite{Ohmoto}.
For a fixed ball, the flat Euler identity holds on its neat cover, with its own dimension and the restricted pullback of the original automorphic class \(L\).
Set the constructible function to zero on strata of codimension greater than \(i\).
They do not change the dimension \(d-i\) component, and multiplying by \(M_i\) gives an integral function and integral Chern class.
For a removed support of codimension \(c\), local cohomology is Borel--Moore homology in degree \(2d-q\), so vanishes for \(q<2c\); this gives the stated extension.
The order two canonical formula gives \(\mathcal T\); the small chamber morphism has no exceptional divisor.
The low rank type \(A\) group orders give 12 and 24.
Substitution into \(12c_2^{\rm orb}\) and \(24c_3^{\rm orb}\) gives the two fractions.
For odd \(p\mid N\), the cubic fraction forces \(3v_p(b)\ge2v_p(N)\).
For \(v_2(N)\ge2\), use \(v_2(N-2)=1\); for \(v_2(N)=1\), the quadratic fraction forces \(b\) even.
\end{proof}

\begin{proposition}\label{sc:root-theta}
For \(R=A_r,D_r,E_6,E_7\), use \(A_\tau=Q(R)\otimes(\mathbb C/(\mathbb Z+\tau\mathbb Z))\) with the polarization associated with the root lattice and Weyl linearization trivial at the origin.
Let \(d_i\) be the degrees of the polynomial invariants and \(m_i\) the corresponding degrees of the generators of the ring of invariant sections, with distinguished generator \(\phi_0\) of degree one.
Then \(\phi_0(0)\ne0\), and the first derivatives of invariant sections of degree at most \(j\), divided by the corresponding powers of \(\phi_0\), project isomorphically onto \(\langle dp_i:m_i\le j\rangle\).
The statement holds at all common fixed points with trivial fiber character by theta group translation, retains higher primitive components, and respects \(z\mapsto-z\).
In type \(D\), the product direction is separated by its sign character.
\end{proposition}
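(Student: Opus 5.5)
The plan is to reduce everything to a computation at the origin of \(A_\tau\) and then move it to the other fixed points. There are three steps: nonvanishing of \(\phi_0(0)\), a graded comparison of differentials, and transport plus the symmetry and type \(D\) refinements.

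\emph{Step 1: \(\phi_0(0)\ne0\).} By Looijenga's theorem on root systems and theta functions (equivalently the Bernstein--Schwarzman/Rains description already used in Lemma~\ref{sg:cusp}), the Weyl-invariant sections of \(M^k\) on \(A_\tau\), with the root-lattice polarization and the linearization trivial at \(0\), form a polynomial ring. Its generators have degrees \(m_i\), the coefficients of the highest coroot together with \(m_0=1\). The degree-one generator \(\phi_0\) is the symmetrized level-one theta function, which is a \(W\)-average of the classical theta function. The value \(\phi_0(0)\) is then a theta constant given by a sum of \(q^{\|\lambda\|^2/2}\) over the root lattice. Its constant term in \(q\) is \(1\), so it is nonzero. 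Alternatively, the zero divisor of \(\phi_0\) is a \(W\)-invariant ample divisor, and the linearization being trivial at the origin forces \(0\notin\operatorname{div}\phi_0\).

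\emph{Step 2: comparison of differentials.} Near \(0\), the functions \(\phi_\alpha/\phi_0^{m_\alpha}\) are \(W\)-invariant holomorphic functions on the tangent space \(\mathfrak h=Q(R)\otimes\mathbb C\). By Chevalley they are power series in the basic invariants \(p_1,\ldots,p_r\), of degrees \(d_i\). The quotient \(A_\tau/W\) is weighted projective space \(\mathbb P(1,m_1,\ldots,m_r)\), and the chart \(\phi_0\ne0\) is smooth at the image of \(0\). So the map \((\phi_\alpha/\phi_0^{m_\alpha})_{\alpha\ge1}\) is a local isomorphism from \(\mathfrak h/W\simeq\mathbb C^r\) with coordinates \(p_i\). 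Its Jacobian in the \(p\)-coordinates is therefore invertible.

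It remains to show that this Jacobian is block triangular with respect to the two gradings, invariant-section degree \(m\) and polynomial degree \(d\). I would argue via the scaling degeneration \(\tau\to i\infty\). The leading term of the Taylor expansion of a level-\(k\) invariant theta function at the origin is governed by the level-\(k\) symmetrized characters, and the \(q\)-expansion lets one order invariants compatibly. Concretely, I would check from the tables (Bourbaki) that the orderings of \(m_i\) and \(d_i\) match, meaning \(m_i\le m_j\) implies \(d_i\le d_j\). Then I would show that an invariant section of degree \(m\) involves only \(p_i\) with \(m_i\le m\), modulo the square of the maximal ideal. Combined with invertibility and a dimension count, this yields the isomorphism onto \(\langle dp_i: m_i\le j\rangle\) for each \(j\).

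This graded triangularity is the main obstacle. I would try first to deduce it from the filtration of the invariant-section ring by vanishing order at the origin. An invariant section of degree \(j\) restricted to the \(j\)-torsion-type neighborhood can only produce invariants of bounded degree. Failing that, I would fall back on a case check for \(A_r\), \(D_r\), \(E_6\), \(E_7\) using the explicit Looijenga generators.

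\emph{Step 3: transport, symmetry, and type \(D\).} At another common fixed point \(x\) with trivial fiber character, the theta group element translating by \(x\) intertwines the linearizations. Because the character is trivial, it carries the origin computation to \(x\). Higher primitive components, meaning the \(p_i\) beyond the first order, are retained because the argument is on full power series rather than on leading terms only.

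The involution \(z\mapsto-z\) lies in \(\mathbb C^*W\) up to a diagram automorphism. It acts on each generator by a sign, so the isomorphism is equivariant. In type \(D\), the Pfaffian-type invariant \(x_1\cdots x_r\) and the corresponding theta generator both transform by the sign character of \(G(2,2,r)\) inside \(G(2,1,r)\). Splitting into character eigenspaces separates the product direction.
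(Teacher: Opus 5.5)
\textbf{Main gap: Step~2 aims at a false statement.} You want $d(s/\phi_0^m)\in\langle dp_i:m_i\le m\rangle$ modulo the square $\mathfrak n^2$ of the invariant maximal ideal. Combined with invertibility, this would make the span $F_j$ of these differentials \emph{equal} to $\langle dp_i:m_i\le j\rangle$. That already fails for $D_4$.

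Its degree-one invariant sections are spanned by $\Theta_1=\prod_i\vartheta_1(\tau,z_i)$ and $f_j=\prod_i\vartheta_j(\tau,z_i)/\vartheta_j(\tau,0)$ for $j=2,3,4$, with $\phi_0=f_2+f_3+f_4$. Use $\vartheta_j(\tau,z)/\vartheta_j(\tau,0)=e^{cz^2}\sigma(z)(\wp(z)-e_j)^{1/2}$, where $c$ does not depend on $j$ and $e_j$ are the half-period values. Put $p_k=\sum_iz_i^k$ and $P=z_1z_2z_3z_4$. Then, using $\sum e_j=0$ and $\sum_{j<k}e_je_k=-15G_4$, one gets modulo $\mathfrak n^2$
\[
 d(f_j-f_k)=(e_j-e_k)\Bigl(-\tfrac12\,dp_2+\tfrac{e_l}{4}\,dp_4-G_4(\tau)\,dp_6\Bigr),\qquad\{j,k,l\}=\{2,3,4\}.
\]
Hence $F_1=\langle dp_4,dP,dp_2+2G_4(\tau)\,dp_6\rangle$, even though $p_6$ has $m=2$. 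So $F_1\ne\langle dp_2,dp_4,dP\rangle$ unless $\tau$ is an Eisenstein point. This $dp_6$ term is exactly a ``higher primitive component'' that the statement retains: the statement only claims that the projection is an isomorphism.

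\textbf{The triangularity that does hold.} It runs the other way, by polynomial degree. The ratio $r_i=\phi_i/\phi_0^{m_i}$ is a Jacobi form of index zero and weight $-d_i$. So its coefficient on $p_k$ modulo $\mathfrak n^2$ is a holomorphic modular form of weight $d_k-d_i$. It therefore vanishes for $d_k<d_i$ and is constant for $d_k=d_i$. The vanishing-order filtration you mention yields this upper triangularity (each $\phi_i$ vanishes to order $d_i$), not the containment you state.

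With this correction, your weighted projective chart argument does finish the job. Invertibility of the full matrix forces the equal-degree diagonal blocks to be invertible. The principal block on $\{i:m_i\le j\}$ is then block upper triangular and invertible. This needs one more step in type $D$: the Pfaffian must first be split off by its sign character, since for even $r\ge6$ it shares degree $r$ with an invariant having $m=2$. No order compatibility between the $m_i$ and the $d_i$ is needed. That is fortunate, because your proposed check $m_i\le m_j\Rightarrow d_i\le d_j$ fails for $D_r$, $r\ge7$: there $\deg P=r>6$, yet $P$ has $m=1$ while the degree-six invariant has $m=2$.

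\textbf{Step~1 is not a proof either.} The statement is fiberwise, so you need $\phi_0(\tau,0)\ne0$ at the given $\tau$. A $q$-series with constant term $1$ can vanish at points of the upper half-plane, and your candidate does. For $D_4$ the root lattice theta constant $\tfrac12(\vartheta_3^4+\vartheta_4^4)$ vanishes at the Gaussian modulus $\tau=(1+i)/2$. A trivial linearization at $0$ also does not force any particular invariant section to be nonzero there.

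The distinguished $\phi_0$ is the weight-zero generator, not the root lattice theta function. Then $\phi_0(\tau,0)$ is a holomorphic modular form of weight zero, hence constant, but you still have to show the constant is nonzero. The paper does this with the Jacobian having rows $(m_i\phi_i)_i$ and $(\partial_{z_a}\phi_i)_i$. This Jacobian equals $c\prod_{\alpha\in R^+}\vartheta(\tau,(\alpha,z))/\eta(\tau)^3$ with $c\ne0$, so it vanishes to order exactly $|R^+|$ at $z=0$. By the weight argument $\phi_i=O(z^{d_i})$. If $\phi_0(0)=0$, then $W$-invariance forces $\phi_0=O(z^2)$, and the determinant would vanish to order at least $\sum_i(d_i-1)+2=|R^+|+2$, a contradiction.

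Your Step~3 (theta group transport, parity, and the type $D$ sign character) is in line with the paper.
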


\begin{proposition}\label{sc:parabolic-flags}
At the inclusions in the following table, use increasing degrees \(k_i\) of the polynomial invariants on the normal space, the coarse axis \(v=u^e\), and punctured normal frames \(p_i/u^{\epsilon_i}\).
Target operations alone make the normal differential upper triangular with the indicated successive orders:
\[
\begin{array}{c|c|c|c}
P\subset G&e&(\epsilon_i)&\text{orders}\\ \hline
D_5\subset E_6&1&0&(3,2,3,3,4)\\
E_6\subset E_7&2&k_i\bmod2&(2,2,2,2,3,3)\\
E_7\subset E_8&2&k_i\bmod2&(3,3,3,4,4,5,6)\\
G_{33}\subset G_{34}&6&(-2,0,-2,0,0)&(1,2,2,3,4).
\end{array}
\]
The last row of the inverse differential has pole order at most the last integer in the table; on its kernel, the preceding row has pole order at most the preceding integer.
\end{proposition}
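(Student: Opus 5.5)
For each row of the table I will compute the normal differential explicitly in invariant coordinates. Fix a generic point \(x\) of the axis, i.e.\ of the fixed space of the parabolic subgroup \(P\) inside the fixed space of \(G\). Split the tangent space of the \(G\)-representation as the axis line with coordinate \(u\) plus the \(P\)-normal space. Choose basic invariants \(p_1,\ldots,p_r\) of \(P\) of increasing degrees \(k_i\), and basic invariants \(F_1,\ldots,F_{r+1}\) of \(G\). Near \(x\), each \(F_j\) is a polynomial in \(u\) and the \(p_i\), because restriction to a slice at \(x\) identifies \(G\)-invariants with \(P\)-invariants. The normal differential is the matrix \(\partial F_j/\partial p_i\) along \(p=0\). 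Weighted homogeneity (\(\deg u=1\), \(\deg p_i=k_i\)) shows that this entry is \(c_{ij}u^{d_j-k_i}\), where \(d_j\) is the degree of \(F_j\). The proof is therefore a statement about which constants \(c_{ij}\) are nonzero.

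The plan has three steps.

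\emph{Step 1.} Using the residual axis group \(\mu_e=N_G(P)/P\), rewrite everything in the coarse coordinate \(v=u^e\). Rewrite the normal directions in the punctured frames \(p_i/u^{\epsilon_i}\), where \(\epsilon_i\) is the \(\mu_e\)-character exponent of \(p_i\). Lemma~\ref{sc:age} gives this correction in the form \(k_i\bmod2\) for \(e=2\). For \(G_{33}\subset G_{34}\) one must read off the \(\mu_6\)-weights of the five \(G_{33}\) invariants, of degrees \(4,6,10,12,18\).

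\emph{Step 2.} Perform target row operations. These are changes of \(G\)-invariant coordinates \(F_j\mapsto F_j-\sum_{d_l<d_j} g_{jl}F_l\), with \(g_{jl}\) polynomial in the invariants. Use them to clear the entries below the diagonal, working degree by degree. This is the same elimination as in \eqref{sc:monomial-matrix} and the type \(A\) expansion in Proposition~\ref{sc:type-a}, where the matrix became bidiagonal. The diagonal order in row \(i\) is then the \(v\)-order \((d_{j(i)}-k_i+\epsilon\text{-shift})/e\), and these orders must reproduce the listed tuples.

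\emph{Step 3.} Read off the pole orders of the inverse. For an upper triangular matrix, the last row of the inverse has pole equal to the last diagonal order. Restricted to the kernel of the last coordinate, the preceding row is governed by the preceding diagonal entry. This gives the final claim.

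The main obstacle is Step 2. It needs the nonvanishing of the specific restriction coefficients \(c_{ij}\) that put the right \(F_j\) on the diagonal, and the pairing of degrees is not determined by degree counting alone, since \(E_7,E_8\) have gaps. For the three Weyl rows I would restrict explicit invariants: power sums of the weights of the minuscule or adjoint representation, or the classical realizations \(E_6\supset D_5\) via the \(27\), \(E_7\) via the \(56\), and \(E_8\) via the root system. I would then check the Jacobian leading terms using the determinant formula \(c_0u^{D_G-D_P-1}\) from the proof of Proposition~\ref{sc:curve-determinant}, which ties the sum of the orders to \(D_G-D_P-1\). For example, \(E_6\) has invariant degrees summing to 42 and \(D_5\) degrees \(2,4,5,6,8\) summing to 25, so \(16=1+15\) must match \(3+2+3+3+4=15\) plus the tangent row. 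For \(G_{33}\subset G_{34}\) I would use the Shephard--Todd invariants of \(G_{34}\) (degrees \(6,12,18,24,30,42\)) and compute their restrictions by machine, checking that the triangular orders \((1,2,2,3,4)\) are consistent with the total \((132-50-1)/6\) after the frame shifts.
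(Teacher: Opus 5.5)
Your Steps 1--3 match the paper's concluding argument. Homogeneity writes the normal differential as $\operatorname{diag}(u^{D_i})\,B\,\operatorname{diag}(u^{\epsilon_i-k_i})$, with $B$ constant and $D_i$ the ambient degrees after deleting the lowest. The elimination coefficients $u^{D_i-D_j}$ ($i>j$) are holomorphic in $v$ because $e\mid D_i-D_j$, and the inverse-row statements follow as you say.

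\textbf{The gap.} The real content of the proposition is that every leading principal minor of $B$ is nonzero, with both sides in increasing degree order. This is what forces the pivot pairing and hence the tuples, and you leave it to a computation you do not carry out. The one concrete check you name, the Jacobian determinant $c_0u^{D_G-D_P-1}$ and its total order, only gives $\det B\ne0$. It cannot detect, for example, $b_{11}=0$, which would raise the order of the first column to $D_2-k_1$ and change the flag.

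The paper proves the nonvanishing without computing invariants of $G$. With the Saito product, the cokernel of the quotient differential at a generic axis point is, up to a unit, the Tjurina algebra of the $D_5$, $E_6$, $E_7$ singularity (Brieskorn, Kodaira--Spencer, Granger--Mond--Schulze). For $G_{33}\subset G_{34}$ it is the algebra $A_{33}$ extracted from the Kato--Mano--Sekiguchi flat structure. In these algebras the complete flags are intrinsic ideals ($\mathfrak m^k$, annihilators, the closure of square-zero elements). Such ideals are preserved by algebra isomorphisms and by units, and they equal the initial spans of degree-ordered bases. A small two-variable computation (the families $F_a$, the $D_5$ axis $T(u)$, the normalization $b^2=0$) then gives invertible leading blocks. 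A machine computation could replace this for the Weyl rows only if it extracts the degree-$k_i$ Taylor coefficients modulo decomposable $P$-invariants and checks all leading minors. For $G_{34}$, whose invariants reach degree $42$, that is far heavier.

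\textbf{Smaller points to repair.}
\begin{itemize}
\item The target operations cannot be ambient coordinate changes $F_j\mapsto F_j-\sum g_{jl}F_l$ with $g_{jl}$ polynomial in invariants. Such a change also adds $-F_l\,dg_{jl}$. Moreover, for $D_5\subset E_6$ the required coefficients include $u$ and $u^3$, which are not in the axis image ring $\mathbb C\{u^2,u^5\}$ (compare $C_8$ and $C_6$ in Lemma~\ref{sn:d5-flag}). They must be holomorphic row operations on the pulled-back frame.
\item To remove the tangent column by deleting the lowest row, you need the lowest invariant of $G$ to be nonzero on the axis. This is automatic for the Weyl quadratic invariant; for the sextic of $G_{34}$ the paper cites Sekiguchi.
\item For $G_{33}\subset G_{34}$, use the table's frames $u^2p_4,u^2p_{10}$, i.e.\ $\epsilon=(-2,0,-2,0,0)$. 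Reading off residues in $[0,6)$ as in Lemma~\ref{sc:age} gives $\epsilon=(4,0,4,0,0)$ and the wrong tuple $(2,2,3,3,4)$.
\end{itemize}
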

The source degrees in the four rows are, respectively, \((2,4,5,6,8)\), \((2,5,6,8,9,12)\), \((2,6,8,10,12,14,18)\), and \((4,6,10,12,18)\).
The frames and successive orders in the table refer to these same directions, rather than to new source coordinates.
Lemma~\ref{sn:d5-flag} uses the first row; Theorem~\ref{st:ten-main} uses the second, Proposition~\ref{sh:eleven} the third, and Lemmas~\ref{sn:compact-g34} and~\ref{sn:nonuniform-g33} the fourth.
At the ends, Proposition~\ref{sc:root-theta} identifies the same directions with the successive derivative projections; Lemma~\ref{sa:g33-theta} supplies the divisions \((1,3,3,4,6)\) for the fourth row.
Proofs of both statements, including the intrinsic complete flags that justify elimination without changing source directions, are in Appendix~\ref{sa:flags}.

\subsection{Compact monomial curves}
\begin{lemma}\label{sc:compact-monomial}
For an arithmetic realization with \(n=r+4\), \(r\ge5\), no fixed curve without ends and with full generic group \(G(m,p,r)\) has only the interior enlargements of Proposition~\ref{sc:monomial-comparison}.
The ambient lattice may be compact or nonuniform.
\end{lemma}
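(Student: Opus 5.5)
Suppose such a fixed curve \(f:C\to W\) exists, with \(d=n-3=r+1\), so the generic group \(T=G(m,p,r)\) has rank \(r=d-1\) and the fixed ball is a disk. Since there are no ends, \(s=0\) and all \(\nu_c\), \(B_c\) vanish. The only enlargements are the \(R\) monomial points \(G(m,p,r+1)\) and the product points \(T\times\mu_e\). Proposition~\ref{sc:monomial-comparison} then applies. By Lemma~\ref{sc:monomial-normalizer} with \(r\ge5\), the full normalizer preserves the individual symmetric and product invariant lines, so the hypothesis on the degree \(k\) direction holds automatically. The plan is to play the identities \eqref{sc:monomial-log} and \eqref{ew-comparison-canonical} against the quotient of negative degree in \eqref{sc:monomial-inverse}, and to finish with Propositions~\ref{sg:negative} and~\ref{sg:determinant}.

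\emph{Step 1 (numerics).} With \(s=0\), \eqref{ew-comparison-canonical} reads \(a=\beta R+\sigma-(D+2)t\) with \(\beta=r+1/p-1/m\). Using \eqref{sc:monomial-log} I eliminate \(\sigma\), writing \(\sigma=2t-(2g-2)-(1-m^{-1})R\). This gives
\[
a=(r+1/p-1)R-Dt-(2g-2).
\]
The target line in \eqref{sc:monomial-inverse} has degree \(h=-kt+R\). The nonzero map \(f^*T_W\to E_k(D_R)\) feeds Lemma~\ref{sg:curve-bound} if \(h\le0\), or Proposition~\ref{sg:negative} if \(h<0\). In the latter case there is a repeated section with \(e\ge kt-R\) and \(a=(n-2\ell)e+2\delta\). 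I will also use the vanishing on \(F\) from Proposition~\ref{sg:kernel}, which holds for maps to lines of degree below \(e\).

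\emph{Step 2 (ruling out \(h\ge0\)).} If \(R\ge kt\), the Riemann--Hurwitz identity \eqref{sc:monomial-log} bounds \(R\le 2t/(1-m^{-1})+2\le 4t+2\). On the other hand \(k\ge(r-1)m\ge 8\), so \(R\ge kt\ge 8t\). This is incompatible unless \(t\) is tiny. In that small-\(t\) regime I will use integrality: \(mt\) is integral because the degree \(m\) symmetric line descends (remark after Proposition~\ref{sc:monomial-comparison}), and \(g\ge0\). Together these should close the small cases.

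\emph{Step 3 (the case \(h<0\)).} Here the tangent coefficient of the map is killed. Lemma~\ref{sc:coefficient-elimination}, applied with the bidiagonal data of \eqref{sc:monomial-matrix}, shows that the symmetric coefficients below degree \(k\) vanish once their residual degrees \(-(k-jm)t+\cdots\) are negative. This pins the Hom space to dimension at most one, so Proposition~\ref{sg:negative} gives \(\ell=2\) and \(a\ge(n-4)e\ge r(kt-R)\). I will compare this with Step 1: \(r(kt-R)\le (r+1/p-1)R-Dt+2\). Since \(k\approx rm\) and \(D\approx mr^2/2\), the inequality becomes \(r^2mt/2\lesssim 2rR+2\), while \(R\le 4t+2\). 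For \(r\ge5\) this fails.

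The main obstacle is making the Hom-dimension-one step rigorous when the residual degrees are borderline. The cleaner fallback is the rank-two quotient onto \(E_{k-m}\oplus E_k\), twisted by the enlargement divisor, as in the proof of Proposition~\ref{sc:type-a}. I will apply Proposition~\ref{sg:determinant} to it, which forces \(\ell\ge3\) and hence two Hom directions, a contradiction. Compact versus nonuniform ambient lattice plays no role, since the curve has no ends.
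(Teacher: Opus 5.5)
\textbf{There is a genuine gap in genus zero.} Your argument for \(g\ge1\) is essentially the paper's, and it is how the paper obtains \(C=\mathbb P^1\). There \(R\le 2j/(m-1)\), so the tangent, symmetric and product residual degrees are all negative. Hence \(\ell=2\) and \(a=re+2\delta>0\), while \(a=2-2g-Dt+(r-1+1/p)R<0\).

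The problem is \(g=0\), where neither Step~2 nor Step~3 closes.

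\emph{Step~2.} Integrality does not dispose of the small cases. Take \(G(2,1,5)\), \(n=9\), \(g=0\), \(j=mt=1\). The data \((R,\sigma,a,h)=(5,1/2,12,0)\) and \((6,0,17,1)\) satisfy \eqref{sc:monomial-log}, \eqref{ew-comparison-canonical}, and every integrality condition. Your bound should also read \(R\le(2t+2)/(1-1/m)\le4t+4\), not \(4t+2\).

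\emph{Step~3, first problem.} The claimed failure of \(r(kt-R)\le(r-1+1/p)R-Dt+2\) for \(r\ge5\) is false for small \(t\). The same group with \(j=1\), \(R=4\), \(\sigma=1\) gives \(a=7\) and \(h=-1\), and \(\ell=2\), \(e=\delta=1\) is consistent with it.

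\emph{Step~3, second problem.} For \(p>1\), the claim that the Hom space has dimension one itself fails in genus zero. The product coefficient has residual degree \((1-1/p)R-(r-1-r/p)j\). This is negative when \(R\le2j/(m-1)\), i.e.\ when \(g\ge1\), but not in general when \(g=0\). For \(G(2,2,5)\) with \(j=1\), \(R=3\), \(\sigma=3/2\) one gets \(a=3\), \(h=-1\), and product residual degree \(0\). Then \(\ell=3\), \(e=1\), \(\delta=0\) is numerically allowed. Your rank-two determinant fallback only yields \(\ell\ge3\), so it gives no contradiction here; for \(p>1\) you have not even set up the right pair of directions.

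\textbf{The missing input is geometric.} Once \(C=\mathbb P^1\), the paper proceeds as follows.
\begin{enumerate}
\item The coordinate \(g_1=v\) at enlargements, together with the unit normal matrix at product points, makes \(f\) immersed.
\item Lemma~\ref{sc:normal-splitting} gives the explicit splitting \(N_f=\bigoplus_i\mathcal O(R-ij)\), with the product summand \(\mathcal O((R-rj)/p)\) when \(p>1\).
\item The direct-summand statement of Proposition~\ref{sg:kernel} forces every negative summand to be \(\mathcal O(-e)\) and every other summand to have degree at least \(e\). In the \(G(2,2,5)\) example, \(N_f=\mathcal O(2)\oplus\mathcal O(1)\oplus\mathcal O\oplus\mathcal O(-1)^{\oplus2}\), which is impossible. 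In the \(G(2,1,5)\), \(R=4\) example, \(N_f=\mathcal O(3)\oplus\mathcal O(2)\oplus\mathcal O(1)\oplus\mathcal O\oplus\mathcal O(-1)\), also impossible.
\item This leaves \(N_f\) nef and \(R\ge rj\), hence \(m=2\), \(j=1\), \(r=5,6\). The case \(r=6\) is excluded by the parity of \(a\) for ten labels.
\item The two \(r=5\) rows \((R,a)=(5,12),(6,17)\) with \(t=1/2\) survive every numerical test, including Lemma~\ref{sg:curve-bound}, since the integral multiple of \(L\) is uncontrolled for compact lattices. They need curve-class input. Nefness of \(N_f\) allows at most one negative marked degree. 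Proposition~\ref{sg:mori} then decomposes \(f_*C\) into coordinate curves, plus \(\beta_I\) in the one-negative case. The bound \(L\cdot\gamma_i\ge2/7\) from Lemma~\ref{sc:moving-coordinate} gives \(t\ge a/7\), or \(t\ge2\delta/7=10/7\) when \(17=7e+2\delta\). Either contradicts \(t=1/2\).
\end{enumerate}

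Neither the normal-bundle splitting nor the coordinate-curve bound appears in your plan, and without them the genus-zero case remains open.
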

\begin{proof}
The image of this compact fixed curve lies in the smooth coarse ball interior, where the small chamber morphism is an isomorphism by Lemma~\ref{sg:resolution} and Proposition~\ref{sg:arithmetic}.
Thus all calculations take place on \(W\), even if the ambient quotient has cusps elsewhere.
Put \(j=mt\in\mathbb Z_{>0}\), \(A=r(r-1)/2+r/p\), \(\eta=r-1+1/p\), and \(k_0=k/m\).
Normalizer preservation and the comparison give
\[
 (m-1)R+m\sigma=2j+2m(1-g),\quad
 a=2-2g-Aj+\eta R,\quad h=R-k_0j .
\]
For \(g\ge1\), \(R\le2j/(m-1)\), hence \(h<0\).
The tangent coefficient of any map to the target associated with the invariant of greatest degree has negative degree \(R/m-(k_0-2/m)j-\sigma\).
Successive symmetric residual degrees are \(-(k_0-i)j<0\).
For \(p>1\), the product residual degree is at most
\[
 (1-1/p)R-(r-1-r/p)j
 \le((r+2)/p-r+1)j<0 .
\]
Thus the Hom evaluation rank is at most 1.
Proposition~\ref{sg:negative} makes the marked quotient have exactly two coincident labels, so \(a=re+2\delta>0\).
But
\[
 a\le(2+2m\eta/(m-1))(1-g)-(A-2\eta/(m-1))j<0 ,
\]
since \(A-2\eta/(m-1)\ge(r-1)(r-4)/2+(r-2)/p>0\).
Consequently \(C=\mathbb P^1\).
The coordinate \(g_1=v\) at enlargements and the unit normal matrix at points with stabilizer \(G(m,p,r)\times\mu_e\) give immersion.
The symmetric prefixes and product block are complementary at every stalk, so Lemma~\ref{sc:normal-splitting} gives
\[
 N_f=
 \begin{cases}
 \bigoplus_{i=1}^r\mathcal O(R-ij),&p=1,\\
 \bigoplus_{i=1}^{r-1}\mathcal O(R-ij)\oplus\mathcal O((R-rj)/p),&p>1 .
 \end{cases}
\]
If negative summands occur, Proposition~\ref{sg:negative} makes all of them \(\mathcal O(-e)\) and every other summand of degree at least \(e\).
Two symmetric negative summands have different degrees and are impossible.
For \(p>1\), one negative symmetric summand forces a negative product summand of the same degree, hence
\[
 j=(p-1)e,\qquad R=[r(p-1)-p]e .
\]
For \(p=2\), the preceding symmetric degree is zero, a contradiction.
For \(p\ge3\), the logarithmic identity gives \(ce\le2m\), but
\[
 c=(m-1)[r(p-1)-p]-2(p-1),\quad
 c-2m\ge(4p-7)m-6p+7\ge4+(p-3)(4p-1)>0 .
\]
The remaining negative possibilities force \((r-1)j<R<rj\).
For \(m\ge3\) the logarithmic bound contradicts this; for \(m=2\), it leaves only \(r=5,6\), \(j=1\), and there is no integral \(R\) in that interval.
Thus \(N_f\) is nef and \(R\ge rj\), \([r(m-1)-2]j\le2m\).
This forces \(m=2\), \(j=1\), \(r=5\) or 6.
For \(r=6\), \(R=6\), \(a=17\), contradicting the parity of the marked degree against \(-K_W\) for ten labels.
For \(r=5\), the only integral pairs are \((R,a)=(5,12),(6,17)\), with \(t=1/2\).
Nefness of \(N_f\) permits at most one negative marked degree.
With none, Proposition~\ref{sg:mori} decomposes the curve into coordinate curves, and \eqref{sc:coordinate-bound} gives \(t\ge a/7\).
With one, \(a=7e+2\delta\); the first pair is impossible, and the second has \(e=1,\delta=5\), giving \(t\ge2\delta/7=10/7\).
Both contradict \(t=1/2\).
\end{proof}

\section{Nine points}
\label{sn:nine}

Let $n=9$, and retain arbitrary weights $0<\mu_i<1$ of sum two.
Suppose that $X=X_\mu$ is the Baily--Borel compactification of an arithmetic quotient of $\mathbb B^6$.
Use the small chamber model $\pi:W\to X$ of Lemma~\ref{sg:resolution}, and write $H=-K_W$.
For a curve $f:C\to W$ obtained from the full effective normalizer in the original lattice, put
\[
 t=\deg f^*L>0,\qquad a=\deg f^*H,\qquad g=g(C),\qquad m_i=\deg M_i.
\]
All basic invariant lines below are the actual coarse extensions, with their finite characters.
We allow zero ends and arbitrary genus.
The proof uses the curve comparison of Lemma~\ref{sg:curve-bound} repeatedly, without deriving a separate nef bound for each reflection group.

\subsection{Two bounds for marked curves in the compact case}

\begin{lemma}\label{sn:compact-curve}
Suppose that $W$ is a compact arithmetic realization.
For a nonconstant complete curve, if $m_i\ge0$ for every $i$, then $a\le7t$.
If exactly one marked degree is negative, write
\[
 m_0=-e,\qquad m_j=e+2\delta_j\ (j\ne0),\qquad \delta=\sum_{j\ne0}\delta_j.
\]
Then $a=7e+2\delta$ and $7t>6e+2\delta=a-e$.
\end{lemma}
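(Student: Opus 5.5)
Both bounds will come from the effective decompositions of Proposition~\ref{sg:mori}, converted from \(-K_W\)-degrees into \(L\)-degrees by the coordinate-curve bound \eqref{sc:coordinate-bound}. In the compact case with \(n=9\) the interpolating quotient has no cusps and every wall vertex is excluded from the interior by Proposition~\ref{sg:arithmetic}. Hence \(\pi\) is an isomorphism, \(W\) is a smooth coarse ball quotient, and Lemma~\ref{sc:moving-coordinate} applies with \(d=6\), \(N=7\). This gives \(L\cdot\gamma_i\ge 4/14=2/7\). Since \(-K_W\cdot\gamma_i=2\), we get \(H\cdot\gamma_i\le 7\,L\cdot\gamma_i\) for every existing coordinate curve.

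First suppose all \(m_i\ge0\). Proposition~\ref{sg:mori} gives \([f_*C]=\tfrac12\sum_i m_i[\gamma_i]\), and every \(\gamma_i\) with \(m_i>0\) exists. Pairing with \(H\) gives \(a=\sum_i m_i\). Pairing with \(L\) gives \(t=\tfrac12\sum_i m_i\,L\cdot\gamma_i\ge \tfrac17\sum_i m_i=a/7\), that is, \(a\le7t\).

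Now suppose exactly one marked degree is negative, \(m_0=-e\). The set of negative sections is \(I=\{0\}\), and Proposition~\ref{sg:negative} or \eqref{sg:curve-classes} gives \(m_j=e+2\delta_j\) with \(\delta_j=\deg D_j\). Summing gives \(a=-e+8e+2\delta=7e+2\delta\). Next, \eqref{sg:curve-classes} writes \([f_*C]=e[\beta_{\{0\}}]+\sum_j\delta_j[\gamma_j]\) and \([\beta_{\{0\}}]=[\beta_J]+\sum_{j\in J\setminus\{0\}}[\gamma_j]\) for a maximal short set \(J\ni0\), with all curves involved existing. Pairing with \(L\) gives
\[
 t=e\,L\cdot\beta_J+e\sum_{j\in J\setminus\{0\}}L\cdot\gamma_j+\sum_j\delta_j\,L\cdot\gamma_j .
\]
The coordinate terms are at least \(\tfrac27\bigl(e(|J|-1)+\delta\bigr)\).

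The main step is to bound \(L\cdot\beta_J\) from below, and \(|J|\) is the awkward variable. \(Z_J=\mathbb P^{7-|J|}\) is closed, with \(N_{Z_J/W}=\mathcal O(-1)^{\oplus(|J|-1)}\) and \(L|_{Z_J}=\lambda\mathcal O(1)\). If \(|J|=1\), then \(W=\mathbb P^6\); I treat this separately, and expect the Chern divisibility \(N^2\mid b^3\) of \eqref{sc:chern-divisibility} to give \(L\cdot\mathrm{line}\) large enough. If \(|J|\ge2\) and \(\dim Z_J\ge2\), Lemma~\ref{sc:negative-normal} yields \(\lambda\ge1\) when the generic fixed rank is below the codimension. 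When the fixed compactification equals \(Z_J\), Proposition~\ref{sg:persistent} gives \(\lambda=1/\delta\cdot(\ldots)\) via \(M_J^{-1}\sim\delta L\). In either case I expect \(L\cdot\beta_J\) to be at least a quantity comparable to \(1\), or at least strictly larger than \(\tfrac67-\tfrac27(|J|-1)\). If \(Z_J\) is a line (\(|J|=6\)), the coordinate terms alone give \(e\cdot\tfrac{10}{7}\), which exceeds \(\tfrac67e\).

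Combining these yields \(7t\ge 2e(|J|-1)+2\delta+7e\,L\cdot\beta_J\). I aim to show this is strictly greater than \(6e+2\delta\), using strictness from \(L\cdot\beta_J>0\) once \(|J|\ge4\), and the \(\lambda\) bounds when \(|J|\le3\). The hardest part is this small-\(|J|\) case analysis for \(L\cdot\beta_J\). There I would first try Lemma~\ref{sc:negative-normal} combined with Proposition~\ref{sc:chern-integrality} on \(Z_J\).
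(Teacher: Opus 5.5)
Your reduction coincides with the paper's. The bound $L\cdot\gamma_i\ge2/7$ from Lemma~\ref{sc:moving-coordinate}, together with the decompositions of Proposition~\ref{sg:mori}, gives $a\le7t$ and $a=7e+2\delta$, and it reduces the strict inequality to $L\cdot\beta_{\{0\}}>6/7$. Writing $L|_{Z_J}=\lambda\mathcal O(1)$ and $q=|J|$, this means $\lambda>6/7,\ 4/7,\ 2/7$ for $q=1,2,3$; the range $q\ge4$ follows from ampleness, as you say.

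\textbf{The gap.} These three thresholds are the whole content of the lemma, you leave them as expectations, and the tools you propose cannot reach them.
\begin{itemize}
\item For $q=1$, the divisibility $N^2\mid b^3$ with $N=7$ only yields $7\mid b$. That gives $L\cdot\text{line}=b/14\ge1/2$, short of $6/7$. Moreover, that part of Proposition~\ref{sc:chern-integrality} presupposes reflections of order two and stabilizers of type $A$ in low codimension, which are not known at this stage.
\item For $q=2,3$, your subcase $s<c$ is fine by Lemma~\ref{sc:negative-normal}. But when the fixed compactification is $Z_J$ itself, Proposition~\ref{sg:persistent} gives $\lambda=1/\delta$. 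This is an upper bound, at most $1/2<4/7$ when $q=2$, so that subcase must be excluded rather than bounded from below.
\end{itemize}

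\textbf{What is missing.} You need lower bounds for compact arithmetic ball quotients whose compactification is a projective space. By Lemma~\ref{st:curvature}, which rests on Lemma~\ref{sc:negative-normal}, $\lambda<1$ forces $Z_J$ to be the compactification of the induced compact arithmetic quotient, with the restricted automorphic class. Then:
\begin{itemize}
\item Proposition~\ref{st:p6} gives $\lambda\ge1$ on $\mathbb P^6$;
\item Proposition~\ref{st:compact-projective} gives $\lambda\ge2/3$ on $\mathbb P^5$ and $\lambda\ge1/3$ on $\mathbb P^4$.
\end{itemize}
These yield $L\cdot\beta_{\{0\}}\ge1$, $2/3+2/7=20/21$, and $1/3+4/7=19/21$ respectively, all exceeding $6/7$.

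Those two propositions are where the real work lies. Chern integrality, applied only after the higher-order reflections have been removed, merely confines $\lambda$ to a lattice: $7\mid b$ on $\mathbb P^6$, and $\lambda\in\tfrac16\mathbb Z$ on $\mathbb P^5$ and $\mathbb P^4$. The residual values $1/2$ on $\mathbb P^6$, $1/6,1/3,1/2$ on $\mathbb P^5$, and $1/6$ on $\mathbb P^4$ are removed only by fixed-curve comparisons. These include the monomial and product rows, the image-ring obstruction on the $D_5$ curve through an $E_6$ point, the $D_4$ triality residue, and the $G_{25}$ conic. None of these appears in your plan.
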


\begin{proof}
A general coordinate curve $\gamma_i$ has at least eight reflection points by Lemma~\ref{sc:moving-coordinate}.
Its canonical formula gives $7L\gamma_i\ge-2+8/2=2$.
Apply the effective curve decompositions obtained from the marked degrees in Proposition~\ref{sg:mori}.
They give the first assertion and reduce the second to $L\beta_0>6/7$.
Choose a maximal short subset $J\ni0$, and set $q=|J|\le6$.
The same decomposition gives
\[
 [\beta_0]=[\beta_J]+\sum_{j\in J\setminus\{0\}}[\gamma_j].
\]
For $q=1$, Proposition~\ref{st:p6} gives $L\beta_0\ge1$.
For $q=2,3$, Proposition~\ref{st:compact-projective} gives, respectively,
\[
 L\beta_0\ge\frac23+\frac27=\frac{20}{21},
 \qquad
 L\beta_0\ge\frac13+\frac47=\frac{19}{21}.
\]
For $q\ge4$, ampleness gives $L\beta_J>0$, hence $L\beta_0>2(q-1)/7\ge6/7$.
\end{proof}

\begin{lemma}\label{sn:compact-g34}
A compact arithmetic realization has no full $G_{34}$ point.
\end{lemma}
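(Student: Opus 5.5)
We argue by contradiction: suppose a point \(x\) of the compact realization has full stabilizer \(G_{34}\), of rank \(6=d\). By Proposition~\ref{sg:arithmetic} this stabilizer is generated by reflections. Its maximal parabolic subgroup \(G_{33}\) (rank \(5\)) has a fixed disk through \(x\), whose full generic stabilizer we take to be \(G_{33}\). The plan is to apply the curve comparison of Proposition~\ref{sc:curve-determinant} with \(m=1\) to the compact fixed curve \(f:C\to W\) obtained from the full normalizer via Lemma~\ref{sg:fixed}. Since the lattice is compact there are no ends, so \(S=\varnothing\). We list the possible larger interior stabilizers along the disk. These are rank \(6\) reflection groups containing \(G_{33}\) as a parabolic subgroup, namely \(G_{34}\) itself and the products \(G_{33}\times\mu_e\). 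The case \(G_{34}\) contributes \((D_{G_{34}}-D_{G_{33}}-1)/e_y\), with \(D_{G_{34}}=6+12+18+24+30+42=132\), \(D_{G_{33}}=4+6+10+12+18=50\), and \(e_y=6\) from the table in Proposition~\ref{sc:parabolic-flags}. Writing \(R\) for the number of \(G_{34}\) points and \(\sigma=\sum(1-e^{-1})\) for the product enlargements, equation~\eqref{ew-curve-determinant-equation} becomes
\[
 a+52t=\tfrac{81}{6}R+\sigma .
\]
The disk's Riemann--Hurwitz identity gives \(2g-2+(1-\tfrac16)R+\sigma=2t\).

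Next we construct a homomorphism \(f^*T_W\to Q\) onto a line of negative degree. We use the flag of Proposition~\ref{sc:parabolic-flags} for \(G_{33}\subset G_{34}\), with orders \((1,2,2,3,4)\) on the source directions of degrees \((4,6,10,12,18)\). The last row of the inverse differential gives a nonzero map from the normal bundle to \(E_{18}(4D_R)\), where \(E_{18}\) is the actual coarse invariant line of degree \(-18t\) corrected by Lemma~\ref{sc:age}. Composing with \(f^*T_W\to N_f^{\mathrm{sat}}\) gives a map to \(Q\) with \(\deg Q\approx-18t+4R\), up to the age corrections. Using Riemann--Hurwitz to bound \(R\) by about \(\tfrac{12}{5}t\) (for \(g\ge0\) and \(\sigma\ge0\)), we expect \(\deg Q<0\). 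Lemma~\ref{sc:coefficient-elimination}, applied through the successive rows of the flag, should then show that this Hom space is at most one dimensional. Proposition~\ref{sg:negative} then forces \(\ell=2\) and \(a\ge(n-4)e=5e\) with \(e\ge-\deg Q\).

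Finally we combine three relations: the lower bound on \(a\) just obtained, the upper bound \(a=7e+2\delta<7t+e\) of Lemma~\ref{sn:compact-curve} (or \(a\le7t\) if all marked degrees are nonnegative), and the determinant identity \(a=\tfrac{27}{2}R+\sigma-52t\) together with Riemann--Hurwitz. Eliminating \(R\) and \(\sigma\) should give \(a\) strongly negative relative to \(t\) when \(g\ge1\) or \(R\) is small. It should give a contradiction with \(5e\le a\) and \(e\ge18t-4R\) in the remaining range. If some corner of this case analysis survives, we would next apply the sorted test \eqref{sc:sorted-test} to the whole normal bundle in its upper triangular frame. We would also use integrality: \(6t\) is integral because the degree \(6\) line descends.

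The main obstacle is the exact degree bookkeeping at the \(G_{34}\) points. This involves the residual axis group \(\mu_6\), the age corrections of Lemma~\ref{sc:age} on each invariant line, and the negative frame exponents \((-2,0,-2,0,0)\). Together these determine whether \(\deg Q\) really is negative and whether the elimination leaves a Hom space of dimension exactly one. We would first work these out on a neat cover, where the pole orders from Proposition~\ref{sc:parabolic-flags} apply directly, and then descend.
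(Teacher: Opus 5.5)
\textbf{Where the proposal holds.} Your setup agrees with the paper's proof: the $G_{33}$ disk, the identity $a+52t=\tfrac{27}{2}R+\sigma$ (equivalently $a=2-2g-50t+\tfrac{38}{3}R$ after Riemann--Hurwitz), the target degree $h=-18t+4R$, and elimination with orders $(1,2,2,3,4)$. For $h<0$ your route closes. The four lower coefficient degrees for target $h$ are
\[
-14t+\tfrac83R,\quad -12t+2R,\quad -8t+\tfrac53R,\quad -6t+R.
\]
All are negative once $R<\tfrac92t$, which is equivalent to $h<0$. So the Hom space is one-dimensional, $\ell=2$, and $a\ge5e>0$. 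This contradicts $a<0$ when $g\ge1$ or $6t\le7$. When $g=0$ and $6t\ge8$ it contradicts $a+5h\le(402-308t)/5<0$.

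\textbf{The gap.} The gap is your expectation that $\deg Q<0$. For $g=0$, Riemann--Hurwitz gives only $R\le(12t+12)/5$, not about $12t/5$. Impose $6t\in\mathbb Z$, $a\in\mathbb Z$ (i.e.\ $R\equiv2j\pmod3$ with $j=6t$), $R>0$, and $\sigma\in\{0\}\cup[1/2,\infty)$. Two rows survive:
\[
(t,R,\sigma,a,h)=(1/6,2,2/3,19,5),\qquad (1/2,3,1/2,15,3).
\]
In both, the top row maps to a line of \emph{positive} degree. Proposition~\ref{sg:negative} never starts, so neither $\ell=2$ nor $a\ge5e$ is available.

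Your final combination is also inconsistent as stated. Lemma~\ref{sn:compact-curve} applies only when at most one marked degree is negative. Your $\ell=2$ means two coincident negative marks, with $a=5e+2\delta$, not $7e+2\delta$. The sorted test \eqref{sc:sorted-test} belongs to the monomial setting of Proposition~\ref{sc:bundle-tests} and is not available for $G_{33}$.

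\textbf{The missing step for the two residual rows.} Use the five coefficient degrees with an \emph{arbitrary} negative target degree $d$:
\[
d+4t-\tfrac43R,\quad d+6t-2R,\quad d+10t-\tfrac73R,\quad d+12t-3R,\quad d+18t-4R.
\]
For $d=-1$ these are $(-3,-4,-4,-5,-6)$ and $(-3,-4,-3,-4,-4)$, and they only decrease for smaller $d$. Together with the tangent coefficient, this gives $\operatorname{Hom}(f^*T_W,Q)=0$ for every line $Q$ of negative degree. Two negative marks would produce such a quotient, so at most one $m_i$ is negative. Only now does Lemma~\ref{sn:compact-curve} apply:
\begin{itemize}
\item With no negative mark, $a\le7t$ contradicts $a=19$ and $a=15$.
\item With exactly one, $a=7e+2\delta$ forces $(e,\delta)=(1,6)$ or $(1,4)$. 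Then $7t>2\delta$ fails for $t=1/6$ and $t=1/2$.
\end{itemize}
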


\begin{proof}
Take its actual $G_{33}$ disk and then its quotient by the full normalizer.
Let $R>0$ count $G_{34}$ points and put $\sigma=\sum(1-1/e)$ for the product enlargements.
The actual line of degree six gives $j=6t\in\mathbb Z_{>0}$.
The formulas for the axis, the determinant, and the row of the inverse differential associated with the invariant of greatest degree give
\begin{equation}\label{sn:g33-compact-data}
 \frac56R+\sigma=2t+2-2g,\quad
 a=2-2g-50t+\frac{38}{3}R,\quad h=-18t+4R.
\end{equation}
The five successive coefficient degrees for a map to a line of degree $d$ are
\begin{equation}\label{sn:g33-compact-coefficients}
 d+4t-\frac43R,\quad d+6t-2R,\quad
 d+10t-\frac73R,\quad d+12t-3R,\quad d+18t-4R.
\end{equation}
These are the source degrees $(-4t+R/3,-6t,-10t+R/3,-12t,-18t)$ corrected by the successive zeros $(1,2,2,3,4)$ of Proposition~\ref{sc:parabolic-flags}.
If $g\ge1$, then $a,h<0$ and the first two coefficients for $d=h$ vanish.
Proposition~\ref{sg:negative} gives $\ell\le4$ and therefore $a=(9-2\ell)e+2\delta>0$, a contradiction.
If $g=0$ and $j\ge8$, the four lower coefficients vanish and the Hom space has dimension one.
The resulting rank 1 marked quotient requires $a+5h\ge0$, but
\[
 a+5h\le(402-308t)/5<0.
\]
For $1\le j\le7$, the conditions
\[
 R\le(2j+12)/5,\quad R\equiv2j\pmod3,\quad \sigma=0\text{ or }\sigma\ge1/2
\]
leave $(j,R)=(1,2),(2,1),(3,3),(4,2),(5,1),(6,3),(7,2)$.
The five rows with $h<0$ have $a<0$ and a one-dimensional Hom space, again impossible.
The other rows are
\[
 (t,R,\sigma,a,h)=(1/6,2,2/3,19,5),\qquad(1/2,3,1/2,15,3).
\]
Formula~\eqref{sn:g33-compact-coefficients}, including the tangent coefficient, excludes every negative line quotient.
Thus at most one $m_i$ is negative.
If none is negative, Lemma~\ref{sn:compact-curve} contradicts $a\le7t$.
Otherwise $a=7e+2\delta$ gives $(e,\delta)=(1,6)$ or $(1,4)$, and the weaker inequality $t>2\delta/7$ already contradicts the displayed values of $t$.
\end{proof}

\subsection{\texorpdfstring{Successive kernels for $D_5$}{Successive kernels for D5}}

The following sequence of kernels is used in both the compact and nonuniform cases.

\begin{lemma}\label{sn:d5-flag}
Let a curve obtained from the full normalizer have generic group $D_5$, larger irreducible groups $E_6,D_6$, and other interior groups $D_5\times\mu_e$ acting trivially on the root space.
Allow ends with the actual minimal $D_5$ polarization, generator degrees $(1^4,2^2)$, trivial fiber character, and width one.
Write $R,D,S$ for the reduced divisors of these $E_6$ points, $D_6$ points, and ends, using the same letters $R,D$ for their degrees and $s=\deg S$.
Set $E=f^*T_W$ and $T_f=\operatorname{sat}_E df(T_C)$.
Then $T_f=T_C(R)$ and $E/T_f$ has successive line quotients of degrees
\begin{align}
 q_2&=-2t+3R+D+s,&q_4&=-4t+2R+D+s,\\
 q_5&=-5t+3R+D/2+s,&q_6&=-6t+3R+D+2s,\\
 \label{sn:d5-degrees}
 h&=-8t+4R+D+2s.
\end{align}
In particular $E\twoheadrightarrow Q_h$, and
\begin{equation}\label{sn:d5-identities}
 2g-2+s+D/2+\sigma=2t,\quad
 a=4h+5t+D+\sigma,\quad
 \deg T_f=(h+D)/4+\sigma+s/2.
\end{equation}
If $R>0$, then $h>0$ and $E$ is nef.
\end{lemma}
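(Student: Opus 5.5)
The plan follows the pattern of Proposition~\ref{sc:monomial-comparison} and Proposition~\ref{sc:type-a}, with Proposition~\ref{sc:parabolic-flags} as the local input at the $E_6$ points.

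\textbf{Logarithmic identity and invariant lines.} Riemann--Hurwitz for the disk quotient gives the first identity in \eqref{sn:d5-identities}. The $D_6$ points have residual axis order two. The $E_6$ points contribute nothing to the logarithmic sum, because the first row of the table in Proposition~\ref{sc:parabolic-flags} has $e=1$. Each end contributes $1$ and each product point contributes $1-1/e$. The $D_5$ invariant degrees are $(2,4,5,6,8)$, so the basic invariant lines $E_2,E_4,E_5,E_6,E_8$ have degrees $-2t,-4t,-5t,-6t,-8t$, as in the proof of Proposition~\ref{sc:monomial-comparison}.

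\textbf{Local pole orders and the successive quotients.} Next I would compute, at each special point, the pole orders of the inverse normal differential in these frames; summing them gives the $q_i$.
\begin{itemize}
\item At an $E_6$ point, use the successive orders $(3,2,3,3,4)$ from the first row of Proposition~\ref{sc:parabolic-flags}. Its coefficient $e=1$ and $\epsilon_i=0$ mean no age corrections occur. The tangent direction is the invariant of degree $2$ on the axis, so the tangent is saturated with a simple zero there, giving $T_f=T_C(R)$.
\item At a $D_6\supset D_5$ point, expand the $D_6$ invariants along the axis, as in \eqref{sc:monomial-matrix}. The symmetric directions are upper bidiagonal with diagonal $v=u^2$, which gives pole order one in $v$. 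The product (Pfaffian) direction $x_1\cdots x_5$ gets the age correction $-D/2$ from Lemma~\ref{sc:age}, giving $D/2$ for $q_5$.
\item At ends, use Proposition~\ref{sc:root-theta} together with the generator degrees $(1^4,2^2)$ and width one. The inverse rows have poles $q^{-1}$ in the four degree-one directions and $q^{-2}$ in the two degree-two directions (degree-$6$ and degree-$8$ invariants). This gives $s$ or $2s$; one of the degree-one rows is absorbed by the radial tangent.
\end{itemize}
The triangular shape lets me define the filtration of $E/T_f$ by successive kernels, exactly as the final sentence of Proposition~\ref{sc:parabolic-flags} allows. The stalks must be computed exactly, as in the proof of Proposition~\ref{sc:type-a}, to rule out extra torsion. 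This yields \eqref{sn:d5-degrees} and the quotient $E\twoheadrightarrow Q_h$ onto the last line.

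\textbf{Determinant identities.} The identity for $a$ comes from \eqref{ew-curve-determinant-equation} with $D_K=25$ and $D_{E_6}-D_{D_5}-1=9$. For $D_{D_6}-25-1=4$ the contribution is $4/2=2$ per point. Ends contribute $A_c=8$. This gives
\[
 a+27t=9R+2D+8s+\sigma.
\]
Comparing with $h$ gives $a=4h+5t+D+\sigma$. Summing the $q_i$ and subtracting from $\deg E=a$ gives $\deg T_f$, and the third identity follows after using the logarithmic identity to eliminate $g$.

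\textbf{Nefness when $R>0$ (main obstacle).} I expect this step to be hardest. If $R>0$ and $h\le0$, the map $E\to Q_h$ together with Lemma~\ref{sg:curve-bound} and Proposition~\ref{sg:negative} gives a repeated negative section with $e>-h$. Lemma~\ref{sg:curve-bound} also forces every map to a line of degree below $e$ to vanish on $F$. The plan is to kill successive coefficients using the $q_i$, each of which is at least $h$ plus positive $R$-terms, and the identity for $a$. This should force the Hom space to be one-dimensional, hence $\ell=2$, and contradict $a=(9-2\ell)e+2\delta$. Once $h>0$, every quotient $q_i\ge h>0$ and $\deg T_f>0$, so $E$ is an iterated extension of positive lines and is nef. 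The uncertain point is whether the quotients really dominate $h$ when $D$ is large, since $q_5$ carries only $D/2$. There I would use $3R\ge2R$ and the identity $2t=2g-2+s+D/2+\sigma$ to compare $q_5$ with $h$ directly.
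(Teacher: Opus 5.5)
\textbf{What matches the paper, and an arithmetic slip.} Your derivation of the flag, of $T_f=T_C(R)$, and of the logarithmic identity follows the paper's route: the source lines with the frame $P/u$ at $D_6$, the orders $(3,2,3,3,4)$ at $E_6$, one extra zero at each $D_6$ point, end divisions $(1,1,1,2,2)$, and Proposition~\ref{sc:curve-determinant}. Your determinant numbers are wrong, however. One has $D_{E_6}-D_{D_5}-1=42-25-1=16$, and a $D_6$ point contributes $(36-25-1)/2=5$. So the identity is $a+27t=16R+5D+8s+\sigma$. With your $9R+2D$, the relation $a=4h+5t+D+\sigma$ does not follow.

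\textbf{The gap: the step $R>0\Rightarrow h>0$.} Measured against $h$, the $R$-terms of the quotients are negative, not positive:
\[
q_2-h=6t-R-s,\qquad q_4-h=4t-2R-s,\qquad q_5-h=3t-R-D/2-s,\qquad q_6-h=2t-R .
\]
Take a target $\mathcal O(-e)$ with $e\ge-h>0$. Degree alone forces only the $Q_2$ and $Q_4$ coefficients to vanish, since $h-q_2=3h/4-2R-3D/4-s/2<0$ and $h-q_4=(h-D)/2\le0$. The $Q_5$, $Q_6$, $Q_h$ coefficients can survive, so no degree count makes the Hom space one-dimensional.

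For example, take $g=\sigma=D=0$, $R=4$, $t=6$, $s=14$. Then $h=-4$, $a=14$, and $(q_2,q_4,q_5,q_6)=(14,-2,-4,4)$. The values $\ell=3$, $e=4$, $\delta=1$ satisfy $a=(9-2\ell)e+2\delta$. Degree permits a two-dimensional $\operatorname{Hom}(E,\mathcal O(-4))$, via constant coefficients in $Q_5$ and $Q_h$. So your proposed contradiction does not arise.

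\textbf{The missing idea is the local obstruction at the $E_6$ points.}
\begin{itemize}
\item Along the axis $T(u)$, the image of the ambient local ring is the non-normal ring $A=\mathbb C\{u^2,u^5\}$.
\item Every marked pair row is pulled back from $W$, so it has coefficients in $A$.
\item Such a row that kills the tangent and the $Q_2,Q_4$ coefficients takes its value at the $E_6$ point in $\mathbb C\,dT_{12}$.
\item Stalkwise surjectivity of $\psi$ onto $M^{\oplus(\ell-1)}$ then forces $\ell=2$.
\item The $Q_h$ row factors through this pair quotient with a unit coefficient. That would make an $A$-valued unit multiple of the primitive row $C_8=dT_{12}+u^3dT_9/2+u^6dT_6/4$, which is impossible because $u^3/2\notin A$.
\end{itemize}

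\textbf{The case $h=0$ is not covered.} Proposition~\ref{sg:negative} says nothing about degree-zero targets, and for $h<0$ it gives only $e\ge-h$. The hypothesis $a-(n-4)q>2b$ of Lemma~\ref{sg:curve-bound} is not available here either. The paper first excludes negative line quotients. It then analyses the positions of the marks whose lines have degree zero (one, two, or three positions), and each case again ends in the same $A$-obstruction.

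\textbf{The final nefness step.} Once $h>0$, you still cannot conclude $q_i\ge h$, by the differences displayed above. Instead, rewrite with the identities:
\[
q_2=\tfrac h4+2R+\tfrac{3D}4+\tfrac s2,\qquad q_4=\tfrac{h+D}2,\qquad q_6=\tfrac{3h}4+\tfrac D4+\tfrac s2,\qquad q_5=\tfrac{3h}4+g-1+\tfrac\sigma2\ge-\tfrac14 .
\]
Also $\deg T_f>0$, and integrality of $q_5$ gives $q_5\ge0$. Since $q_5=0$ can occur, $E$ is nef, but it is not in general an extension of positive lines.
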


\begin{proof}
The source line bundles have degrees $-2t,-4t,-5t-D/2,-6t,-8t$.
The half contribution comes from the coarse frame $P/u$ at a $D_6$ point, with its character included.
The full normalizer preserves their flag ordered by increasing degree.
Proposition~\ref{sc:parabolic-flags} gives successive zero orders $(3,2,3,3,4)$ at $E_6$.
At $D_6$, put $v=u^2$ and $P=u\xi$; after removing the tangent, the normal matrix is
\[
 \begin{pmatrix}
 v&1&0&0&0\\0&v&0&1&0\\0&0&v&0&0\\0&0&0&v&1\\0&0&0&0&v
 \end{pmatrix},
\]
so every quotient gains one zero.
At an end the nonzero radial section has degree one.
After its removal, the matrix is $\operatorname{diag}(q,q,q,q^2,q^2)A(q)$, where the leading $3\times3$ block of $A(0)$ is invertible.
The least orders of its first-column minors are $1,2,3,5,7$; their differences give $(1,1,1,2,2)$.
These are orders of minors from the initial columns, not merely Smith exponents after an arbitrary source change.
The $E_6$ axis differential has one common zero and the other coarse axes have unit differential.
This proves the flag and~\eqref{sn:d5-identities}.

We record the local obstruction that also treats $h=0$.
Use coordinates $T_2,T_5,T_6,T_8,T_9,T_{12}$.
The $E_6$ axis is
\[
 T(u)=(-3u^2,3u^5,-3u^6/2,-3u^8/4,u^9,-3u^{12}/16).
\]
The image of its ambient local ring is $A=\mathbb C\{u^2,u^5\}$, whose missing nonnegative exponents are $1,3$.
Primitive rows for the descending saturated quotients are
\begin{align*}
 C_8&=dT_{12}+u^3dT_9/2+u^6dT_6/4,\\
 C_6&=dT_9+u\,dT_8+u^3dT_6+u^4dT_5/2+u^7dT_2/4,\\
 C_5&=dT_8+u^3dT_5/2+u^6dT_2/4,\\
 C_4&=dT_6+u\,dT_5+u^4dT_2,\qquad
 C_2=dT_5+5u^3dT_2/2.
\end{align*}
They annihilate $T'(u)/u$ and have a unit minor in columns $T_{12},T_9,T_8,T_6,T_5$.
The intrinsic Tjurina flag of Proposition~\ref{sc:parabolic-flags} identifies their successive kernels.
No unit multiple of $C_8$ is $A$-valued, since its coefficient ratio is $u^3/2\notin A$.
A regular row on the ambient quotient that annihilates the tangent direction and the quadratic invariant has an extra zero in the coefficient of the invariant of degree four: compare the missing $u$ term in its $dT_5$ coefficient.
If it also kills degree four, comparison of the missing $u^3$ terms in $dT_6,dT_5$ puts its value at zero in $\mathbb C dT_{12}$.

Suppose $R>0$ and $h\le0$.
For a row with target degree $q\le h$ the first two coefficient degrees are bounded by
\[
 q-q_2\le3h/4-2R-3D/4-s/2<0,\qquad
 q-q_4\le(h-D)/2\le0.
\]
Equality in the second bound requires $q=h=D=0$.
If $h<0$, the independent quotients associated with coincident pairs in Proposition~\ref{sg:negative} all have values in $\mathbb C dT_{12}$, hence $\ell=2$.
The quotient associated with the invariant of greatest degree factors through this pair quotient with a unit coefficient, contradicting $u^3/2\notin A$.
For $h=0$, the same argument first excludes negative line quotients.
Two negative marks would supply one; a unique negative mark would make the row associated with the invariant of greatest degree zero on the positive outside columns, and infinitesimal projective transformations then kill its remaining column.
Thus all marked degrees are nonnegative and the row associated with the invariant of greatest degree only uses marks whose lines have degree zero.
Their distinct positions are disjoint.
Three positions trivialize the family and make the relevant row constant; one position makes it a constant sum of pair differences.
Both contradict the same coefficient ratio.
If exactly two positions remain, each position contains at least two marks and gives an independent quotient of degree zero associated with that pair.
Their first coefficients vanish by degree.
The second vanish by degree when $D>0$, and by their extra zero when $D=0$.
Both pair quotients therefore have value in $\mathbb C dT_{12}$, contradicting independence.
Consequently $h>0$.

Finally~\eqref{sn:d5-identities} gives
\[
 q_2=h/4+2R+3D/4+s/2>0,\quad q_4=(h+D)/2>0,
\]
\[
 q_6=3h/4+D/4+s/2>0,\quad \deg T_f>0,
 \qquad q_5=3h/4+g-1+\sigma/2\ge-1/4.
\]
Since $h,q_5$ are actual integral line degrees, $q_5\ge0$.
The flag has quotients of nonnegative degree and remains so on every finite cover; hence $E$ is nef.
\end{proof}

\begin{proposition}\label{sn:positive-target}
In the compact case the curve of Lemma~\ref{sn:d5-flag} cannot have $R>0$.
\end{proposition}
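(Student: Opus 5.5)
In the compact case there are no ends, so $s=0$. Suppose $R>0$; we aim for a contradiction. Lemma~\ref{sn:d5-flag} then gives $h>0$ and makes $E=f^*T_W$ nef, so every marked degree satisfies $m_i\ge0$. Indeed, by Proposition~\ref{sg:negative} a negative marked degree $-e$ together with a second identical section would give a quotient $E\twoheadrightarrow M$ of negative degree. A single negative mark gives no such quotient, so Lemma~\ref{sn:compact-curve} has to be invoked in both of its cases.

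First I would treat the case where all $m_i\ge0$. Lemma~\ref{sn:compact-curve} gives $a\le7t$. Combined with \eqref{sn:d5-identities}, this yields
\[
4h+5t+D+\sigma\le7t,\qquad\text{hence}\qquad 4h+D+\sigma\le2t.
\]
The logarithmic identity with $s=0$ gives $2t=2g-2+D/2+\sigma$. Substituting, we get $4h+D/2\le2g-2$. Since $h\ge1$, this forces $g\ge3$. To exclude positive genus, I would return to the degree formula in \eqref{sn:d5-degrees} and expand $q_5$:
\[
q_5=-5t+3R+D/2=\tfrac34h+g-1+\tfrac{\sigma}2 .
\]
I would also use the defining relation $h=-8t+4R+D$. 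Writing $t$ in terms of $g$, $D$, $\sigma$ turns $h>0$ into $4R>8t-D=8g-8+3D+4\sigma$. The inequality $4h+D/2\le2g-2$ becomes $4(4R-8t+D)+D/2\le2g-2$. Substituting $8t=8g-8+2D+8\sigma$, this reads
\[
16R\le34g-34+\tfrac{23}{2}D+32\sigma+\dots,
\]
and I would check whether it is compatible with $4R>8g-8+3D+4\sigma$. I expect it is, so this alone is not a contradiction. The missing input is an upper bound on $R$ in terms of $t$. For this I would use the $E_6$ points: each is a point where $L$-degree and ramification interact. Concretely, I would apply the compact coordinate-curve bound or the Chern integrality of Proposition~\ref{sc:chern-integrality} to the curve. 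Alternatively, I would use $D_K=2+4+5+6+8=25$, $D_{E_6}=42$, $D_{D_6}=30$ in \eqref{ew-curve-determinant-equation}. Since the enlargement contributions there already reproduce \eqref{sn:d5-identities}, the real constraint must come from integrality: $h$, the $q_i$, and $t$-multiples must all be integers, with $8t\in\mathbb Z$ because the degree-eight line descends.

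Second, I would treat the case of exactly one negative mark. Lemma~\ref{sn:compact-curve} gives $a=7e+2\delta$ and $7t>a-e$. Inserting $a=4h+5t+D+\sigma$, this becomes $2t>4h+D+\sigma-e$. Nefness of $E$ restricts $e$: the quotient onto $\bigoplus M_j$ and the kernel sequence \eqref{sg:tangent} with only one negative line force $e\le m_j$ for the others. A single $M_0$ of degree $-e$ must then be absorbed by $\operatorname{End}_0(V)$, which bounds $e$ by the positivity of $T_f$. I would bound $e\le\deg T_f=(h+D)/4+\sigma$ using the flag. Then $2t>4h+D+\sigma-(h+D)/4-\sigma$, and the logarithmic identity again forces large genus.

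The main obstacle I foresee is the positive-genus cases, where $t$ grows with $g$. I would try to rule out $g\ge1$ first, by the same mechanism as in Lemma~\ref{sn:compact-g34}: for $g\ge1$, compare $a$ against the bound $a\le7t$ together with integrality of $h$ and $q_5$. If that fails, I would fall back on Lemma~\ref{sc:moving-coordinate}-type branch counting, applied to the compact $D_5$ curve, to get $t$ large relative to $R$.
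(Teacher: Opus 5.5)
Your proposal never reaches a contradiction, and the gap is structural rather than a missing integrality condition. As your own computation suggests, the relations \eqref{sn:d5-identities}, $h=-8t+4R+D>0$, $a\le7t$ and integrality are numerically consistent. For instance, $(g,t,R,D,\sigma,h,a)=(9,8,17,0,0,4,56)$ satisfies all of them with $a=7t$ and integral $q_i$. None of the global tools you list (Chern integrality, coordinate-curve bounds, the determinant formula) adds anything beyond $a\le7t$ and those identities.

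What is needed is not an upper bound on $R$ but an upper bound on $a=\sum_i m_i$ in terms of $h$ and $\tau=\deg T_f=(h+D)/4+\sigma$. It comes from the local structure at the $E_6$ points in Lemma~\ref{sn:d5-flag}. There the image of the ambient local ring is $A=\mathbb C\{u^2,u^5\}$, with missing exponents $1,3$. Every marked column, written in a frame pulled back from $W$, has coefficients in $A$. Suppose a column is killed by the top row $C_8$. Then the rows $C_8,C_6,C_5,C_4$ and the missing exponents force extra zeros of orders $(R,2R,2R,3R)$ in its images in $Q_6,Q_5,Q_4,Q_2$. Put $Y=2t-h-D-\sigma$. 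The adjusted degrees are $q_6-R=-t-\sigma/2-Y/2$, $q_5-2R=-5t/2-\sigma/4-Y/4$, $q_4-2R=-2t-h-\sigma-Y$ and $q_2-3R=-h-\sigma-Y$. This is exactly how the large $R$-dependent degrees are absorbed. Hence, if $Y>0$, a nonnegative column killed by $C_8$ lies in $T_f$ and has degree at most $\tau$, while every other column has degree at most $h$. Evaluation at a general point bounds the number $p$ of columns in $T_f$: their kernel consists of projective vector fields vanishing at complementary marked positions. So $p\le3$, and $p\le2$ when a negative mark exists.

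With this input the numerics close in every genus, so you do not need to exclude $g\ge1$ first. Your opening claim that nefness of $E$ makes all $m_i\ge0$ is false, as your next sentence concedes; nefness only leaves at most one negative mark.

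\emph{All $m_i\ge0$.} Lemma~\ref{sn:compact-curve} gives $A_7=7t-a\ge0$, hence $Y\ge3h>0$. The column bound $a\le(9-p)h+p\tau$ gives $B_0=9h-a\ge0$ when $\tau\le h$, or $B_3=6h+3\tau-a\ge0$ when $\tau>h$. But $5A_7+4B_0=-10t-9D-9\sigma<0$ and $2A_7+B_3=-t-21h/4-9D/4<0$.

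\emph{One negative mark $-e$.} The correct bound is $e\le h$: the eight outside columns generate $E$ and $E\twoheadrightarrow Q_h$. Your bound $e\le\deg T_f$, via absorption by $\operatorname{End}_0(V)$, is unjustified and is not what is needed. Lemma~\ref{sn:compact-curve} then gives $S'=7t+e-a>0$, so $Y=S'+3h-e>0$. The column count gives $T_p=(8-p)h+p\tau-e-a\ge0$. But $S'+T_p=-3t-3ph/4+(p/4-2)D+(p-2)\sigma<0$ for $0\le p\le2$.
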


\begin{proof}
We have $h>0$, $E$ nef, and at most one negative marked degree.
Put $Y=2t-h-D-\sigma$.
If all marked degrees are nonnegative, Lemma~\ref{sn:compact-curve} gives $Y\ge3h>0$.
With one negative mark $-e$, the eight outside columns generate $E$, so $e\le h$ and
\[
 S'=7t+e-a>0,\qquad Y=S'+3h-e>0.
\]
Every marked column, written in a frame pulled back from $W$, has coefficients in $A$.
For a column killed by $C_8$, successive use of the displayed rows and the missing exponents $1,3$ gives extra zero orders $(R,2R,2R,3R)$ in $Q_6,Q_5,Q_4,Q_2$.
The equation $C_8v=0$ gives $v_9(0)=0$, then $C_6v=0$ gives $v_8(0)=0$; the $u^2,u^3$ terms of $C_5v=0$ give $[v_8]_{u^2}=v_5(0)=0$.
The $u^3$ terms of $C_6v=0$ and $C_4v=0$ give $v_6(0)=[v_5]_{u^2}=0$.
The adjusted degrees are
\[
 q_6-R=-t-\sigma/2-Y/2,\quad q_5-2R=-5t/2-\sigma/4-Y/4,
\]
\[
 q_4-2R=-2t-h-\sigma-Y,\qquad q_2-3R=-h-\sigma-Y.
\]
All are negative, so a nonnegative marked column killed by the row associated with the invariant of greatest degree lies in $T_f$.
Let $p$ count these columns and put $\tau=\deg T_f=(h+D)/4+\sigma$.
At a general point, the kernel of their evaluation consists of projective vector fields vanishing at complementary marked positions.
There is at least one such position, hence $p\le3$.
With a unique negative mark there are two distinct complementary positions, hence $p\le2$.
Individual columns are nonzero because a nonconstant ordered configuration has at least four distinct positions.

If all marks are nonnegative, put $A_7=7t-a\ge0$.
For $\tau\le h$ use $B_0=9h-a\ge0$; for $\tau>h$ use $B_3=6h+3\tau-a\ge0$.
But
\[
 5A_7+4B_0=-10t-9D-9\sigma<0,\qquad
 2A_7+B_3=-t-21h/4-9D/4<0.
\]
With one negative mark, $T_p=(8-p)h+p\tau-e-a\ge0$, but
\[
 S'+T_p=-3t-3ph/4+(p/4-2)D+(p-2)\sigma<0\qquad(0\le p\le2).
\]
Every case is impossible.
\end{proof}

\subsection{\texorpdfstring{The compact proof closes at $K_W+10L$}{The compact proof closes at K+10L}}

\begin{lemma}\label{sn:compact-chern}
After excluding persistent proper factors and the compact $G(m,p,5)$ curves with $m\ge4$ or $(m,p)=(3,1)$, the divisor $K_W+10L$ is nef.
\end{lemma}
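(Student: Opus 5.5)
The plan is to test \(K_W+10L\) on the generators of \(\overline{\mathrm{NE}}(W)\) from Proposition~\ref{sg:mori}. These are the coordinate curves \(\gamma_i\) and the lines \(\beta_J\) in the closed loci \(Z_J=\mathbb P^{7-|J|}\), for maximal short \(J\). Because the realization is compact, \(\pi\) is an isomorphism over the whole ball quotient, so no cusp corrections enter. For a coordinate curve, Lemma~\ref{sc:moving-coordinate} (equivalently the first step of Lemma~\ref{sn:compact-curve}) gives \(7L\cdot\gamma_i\ge2\). Together with \(-K_W\cdot\gamma_i=2\) this yields
\[
(K_W+10L)\cdot\gamma_i\ge-2+\tfrac{20}{7}>0 .
\]

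For the lines \(\beta_J\) we have \(-K_W\cdot\beta_J=9-2|J|\). So for \(|J|\ge5\) the class \(K_W\) is already nonnegative on \(\beta_J\), and ampleness of \(L\) finishes these cases. For \(|J|\le4\) we need the lower bound
\[
L\cdot\beta_J\ge\frac{9-2|J|}{10}\in\Bigl\{\tfrac7{10},\tfrac5{10},\tfrac3{10},\tfrac1{10}\Bigr\}.
\]
We would derive it uniformly from Lemma~\ref{sc:negative-normal}, applied to \(Z=Z_J=\mathbb P^k\) with \(k=7-|J|\ge3\). By Proposition~\ref{sg:mori} its normal bundle is \(\mathcal O(-1)^{\oplus c}\) with \(c=|J|-1\), and in the compact case \(Z\) lies in the smooth interior after removing at most finitely many wall points. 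The finite set \(S\) consists exactly of these wall vertices, which have codimension at least four by Lemma~\ref{sg:resolution}.

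Let \(s\le c\) be the normal rank of the full generic fixed group along \(Z_J\). If \(s<c\), Lemma~\ref{sc:negative-normal} gives \(\lambda=L\cdot\beta_J\ge1\), which exceeds every required bound. If \(s=c\ge1\), the fixed compactification is \(Z_J\) itself, and the generic group is a reflection group of rank \(c\) fixing a ball of dimension \(k\). When \(c\le d-2=4\), this is a proper persistent factor in the sense of Proposition~\ref{sg:persistent}: either an irreducible one, or a product whose irreducible factors are themselves persistent. Either way it is excluded by hypothesis.

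The case \(J\) a singleton (\(c=0\), \(Z_J=W=\mathbb P^6\)) is not reached by this argument, and we would handle it by Proposition~\ref{st:p6}, which gives \(L\cdot\beta_J\ge1\). As a cross-check, for \(|J|=3,4\) the Chern divisibility \eqref{sc:chern-divisibility} already gives
\[
L\cdot\text{line}\ge\frac1{2\cdot7^{1/3}}>\frac3{10},
\]
provided the stabilizers in codimension \(\le3\) are products of type \(A\).

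The main obstacle is verifying the hypotheses of Lemma~\ref{sc:negative-normal} and of the persistent-factor exclusion simultaneously. Specifically, we must show that the full generic fixed group along each \(Z_J\) satisfies the product hypothesis of Lemma~\ref{sg:fixed}. We must also show that the only way it can fail to be a proper persistent factor is the fixed-curve situation of rank \(5\) with group \(G(m,p,5)\).

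This is where the second exclusion enters. Compact fixed curves of type \(G(m,p,5)\) with \(p=1,\ m=2\) or with small \(m\) are already ruled out by Lemma~\ref{sc:compact-monomial}, and the remaining ones with \(m\ge4\) or \((m,p)=(3,1)\) are excluded by hypothesis. Hence no curve of \(W\) can carry a rank-\(5\) generic stabilizer that pushes \(L\cdot\beta_J\) below the threshold through a non-reduced intersection.

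We would first check this reduction on \(|J|=2\), where \(c=1\) and the generic group is a single reflection or trivial, since there the bound \(\tfrac23\) of Proposition~\ref{st:compact-projective} is directly available as a fallback. Combining all cases, \(K_W+10L\) is nonnegative on every extremal generator and hence nef.
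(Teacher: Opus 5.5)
Your treatment of the coordinate curves, of the rays with $|J|\ge5$, and of the singleton case through Proposition~\ref{st:p6} agrees with the paper. The gap is the case $s=c$ of Lemma~\ref{sc:negative-normal}. You claim that a full generic fixed group of normal rank $c\le4$ along $Z_J$ is a persistent proper factor and is therefore excluded. But persistence in Proposition~\ref{sg:persistent} requires the product hypothesis of Lemma~\ref{sg:fixed}: every interior stabilizer along the fixed ball must be $T\times J_x$ with $J_x$ trivial on $E$. Being the full generic group of a closed $\mathbb P^k$ does not give this. In fact the hypothesis of the lemma says the opposite: every irreducible factor does enlarge somewhere along its fixed ball, eventually to rank $6$. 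This is how that hypothesis is used in Theorem~\ref{sn:compact-terminal}. So when $s=c$ the product hypothesis fails, nothing is excluded, and Lemma~\ref{sc:negative-normal} gives no lower bound on $\lambda$. This is not a degenerate situation: a coincidence locus can be the image of a fixed ball whose group enlarges at special points, for instance where a further label joins.

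For $|J|=2,3$ this can be patched. If $s=c$, then $Z_J$ is the actual compact fixed compactification with the restricted class $L$, and Proposition~\ref{st:compact-projective} gives $\lambda\ge2/3$, respectively $\lambda\ge1/3>3/10$. For $|J|=4$, however, $Z_J=\mathbb P^3$ and you need $\lambda\ge1/10$; nothing in your argument supplies it. Your cross-check via~\eqref{sc:chern-divisibility} assumes type $A$ stabilizers in codimension at most three. That fails here, since $B_2,B_3\subset B_6$ and $G(3,3,3)\subset G(3,3,6)$ survive.

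The missing idea is the paper's quantization of $L|_{Z_I}$. After the two exclusions the remaining rank $6$ groups are $A_6,B_6,D_6,G(3,3,6),E_6,G_{34}$. So every reflection has order two and the rank $2$ stabilizers are $A_1^2,A_2,B_2$. Hence $14L=2K_W+B$ is integral, and the CSM identities~\eqref{sn:rank-two-cycles}, paired integrally with a plane in $Z_I$, force $7\mid14\lambda$. Thus $\lambda\in\frac12\mathbb Z_{>0}$ on every coincidence space with $q=2,3,4$, which gives the ratios $10,6,2$ uniformly. This is also where the $G(m,p,5)$ hypothesis really enters: it forces order-two reflections. It does not enter through the ``non-reduced intersection'' mechanism of your penultimate paragraph, which does not correspond to any actual step.
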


\begin{proof}
The remaining full rank 6 groups are $A_6,B_6,D_6,G(3,3,6),E_6,G_{34}$.
Their reflections have order two; put $B$ for the reduced branch divisor.
Then $14L=2K_W+B$ is integral and the rank 2 groups are $A_1^2,A_2,B_2$.
Set
\[
 C_1=H+7L=B/2,\quad C_2=c_2(T_W)+6HL+21L^2,
 \quad A_0=3C_1(C_1+L)-6C_2.
\]
For a normal neat cover $p:Y\to W$ with group $G$, Burnside's fiber count gives
\[
 \boldsymbol1_W=|G|^{-1}\sum_{g\in G}p_*\boldsymbol1_{\operatorname{Fix}(g)}.
\]
Apply the integral CSM naturality used in Proposition~\ref{sc:chern-integrality}.
A codimension-$r$ fixed ball has Chern polynomial $(1-Lz)^{7-r}$.
The codimension-one and codimension-two triangular identities give respectively $C_1,C_2$ as defined above, and
\begin{equation}\label{sn:rank-two-cycles}
 48C_2=12Z_{A_1^2}+16Z_{A_2}+18Z_{B_2},\qquad
 4A_0=4Z_{A_2}+9Z_{B_2}.
\end{equation}
Here $Z_P$ are ordinary reduced fixed cycles.
For clarity, the first coefficients are $48$ times the proportions $1/4,1/3,3/8$ of elements with no nonzero fixed vector.
For the second identity, the coefficients of $B^2+2LB$ are $2,4,6$, obtained from $4\nu(\nu-1)/|P|$.
The mirror normal class is $-L$, so its diagonal self-intersection cancels with $2LB$.
These observations derive both identities from the common CSM formula.

On a closed coincidence space $Z_I=\mathbb P^k$, put $q=|I|$, $\alpha=9-2q$, and $L|_{Z_I}=\lambda H_0$.
The marked tangent sequence gives $c_1(T_W)|=\alpha H_0$ and $c_2(T_W)|=(\alpha^2-7)H_0^2/2$.
A plane in $Z_I$ is the intersection of $q-1$ divisors where pairs coincide and $k-2$ outside relative tangent classes.
Thus~\eqref{sn:rank-two-cycles} pairs integrally with this plane, including excess intersections.
For $b=14\lambda\in\mathbb Z$, substitution gives
\[
 48\int_{\mathbb P^2}C_2=24(\alpha^2-7)+\frac{36}{7}b(b+4\alpha),\qquad
 4\int_{\mathbb P^2}A_0=84+\frac67b(b+3\alpha).
\]
Their difference gives $7\mid6\alpha b$ and then $7\mid6b^2$, hence $7\mid b$.
For the $K_W$-negative coincidence lines $q=2,3,4$, their $H$-degrees are $5,3,1$ and their $L$-degrees are at least $1/2$.
The corresponding ratios are at most $10,6,2$.
Coordinate curves have ratio at most seven by Lemma~\ref{sn:compact-curve}.
Proposition~\ref{sg:mori} proves the result; for $W=\mathbb P^6$ use Proposition~\ref{st:p6}.
This argument quantizes restrictions to coincidence spaces and does not assert descent of $2L$ on the compact $W$.
\end{proof}

\begin{theorem}\label{sn:compact-terminal}
No nine-point quotient under consideration is a compact arithmetic ball quotient.
\end{theorem}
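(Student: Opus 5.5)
The plan is to argue by contradiction: assume \(X_\mu\simeq(\mathbb B^6/\Gamma)^{\mathrm{BB}}\) with \(\Gamma\) cocompact and arithmetic. Since there are no cusps, Lemma~\ref{sg:resolution} and Proposition~\ref{sg:arithmetic} make the small chamber morphism an isomorphism over the smooth interior, so \(W\simeq X\). By Proposition~\ref{sg:arithmetic}, every full point stabilizer is a finite complex reflection group, and the orbifold canonical class is \(7L\). The aim is to obtain the nef statement of Lemma~\ref{sn:compact-chern}, namely that \(K_W+10L\) is nef, and then to contradict it with a fixed curve on which \(a+10t<0\).

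\textbf{Step 1: remove the excluded configurations.}
\begin{itemize}
\item A persistent proper irreducible factor of rank \(s\) is excluded by Proposition~\ref{sg:persistent}. That result forces \(s\le2\), equal basic degrees, and a fixed locus \(Z_I=\mathbb P^{6-s}\) with an antiample marked line. The compact rank \(d-1\) case is excluded directly. For the remaining ranks \(s\le2\), I would apply Lemma~\ref{sc:negative-normal} together with the Chern quantization \(7\mid b\) from the proof of Lemma~\ref{sn:compact-chern}. Combined with Lemma~\ref{sn:compact-curve}, this should bound the \(L\)-degree of lines in \(Z_I\) from below and lead to a contradiction.
\item The compact \(G(m,p,5)\) curves are excluded by Lemma~\ref{sc:compact-monomial} with \(r=5\), \(n=9\). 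I expect this step to need only a check of that lemma's hypotheses: every larger interior group must be \(G(m,p,6)\) or a product with \(\mu_e\).
\end{itemize}

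\textbf{Step 2: list the possible full rank-six stabilizers.} Once the excluded configurations are gone, the candidates are \(A_6,B_6,D_6,G(3,3,6),E_6,G_{34}\). Lemma~\ref{sn:compact-g34} removes \(G_{34}\). Proposition~\ref{sn:positive-target} removes the \(D_5\) curves that pass through \(E_6\) points. Lemma~\ref{sn:compact-chern} now yields that \(K_W+10L\) is nef.

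\textbf{Step 3: test each surviving group on a fixed curve.} For each remaining maximal group \(G\in\{A_6,B_6,D_6,G(3,3,6),E_6\}\), choose a corank-one subgroup \(K\) whose fixed curve passes through a \(G\)-point:
\begin{itemize}
\item \(A_5\subset A_6\), using Proposition~\ref{sc:type-a};
\item the monomial groups inside \(B_6,D_6,G(3,3,6)\), using Proposition~\ref{sc:monomial-comparison};
\item \(D_5\subset E_6\), already handled in Step 2.
\end{itemize}
On the compact fixed curve, equation~\eqref{ew-curve-determinant-equation} has no end terms and reads
\[
a+(D_K+2)t=\sum_y\frac{D_{J_y}-D_K-1}{e_y}.
\]
Combine this with the logarithmic Riemann--Hurwitz identity. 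For a large sum of invariant degrees \(D_K\) (for example \(D_{A_5}=20\)), this should give \(a+10t<0\), which contradicts nefness.

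Alternatively, on a fixed curve in the \(A\) case, compare the lower bound~\eqref{sc:type-a-bound}, \(a\ge 11t+\sigma\), with the upper bound \(a\le 7t\) from Lemma~\ref{sn:compact-curve} when all marked degrees are nonnegative. When one marked degree is negative, use the inequality \(7t>a-e\) instead.

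\textbf{Main obstacle.} The hardest part will be the exhaustive enumeration in Step 3. The interior enlargements must be of the forms permitted by the propositions, and curves of genus \(g\ge1\) must be dismissed by Hom-dimension arguments as in Lemma~\ref{sn:compact-g34}. If the enumeration resists, I would first try to show that some fixed point of maximal rank exists by a Euler-characteristic count from Proposition~\ref{sc:chern-integrality}, and then treat only the fixed curves through that point.
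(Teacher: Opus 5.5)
Your exclusion of the persistent factors of rank \(s=1,2\) does not work, and everything after it depends on that step, because Lemma~\ref{sn:compact-chern} assumes these factors are already gone.

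\begin{itemize}
\item In the persistent situation of Proposition~\ref{sg:persistent}, the normal rank equals the codimension of \(Z_I\). In that case Lemma~\ref{sc:negative-normal} gives no inequality on \(\lambda\); it only re-identifies the fixed compactification with \(Z_I\).
\item The quantization \(7\mid b\) is circular. Its proof uses that all reflections have order two, so that \(14L=2K_W+B\), and that the rank-two stabilizers are \(A_1^2,A_2,B_2\). This is exactly what fails when a persistent \(\mu_\delta\) with \(\delta\ge3\), a \(G(m,2,2)\), or one of \(G_7,G_{11},G_{19}\) is present.
\item Even if you grant it, \(7\mid b\) only gives \(\lambda\in\frac12\mathbb Z\). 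The case \(s=1\), \(\delta=2\) has \(\lambda=1/2\) on \(\mathbb P^5\), which passes that test.
\item Lemma~\ref{sn:compact-curve} cannot supply the bound either. Lines in \(Z_I\) carry \(s+1\ge2\) negative marks, while the lemma only bounds \(t\) from below on curves with at most one negative mark. Its own proof also takes Proposition~\ref{st:compact-projective} as input for exactly these collision lines.
\end{itemize}

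The missing idea is that \(Z_I\simeq\mathbb P^{6-s}\) is itself the Baily--Borel compactification of a compact arithmetic ball quotient, with automorphic class \(L|_{Z_I}=\delta^{-1}\mathcal O(1)\). Proposition~\ref{st:compact-projective} therefore applies to it directly.
\begin{itemize}
\item Its bound \(\lambda\ge2/3\) on \(\mathbb P^5\) excludes \(s=1\), where \(\delta\ge2\).
\item Its bound \(\lambda\ge1/3\) on \(\mathbb P^4\) excludes \(s=2\). There the irreducible groups with equal degrees are \(G(m,2,2)\) with even \(m\ge4\) and \(G_7,G_{11},G_{19}\), all with \(\delta\ge4\).
\end{itemize}

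Step~3 also needs correcting.
\begin{itemize}
\item Nefness of \(K_W+10L\) means \(a\le10t\), so you need a curve with \(a>10t\), not \(a+10t<0\).
\item The determinant identity and Riemann--Hurwitz alone cannot produce such a curve. On an \(A_5\) disk without ends they give \(a=6R+\sigma-22t\), with \(R\) unconstrained.
\item What works is the tangent comparison of Proposition~\ref{sc:type-a}, \(a\ge11t+\sigma>10t\), set against the nef bound. Your alternative via Lemma~\ref{sn:compact-curve} misses curves with two or more negative marks.
\item For \(B_6,D_6,G(3,3,6)\), apply Lemma~\ref{sc:compact-monomial} to their \(B_5,D_5,G(3,3,5)\) disks. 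Do this only after Lemma~\ref{sn:compact-g34} and Proposition~\ref{sn:positive-target} have removed \(G_{34}\) and \(E_6\), since these primitive enlargements violate the lemma's hypothesis.
\item Consequently, in Step~1 that lemma disposes only of \(m\ge4\) and \((m,p)=(3,1)\). That is all Lemma~\ref{sn:compact-chern} needs.
\item The \(A_5\) disk exists because the lattice is generated by reflections and every reflection enlarges to rank six, which by then can only be \(A_6\). No Euler-characteristic count is required.
\end{itemize}
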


\begin{proof}
By Proposition~\ref{sg:arithmetic}, stabilizers are reflection groups, so the compact coarse quotient is smooth and $\pi$ is an isomorphism.
Increase an irreducible factor through full actual parabolic stabilizers.
If enlargement stops below rank 6, Proposition~\ref{sg:persistent} gives equal basic degrees $\delta$ and a closed image $\mathbb P^{6-s}$ with normal bundle $\mathcal O(-1)^s$ and $\delta L=\mathcal O(1)$.
The classification of groups with equal basic degrees first gives $s\le2$; no marked antiample statement for four labels is needed.
For $s=1$, $\delta\ge2$ contradicts the $\mathbb P^5$ lower bound in Proposition~\ref{st:compact-projective}.
For $s=2$ the groups are $G(m,2,2)$, with even $m\ge4$, and $G_7,G_{11},G_{19}$, all with $\delta\ge4$, contradicting its $\mathbb P^4$ bound.
Thus every reflection enlarges to rank 6.
Lemma~\ref{sc:compact-monomial} excludes $G(m,p,5)$ for $m\ge4$ or $(m,p)=(3,1)$ before any bound on $a/t$ is used.
Lemma~\ref{sn:compact-chern} now gives $a\le10t$ on every complete curve.
Lemma~\ref{sn:compact-g34} excludes $G_{34}$, and Proposition~\ref{sn:positive-target} excludes $E_6$ through its full parabolic $D_5$ disk.
Next $B_6,D_6,G(3,3,6)$ give full $B_5,D_5,G(3,3,5)$ disks, whose primitive enlargements have just been excluded; Lemma~\ref{sc:compact-monomial} excludes them.
Only $A_6$ remains.
The nontrivial lattice generated by reflections has a reflection, so actual enlargement supplies a full $A_5$ disk.
Proposition~\ref{sc:type-a} gives $a\ge11t+\sigma>10t$, a contradiction.
No imaginary quadratic hypothesis or compact descent of $2L$ was used.
\end{proof}

\subsection{The nonuniform proof: bounds fixed once}
\label{sn:nonuniform}

Now suppose that the arithmetic quotient is nonuniform.
At a genuine wall vertex the local slice is the cone over $\mathbb P^a\times\mathbb P^b$, $a+b=5$, $a,b\ge1$.
The chosen small model retains the smaller factor, so $\dim\operatorname{Exc}(\pi)\le2$ and its codimension is at least four.
It is an isomorphism near every smooth cusp.
Proposition~\ref{sg:arithmetic} applies since $12>7$; the coefficient field $K$ is imaginary quadratic and $w_KL$ is an actual ample line bundle on $X$, $w_K\in\{2,4,6\}$.
After higher-order reflections are excluded below, it gives actual descent of $2L$ on the original $X$.

We use Lemma~\ref{sg:curve-bound} in the following form throughout.
For an actual ample line bundle $A_X=cL$, put $b=ct$.
Then
\begin{equation}\label{sn:curve-comparison}
 a\le7b;\qquad
 \operatorname{Hom}(E,Q)\ne0,\ \deg Q\le0\quad\Longrightarrow\quad a\le5b.
\end{equation}
If the common negative marked degree is $-e$, the same lemma gives
\[
 a=(9-2\ell)e+2\delta,\quad e+\delta\le b,
\]
and every target of degree less than $e$ kills the positive marked part $F$.
Thus, after $2L$ descends, the two bounds in~\eqref{sn:curve-comparison} are $a\le14t$ and $a\le10t$.
These are curvewise statements, valid also for $W=\mathbb P^6$; no global nef theorem is invoked.

\begin{lemma}\label{sn:enlargement}
Every irreducible reflection factor enlarges through full actual point stabilizers to rank 5 or 6.
Every irreducible rank 5 disk avoids genuine Segre vertices, so all its ends are smooth ambient cusps.
\end{lemma}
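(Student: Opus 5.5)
We argue by contradiction, following the enlargement procedure in the proof of Theorem~\ref{sn:compact-terminal}. Start from a reflection in the original lattice, which exists by Proposition~\ref{sg:arithmetic}. Take its irreducible reflection factor and enlarge it repeatedly through full actual point stabilizers along its fixed ball. Suppose the enlargement stops at an irreducible factor \(T\) of rank \(s\le4\). Then \(T\) is the full generic stabilizer of a fixed ball of dimension \(6-s\ge2\), and every larger interior stabilizer along that ball has the product form \(T\times J_x\). So the product hypothesis of Lemma~\ref{sg:fixed} holds and Proposition~\ref{sg:persistent} applies. It gives equal basic degrees \(\delta\), hence \(s\le2\). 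It also identifies the fixed compactification with the closed coincidence locus \(Z_I=\mathbb P^{6-s}\) of the merged weight quotient on \(9-s\ge7\) labels, with normal bundle \(\mathcal O(-1)^{\oplus s}\). Finally, Lemma~\ref{sg:marked} gives \(M_I=\mathcal O(-1)\) together with \(\delta L|_{Z_I}=\mathcal O(1)\).

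To exclude \(s=1,2\), we compare \(L\) with the geometry of \(Z_I\). For \(s=1\), \(T\) is cyclic of order \(\delta\ge2\) and \(L\cdot\text{line}=1/\delta\le1/2\). Take a line \(\beta\subset Z_I\). It has \(a=H\cdot\beta=9-2\cdot2=5\), and by Proposition~\ref{sg:negative} the normal quotient \(M_I\) has \(e=1\). Apply \eqref{sn:curve-comparison} with \(A_X=w_KL\): since \(\operatorname{Hom}(E,M_I)\ne0\), we get \(5\le5w_K t\), so this alone is too weak. The main tool will instead be Chern integrality. Proposition~\ref{sc:chern-integrality} on \(\mathbb P^{5}\) or \(\mathbb P^4\), with \(N=7\), gives \(7^2\mid b^3\), hence \(7\mid b\) where \(\mathcal T=b\mathcal O(1)\). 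This yields \(L\cdot\text{line}\ge 1/2\) when reflections have order two, and \(\ge1/2N^{1/3}\) in general. For the nonuniform case the paper's analogues of Proposition~\ref{st:compact-projective} (the \(\mathbb P^5\) and \(\mathbb P^4\) lower bounds for \(L\)) should be available. I would invoke them as in Theorem~\ref{sn:compact-terminal}: \(\delta\ge2\) for \(s=1\), and \(\delta\ge4\) for the rank-two equal-degree groups \(G(m,2,2)\) with even \(m\ge4\), \(G_7\), \(G_{11}\), \(G_{19}\). Each case contradicts the lower bound. Note that Lemma~\ref{sg:meyer} places \(T\) in a cusp horizontal group, which would also constrain it to crystallographic types via Lemma~\ref{sg:cusp}; this gives an alternative route if the projective bound needs it. This exclusion is where I expect the main obstacle, because compactness was used in the earlier \(\mathbb P^5\) and \(\mathbb P^4\) bounds and the nonuniform versions must control cusp points on \(Z_I\).

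For the second assertion, let \(T\) be irreducible of rank 5, with fixed disk \(Y=C\). The genuine wall vertices of \(X\) are Segre cones \(\operatorname{Cone}(\mathbb P^a\times\mathbb P^b,\mathcal O(1,1))\) with \(a+b=5\) and \(a,b\ge1\). Suppose an end of \(C\) lands at such a vertex. Lemma~\ref{sg:cusp-lift} realizes \(T\) as the kernel of a fiber character at a horizontal point. By Theorem~\ref{sg:segre}, \(F_P=F_1\times F_2\) is a product of factors of ranks \(a,b\), and the horizontal representation splits into two orthogonal pieces of ranks \(a\) and \(b\). A subgroup of \(F_P\) generated by reflections has its reflections in \(F_1\) or \(F_2\), so it splits as a product compatible with this decomposition. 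If it has no horizontal fixed vector, as Lemma~\ref{sg:cusp-lift} requires for the disk to end at the cusp, then both components are nontrivial. Hence it is reducible, contradicting the irreducibility of \(T\) of rank \(5=a+b\). Moreover, the disk cannot meet an interior wall vertex, since wall vertices are not interior ball quotient points (proof of Proposition~\ref{sg:arithmetic}). So every end is a smooth cusp.
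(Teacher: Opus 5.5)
Your argument for the second assertion is sound and is essentially the paper's. A reflection of \(F_1\times F_2\) lies in a single factor, so a reflection subgroup splits. Hence an irreducible rank \(5\) group cannot sit in horizontal factors of ranks \(1+4\) or \(2+3\).

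\textbf{The gap: excluding persistent factors with \(s=1,2\).} This is the step you flagged, and neither of your proposed tools closes it.
\begin{itemize}
\item There is no nonuniform analogue of Proposition~\ref{st:compact-projective} to invoke. The fixed Hermitian space has dimension at least three, so Meyer makes the induced quotient on \(Z=\mathbb P^{6-s}\) itself nonuniform. Proposition~\ref{st:compact-projective} is proved only for compact quotients. The only nonuniform projective statements in the paper, Propositions~\ref{st:p5-sixth} and~\ref{st:p5-quarter}, exclude just the slopes \(1/6\) and \(1/4\) on \(\mathbb P^5\).
\item Your fallback through Proposition~\ref{sc:chern-integrality} also fails. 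Its hypotheses are not available at this stage:
  \begin{itemize}
  \item it needs reflections of order two for \(\mathcal T=2NL\) to be integral, but higher-order reflections are removed only later, in Lemma~\ref{sn:parameters};
  \item for \(s=1,\ \delta\ge3\) the factor \(\mu_\delta\) is itself generated by a higher-order reflection;
  \item it needs type \(A\) denominators \(M_2,M_3\), which are not known here.
  \end{itemize}
\item Even granting those hypotheses, \(7\mid b\) gives only \(L\cdot\text{line}\ge1/2\). That is exactly the slope \(1/\delta\) for \(\delta=2\): with \(b=7\), \(N=7\), both fractions in \eqref{sc:chern-divisibility} are integers (\(63\) and \(105\)). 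The general bound \(1/(2\cdot7^{1/3})<1/3\) does not exclude \(\delta=3\) either.
\item Placing \(T\) in a cusp horizontal group gives only crystallographic restrictions such as \(\delta\in\{2,3,4,6\}\), which exclude nothing by themselves.
\end{itemize}

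\textbf{What the paper does instead.}
\begin{itemize}
\item It first checks \(Z=Z_X\). The strict transform is small since \(\dim Z\ge4>\dim\operatorname{Exc}(\pi)\), so the normal bundle and the slope pass from \(W\) to \(X\).
\item It proves \(\delta\mid w_K\) by descending \(w_KL\) to the nonuniform induced quotient.
\item It shows that the lowest invariant map \(L^{-\delta}\otimes U_\delta\to N_{Z/W}\) is constant and invertible. This forces every cusp of \(Z\) to a genuine Segre vertex, with degree-one generators and trivial fiber character.
\item Rank \(2\) is reduced to \(G(4,2,2)\) using the point groups of smooth abelian quotients. It is then excluded via the canonical formula \(2B_2+3B_4=25H_0\) and a \(G(4,1,3)\) cusp disk.
\item For rank \(1\), Propositions~\ref{st:p5-quarter} and~\ref{st:p5-sixth} handle \(\delta=4,6\).
\item For \(\delta=2,3\), the paper analyses the induced groups on \(\mathbb P^5\) one by one: \(G(m,1,4)\), \(G_{32}\), \(F_4\), \(D_4\) with the triality residue, and \(G(m,p,4)\), and finally \(A_4\) against the Euler sequence.
\end{itemize}
None of this can be replaced by a uniform lower bound on the slope, so your first assertion remains unproved as written.
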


\begin{proof}
If actual enlargement stops in rank $s\le4$, the product hypothesis of Proposition~\ref{sg:persistent} holds.
Its image is a closed $Z_X=\mathbb P^{6-s}$ with equal degrees $\delta$, and classification gives $s\le2$.
The strict transform $Z\to Z_X$ has no exceptional divisor since $\dim Z\ge4$ and $\dim\operatorname{Exc}(\pi)\le2$.
A projective small morphism to a factorial target is an isomorphism: push down a relatively ample Cartier divisor and pull it back; their difference would be an exceptional divisor.
Thus $Z=Z_X$, $N_{Z/W}=\mathcal O(-1)^s$, and $\delta L|_Z=\mathcal O(1)$.
Its actual fixed Hermitian space is defined over the original imaginary quadratic field and has dimension at least three.
Meyer supplies a rational isotropic line in that space, so the induced fixed quotient of Lemma~\ref{sg:fixed} is nonuniform.
Descent of the original $w_KL$ then gives $\delta\mid w_K$.
The lowest invariant term of its ideal gives a generically invertible map
\[
 L^{-\delta}\otimes U_\delta\longrightarrow N_{Z/W}.
\]
After canceling equal degrees it is a map from a finite flat bundle to $\mathcal O_Z^s$.
On complete curves away from cusps its coefficients become constant after a finite cover.
Interior inertia is therefore trivial; simple connectedness and Hartogs make the matrix constant and invertible on $Z$.
At a smooth ambient cusp every vanishing function and its lowest normal coefficient have a central factor, a contradiction.
All cusps of $Z$ consequently lie at genuine Segre vertices.
Surjectivity on local rings and the generators of degree one in Theorem~\ref{sg:segre} force equality of the full central periods: an additional central index would kill every nonconstant restricted generator.
The induced smooth cusps therefore have only generators of degree one and trivial fiber character.

In rank 2, the possibilities are $G(4,2,2),G(6,2,2)$.
The two lowest directions require every enlargement to have two basic degrees equal to $\delta$.
The lift fixing the point for the full finite group permits this test at a point of a Segre cusp.
Standard smooth abelian quotient actions have point groups that are products of symmetric groups and $G(e,1,k)$, whose essential degrees are distinct.
The sole remaining surface action is $G(4,2,2)$, excluding $\delta=6$.
For $\delta=4$, the induced groups on $\mathbb P^4$ are $A_3,B_3,G(4,1,3)$.
The generic branch groups are $G(4,2,2)\times\mu_e$, $e=2,4$.
Their canonical formula is $2B_2+3B_4=25H_0$, so $B_4\ne0$.
Meyer in that actual higher-dimensional fixed quotient yields a $G(4,1,3)$ cusp and its full fixed disk.
Writing $u=4t$, its equations are
\[
 46t=4s+15R/4+\sigma,\qquad -12t+s+R\ge4t.
\]
The first right side is at least $15(s+R)/4\ge60t$, impossible.

For rank 1, $\delta=4,6$ are excluded by Propositions~\ref{st:p5-quarter} and~\ref{st:p5-sixth}.
For $\delta=2,3$, put $u=\delta t$ on $Z=\mathbb P^5$.
All induced cusps have five generators of degree one.
The field and cusp classifications leave $A_4$ and $G(m,1,4)$ with $m=2,3,4$ for $\delta=2$, and $m=2,3$ for $\delta=3$.
Parameter six is excluded by the general rank 4 product inequality in the proof of Proposition~\ref{st:p5-sixth}:
\[
 a\ge(17p/6+1)h_P+30t+R+\sigma>6u
 \qquad(h_P\ge u,\ t=u/\delta,\ \delta=2,3).
\]
This uses that inequality itself, rather than the conclusion at slope $1/6$ of the proposition.
A $G_{32}$ disk has no ends, since the horizontal actions whose rings of invariant sections are generated in degree one have no such full stabilizer, and has no irreducible rank 5 enlargement.
Its axis and determinant formulas give $a=2-2g-84t<0$, since $t=u/\delta\ge1/3$, contrary to $a=6u>0$.
For $G(m,1,4)$, the canonical formula and Euler give
\[
 (10m+2+6\delta)t=5s+(5-1/m)R+\sigma,\qquad
 -4mt+s+R\ge\delta t.
\]
The difference between the forced lower coefficient and the left coefficient is
\[
\begin{array}{c|ccc}
 &m=2&m=3&m=4\\\hline
 \delta=2&11&64/3&63/2\\
 \delta=3&19/2&20&\text{absent}
\end{array}
\]
and is positive.
Curves of types $D_4,G(3,3,4),G(4,2,4),G(4,4,4),F_4$ have no ends, by the order and character test in the actual smooth horizontal action.
For $F_4$, $(28+6\delta)t=2-2g$ contradicts $t\ge1/\delta$.
For $D_4$, with $R,M$ counting $G_{33},D_5$ points, the two degree formulas give
\[
 g=0,\quad u\le2,\qquad (36+96/\delta)u-12=28R+21M.
\]
Modulo seven excludes $\delta=2$; $\delta=3$ leaves $(u,t,R,M,\sigma)=(1,1/3,2,0,1)$.
Here the image is a line with normal bundle $\mathcal O(1)^4$.
The four actual source line bundles on the coarse quotient have degree $-2$, with zero orders $(1,0,2,1)$ and $(1,2,0,1)$ at the two $G_{33}$ points; the other inertia consists of two order-two points acting trivially on the root space.
This is exactly the triality configuration in~\eqref{st:d4-residual}, whose following calculation uses only these data.
Its tensor identity makes the two products of exchanged ratios equal to $1/12$; the determinant of the four linear columns is therefore zero.
This excludes $D_4,G_{33},D_5$ before the pure $A_4$ formula is used.
For $G(m,p,4)$ with $p>1$, put $D_0=6m+4m/p$, $k=3m$, and $\beta=4+1/p-1/m$.
The equations for a curve without ends give
\[
 (D_0+2+6\delta)t=\beta R+\sigma,\qquad R\ge(k+\delta)t.
\]
For $\delta=2$, the differences $\beta(k+\delta)-(D_0+2+6\delta)$ are $8,27/2,14$.
These correspond respectively to $(m,p)=(3,3),(4,2),(4,4)$.
For $\delta=3,(m,p)=(3,3)$ the difference is six.
All are positive.
Persistence contradicts the ample tangent bundle of $\mathbb P^5$, so a full $A_4$ disk remains.
Its target associated with the invariant of greatest degree satisfies $5\deg Q=(6\delta-9)t-\sigma<5\delta t$, contrary to Euler.
This excludes every persistent proper factor.

The field column of finite reflection classification now leaves
\[
\begin{array}{c|l}
\text{rank 5}&A_5,\ G(m,p,5),\ G_{33}\\
\text{rank 6}&A_6,\ G(m,p,6),\ E_6,\ G_{34},
\end{array}\qquad
m\in\{2,3,4,6\},\quad p\mid m.
\]
Every rank 6 group has an actual rank 5 parabolic.
An irreducible rank 5 root space cannot lie in one horizontal factor at a genuine Segre cusp, whose horizontal ranks split as $1+4$ or $2+3$.
Thus every rank 5 end is a smooth ambient cusp.
\end{proof}

\subsection{\texorpdfstring{Remove the higher parameters before using $2L$}{Remove the higher parameters before using 2L}}

\begin{lemma}\label{sn:parameters}
There are no full disks or smooth cusps of types $G(6,p,5)$, $G(4,1,5)$, or $G(3,1,5)$.
After $2L$ descends, the same holds for $G(4,2,5),G(4,4,5)$.
\end{lemma}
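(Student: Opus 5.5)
Fix one of the listed groups $T=G(m,p,5)$ in the nine-point nonuniform setting, so $r=5$, $d=6$, $n=9$. I would treat a full disk and a smooth cusp of that type together. A smooth cusp of type $G(m,p,5)$ supplies, by Lemma~\ref{sg:cusp-lift}, a disk with generic group $\ker\chi_c=T$ ending there. Conversely, by Lemma~\ref{sn:enlargement}, every end of such a disk is a smooth ambient cusp. So it suffices to exclude the fixed curve $C$ of Lemma~\ref{sg:fixed}, the quotient of the $T$-disk by its full normalizer.

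Its larger interior groups are rank~6 groups containing $T$ as a parabolic, from the list in Lemma~\ref{sn:enlargement}. For these parameters, the only rank~6 overgroup with a primitive axis is $G(m,p,6)$ (no $E_6$ or $G_{34}$ contains $G(m,p,5)$ with $m\ge3$ or $(m,p)=(4,\ast)$). Hence the hypotheses of Proposition~\ref{sc:monomial-comparison} hold. Lemma~\ref{sc:monomial-normalizer} supplies normalizer preservation of the degree $k$ direction.

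The computation then proceeds as follows.

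First, if $C$ has no ends, Lemma~\ref{sc:compact-monomial} already excludes it for $r=5$, and that proof is valid for nonuniform ambient lattices. So I may assume $s\ge1$.

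Second, at an end the horizontal stabilizer is a smooth crystallographic action containing $G(m,p,5)$ with trivial fiber character on $T$. By the classification in Theorem~\ref{sg:segre} and \cite{ALA1}, $F_c$ is $G(m,1,5)$ or $S_6$, with generator degrees $b_{c,i}\in\{1,m\}$ and width $\kappa_c\mid m$. For $m=6$ and for $p=1$ with $m=3,4$, the lattice $\mathcal O_K$ must be $\mathbb Z[\zeta_m]$. I would check, using Proposition~\ref{sc:root-theta}-type theta data, that no actual horizontal action with stabilizer $G(m,p,5)$ and invariant ring generated as required exists unless the fiber character has order dividing $m/p$. This fixes $B_c$ and $\nu_c$ in \eqref{ew-comparison-canonical} and \eqref{sc:monomial-inverse}.

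Third, insert the data into \eqref{sc:monomial-log}, \eqref{ew-comparison-canonical} and \eqref{sc:monomial-inverse}. The sum $D=mr(r-1)/2+rm/p=10m+5m/p$ grows linearly in $m$. Eliminating $s$ with the logarithmic identity gives $a$ as a negative multiple of $t$ plus bounded terms, in the same way as the displayed inequality in the proof of Lemma~\ref{sc:compact-monomial}. In the remaining cases $a$ is positive; then $h=-kt+R+\sum\nu_c$ is negative because $k=5m$ or $4m$ dominates. Lemma~\ref{sc:coefficient-elimination} reduces the Hom space to dimension one, and Proposition~\ref{sg:negative} together with \eqref{sn:curve-comparison} then bounds $a\le5ct$ with $c=w_K$. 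For $G(6,p,5)$, $G(4,1,5)$ and $G(3,1,5)$ this uses only $w_KL$ integral ($c\le6$). For $G(4,2,5)$ and $G(4,4,5)$ the margin is smaller, and I would use the descended $2L$ to get $a\le10t$; this is why the second sentence requires descent. If some finite list of small $(t,R,s)$ survives, I would finish as in Lemma~\ref{sc:compact-monomial}. That means applying the normal bundle test \eqref{sc:normal-test} and the sorted test \eqref{sc:sorted-test} on the rational cover of Proposition~\ref{sc:bundle-tests}, using integrality of $j=mt$.

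The main obstacle is the second step: determining the possible cusp generator degrees and widths $\kappa_c$ for these monomial groups, because $\nu_c$ and $B_c$ feed every inequality. If this classification is not clean, I would fall back on the crude bound $\nu_c\le m$, $B_c\le(5+m)$, and check whether the linear-in-$m$ surplus of $D$ still forces the contradiction.
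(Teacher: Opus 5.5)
Your reductions agree with the paper: cusps give disks through Lemma~\ref{sg:cusp-lift}, disks without ends are handled by Lemma~\ref{sc:compact-monomial}, and Proposition~\ref{sc:monomial-comparison} applies. The third step, where the contradiction has to come from, does not work as written.

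\textbf{The end data.} The full horizontal group at an end is $G(m,q_c,5)$ with $q_c\mid p$ and $\kappa_c\mid p/q_c$. It is not $S_6$, which cannot contain $T$. Rains's raw section degrees include values other than $1$ and $m$: for instance $(2,3,6/p,6,6,6)$ for $m=6$ and $(d_1,d_2,2d_3,4,4,4)$ for $G(4,1,5)$. In particular ends with $\nu_c=6$ have $B_c=23+6/p$, so your fallback bound $B_c\le5+m$ is false.

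\textbf{Why upper bounds on $B_c$ do not suffice.} Upper bounds on $B_c$ point the wrong way. The logarithmic identity together with $\nu_c\le m$ only gives $h\le-3j+2m+(2-m)R-m\sigma$ for $p=1$, which is positive for small $j=mt$. So ``$h<0$ because $k$ dominates'' is unjustified. So is ``$a>0$'': with $\ell\ge5$ one has $a=(9-2\ell)e+2\delta$, which can be $\le0$, and the paper imposes no sign on $a$.

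\textbf{The missing idea.} You need the \emph{lower} bound $B_c\ge\gamma\nu_c$, with $\gamma=23/6+1/p$ for $m=6$ and $\gamma=19/4$ for $G(4,1,5)$. It turns the canonical formula into $a\ge p\gamma h_P+53j/6+R+\sigma$, respectively $a\ge19h/4+33j/4+R+\sigma$. Then $a\le7j$ from Lemma~\ref{sg:curve-bound} forces $h_P<0$ or $h<0$, and only after that does $a\le5j$ become available.

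\textbf{The endgame.} Your endgame contains no contradiction. A one-dimensional Hom space gives, by Proposition~\ref{sg:negative}, $\ell=2$ and the \emph{lower} bound $a\ge5(-h)$, not $a\le5ct$. An upper bound $a\le5ct$ contradicts nothing unless it is paired with the lower bound above. The paper's contradictions come from successive kernels. A second inverse row on $F$ has target degree $h+j<e$; it comes from the enlargement equations $dZ=v\,df_5$, $dg_5=df_5+v\,df_4$, or from the local cusp equations for $G(3,1,5)$. Either this row must vanish on $F$ by Proposition~\ref{sg:kernel}, or it forces $\ell\ge3$. That gives $a\le3j$ against $a\ge7j/2$ for $G(4,1,5)$, and $a\le3j/2$ for $G(4,2,5),G(4,4,5)$ once $2L$ has descended.

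\textbf{The $m=6$ residue.} A finite residue survives for $m=6$: $R=g=0$, $a=e=j$, $\ell=4$, $h=-j$. It is killed by specific arguments, not by generic bundle tests.
\begin{itemize}
\item For $p=3,6$, the product line is a direct summand of degree strictly between $-j$ and $0$, contradicting $E=\mathcal O(-j)^3\oplus F$.
\item For $p=2$, an end with $B_c=26$ and the relation on $y^2=4x^3-g_3u^6$ force an extra central division, leaving evaluation rank at most $2<3$.
\end{itemize}
Deferring these to ``the bundle tests'' without checking that they succeed leaves the proof incomplete.

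\textbf{The order matters.} The $G(3,1,5)$ case uses the end list $(1^6),(1^2,3^4),(1,3^5)$, which is available only after parameter six is excluded. Descent of $2L$ is itself derived afterwards, once all reflections of order greater than two are gone.
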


\begin{proof}
Apply Proposition~\ref{sc:monomial-comparison} with actual end periods, polarizations, and fiber ages.
For $p>1$, the target degrees for the elementary symmetric invariant of greatest degree and for the product invariant are
\[
 h=-4mt+R+\nu,\qquad h_P=(-5mt+R+\nu)/p,\qquad \nu=\sum_c\nu_c;
\]
for $p=1$ the target associated with the invariant of greatest degree is $h=-5mt+R+\nu$.
The age of the source line bundle associated with the product invariant is retained separately.

For $m=6$, put $j=6t\in\mathbb Z_{>0}$ and use $A_X=6L$.
The complete normalized end list is
\[
 (B_c-\nu_c/p,\nu_c)\in\{(5,1),(9,2),(13,3),(23,6)\},\qquad
 \operatorname{age}_c=\lceil\nu_c/p\rceil-\nu_c/p.
\]
It follows from the raw degrees $(2,3,6/p,6,6,6)$ with their actual character congruences.
Set $\gamma=23/6+1/p$.
Since $B_c\ge\gamma\nu_c$, the canonical formula gives
\[
 a\ge53j/6+R+\sigma+p\gamma h_P.
\]
Formula~\eqref{sn:curve-comparison} first gives $a\le7j$, then $h_P<0$ and $a\le5j$.
Every nonzero line target has degree at least $-j$.
The logarithmic formula, using $\nu_c\le6$, gives
\[
 (\bar k-3)j+4R+6\sigma\le12(1-g),\qquad
 \bar k=5\ (p=1),\quad\bar k=4\ (p>1).
\]
Thus $g=0$, $j\le6$ for $p=1$ and $j\le12$ otherwise; also $s\le\lfloor2+j/3\rfloor$ and $5R/6+s\le2+j/3$.
Here $\sigma=0$ or $\sigma\ge1/2$, with no bound on the order of a product factor.

We give the finite check of the remaining cases to specify its scope.
Let $V=\sum V_c$ be the initial required cusp zero order: $V_c=1$, except that the two $p=1$ end sums $11,16$ require $V_c=2,3$.
For a negative target $q=\min(h,h_P)$, the successive symmetric bounds are $q+ij-R-V$.
The product bound is $q+(5j-R)/p+\sum\operatorname{age}_c-V$.
Require $h,h_P\in\mathbb Z$, $h,h_P\ge-j$, and integrality of the degree of the source line bundle associated with the product invariant
\[
 -5j/p-(1-1/p)R-\sum\operatorname{age}_c.
\]
For $-q\le e\le j$ and $2\le\ell\le6$, require $a=(9-2\ell)e+2\delta$, $e+\delta\le j$, and $\delta\ge0$ integral.
The coefficient bounds must support evaluation rank $\ell-1$ and dimension $(\ell-1)(e+q+1)$.
The independent inverse directions on $F$ also impose
\[
 \#\{0\le i<4:h+ij<e\}+\boldsymbol1_{h_P<e}\le\ell-1
\]
for $p>1$, with five symmetric directions and no product term for $p=1$.
Substitution in this proved finite range gives no row for $p=1$.
All remaining rows have $R=g=0$, $a=e=j$, $\ell=4$, $h=-j$, $\delta=0$; their complete end table is
\[
\begin{array}{c|c|c|c|c}
 p&j&\{B_c\}&\sigma&h_P\\\hline
 2&2&\{26\}&5/3&-2\\
 2&3&\{29/2,26\}&1&-3\\
 2&3&\{11/2,10,26\}&0&-3\\
 3&6&\{29/3,29/3,25,25\}&0&-4\\
 6&6&\{24,24,24\}&1&-2\\
 6&9&\{27/2,24,24,24,24\}&0&-3.
\end{array}
\]
This includes zero ends and every sign of $a$; positivity was not imposed.
For $p=3,6$, the product is an actual direct summand because $R=0$.
Its degree strictly between $-j$ and zero contradicts $E=\mathcal O(-j)^3\oplus F$, whose other summands have degree at least $j$.
For $p=2$, every row has an end with sum $26$.
The actual elliptic sections obey $y^2=4x^3-g_3u^6$, $g_3\ne0$, and
\[
 2y_1/y_0-3x_1/x_0=-g_3u_0^6/y_0^2\ne0.
\]
The radial and lowest symmetric derivatives are independent.
Once the first symmetric coefficient vanishes, all symmetric coefficients are divisible by the sixth central power.
Only one symmetric direction and the product can remain, giving evaluation rank at most 2 instead of $\ell-1=3$.

For $m=4,p=1$, use $j=4t$ and $A_X=4L$.
The actual generator degrees $(d_1,d_2,2d_3,4,4,4)$, $d_1,d_2\mid4$, $d_3\mid2$, give $B_c\ge19\nu_c/4$ for every torsor and linearization.
Consequently
\[
 a\ge19h/4+33j/4+R+\sigma.
\]
Starting with $a\le7j$ gives $h<0$, then $a\le5j$, $h\le-13j/19$, and $e\ge13j/19$.
On $F$ the equations $dZ=v\,df_5$, $dg_5=df_5+v\,df_4$ reduce the next inverse pole, giving a second independent target of degree $h+j<e$.
Thus $\ell\ge3$ and $a\le3j$, but $h\ge-j$ gives $a\ge7j/2$.

For $m=3,p=1$, put $j=3t$.
Its integrality comes from the actual symmetric lines, not from $6L$ alone.
After parameter six, the end degrees are $(1^6),(1^2,3^4),(1,3^5)$, with counts $x,y,z$.
Set $h=-5j+R+x+3y+3z$.
The equations give
\[
 a-5h=10j+2-2g-2y>4j,\qquad j+R+3\sigma\le6(1-g).
\]
Here $y\le2+2j/3-2g$ proves the strict inequality.
Lemma~\ref{sg:curve-bound} gives $g=0$, $1\le j\le6$, $h<e\le2j$, and factorization through the negative marked quotient.
The first symmetric residual degree is $-4j+2y<0$; its sole possible exception $j=1,y=s=2,R=0,e=2$ gives $a=13>10j$.
At $(1^2,3^4)$ the actual local equation $(s_u/s_v)^3=i(1+z_i^3)$ makes the radial and first symmetric derivatives independent at every fixed point.
At $(1,3^5)$, the section of degree one for width one is nonzero.
Thus all symmetric coefficients acquire a third central power; their residual degrees are $(i-5)j$.
Hom has dimension one, so $\ell=2$, $h=-e$, and $5e+\delta=5j+1-y$.
The only integer case with $2e\le j$ is $(j,e,y,R)=(2,1,3,0)$, forcing $\sigma=1/3$, impossible for product contributions.
Hence $2e>j$ and the next inverse target $h+j=j-e<e$ gives two independent rows through a rank 1 marked quotient.

These exclusions remove every reflection of order greater than two, including those in smaller horizontal factors by Lemma~\ref{sn:enlargement} and the lift fixing the point for the full group of Theorem~\ref{sg:segre}.
The normalized lifts now have determinant $\pm1$, so a scalar $zI_7$ in their kernel satisfies $z^{14}=1$.
Its order also divides $w_K$, hence divides two.
Proposition~\ref{sg:arithmetic} gives actual descent of $2L$, with the full cusp group included.

For $m=4,p=2,4$, we may therefore put $j=4t\in2\mathbb Z_{>0}$, $b=j/2$.
The actual product gives $h_P=(h-j)/p$.
The raw degrees imply $5\nu_c+1-B_c\le4$ for $p=2$ and $\le5$ for $p=4$.
For $p=2$ the stronger bound uses the even degree of the unnormalized product: if its maximum exponent is four, the opposite order-four branch has odd exponent and the sum is at least seventeen.
Elimination of $s$ gives
\[
 a-5h\ge11j/2-6+6g+5R/2+4\sigma\quad(p=2),
\]
\[
 a-5h\ge25j/4-8+8g+3R+5\sigma\quad(p=4).
\]
Both exceed $2b=j$.
Thus $h<e\le j/2$, and the independent product target also has degree less than $e$; hence $\ell\ge3$ and $a\le3j/2$.
For $p=2$, $h_P\ge-j/2$ gives $h\ge0$, contradicting the first bound.
For $p=4$, $h\ge-j/2$ gives $a-5h\le4j$, contradicting the second bound for $j\ge4$.
For $j=2$, $h=-1,0$ and $(h-2)/4$ is not integral.
All corresponding cusp and rank 6 exclusions use the lifts fixing the points for their full finite groups, preserving period and character.
\end{proof}

\subsection{The remaining five types}

\begin{lemma}\label{sn:nonuniform-g33}
There are no $G_{34}$ points, full $G_{33}$ cusps, or full $G_{33}$ disks.
\end{lemma}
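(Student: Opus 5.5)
The plan follows the compact argument of Lemma~\ref{sn:compact-g34}, adapted to allow ends. It applies the comparison machinery to the full $G_{33}$ disk modulo its full normalizer. The three assertions are linked. A $G_{34}$ point contains a full $G_{33}$ disk, since every rank~6 group has an actual rank~5 parabolic. By Lemma~\ref{sn:enlargement}, every end of such a disk is a smooth ambient cusp. A full $G_{33}$ cusp lifts, by Lemma~\ref{sg:cusp-lift}, to a $G_{33}$ disk ending there. It therefore suffices to exclude full $G_{33}$ disks, allowing both $R$ (the number of $G_{34}$ points) and $s$ (the number of ends) to be positive. By Lemma~\ref{sn:parameters}, all reflections now have order two, so $2L$ descends. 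Consequently \eqref{sn:curve-comparison} reads $a\le14t$, and $a\le10t$ whenever a map to a line of nonpositive degree exists.

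First, the numerical data are set up. The basic degrees of $G_{33}$ are $(4,6,10,12,18)$, and the actual degree-six line gives $j=6t\in\mathbb Z_{>0}$. The logarithmic, determinant and inverse-row formulas of Proposition~\ref{sc:curve-determinant} give the analogue of \eqref{sn:g33-compact-data} with cusp terms added:
\[
\tfrac56R+s+\sigma=2t+2-2g,\qquad a=2-2g-50t+\tfrac{38}{3}R+\sum_cB_c,\qquad h=-18t+4R+\sum_c\nu_c .
\]
The end generator degrees $b_{c,i}$ and widths $\kappa_c$ come from Lemma~\ref{sa:g33-theta}, which supplies the $G_{33}$ theta calculation and the successive cusp divisions $(1,3,3,4,6)$. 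At the $G_{34}$ points, the successive zero orders $(1,2,2,3,4)$ come from Proposition~\ref{sc:parabolic-flags}. Combining the two gives coefficient degrees like \eqref{sn:g33-compact-coefficients}, with the cusp divisions subtracted.

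Next, the genus and ends are bounded. If $g\ge1$, the logarithmic identity forces $R=s=0$, and the compact argument gives $a<0$. This contradicts Proposition~\ref{sg:negative} once $h<0$. So $g=0$ and $s\le2+2t$. Using $B_c\ge\gamma\nu_c$ for an explicit $\gamma$ read off from the end list, I would get $a-5h\ge cj+\dots$, where $c$ is large enough that \eqref{sn:curve-comparison} forces $h<0$. It then forces $h<e$, so the four lower successive coefficients vanish for large $j$. The Hom space is then one-dimensional, so $\ell=2$, and the rank-one marked quotient requires $a+5h\ge0$; for large $j$ this should fail as in the compact case.

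The finite remainder, namely small $j$ together with an enumeration of end types, is handled by the sorted test \eqref{sc:sorted-test} and the integrality checks, as in the table of Lemma~\ref{sn:parameters}. The expected main obstacle is the ends. One must confirm that the cusp divisions $(1,3,3,4,6)$ really raise the successive zero orders on the intrinsic flag, rather than only after a change of source coordinates, so that coefficient elimination (Lemma~\ref{sc:coefficient-elimination}) applies. For this I would rely directly on Lemma~\ref{sa:g33-theta} and Proposition~\ref{sc:root-theta}-style derivative projections, and check that the few surviving rows violate either integrality of $h$ or the bound $e+\delta\le 2t$.
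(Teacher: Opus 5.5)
You reduce correctly to full $G_{33}$ disks with $R,s\ge0$, and for large $t$ the successive vanishing of coefficients does give $\ell=2$ and a contradiction. The gap is in the finite remainder: the tools you name (coefficient counts, integrality of $h$, $e+\delta\le2t$, the sorted test) do not close it.

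Every end is a full $G_{33}$ end of width one with section degrees $(1,1,3,3,4,6)$, so $B_c=18$ and $\nu_c=6$. Eliminating $s$ with the logarithmic identity gives, for $g=0$,
\[
a=36-16t-\tfrac32R-17\sigma,\qquad h=12-6t-R-6\sigma .
\]
Hence $a-5h=-24+14t+\tfrac72R+13\sigma$. Because of the constant $-24$, this need not exceed $2b=4t$, so Lemma~\ref{sg:curve-bound} does not force $h<0$ for small $t$. For $R=0$ two rows with $h=0$ survive, namely $(t,s,\sigma,a,h)=(1,3,1,3,0)$ and $(2,6,0,4,0)$.
\begin{itemize}
\item In the first row, $\ell=3$, $e=1$, $\delta=0$ satisfy $a=(9-2\ell)e+2\delta$ and $e+\delta\le2t$. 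The successive residual degrees for a target $\mathcal O(-1)$ are $(0,-4,0,-1,-1)$, so a two-dimensional Hom space is allowed.
\item In the second row all marked degrees may be nonnegative, since $a=4\le2b=8$.
\end{itemize}
No integrality, parity, or degree check excludes these rows. In any case Proposition~\ref{sc:bundle-tests} is only set up for monomial groups.

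The missing idea is structural. The central scalar $\omega$ of order three acts on the normal directions of degrees $(4,6,10,12,18)$ by $(\omega,1,\omega,1,1)$. Its eigenprojector preserves the actual normal lattices and splits $N_f^{\rm sat}=U\oplus V$ with $\operatorname{rank}U=2$. The extension $0\to E_4(S)\to U\to E_{10}(3S)\to0$ gives $\deg U=-14t+4s=8-6t-4\sigma$. This is negative once $a\le14t$ and the logarithmic equation settle $t=\tfrac12,1$.

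Since $U$ is a direct summand of negative degree, Proposition~\ref{sg:kernel} forces $U\simeq\mathcal O(-e)^2$, so $e=3t+2\sigma-4$. The residual degrees of $V\to\mathcal O(-e)$ are all negative, so $\ell=3$.
\begin{itemize}
\item The row $(2,6,0,4,0)$ then has $e=2$ and violates $a\ge3e=6$.
\item In the row $(1,3,1,3,0)$ one has $e=1$, and $V$ has rank three, degree three and summands of degree at least $e=1$, so $V\simeq\mathcal O(1)^3$. The row of the degree-eighteen invariant has trivial $\omega$-character, so it vanishes on $U$ and gives a nonzero map $V\to\mathcal O$, which is impossible.
\end{itemize}

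There are also two smaller errors.
\begin{itemize}
\item For $g\ge1$ the logarithmic identity does not force $R=s=0$; it only gives $s+\tfrac56R+\sigma=2t+2-2g$. What works is that $a=36(1-g)-16t-\tfrac32R-17\sigma<0$, while the first two residual degrees are negative. This gives $\ell\le4$ and hence $a>0$, a contradiction.
\item With ends, the determinant formula carries $\sum_c(B_c-1)$, not $\sum_cB_c$. You should also use $2t\in\mathbb Z$ from the descent of $2L$, rather than only $6t\in\mathbb Z$.
\end{itemize}
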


\begin{proof}
The remaining horizontal groups are $A_5,B_5,D_5,G(3,3,5),G_{33}$.
Primitivity forces every $G_{33}$ end to have full group $G_{33}$, trivial fiber character, and width one.
Its actual generator degrees are $(1,1,3,3,4,6)$.
The four fixed points form the polarization kernel; theta translation preserves the actual characters at all four, by Lemma~\ref{sa:g33-theta}.
The only larger interior groups are $G_{34},G_{33}\times A_1$.
Let $R$ count the former and let $\sigma$ be the latter contribution, so $2\sigma\in\mathbb Z_{\ge0}$.
The equations are
\begin{equation}\label{sn:g33-nonuniform-data}
 2g-2+s+5R/6+\sigma=2t,\quad
 a=36(1-g)-16t-3R/2-17\sigma,\quad
 h=12(1-g)-6t-R-6\sigma.
\end{equation}
The basic source degrees are $(-4t+R/3,-6t,-10t+R/3,-12t,-18t)$; their successive cusp divisions are $(1,3,3,4,6)$.
If $g\ge1$, the first two residual degrees for the target associated with the invariant of greatest degree are
\[
 10(1-g)-4t-3R/2-5\sigma,\qquad
 10(1-g)-2t-13R/6-5\sigma.
\]
They and the tangent coefficient are negative.
The marked rank bound gives $\ell\le4$, thus $a>0$, contrary to~\eqref{sn:g33-nonuniform-data}.
Hence $g=0$.

For $R>0$, put $j=2t$, $R=3r$, $q=2\sigma$; actual coarse lines make these integers.
Then
\[
 r+q\equiv0\pmod2,\quad s=2+j-(5r+q)/2\ge0,\quad
 a=36-8j-(9r+17q)/2\le7j.
\]
If $r=1$, then $q$ is odd and $j\ge2$; if $r\ge2$, then $j\ge3$.
The first two residual degrees are strictly negative, and the marked rank argument gives $a>0$ when $h<0$.
When $h\ge0$, use $a=3h+j+9r/2+q/2>0$ instead.
For $r\ge3$, the inequality for $s$ gives $a\le52-49r/2-25q/2<0$.
Positivity leaves exactly
\[
 (t,R,s,\sigma,a,h)=(3/2,6,0,0,3,-3),\qquad(1,3,1,1/2,7,0).
\]
In the first row a third coefficient vanishes, so $\ell\le3$, $e\ge3$, and $a\ge3e\ge9$.
In the second, the five residual degrees for a target $\mathcal O(-1)$ are $(-2,-4,-1,-2,-1)$, so no quotient onto a line of negative degree exists.
All marked degrees have the same parity and sum seven.
They cannot all be nonnegative, since then all nine would be positive odd integers.
Two negative marks give a quotient onto a line of negative degree.
With one, $7=7e+2\delta$ gives $e=1$, $\delta=0$, and $E=\mathcal O(2)\oplus\mathcal O(1)^5$, which cannot map nontrivially to $Q_0$.
Thus $G_{34}$ is excluded.

Now $R=0$.
The bound $a\le10t$ follows from $a\le14t$: it is automatic for $t\ge3/2$, and for $t=1/2,1$ the logarithmic equation leaves respectively $a\le0,3$.
The central scalar $\omega$ has characters $(\omega,1,\omega,1,1)$ on the normal directions.
Its projector preserves every actual normal lattice and splits the saturated normal bundle as $U\oplus V$ of ranks 2 and 3.
We have
\[
 \deg U=-14t+4s=8-6t-4\sigma<0,\qquad
 0\longrightarrow E_4(S)\longrightarrow U\longrightarrow E_{10}(3S)\longrightarrow0.
\]
The marked splitting forces $U=\mathcal O(-e)^2$; a positive marked summand in $U$ would make its determinant nonnegative.
Thus $e=3t+2\sigma-4>0$ and $t+\sigma\le2$.
Integrality leaves $(t,s,\sigma,a,h,e)=(1,3,1,3,0,1),(2,6,0,4,0,2)$.
The residual degrees for $V\to\mathcal O(-e)$ are $(-4,-1,-1)$ or twice these values, so $\ell=3$.
The second row contradicts $a\ge3e$; the first gives $V=\mathcal O(1)^3$ and a nonzero map associated with the invariant of greatest degree $V\to\mathcal O$, also impossible.
A disk without ends has $h=-18t$ and $a=2-2g-50t<0$, since $2t$ is a positive integer.
Only its last coefficient can survive, giving $\ell=2$ and $a\ge90t>0$.
\end{proof}

\begin{lemma}\label{sn:nonuniform-b5}
There are no full $B_5$ disks or smooth cusps.
\end{lemma}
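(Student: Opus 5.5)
We exclude full $B_5$ disks and smooth $B_5$ cusps by the template of Lemma~\ref{sn:parameters}, specialised to $G(2,1,5)$, with $m=2$, $p=1$. We now have the descent of $2L$ from the end of that proof, so $j=2t\in\mathbb Z_{>0}$, $A_X=2L$ and $b=j$. The comparison~\eqref{sn:curve-comparison} then reads $a\le7j$ in general and $a\le5j$ whenever a target of nonpositive degree receives a nonzero map.

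\textbf{Local data.} By Lemma~\ref{sn:enlargement} every end is a smooth ambient cusp. The larger interior groups are the primitive rank $6$ enlargements that survive Lemmas~\ref{sn:parameters} and~\ref{sn:nonuniform-g33}, namely $B_6=G(2,1,6)$ with the prescribed axis, together with the product enlargements $B_5\times\mu_e$. A $D_6$ or $E_6$ point has no $B_5$ parabolic, and a $G(3,3,6)$ point has no $B_5$ parabolic either, so none of these occurs. Proposition~\ref{sc:monomial-comparison} therefore applies with $D=30$, $k=10$, $\beta=5+1-1/2=11/2$. It gives
\[
2g-2+s+R/2+\sigma=2t,\qquad a=\sum_cB_c+\tfrac{11}{2}R+\sigma-32t,
\]
and a nonzero map $f^*T_W\to E_{10}(D_R+\sum\nu_c[c])$ with $h=-5j+R+\nu$.

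\textbf{End table.} Next I list the actual end generator degrees compatible with a $B_5$ horizontal stabiliser of trivial kernel character. The expected degrees are $(1^6)$ and $(1,2^5)$ type data, plus the analogues of the order-four patterns. In each case I want a bound $B_c\ge\gamma\nu_c$ with $\gamma$ close to $11/2$.

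\textbf{Elimination.} Substituting this bound into the canonical formula should give something like $a\ge \gamma h+c_0j+R+\sigma$ with $\gamma>5$. From $a\le7j$ this forces $h<0$, hence $a\le5j$, and then $h\le -c'j$. At that point Lemma~\ref{sg:curve-bound} gives $h<e$, and the map factors through the negative marked quotient $D=M^{\oplus(\ell-1)}$. I then use the enlargement equations $dg_5=df_5+v\,df_4$, with $v=u^2$, as in the $m=4$ case of Lemma~\ref{sn:parameters}, to produce a second independent inverse row of degree $h+j<e$. That gives $\ell\ge3$, hence $a\le3j$. Combined with $h\ge-j$ (all nonzero line targets have degree at least $-j$), the lower bound $a\ge\gamma h+\dots$ should contradict $a\le3j$.

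\textbf{Residual cases and main obstacle.} Any finite residual list with $g=0$, small $j$ and $R=0$ should be handled by the logarithmic equation and by parity of the marked degrees, as in Lemma~\ref{sn:nonuniform-g33}. If a residual case has $R=0$, the normal bundle splits into invariant lines, and the splitting $E=\mathcal O(-e)^{\ell-1}\oplus F$ of Proposition~\ref{sg:kernel} rules out the intermediate degrees. I expect the main obstacle to be the end analysis. Proving a uniform bound $B_c\ge\gamma\nu_c$ with $\gamma>5$ for every $B_5$ cusp polarisation and fiber age, and checking that the first symmetric derivative is independent of the radial one so that the extra cusp divisibility holds, is where the actual theta/character computation (as in Proposition~\ref{sc:root-theta}) must be invoked. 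If the bound fails for some end type, I would fall back to the sorted test~\eqref{sc:sorted-test} of Proposition~\ref{sc:bundle-tests} on $\mathbb P^1$ for the finitely many remaining rows.
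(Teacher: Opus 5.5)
Your argument has a genuine gap at the closing step, and a better end bound will not close it.

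\textbf{The end data are weaker than you assume.} For $G(2,1,5)$ with trivial fiber character and width one, the complete end list is $(1^6),(1^3,2^3),(1^2,2^4),(1,2^5)$, that is $(B_c,\nu_c)=(6,1),(9,2),(10,2),(11,2)$. There are no order-four patterns here. The best uniform constant is $\gamma=9/2$, attained at $(1^3,2^3)$, not $\gamma>5$. The canonical formula therefore gives only $a\ge 9h/2+13t+R+\sigma$. With this, $a\le14t$ yields only $h\le 2t/9$, not $h<0$.

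\textbf{That step is repairable.} Since $a-5h\ge116t/9>4t=2b$, the general clause of Lemma~\ref{sg:curve-bound} still gives $h<e$, $\ell\ge2$, $a\le10t$ and $h\le-2t/3$. Your second row from the $B_6$ equations also survives: for every $\ell\ge2$, $a=(9-2\ell)e+2\delta\le 3e+2(2t-e)$ gives $h+2t<e$.

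\textbf{The closing inequality does not.} $\ell\ge3$ gives $a\le6t$, while $h\ge-2t$ gives only $a\ge4t$. Concretely, take $g=0$, $t=1$, $s=4$, $R=\sigma=0$, and four ends with $\nu_c=2$ and $\sum_cB_c=38$ (for instance $9+9+10+10$). This gives $a=6$, $h=-2$, $e=2$, $\delta=0$, $\ell=3$. The row satisfies:
\begin{itemize}
\item the logarithmic and canonical identities, $e+\delta\le2t$, and parity;
\item the tangent sorted test of Proposition~\ref{sc:bundle-tests}: targets $10,6,4,2,0,-2$ against rank $4$ and degree $10$;
\item the normal sorted test: targets $6,4,2,0,-2$ against rank $3$ and degree at most $8$.
\end{itemize}
So neither your numerics nor your fallbacks exclude it. Your claimed splitting of the normal bundle into invariant lines when $R=0$ would itself require the end computation, since $J_c$ mixes directions at the cusps.

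\textbf{The missing idea.} Use the extra cusp divisibility to cut down $\operatorname{Hom}(f^*T_W,Q_h)$, rather than to improve $\gamma$.
\begin{itemize}
\item For a map to $Q_h$, the two lowest residual coefficients have degrees $-8t+\nu-s$ and $-6t+\nu-s$. They are negative: for $t\ge1$ because $\nu-s\le s\le2+2t-2g$, and for $t=1/2$ because integrality of $h<0$ gives $R+\nu\le4$. So both vanish.
\item At each end with $\nu_c=2$, the degree one sections vanishing at the fixed point project injectively onto the first two symmetric directions. This is an elliptic ratio computation, transported to every fixed point by theta translation. It concerns the first two directions, not just the first, because both vanished coefficients are used.
\item Hence every coefficient acquires a second central division. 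The remaining degrees are $(2i-10)t$, so the Hom space has dimension one.
\item By Proposition~\ref{sg:negative} this space equals $H^0(C,QM^{-1})^{\oplus(\ell-1)}$ with a nonzero factor, so $\ell=2$, not $\ell\ge3$.
\item Only now does the $B_6$ row enter. It vanishes on $F$ and has target degree $h+2t<e$, so two independent rows would have to factor through the rank one marked quotient, a contradiction.
\end{itemize}
In the row above, the second division leaves degrees $-8,-6,-4,-2,0$, which is exactly what excludes it. You should also say explicitly that the cusps are then excluded by lifting the full finite group to a disk, as in Lemma~\ref{sg:cusp-lift}.
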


\begin{proof}
The remaining horizontal groups force full $B_5$ ends with trivial character and width one.
The complete pairs are $(B_c,\nu_c)=(6,1),(9,2),(10,2),(11,2)$; the only irreducible enlargement is $B_6$.
For $h=-10t+R+\sum\nu_c$ the common formula gives
\[
 a\ge9h/2+13t+R+\sigma.
\]
First $a\le14t$ gives $h\le2t/9$ and $a-5h\ge116t/9>4t$.
Then Lemma~\ref{sg:curve-bound} gives $h<e$, $\ell\ge2$, $a\le10t$, and $h\le-2t/3$.
The lowest two residual coefficients have degrees $-8t+\nu-s,-6t+\nu-s$, with $\nu-s\le s\le2+2t-2g$.
They are negative for $t\ge1$; for $t=1/2$ use integrality of $h<0$ to get $R+\nu\le4$.
In each of the three actual models with $\nu_c=2$, the sections of degree one vanishing at a fixed point project injectively to the first two symmetric directions.
Their elliptic ratios are $x_c/x_b=\gamma_c(1+a_cz^2)^{1/2}$ and $x_d/x_b=\gamma_d(1+a_dz^2)^{1/2}$, with $a_ca_d(a_d-a_c)\ne0$.
Their three product sections have an invertible matrix of values and derivatives in degrees two and four, and this remains injective on the actual descending subspace.
Theta translation retains all torsors and linearizations at every fixed point.
Thus the two vanished coefficients give a second central division; the $\nu_c=1$ model needs only the initial division.
The five remaining degrees are $(2i-10)t$, so Hom has dimension one and $\ell=2$.
The marked degree formula then yields
\[
 h+2t\le2e/3-2(R+\sigma)/9<e.
\]
On $F$, the row associated with the invariant of greatest degree vanishes and the $B_6$ equations $dg_6=v\,df_5$, $dg_5=df_5+v\,df_4$ leave only a simple pole in the next row.
Its target has degree $h+2t<e$.
Two independent rows cannot factor through the rank 1 marked part.
The lift fixing the point for the full group excludes the cusps by the same argument.
\end{proof}

\begin{lemma}\label{sn:nonuniform-e6}
There are no full $E_6$ points.
\end{lemma}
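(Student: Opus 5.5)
The plan mirrors the compact case: Proposition~\ref{sn:positive-target} excludes $E_6$ through its full parabolic $D_5$ disk. Suppose a full $E_6$ point exists. Its actual $D_5$ parabolic gives a full $D_5$ fixed disk. We take its quotient $f:C\to W$ by the full normalizer in the original lattice, so Lemma~\ref{sn:d5-flag} applies with $R>0$.

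First I check the hypotheses of Lemma~\ref{sn:d5-flag} in the nonuniform setting. After Lemmas~\ref{sn:enlargement}, \ref{sn:parameters}, \ref{sn:nonuniform-g33} and~\ref{sn:nonuniform-b5}, every reflection has order two. The only irreducible groups strictly containing $D_5$ are then $E_6$ and $D_6$, and the other interior groups are $D_5\times\mu_e$, acting trivially on the root space. By Lemma~\ref{sn:enlargement} every end is a smooth ambient cusp. Primitivity, together with the lift fixing the point for the full group (Lemma~\ref{sg:cusp-lift}), should force full $D_5$ horizontal stabilizers with trivial fiber character, width one, and the minimal polarization with generator degrees $(1^4,2^2)$. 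I would verify this against the list of actual smooth horizontal actions, as was done for $B_5$ and $G_{33}$. Lemma~\ref{sn:d5-flag} then gives $h>0$, nefness of $E=f^*T_W$, the flag degrees~\eqref{sn:d5-degrees}, and the identities~\eqref{sn:d5-identities}.

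Next I repeat the column argument of Proposition~\ref{sn:positive-target}, now with ends allowed. Descent of $2L$ turns~\eqref{sn:curve-comparison} into $a\le14t$ in general and $a\le10t$ once there is a map to a line of nonpositive degree. Since $E$ is nef, there is at most one negative marked degree, and when it exists $e\le h$. The rows $C_8,\dots,C_2$ and the missing exponents $1,3$ of $A=\mathbb C\{u^2,u^5\}$ still give extra zero orders $(R,2R,2R,3R)$ at the $E_6$ points for marked columns killed by the row associated with the invariant of greatest degree. I would recompute the adjusted degrees $q_i-\text{(extra)}$ with the $s$ terms kept and expect them to remain negative. This uses $2g-2+s+D/2+\sigma=2t$ to trade $s$ for $t$, together with the fact that the compact estimate $7t\ge a$ must now be replaced by Lemma~\ref{sg:curve-bound} with $b=2t$. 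Then the columns killed by the greatest row lie in $T_f$, and their number $p$ is at most $3$, or at most $2$ when there is one negative mark. Finally I would play the inequalities $B_0=9h-a\ge0$, $B_3=6h+3\tau-a\ge0$ or $T_p\ge0$ against the upper bound on $a$ and the identity $a=4h+5t+D+\sigma$, with $\tau=(h+D)/4+\sigma+s/2$, to reach a contradiction.

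The main obstacle is the weaker curve bound. In the compact case $a\le7t$ came from Lemma~\ref{sn:compact-curve}; here only $a\le14t$ (coordinate decomposition with $b=2t$) or $a\le10t$ is available, and the ends add $s/2$ to $\tau$. I would first try the sharper input of Lemma~\ref{sg:curve-bound}. When all marks are nonnegative, $a\le2b$, and coordinate curves satisfy $L\gamma_i\ge2/7$ by~\eqref{sc:coordinate-bound}. When one mark is negative, $e+\delta\le b$. This should recover an estimate of the shape $a\le7t+e$. If the inequality still fails for small $t$, I would resolve the remaining small cases, $t\in\frac12\mathbb Z$ with $h,q_5$ integral, by the explicit finite enumeration used for $G_{33}$ in Lemma~\ref{sn:nonuniform-g33}.
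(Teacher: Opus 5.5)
Your setup agrees with the paper: the full $D_5$ disk through the $E_6$ point, Lemma~\ref{sn:d5-flag} with $R>0$ giving $h>0$ and $E$ nef, at most one negative mark with $e\le h$, and descent of $2L$. The proposed finish, however, does not close, for two reasons.

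First, the column count of Proposition~\ref{sn:positive-target} made its adjusted degrees negative only by using the compact curve bound. With ends the logarithmic identity gives
\[
 q_6-R=-2t+h/2+D/2+s=2-2g-\sigma+h/2,
\]
which is positive on a genus-zero curve with $\sigma<2$. So the step that puts the columns killed by the greatest row into $T_f$ is not available as stated.

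Second, your fallback does not supply the missing estimate. With one negative mark, Lemma~\ref{sg:curve-bound} for $A_X=2L$ gives only $e+\delta\le 2t$. Combined with $a=7e+2\delta=4h+5t+D+\sigma$ and $h\ge e$, this yields merely $e\ge\delta+2D+2\sigma$, which is no contradiction. Also $7t+e-a\ge(3\delta-5e)/2$ can be negative, so an estimate of the shape $a\le 7t+e$ does not follow. The enumeration you mention has no bounded domain to run on.

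The missing idea is the sharper bound $(2L)\cdot\beta_i\ge 2$ on the complete rational curve with one negative mark. The paper proves it by showing that $(2L)\cdot\beta_i=1$ would force $(X,2L)\simeq(\mathbb P^6,\mathcal O(1))$, which Proposition~\ref{st:p6} excludes:
\begin{itemize}
\item write $2L$ with multidegrees satisfying $q-c_i=1$;
\item make every $c_j\ge1$ using noncontracted coordinate curves and bracket units;
\item exhibit invariants restricting to the affine differences $z_j-z_k$, so the section ring is polynomial on seven generators of degree one.
\end{itemize}
With this bound the marked decomposition gives $t\ge e+\delta/2$, and
\[
 4h=7e+2\delta-5t-D-\sigma\le 2e-\delta/2-D-\sigma<4e,
\]
contradicting $h\ge e$.

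In the other case you already list the decisive inequality without using it. With all marks nonnegative, $a\le 2b=4t$ contradicts $a=4h+5t+D+\sigma>5t$ immediately. Neither case needs the rows $C_8,\dots,C_2$, the extra zero orders, or any enumeration.
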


\begin{proof}
Their full $D_5$ disks have precisely the models of Lemma~\ref{sn:d5-flag}: $G_{33}$ has no full-rank reflection subgroup $D_5$, and other horizontal possibilities have been excluded.
Since $R>0$, that lemma gives $h>0$ and $E$ nef.
If all $m_i\ge0$, the effective coordinate decomposition and integrality of $(2L)\gamma_i$ give $a\le4t$, contrary to $a=4h+5t+D+\sigma$.
Otherwise nefness leaves one negative mark $-e$, and the outside columns generate $E$, giving $h\ge e$.
We need $(2L)\beta_i\ge2$.
If this degree were one, on the open set where mark $i$ is distinct the polarization would be $\mathcal O(1)$ on $\mathbb P^6$ minus codimension-two linear subspaces.
Represent $2L$ by multidegrees $c_i$, $\sum c_i=2q$, so $q-c_i=1$.
Every short pair $(i,j)$ gives $c_j\ge1$ using its noncontracted coordinate curve.
For the other pairs, the bracket $[ij]$ is a unit on the stable locus; adding its multidegree makes all $c_j\ge1$, preserving $q-c_i=1$.
For $j,k\ne i$, the global invariant
\[
 [jk]\prod_{\ell\ne i}[i\ell]^{c_\ell-\delta_{\ell j}-\delta_{\ell k}}
\]
restricts to $z_j-z_k$.
Normality and multiplication identify the section ring with a polynomial ring on seven generators of degree one.
Hence $(X,2L)=(\mathbb P^6,\mathcal O(1))$, excluded independently by Proposition~\ref{st:p6}.
The small projective map to factorial $\mathbb P^6$ is an isomorphism, so this exclusion applies to $X$ as well as $W$.
The marked decomposition now gives $t\ge e+\delta/2$, and
\[
 4h=7e+2\delta-5t-D-\sigma\le2e-\delta/2-D-\sigma<4e,
\]
contrary to $h\ge e$.
\end{proof}

\begin{lemma}\label{sn:nonuniform-d5}
There are no full $D_5$ disks or smooth cusps.
\end{lemma}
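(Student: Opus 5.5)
We follow the pattern of Lemmas~\ref{sn:nonuniform-b5} and~\ref{sn:nonuniform-e6}, applied to a full $D_5$ disk with the data of Lemma~\ref{sn:d5-flag}. By Lemma~\ref{sn:nonuniform-e6} there are no $E_6$ points, so $R=0$. The only possible larger interior groups are $D_6$ (counted by $D$) and products $D_5\times\mu_e$. Every end is a smooth cusp with full group $D_5$, trivial fiber character and width one. This uses primitivity of the horizontal action, the exclusions of Lemmas~\ref{sn:parameters}--\ref{sn:nonuniform-b5}, and the lift fixing the point for the full group in Lemma~\ref{sg:cusp-lift}. We must also verify that the actual generator degrees are the minimal ones $(1^4,2^2)$. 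If a nonminimal polarization or torsor occurs, we first enumerate the finitely many actual end models with their $(B_c,\nu_c)$, as in the $B_5$ case. We then note that the $\nu_c\ge2$ models only strengthen the inequalities below. The identities \eqref{sn:d5-degrees}--\eqref{sn:d5-identities} then give, with $R=0$,
\[
h=-8t+D+2s,\qquad a=4h+5t+D+\sigma,\qquad 2g-2+s+D/2+\sigma=2t .
\]
Since $2L$ descends, the comparison \eqref{sn:curve-comparison} gives $a\le14t$, and $a\le10t$ whenever a map to a line of nonpositive degree exists.

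First we treat the case $h\le0$. There $E\twoheadrightarrow Q_h$ is nonzero of nonpositive degree, so $a\le10t$, hence $4h\le5t-D-\sigma$. By Lemma~\ref{sg:curve-bound} we get $a-5h>2b=2t$, since $a-5h=5t+D+\sigma-h>2t$. So there is a repeated negative section with $h<e$, and the splitting of Proposition~\ref{sg:kernel} holds. All maps to targets of degree less than $e$ vanish on $F$. We then run coefficient elimination (Lemma~\ref{sc:coefficient-elimination}) with the successive quotient degrees $q_2,q_4,q_5,q_6$. The residual degrees $h-q_i$ are negative in most cases, using $s\le2+2t-2g$. This makes the Hom space to $Q_h$ one-dimensional, so $\ell=2$, and then $a=5e+2\delta$ with $e+\delta\le 2t$. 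Next we use the $D_6$ equations, exactly as in the $B_6$ step. After the top row vanishes on $F$, the next inverse row has at most a simple pole. This yields a second independent target of degree $q$ with $h<q<e$, hence a second independent row through the rank one marked part, which is a contradiction. The finitely many small cases ($t=1/2$, small $s$) where some residual degree is nonnegative will be checked directly using integrality of $h,q_5$ and $2t$, just as the $B_5$ proof handles $t=1/2$.

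Next we treat the case $h>0$, where $R=0$ has removed the local obstruction at $E_6$ points. Now we use the other inequality directly: $a\le14t$ gives $4h\le9t-D-\sigma$. We combine it with the marked decomposition of Proposition~\ref{sg:mori} and the degree bounds $(2L)\gamma_i\ge1$, $(2L)\beta_i\ge2$ established in Lemma~\ref{sn:nonuniform-e6}. If all marked degrees are nonnegative, we get $a\le4t$, contrary to $a\ge5t$. If there is one negative mark, nefness of the flag quotients shows that the outside columns generate $E$, so $e\le h$. Then $t\ge e+\delta/2$ gives $4h<4e$ by the same computation as in Lemma~\ref{sn:nonuniform-e6}, which is a contradiction. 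We must make sure $E$ is nef here. In this case $q_2,q_4,q_6,\deg T_f>0$ and $q_5\ge0$ follow from \eqref{sn:d5-identities} exactly as at the end of Lemma~\ref{sn:d5-flag}, and that argument does not use $R>0$. Finally, we exclude smooth $D_5$ cusps by the same argument applied to the disk through the lift fixing the point, as at the end of Lemma~\ref{sn:nonuniform-b5}. We also exclude disks without ends; there $s=0$, so $h=-8t+D<0$, and we conclude as in the first case.

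The main obstacle is the first case, specifically producing the second independent inverse row of degree below $e$. This requires the precise pole structure at $D_6$ points and at ends, in particular at the ends with minimal polarization, where the successive divisions are only $(1,1,1,2,2)$. We would try first to reuse the $D_6$ normal matrix from Lemma~\ref{sn:d5-flag}, whose superdiagonal entries couple $T_6$ to $T_8$ and $T_5$ to $T_6$. We expect it to give the row $q_6$ with only the corrections $+D+2s$, so that $q_6-h=2t-s$ should be below $e-h$ except in a short list of cases that we settle by integrality.
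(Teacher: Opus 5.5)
Your case \(h>0\) is fine: with \(R=0\), the end of Lemma~\ref{sn:d5-flag} still makes \(E\) nef, and the argument of Lemma~\ref{sn:nonuniform-e6} applies unchanged. The gap is in the case \(h\le 0\), and that case is where essentially everything happens.

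With \(R=0\), the identities give \(h=4(1-g)-4t-2\sigma\). The residual degree of the product coefficient is \(h-q_5=2-2g-t-\sigma\). Your own constraints \(-e\le h\) and \(e\le e+\delta\le 2t\) force \(h\ge -2t\), hence \(g=0\) and \(t+\sigma\le 2\). So this residual degree is never negative, and there is no range of "most cases" in which Hom to \(Q_h\) is one-dimensional. The entire surviving domain consists of your "small cases", and integrality of \(h,q_5,2t\) does not remove them.

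For instance, \((t,\sigma,s,D,h,e,\delta,\ell,a)=(\tfrac12,\tfrac32,0,3,-1,1,0,3,3)\) and \((1,0,4,0,0,1,1,3,5)\) satisfy every numerical condition you impose and have \(\ell=3\). So the step "\(\ell=2\)" is false there; note the first is a disk without ends, and \(s=0\) does not force \(h<0\) in general. At \((t,\sigma)=(1,0)\) there are also \(\ell=2\) rows, \((s,D,\delta)=(3,2,1),(4,0,0)\) with \(e=1\). There your second symmetric row has target \(h+2t=2>e\), so it need not vanish on \(F\) and gives no contradiction. Note that \(q_6-h=2t\), not \(2t-s\), and that \(2b=4t\).

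The missing idea is the product direction. The odd degree-five invariant \(P\) is separated from the symmetric invariants by its sign character. Using the coarse frame \(P/u\) at \(D_6\) points and the odd theta section at the ends, transported to all fixed points by theta translation, it splits off globally: \(E=N_P\oplus E'\), with \(N_P=E_P(D_D+S)\) and \(\deg N_P=p=q_5=h/2-t\).

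Then \(a=8p+13t+D+\sigma\) and \(p\ge -2t\) give \(a-5p\ge 7t>4t\). Lemma~\ref{sg:curve-bound} gives \(p<e\), and the direct-summand clause of Proposition~\ref{sg:kernel} forces \(N_P\cong M\), that is, \(p=-e\). This pins down \(g=0\) and \(2t-e\in\{0,1\}\). It also makes the product row an honest map out of \(E\) to a line of degree below \(e\), i.e. an extra independent row through the marked quotient.

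For \(2t=e\), the second cusp division bounds the Hom dimension by two, so \(\ell=3\). Three independent rows then factor through the rank-2 marked part: the product row, the top symmetric row, and the next symmetric row with target \(h+2t=0<e\).

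For \(t=e=1\), the product row and the top symmetric row already force \(\ell\ge 3\), which leaves \((s,D,\ell,\delta)=(4,0,3,1)\). After the second cusp division only the product contributes to Hom to \(N_P=\mathcal O(-1)\), giving dimension one where two are required.

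Without the global splitting of \(N_P\), your plan has no mechanism to dispose of these rows.
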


\begin{proof}
Only $D_6$ and product enlargements remain, and every cusp has full group $D_5$ and width one.
The source line bundle $E_P$ associated with the product invariant has degree $-5t-D/2$ and defines a direct summand
\[
 E=N_P\oplus E',\qquad N_P=E_P(D_D+S),\qquad p=\deg N_P=-5t+D/2+s.
\]
At $D_6$, the product invariant pulls back to $Z=v\psi$ in the frame $P/u$ on the coarse quotient.
At an end its column is the odd section $q\theta_P$.
Divide that column and its inverse row by the same parameter; their composite is the identity.
Theta translation proves this at all sixteen actual fixed points, so the splitting is global.
The formulas become
\[
 2g-2+s+D/2+\sigma=2t,\quad
 p=2(1-g)-3t-\sigma=h/2-t,\quad a=8p+13t+D+\sigma.
\]
Every nonzero target has degree at least $-2t$, so $a-5p\ge7t>4t$.
Lemma~\ref{sg:curve-bound} gives $p<e$, $\ell\ge2$, $a\le10t$, and $p<0$.
The conclusion on direct summands in Proposition~\ref{sg:kernel} gives $p=-e$.
Therefore $g=0$ and $k=2-t-\sigma=2t-e\in\{0,1\}$.
For $k=0$, $e=2t$, $\delta=0$, $h=-e$, and $s+D/2=3t$.
The first two symmetric residual degrees $(2i-8)t+s$, $i=1,2$, are negative.
The actual derivative projections give the second cusp division; the four symmetric degrees become $(2i-8)t$.
Product and symmetric directions contribute at most two Hom dimensions, hence $\ell=3$.
On $F$, a further symmetric inverse row has only simple $D_6$ poles and target $h+2t=0<e$.
Three independent rows would factor through the rank 2 marked part.
For $k=1$, $(t,\sigma,e,p,h)=(1,0,1,-1,0)$.
The independent rows associated with the product invariant and the elementary symmetric invariant of greatest degree give $\ell\ge3$.
The equations $D=8-2s$, $a=13-2s=(9-2\ell)+2\delta$, $\delta\le1$, leave $(s,D,\ell,\delta)=(4,0,3,1)$.
For target $N_P=\mathcal O(-1)$ the first residual degrees are $-3,-1$ and those after the second division are $-7,-5,-3,-1$.
Only the product contributes to Hom, of dimension one; the rank 2 marked part requires dimension two.
The lift fixing the point for the full group again excludes all cusps.
\end{proof}

\begin{lemma}\label{sn:nonuniform-g335}
There are no full $G(3,3,5)$ disks or smooth cusps.
\end{lemma}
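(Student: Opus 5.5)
We argue as in Lemmas~\ref{sn:nonuniform-b5} and~\ref{sn:nonuniform-d5}, now for $T=G(3,3,5)$. We apply Proposition~\ref{sc:monomial-comparison} with $m=p=3$ and $r=5$, so that $\delta=m/p=1$, $k=(r-1)m=12$, $D=60+5=65$ and $\beta=5$.

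\textbf{Step 1 (interior and end data).} Reflections of order greater than two are already excluded and $2L$ descends, so the remaining larger interior groups are only $G(3,3,6)$ and products $G(3,3,5)\times\mu_e$. Every end has full group $G(3,3,5)$, trivial fiber character and width one, because the other horizontal types have been excluded in the preceding lemmas. We then list the actual generator degrees of the ring of invariant sections at such a cusp. The raw degrees are $(1,3,6,9,12,5)$ up to the polarization kernel. Using character congruences, we tabulate the admissible pairs $(B_c,\nu_c)$ exactly as was done for $B_5$.

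\textbf{Step 2 (product summand).} The product invariant $P_0=x_1\cdots x_5$ has degree $5$, and the central scalar $\omega I$ acts on it by a character different from its action on the symmetric directions $f_j=e_j(x^3)$. Following the $D_5$ argument, this character should give a global direct summand $N_P=E_P(D_R+S)$ of $E=f^*T_W$. At a $G(3,3,6)$ point we use the frame $Z=uP_0$ together with the age correction $-(2/3)R$ from Lemma~\ref{sc:age}, and we check at all fixed points by theta translation. This yields
\[
p=\deg N_P=\frac{-15t-2R}{3}+R+s ,
\]
together with the logarithmic identity~\eqref{sc:monomial-log} and the canonical formula~\eqref{ew-comparison-canonical}.

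\textbf{Step 3 (forcing $p=-e$).} Every nonzero target has degree at least $-3t$, which gives $a-5p>4t$. Lemma~\ref{sg:curve-bound} then yields $p<e$, $\ell\ge2$ and $a\le10t$. Proposition~\ref{sg:kernel}, applied to the direct summand $N_P$, gives $p=-e$, and this should force $g=0$ with only a few integral values of $(t,R,s,\sigma)$, since $3t=j\in\mathbb Z$.

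\textbf{Step 4 (successive kernels).} For the surviving cases we run the elimination. Using the $G(3,3,6)$ matrix~\eqref{sc:monomial-matrix}, which has superdiagonal $1$ and diagonal $v$, the symmetric rows acquire only simple poles. At the ends, the derivative projections of the degree-three sections supply extra central divisions. The residual degrees $(i-4)j$ should make the Hom space from the symmetric part at most one-dimensional. Adding the product row gives at most two independent rows. The argument closes when a further symmetric row with target degree $h+j<e$ appears, which forces three independent rows through a marked part of rank $\ell-1\le2$, or when the count of marked degrees fails parity, $a=(9-2\ell)e+2\delta$, as in Lemma~\ref{sn:nonuniform-d5}. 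Cusps are then excluded by the lift fixing the point for the full group.

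\textbf{Expected main obstacle.} The hardest step is the cusp analysis: determining the actual polarized models of $G(3,3,5)$ with width one, and proving that the degree-three sections give the second central division uniformly at all fixed points. This is the analogue of the elliptic ratio computation used for $B_5$. I would try Hesse-type cubic relations on $E_\omega^5$ for the $\omega$-lattice.
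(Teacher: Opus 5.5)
There is a genuine gap. The proposal never reaches a contradiction, and the data it would feed into the comparison are wrong.

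\textbf{Wrong data.}
\begin{itemize}
\item The constant $D$ is the sum of the basic degrees $3+6+9+12+5=35$, not $65$.
\item The end tuple is not $(1,3,6,9,12,5)$, which mixes polynomial degrees with section degrees. It is $(1^3,3^3)$, so $B_c=12$ and $\nu_c=3$ at every end.
\item Since $2L$ descends, Lemma~\ref{sg:curve-bound} gives $e+\delta\le 2t$. Every nonzero target therefore has degree at least $-2t$, not $-3t$.
\end{itemize}
With your weaker bound, the inequality $a-5p>4t$ of Step~3 does not follow. With the correct formulas $a-5p=14(1-g)+2t-\tfrac43R-6\sigma$. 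This equals $2t$ at $g=1$, $R=\sigma=0$, a case that $p\ge-3t$ still allows.

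\textbf{Missing integrality.} Most importantly, you do not derive the integrality that makes the lemma short.
\begin{itemize}
\item $3t\in\mathbb Z$, from the actual degree-three line, together with $2t\in\mathbb Z$, gives $t\in\mathbb Z_{>0}$.
\item Integrality of the actual product line degree $-5t-2R/3$ gives $3\mid R$.
\item The logarithmic identity then makes $\sigma$ integral.
\end{itemize}

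\textbf{The splitting.} The splitting in Step~2 is not supplied by a central scalar: $G(3,3,5)$ has trivial center and $\omega I_5\notin G(3,3,5)$. It would have to be checked at $G(3,3,6)$ points and at the ends, as in Lemma~\ref{sn:nonuniform-d5}. In fact it is not needed.

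\textbf{How the case actually closes.} With the correct data one has
$2g-2+s+2R/3+\sigma=2t$, $a=24(1-g)-13t-3R-11\sigma$, and $h=6(1-g)-6t-R-3\sigma$ for the target of the invariant of degree twelve.
\begin{itemize}
\item Apply $h\ge-2t$ directly to this target. This gives $4t+R+3\sigma\le6(1-g)$.
\item The integrality above then leaves only $(g,t,R,\sigma,s,a,h)=(0,1,0,0,4,11,0)$.
\item In that row the nonzero map to a line of degree zero gives $a\le5\cdot2t=10$ by \eqref{sn:curve-comparison}. This contradicts $a=11$.
\end{itemize}
So no global direct summand, successive kernels, or cusp analysis via Hesse relations is required. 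The step you flag as the main obstacle does not arise.

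Your Step~4 mechanism would not even apply to the surviving row. There $h+j=3$ exceeds every admissible $e\le2$, so no further row of degree below $e$ appears. What closes it is the marked-degree bound $a=(9-2\ell)e+2\delta\le5(e+\delta)\le10t$, which is exactly the comparison above.
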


\begin{proof}
Only $G(3,3,6)$ and product enlargements remain; the cusps have width one and generator degrees $(1^3,3^3)$ for the full group.
The actual coarse lines give $3t\in\mathbb Z$, $-5t-2R/3\in\mathbb Z$.
Together with $2t\in\mathbb Z$, they imply $t\in\mathbb Z_{>0}$ and $3\mid R$; the logarithmic equation makes $\sigma$ integral.
The formulas are
\[
 2g-2+s+2R/3+\sigma=2t,\quad
 a=24(1-g)-13t-3R-11\sigma,\quad h=6(1-g)-6t-R-3\sigma.
\]
Since $h\ge-2t$, we have $4t+R+3\sigma\le6(1-g)$.
Only $(g,t,R,\sigma,s,a,h)=(0,1,0,0,4,11,0)$ remains.
The nonzero map associated with the invariant of greatest degree to a line of degree zero gives $a\le10t$, contrary to $a=11$.
\end{proof}

\begin{theorem}\label{sn:nonuniform-terminal}
No nine-point quotient under consideration is the Baily--Borel compactification of a nonuniform arithmetic ball quotient.
\end{theorem}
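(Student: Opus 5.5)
The plan is to mirror the proof of Theorem~\ref{sn:compact-terminal}, replacing its compact inputs by the nonuniform lemmas just proved. Suppose $X=X_\mu\simeq(\mathbb B^6/\Gamma)^{\mathrm{BB}}$ with $\Gamma$ nonuniform arithmetic. By Proposition~\ref{sg:arithmetic}, all point and cusp stabilizers are generated by reflections, so the interior coarse quotient is smooth away from cusps. $K$ is imaginary quadratic, and $w_KL$ descends. Lemma~\ref{sn:enlargement} shows that every irreducible reflection factor enlarges through full actual point stabilizers to rank $5$ or $6$. It also shows that no persistent proper factor survives, and that every irreducible rank $5$ disk has only smooth ambient cusps as ends. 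This leaves the field list
\[
 \text{rank 5: }A_5,\ G(m,p,5),\ G_{33};\qquad \text{rank 6: }A_6,\ G(m,p,6),\ E_6,\ G_{34}.
\]

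I will then remove the higher parameters using Lemma~\ref{sn:parameters}. This excludes $G(6,p,5)$, $G(4,1,5)$ and $G(3,1,5)$. It then kills all reflections of order greater than two, which yields actual descent of $2L$ with the full cusp groups included, and so excludes $G(4,2,5)$ and $G(4,4,5)$. Since every rank $6$ group has an actual rank $5$ parabolic whose full disk or cusp has now been excluded, the corresponding rank $6$ groups $G(m,p,6)$ with $m\ge3$ (other than $G(3,3,6)$) are gone as well. Next I apply Lemmas~\ref{sn:nonuniform-g33}, \ref{sn:nonuniform-b5}, \ref{sn:nonuniform-e6}, \ref{sn:nonuniform-d5} and~\ref{sn:nonuniform-g335}. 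These exclude $G_{34}$ and $G_{33}$, then $B_5$ (hence $B_6$), $E_6$, $D_5$ (hence $D_6$), and $G(3,3,5)$ (hence $G(3,3,6)$). Each of these lemmas uses the lift fixing the point for the full finite group, so that ambient cusps are excluded along with the disks. After this, every interior stabilizer and every horizontal cusp group is a product of groups of type $A$. The only irreducible full-rank possibilities are $A_5$ as a rank $5$ factor, and $A_6$ or $A_5$-enlargements.

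Finally, since $\Gamma$ is a nontrivial lattice generated by reflections, some reflection exists, and enlargement produces a full $A_5$ disk. Its larger interior groups can only be $A_6$ and products $A_5\times\mu_e$. Its ends are smooth cusps, and Lemma~\ref{sc:complete-curves} together with the classification of smooth horizontal type $A$ actions should give degree one horizontal models of width one. Proposition~\ref{sc:type-a} with $r=5$ then gives $a\ge11t+\sigma$. On the other hand, \eqref{sn:curve-comparison} with $A_X=2L$ gives $a\le14t$ in general, and $a\le10t$ whenever $\operatorname{Hom}(E,Q)\ne0$ for some $\deg Q\le0$. The $a\le10t$ bound is exactly what contradicts $a\ge11t+\sigma$, so I need to produce such a map.

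The main obstacle is this last step, since the general bound $a\le14t$ does not contradict $a\ge11t$. To get the map, I would split on the target degree $h=-6t+R+s$ associated with the invariant of greatest degree. If $h\le0$, the nonzero map to that line gives $a\le10t$ directly. If $h>0$, the identity $6h=a-14t-\sigma$ from the proof of Proposition~\ref{sc:type-a} forces $a>14t$, contradicting the general bound. Either way the $A_5$ disk is impossible, which proves the theorem. I expect the real work to be verifying that the $A_5$ ends genuinely have the degree one full horizontal models that Proposition~\ref{sc:type-a} requires, so that the nonuniform case is covered.
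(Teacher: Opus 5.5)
Your proposal is correct and is essentially the paper's proof. After Lemma~\ref{sn:enlargement} and the nonuniform exclusions leave a full $A_5$ disk with width-one, degree-one ends, the identity $6h=a-14t-\sigma$ together with $a\le14t$ forces $h\le0$, so \eqref{sn:curve-comparison} gives $a\le10t$, contradicting $a\ge11t+\sigma$ from Proposition~\ref{sc:type-a}; the end models you flag as the remaining work are simply asserted in the paper as consequences of the preceding exclusions (every rank~5 end is a smooth ambient cusp and only type $A$ survives).
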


\begin{proof}
The nontrivial lattice is generated by reflections by Proposition~\ref{sg:arithmetic}.
Start with a reflection and enlarge by Lemma~\ref{sn:enlargement}.
The preceding exclusions force $A_5$ or $A_6$, and the latter has a full parabolic $A_5$.
All ends of this full $A_5$ disk are smooth, full $A_5$ ends, with width one, trivial fiber character, and six generators of degree one.
Proposition~\ref{sc:type-a} applies without genus or immersion assumptions.
Its highest nonzero target satisfies $6h=a-14t-\sigma$.
First $a\le14t$ gives $h\le0$, and then~\eqref{sn:curve-comparison} gives $a\le10t$.
The same local rank 2 image quotient gives $a\ge11t+\sigma$, a contradiction.
Combining this with Theorem~\ref{sn:compact-terminal} proves the case of nine points with arbitrary weights.
\end{proof}

\section{Ten points}
\label{st:ten}
Here the ball has dimension seven.
We use one argument for the three arithmetic cases, separating them only before their stabilizers become ADE.
For a curve obtained from the full normalizer put $a=-K_W\cdot C$, $t=L\cdot C>0$, and $g=g(C)$.
Propositions~\ref{sg:negative} and~\ref{sg:kernel} give, whenever a negative inverse row exists,
\begin{equation}\label{st:ten-git}
 a=(10-2\ell)e+2\delta,\qquad
 e\in\mathbb Z_{>0},\quad\delta\in\mathbb Z_{\ge0},\qquad
 f^*T_W\twoheadrightarrow M^{\oplus(\ell-1)},\quad\deg M=-e.
\end{equation}
Moreover $a$ is even, since the anticanonical class descends from $\mathcal O(2,\ldots,2)$.
We retain $a\le0$, $s=0$, and every integer $e$ allowed by~\eqref{st:ten-git} in the finite comparisons.

\begin{lemma}\label{st:initial-nef}
In the compact case $K_W+(32/5)L$ is nef.
In the nonuniform Gaussian case $K_W+8L$ is nef.
In the remaining nonuniform cases $K_W+4wL$ is nef, where $w=|\mu(K)|=2$ or $6$.
\end{lemma}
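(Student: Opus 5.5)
The proof will follow Lemma~\ref{sn:compact-chern} and the curvewise bounds~\eqref{sn:curve-comparison} of the nine-point case, with \(n=10\), \(d=7\), \(N=8\). By Proposition~\ref{sg:mori}, nefness of \(K_W+cL\) only has to be checked on the generators of \(\overline{\mathrm{NE}}(W)\). These are the coordinate curves \(\gamma_i\), with \(H\cdot\gamma_i=2\), and the lines \(\beta_J\) in the closed coincidence spaces \(Z_J=\mathbb P^{8-|J|}\), with \(H\cdot\beta_J=10-2|J|\). When \(W=\mathbb P^7\), I would treat the case separately by the projective-space bound on \(L\cdot\)line, as in the use of Proposition~\ref{st:p6}. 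For each generator the task is a lower bound on its \(L\)-degree, so that \(H\cdot\gamma/L\cdot\gamma\le c\) with the stated \(c\). The ratios to control are \(2/L\gamma_i\) and \((10-2q)/L\beta_J\) for \(q=|J|=2,3,4\); the case \(q\ge5\) is \(K_W\)-nonnegative.

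\emph{Coordinate curves.} Lemma~\ref{sc:moving-coordinate} gives \(L\cdot\gamma_i\ge (d-2)/(2(d+1))=5/16\), hence ratio at most \(32/5\). This is exactly the compact constant, so in the compact case the coordinate curves are handled directly. In the nonuniform case this bound is weaker than needed. There I would use that an actual ample line bundle \(cL\) exists: \(c=w\) by Proposition~\ref{sg:arithmetic}, and \(c=2\) in the Gaussian case once higher-order reflections are removed, or simply \(c=w_K\) before that. Integrality then gives \(L\gamma_i\ge 1/c\), so the ratio is at most \(2c\). For \(w=2,6\) this is \(\le 4w\). In the Gaussian case \(w=4\), and to reach \(8\) I would use \(2c=8\) with \(c=4\).

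\emph{Coincidence lines.} On \(Z_J=\mathbb P^k\) with normal bundle \(\mathcal O(-1)^{q-1}\), write \(L|_{Z_J}=\lambda\mathcal O(1)\). In the nonuniform case integrality of the actual ample bundle gives \(\lambda\ge 1/c\), so the ratios are \((10-2q)c\le 6c\). That is too weak for \(q=2\), so there I would apply Lemma~\ref{sc:negative-normal}. Either \(\lambda\ge1\) (when \(s<c\)), or the fixed compactification is \(Z_J\) itself, and then Proposition~\ref{sg:persistent} identifies \(\lambda=1/\delta\) with \(\delta\mid w_K\), giving \(\lambda\ge 1/w\). This yields ratio at most \(6w\) for \(q=2\), which is still worse than \(4w\). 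To improve it I would add the Chern divisibility \eqref{sc:chern-divisibility} on \(\mathbb P^k\), \(k\ge3\), with \(N=8\): \(N^2\mid b^3\) forces \(16\mid b\) with \(b=2N\lambda\), so \(\lambda\ge 1\). This gives ratios \(6,4,2\) for \(q=2,3,4\) whenever only order-two reflections occur, which covers the compact case, where \(\le 32/5\) is needed. The case \(q=4\), \(k=4\) also has \(k\ge3\).

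\textbf{Main obstacle.} The hard step is the nonuniform \(q=2,3\) lines when higher-order reflections are present, so that \(2L\) does not yet descend and \eqref{sc:chern-divisibility} has the worse denominators \(M_2,M_3\). I would first try to run the divisibility with \(M_i\) taken from the possible stabilizer orders allowed by \(\mu(K)\), and show that \(b\) must be large enough to give \(\lambda\ge 3/(2w)\), which makes \(6/\lambda\le 4w\). Failing that, I would treat the small \(\lambda\) by the persistent-factor route: Proposition~\ref{sg:persistent} with Lemma~\ref{sg:marked}, and the \(\mathbb P^5\)/\(\mathbb P^6\) lower bounds, as in Lemma~\ref{sn:enlargement}.
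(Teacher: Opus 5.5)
Your treatment of the coordinate curves (the bound $5/16$ from Lemma~\ref{sc:moving-coordinate} in the compact case, and integrality of $wL$ otherwise) matches the paper. So does your handling of the nonuniform $q=4$ lines, and of the $q=3$ lines for $w=2,6$. The remaining cases are not handled: the $q=2$ lines, the Gaussian $q=3$ lines, and every collision line in the compact case.

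The Chern step you rely on fails. For $N=8$ one has $b_8=2^{\lceil 2\cdot3/3\rceil}=4$, so $N^2\mid b^3$ (equivalently $21b^3/64\in\mathbb Z$) only gives $4\mid b$. That yields $\lambda=b/16\ge1/4$, which is exactly the bound $1/(2N^{1/3})$ in \eqref{sc:chern-divisibility}, not $\lambda\ge1$. Moreover, that divisibility needs all reflections of order two and rank $\le3$ stabilizer orders dividing $12$ and $24$. These hypotheses are unavailable where the lemma is used:
\begin{itemize}
\item Theorem~\ref{st:ten-main} invokes it for compact groups while $B$, $D$, $E_7$, $G(3,3,7)$ and $G_{34}$ parabolics are still present.
\item Lemma~\ref{st:ade-induction} invokes it before the higher-order reflections are removed.
\end{itemize}
Your $q=2$ fallback through Proposition~\ref{sg:persistent} stops at $\lambda=1/\delta\ge1/w$. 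That gives nothing in the compact case and only the ratio $6w$ otherwise, as you noticed.

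The missing idea is to turn a small slope into a ball-quotient structure on the collision space itself, and then apply the projective-space propositions. The space $Z=\mathbb P^{8-q}$ has normal bundle $\mathcal O(-1)^{q-1}$ and maps isomorphically to its image, since $K_W$ is $\pi$-nef. Lemma~\ref{st:curvature}, which rests on Lemma~\ref{sc:negative-normal}, shows that $\lambda<1$ makes $Z$ the Baily--Borel compactification of the induced arithmetic quotient, with the restricted automorphic class. The paper then argues case by case:
\begin{itemize}
\item For $q=2$, Proposition~\ref{st:p6} gives $\lambda\ge1$ on $\mathbb P^6$, for compact and nonuniform groups alike, so the ratio is at most $6$.
\item In the compact case, Proposition~\ref{st:compact-projective} gives $\lambda\ge2/3$ on $\mathbb P^5$ and $\lambda\ge1/3$ on $\mathbb P^4$. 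Both ratios are at most $6$, so the coordinate bound $32/5$ dominates.
\item In the Gaussian case, Meyer applied to the fixed Hermitian space of dimension at least three shows the induced $\mathbb P^5$ quotient is nonuniform. Proposition~\ref{st:p5-quarter} then excludes $\lambda=1/4$, and integrality of $4L$ leaves $\lambda\ge1/2$, giving ratio $8$.
\end{itemize}
Your closing mention of ``the $\mathbb P^5/\mathbb P^6$ lower bounds'' points in this direction, but it never identifies $Z$ as a ball-quotient compactification and never checks nonuniformity. It also routes through Lemma~\ref{sg:marked}, which yields no slope bound.

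Finally, $W=\mathbb P^7$ is not handled by an $L$-degree bound on lines. If $W=\mathbb P^7$, then $\pi$ is finite birational onto the normal $X_\mu$, hence an isomorphism, and Proposition~\ref{sh:projective} excludes this case outright.
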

\begin{proof}
The comparison for projective space in Proposition~\ref{sh:projective} excludes $W=\mathbb P^7$.
In that case $\pi^*A_X=\mathcal O(k)$ with $k>0$ is ample, so $\pi$ is finite birational onto normal $X_\mu$ and hence is an isomorphism.
The $K_W$-negative rays of Proposition~\ref{sg:mori} have respective degrees against \(-K_W\) $2,6,4,2$: a coordinate curve, and a line where $q=2,3,4$ labels coincide.
Their collision spaces are $\mathbb P^{8-q}$ with normal bundle $\mathcal O(-1)^{q-1}$.
Relative nefness prevents their contraction, and their maps to the normal collision images are isomorphisms.
Lemma~\ref{st:curvature} and Proposition~\ref{st:p6} give $L$-degree at least one for $q=2$.
For compact groups Proposition~\ref{st:compact-projective} gives the bounds $1,2/3,1/3$ on the three collision rays.
Their ratios are at most six, but Lemma~\ref{sc:moving-coordinate} gives $L\cdot\gamma\ge5/16$ on a coordinate curve, hence ratio at most $32/5$.
For nonuniform groups $wL$ is an actual integral bundle.
This gives ratios at most $2w,6,4w,2w$.
When $w=4$, a slope $1/4$ on the $\mathbb P^5$ collision would induce a quotient by Lemma~\ref{st:curvature}.
The induced quotient is nonuniform: its actual fixed Hermitian space over the original imaginary quadratic field has dimension at least three, so Meyer supplies a rational isotropic line.
Proposition~\ref{st:p5-quarter} excludes it.
Its slope is therefore at least $1/2$, and all four ratios are at most eight.
\end{proof}

\begin{lemma}\label{st:ade-induction}
In a nonuniform ten-point realization, all point stabilizers are products of ADE groups, all ends relevant to $D_6$ and $E_6$ have width one, and $4L$ is integral.
If the field is not Gaussian, $2L$ is integral.
\end{lemma}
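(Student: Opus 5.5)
The plan is to follow the nine-point pattern of Section~\ref{sn:nine}, transposed to dimension seven. The argument runs in three steps: enlarge reflection factors, remove reflections of order greater than two by the finite parameter comparisons, and then read off the descent statements from Proposition~\ref{sg:arithmetic}.

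\emph{Step 1: enlargement and exclusion of persistent factors.} Start from any reflection and enlarge it through full actual point stabilizers, as in Lemma~\ref{sn:enlargement}. If enlargement stops in rank $s\le5$, Proposition~\ref{sg:persistent} applies. It gives equal basic degrees, hence $s\le2$, together with a closed $\mathbb P^{7-s}$ that is identified with a merged weighted quotient on $10-s$ labels. By Meyer (Lemma~\ref{sg:meyer}) this induced quotient is nonuniform. For eight or nine merged labels, the nine-point theorem (Theorems~\ref{sn:compact-terminal} and~\ref{sn:nonuniform-terminal}) does not apply directly, because the merged quotient with $s=1$ has nine labels. That case is an arithmetic realization of a nine-label weighted quotient, so it is excluded by the nine-point case itself. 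For $s=2$ there are eight labels, and I would instead use the projective-space bounds (Propositions~\ref{st:p6}, \ref{st:p5-quarter}, \ref{st:p5-sixth}) together with $\delta\mid w_K$. After this step, every irreducible factor enlarges to rank $6$ or $7$.

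\emph{Step 2: removing higher parameters.} The field column of the reflection classification leaves $A_r$, $G(m,p,r)$ with $m\in\{2,3,4,6\}$, $E_6,E_7$, and the exceptional groups of rank $6,7$ (only $G_{34}$ is relevant). Each group carrying a reflection of order greater than two must be excluded. For this I would rerun the comparison of Proposition~\ref{sc:monomial-comparison} with actual end periods, using $a\le(n-2)b$ and the Hom bound $a\le(n-4)b$ from Lemma~\ref{sg:curve-bound} with $A_X=w_KL$, exactly as in Lemma~\ref{sn:parameters} but with $n=10$ (so the constants become $8$ and $6$). The $G_{34}$ and $G_{33}$ cases would follow the theta calculation of Lemma~\ref{sn:nonuniform-g33}, using Proposition~\ref{sc:parabolic-flags}.

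\emph{Step 3: ADE conclusion and descent.} Once only order-two reflections remain, the normalized lifts have determinant $\pm1$, so a central scalar $zI_8$ satisfies $z^{16}=1$ and $z\in\mu(K)$. For the Gaussian field this gives $c\mid4$; otherwise $w_K\in\{2,6\}$ gives $c\mid2$. Proposition~\ref{sg:arithmetic} then yields the integrality of $4L$, respectively $2L$. The surviving order-two reflection groups defined over $K$ are of types $A$, $D$, $E$, and $B$; the $B_r=G(2,1,r)$ case still has to be removed, as in Lemma~\ref{sn:nonuniform-b5}, after which only ADE groups remain. Width one at ends relevant to $D_6$ and $E_6$ comes from Theorem~\ref{sg:segre} and Lemma~\ref{sg:cusp-lift}. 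The fiber character of a full $D_6$ or $E_6$ horizontal stabilizer must be trivial, because its kernel would otherwise be a proper reflection subgroup of the same rank, and with generator degree one in the actual grading this gives $\kappa=1$.

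I expect the main obstacle to be Step~2 for $m=4,6$ at rank $6$ (the parameter analogues of Lemma~\ref{sn:parameters}). The end tables and the finite integer check grow considerably with $n=10$, and a few rows may survive and need an ad hoc theta argument.
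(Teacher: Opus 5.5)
\textbf{The gap is Step~2.} Two parts of your outline are sound. Excluding an $s=1$ persistent factor by the nine-point theorem is legitimate, and despite your wording it applies directly, because the merged nine-label quotient of Proposition~\ref{sg:persistent} is a Baily--Borel compactification of an arithmetic ball quotient. The paper instead uses $\lambda=1/\delta<1$ against Proposition~\ref{st:p6}. Your final descent (determinants $\pm1$, scalar kernel of order dividing $\gcd(16,w)$) is exactly the paper's.

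Step~2 is the actual content of the lemma. You run the parameter comparisons with only the curvewise bounds of Lemma~\ref{sg:curve-bound} for $A_X=w_KL$: $a\le8wt$, and $a\le6wt$ when a map to a line of nonpositive degree exists. The paper starts instead from Lemma~\ref{st:initial-nef}: $K_W+8L$ is nef in the Gaussian case and $K_W+4wL$ is nef otherwise. That lemma comes from the Mori generators of Proposition~\ref{sg:mori}, after $W=\mathbb P^7$ is excluded by Proposition~\ref{sh:projective}. Its slope bounds on the collision spaces use Proposition~\ref{st:p6}, Lemma~\ref{st:curvature}, and, in the Gaussian case, Proposition~\ref{st:p5-quarter}.

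Your bounds are much weaker. In the Gaussian case you have $a\le32t$ (or $24t$) where the paper has $a\le8t$; for $w=6$ you have $a\le48t$ (or $36t$) where the paper has $a\le24t$. All of the paper's finite domains are cut out by the stronger bound: the $G_{34}$ ranges, the parameter-six rows, and the $G(4,p,6)$ table with $a\le2j$ and $j\le5,6,8$. With your bounds they do not close as stated. For example, take a $G_{34}$ disk with $g=0$, $a=84(1-g)-50t-41\sigma$ and $h=-42t+12s$. The rows $(t,s,\sigma)=(1/2,2,1)$ and $(1,4,0)$ give $(a,h)=(18,3)$ and $(34,6)$. Both violate $a\le24t$ but satisfy $a\le48t$, and since $h>0$ your Hom bound is not triggered. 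You concede that rows may survive; the argument that kills them is the missing step, not bookkeeping.

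\textbf{The $G_{34}$ input is also missing.} In dimension seven $G_{34}$ is a corank-one disk group with no primitive rank~7 enlargement. So the $G_{33}\subset G_{34}$ flag of Proposition~\ref{sc:parabolic-flags} and the $G_{33}$ theta computation of Lemma~\ref{sa:g33-theta} are not the relevant tools, and $G_{33}$ itself is not a curve case here. What is needed is that the invariant radial section at a $G_{34}$ cusp, with section degrees $(1,3,4,6,7,9,12)$, is nonzero. The paper gets this from irreducibility of the principal theta divisor and Lozovanu's multiplicity bound $\le4<6$, and uses it to control the cusp contributions in the coefficient degrees $(d-42)t+9s$.

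\textbf{How to repair it.} Establish Lemma~\ref{st:initial-nef} first. Then redo the $G_{34}$, parameter six, three and four enumerations under $a\le8t$ or $a\le24t$, applying the normal-bundle test of Proposition~\ref{sc:bundle-tests} to the surviving Gaussian rows. Exclude $B_6$ through its four cusp tuples once only order-two reflections remain.
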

\begin{proof}
Apply the monomial comparison of Proposition~\ref{sc:monomial-comparison}, using Lemma~\ref{st:initial-nef}, before using any stronger nef inequality.
The domains are given below for rank 6 and in Appendix~\ref{sf:projective} for the auxiliary projective spaces; the shared lists of cusps and the conditions on the marked degrees are in Appendices~\ref{sf:records}--\ref{sf:finite-domain}.
For clarity, we give the reductions determining those domains.
The primitive group $G_{34}$ is treated separately before the ordinary monomial comparisons.
Its polynomial degrees are $(6,12,18,24,30,42)$ and its section degrees are $(1,3,4,6,7,9,12)$, with no torsor and width one.
Its principal theta divisor is irreducible, and its multiplicity is at most four~\cite[Theorem 1.4]{Lozovanu}, below the first invariant degree six; hence its invariant radial section is nonzero.
Three end zeros leave coefficient degrees $(d-42)t+9s$ and
$h=-42t+12s$, $a=84(1-g)-50t-41\sigma$, with even $s$ and $j=6t$ integral.
The weak inequality $a\le24t$ gives $g=0$, $j\le8$ for $h\ge0$ and $j\le11$ for $h<0$.
The logarithmic equation and~\eqref{st:ten-git} exclude every row; the sole remaining value $1/3$ for the contribution of disk inertia is not an allowed sum of $1-1/e$.
For parameter six set $j=6t$, $h_P=(-36t+R+\nu)/p$ and $h=h_P$ for $p=1$, $h=ph_P+6t$ otherwise.
The product direction is preserved since $6^5>6p$.
The degree and rank bounds reduce the calculation to $g=0$, $1\le j\le9$, $s\le\lfloor2+j/3\rfloor$, $R\le\lfloor(6/5)(2+j/3-s)\rfloor$.
The twelve residual rows have respectively $p=1$ (two rows), $p=2$ (five), or $p=3,6$ (five).
The first two violate $a\ge6(-h)$; the five $p=2$ rows require $a=4j$, with three further contradicted by a kernel target of degree zero below $e=j$; the last five have $h<h_P<0$, contrary to the splitting of Proposition~\ref{sg:kernel}.
The tests for parameter three use $G(3,1,6)$ with end degrees $(1^7),(1^2,3^5),(1,3^6)$ and $1\le3t\le4$, followed by $G(3,3,6)$ with $(1^3,3^4)$ and $1\le3t\le3$.
They have no residual row.
In the Gaussian case use $G(4,p,6)$, $p=1,2,4$, and $j=4t$.
The negative coefficient rank bound gives $j\le5,6,8$, respectively.
For \(p=4\), it suffices to retain the following end types, where \(\alpha\) is the section tuple and \(\kappa\) the actual width:
\[
\begin{array}{c|c|c}
 \text{type}&\alpha&\kappa\\\hline
 A&(1^3,2^4)&1,2\\
 B&(1^2,2^5)&1,2\\
 C&(1,2^2,4^4)&1,2\\
 D&(1,2,4^5)&1,2\\
 E&(1^2,2,4^4)&1\\
 X&(1^2,4^5)&1.
\end{array}
\]
The type \(X\) is the nontrivial torsor; width two would require a full fixed point and is therefore excluded.
For each multiset of \(s\) ends, put \(B=\sum_c\sum_i\alpha_{c,i}/\kappa_c\) and \(\nu=\sum_c\max_i\alpha_{c,i}/\kappa_c\).
The complete necessary numerical conditions are
\[
\begin{gathered}
 1\le j\le8,\quad0\le s\le\lfloor2+j/2\rfloor,\quad
 0\le R\le\lfloor(4/3)(2+j/2-s)\rfloor,\\
 \sigma=2+j/2-s-3R/4\in\{0\}\cup[1/2,\infty),\\
 a=B-s+2-33j/2+21R/4\in2\mathbb Z,\quad a\le2j,
 \qquad h=-5j+\nu+R.
\end{gathered}
\]
There is no solution with \(h\ge0\).
For \(h<0\), put \(u=\max(0,\min(5,\lfloor(\nu-s)/j\rfloor+1))\), and use \(2\le\ell\le\min(7,1+u+\epsilon_P)\).
Here \(\epsilon_P\) is the number of possible product coefficients, so \(\epsilon_P\le1\).
When all widths are one, the degree \(E_P=-3j/2-3R/4\) must be integral and \(h-E_P-s<R\) forces \(\epsilon_P=0\).
Finally require \(e\ge\max(1,-h)\) and \(a-(10-2\ell)e\in2\mathbb Z_{\ge0}\).
Substitution in these bounded sets leaves only the following signatures; all have \(R=0\), \(\ell=5\), and at most two ends of width two:
\[
\begin{array}{c|c|c|c|c|c}
 j&s&\nu&h&a&\sigma\\\hline
 2&3&7&-3&2,4&0\\
 3&3&9&-6&0&1/2\\
 3&3&10&-5&0,2,4,6&1/2\\
 4&4&12&-8&0&0\\
 4&4&13&-7&0,2,4,6&0.
\end{array}
\]
At such an end the least nonzero generator degree is the actual width; thus the curve is immersed in the coarse coordinate, with no fiber correction.
The test in Proposition~\ref{sc:bundle-tests} using the normal bundle, for degrees $(4,8,12,16,20,6)$, requires
\[
 2a-4+45j-12\nu\le0.
\]
The five rows give $2a+2$, $2a+23$, $2a+11$, $2a+32$, $2a+20$, all positive.
This excludes the remaining groups and cusps with parameter four.
For a persistent factor of rank at most 5, apply Meyer to its actual fixed Hermitian space of dimension at least three; the induced quotient isomorphic to projective space is nonuniform over the same field.
The reduction for persistent factors, Proposition~\ref{st:p6}, and Propositions~\ref{st:p5-sixth}--\ref{st:p5-quarter} now reduce the possibilities to parabolics of $A_7,B_7,D_7,E_7$.
For $B_6$, the four tuples of generator degrees at cusps of width one are $(1^7),(1^3,2^4),(1^2,2^5),(1,2^6)$.
With $\nu=\sum_c\max_i\alpha_{c,i}$, its equations are
\[
 2g-2+s+R/2+\sigma=2t,\quad
 a=\sum_c(B_c-1)+2-2g-42t+6R,\quad h=-12t+\nu+R.
\]
The integrality of $42t$ and $4t$ gives $2t\in\mathbb Z_{>0}$.
For $h\ge0$ one obtains $t=1/2$, $g=\sigma=0$, and $a\ge17-2s\ge11>8t$.
For $h<0$ only $g=0$, $2t=1,2$ remain, and substitution of the four tuples in~\eqref{st:ten-git} gives no row.
The stabilizers are therefore ADE products.
Normalized reflection lifts in dimension eight have determinants $\pm1$; their scalar kernel has order dividing $\gcd(16,w)$.
The descent at cusps uses the invariant corresponding power of the frame $h(v,\ell)=1$.
It gives $2L$ outside the Gaussian case and $4L$ in that case.
\end{proof}

\begin{lemma}\label{st:ade-nef}
Once the ambient stabilizers are ADE products, $K_W+6L$ is nef.
\end{lemma}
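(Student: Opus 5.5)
The plan is to follow the template of Lemma~\ref{st:initial-nef}. Nefness is checked only on the $K_W$-negative extremal rays of Proposition~\ref{sg:mori}, using the improved integrality and the improved slope bounds available once every stabilizer is an ADE product.

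First, the case $W=\mathbb P^7$ is already excluded by Proposition~\ref{sh:projective}. Relative nefness of $K_W$ then shows that the remaining rays are not contracted, so each ray maps isomorphically onto its image in $X_\mu$. The rays and their degrees against $-K_W$ are:
\begin{itemize}
\item the coordinate curves $\gamma_i$, with $-K_W$-degree $2$;
\item the lines $\beta_J$ in the collision spaces $\mathbb P^{8-q}$, $q=2,3,4$, with normal bundle $\mathcal O(-1)^{q-1}$ and $-K_W$-degrees $6,4,2$.
\end{itemize}
It therefore suffices to show $L\cdot\gamma_i\ge1/3$, and $L\cdot\beta_J\ge1,\,2/3,\,1/3$ for $q=2,3,4$ respectively.

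Second, with all stabilizers ADE products, every reflection has order two. Proposition~\ref{sc:chern-integrality} then gives $\mathcal T=2NL=16L=2K_W+B\in\operatorname{Pic}(W)$, and in the nonuniform case Lemma~\ref{st:ade-induction} makes $2L$ or $4L$ integral. On a closed collision space $\mathbb P^k$ with $k\ge3$, the divisibility $N^2\mid b^3$ from \eqref{sc:chern-divisibility}, with $N=8$, gives $2^6\mid b^3$. Hence $4\mid b$ and $L\cdot\text{line}\ge1/4$.

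This bound is too weak by itself for $q=2,3$. Here I would invoke the curvature comparison Lemma~\ref{sc:negative-normal}: either $\lambda\ge1$, or the full generic fixed group has normal rank $c=q-1$. In the latter case Proposition~\ref{sg:persistent} identifies the image with a merged-weight quotient, and the projective-space propositions (Proposition~\ref{st:p6}, Propositions~\ref{st:p5-sixth}--\ref{st:p5-quarter}, Proposition~\ref{st:compact-projective}) then bound the slope below by $1$ on $\mathbb P^6$ and by the required $2/3$ on $\mathbb P^5$. For $q=4$ the collision space is $\mathbb P^4$, and the quantization $b\in4\mathbb Z$ gives exactly $L\cdot\beta_J\ge1/3$ when combined with the rational-equivalence integrality of the quadratic fraction in \eqref{sc:chern-divisibility}, since $3\cdot7b^2/16\in\mathbb Z$ forces $4\mid b$.

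Third, for the coordinate curves, Lemma~\ref{sc:moving-coordinate} gives only $L\cdot\gamma_i\ge5/16$, which is slightly short of $1/3$. I would sharpen it using the order-two inertia. The canonical formula reads $8L\gamma=-2+b/2$ with $b$ branch values, and Lemma~\ref{sc:moving-coordinate} gives $b\ge10$. Integrality of $B\cdot\gamma_i$ together with $16L\cdot\gamma_i\in\mathbb Z$ (from $\mathcal T$ integral) forces $16L\gamma_i\ge5$. To reach $16L\gamma_i\ge6$, the plan is to show that $b$ is even. I expect to argue this through the parity of $a$ noted after \eqref{st:ten-git}: $2K_W\cdot\gamma_i=-4$ is even, so $B\cdot\gamma_i=16L\gamma_i-2K_W\gamma_i$ and $16L\gamma_i$ have the same parity. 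This then needs to be combined with descent of $2L$ or $4L$ in the nonuniform case, and with a separate parity argument for the double cover branched along $B$ in the compact case.

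This parity step is the main obstacle. If it fails, the fallback is to use the marked decomposition of Lemma~\ref{sg:curve-bound} directly on fixed curves rather than on the ray $\gamma_i$.
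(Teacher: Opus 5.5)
\paragraph{Gap: the three-label collision at slope $1/2$.}
Your reduction to the four $K_W$-negative rays is right, and so are the target bounds $L\cdot\gamma_i\ge1/3$ and $L\cdot\beta_J\ge1,\,2/3,\,1/3$ for $q=2,3,4$. The case $q=2$ also goes exactly as in Lemma~\ref{st:initial-nef}. The step for $q=3$, however, fails.

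This lemma lives in the nonuniform setting: Theorem~\ref{st:ten-main} uses only $K_W+\frac{32}{5}L$ in the compact case. In that setting none of the propositions you cite bounds the slope on the collision $Z=\mathbb P^5$ by $2/3$.
\begin{itemize}
\item Propositions~\ref{st:p5-sixth} and~\ref{st:p5-quarter} exclude only $\lambda=1/6$ and $\lambda=1/4$.
\item Proposition~\ref{st:compact-projective} is for compact quotients.
\end{itemize}
Integrality of the restricted $2L$, or of $4L$ together with $24c_2^{\rm orb}=360L^2$ on a plane, leaves exactly $\lambda=1/2$. At that slope the line has $-K_W$-degree $4$, so $(K_W+6L)\cdot\beta_J=-1$.

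Excluding this slope is the main content of the paper's proof, and nothing in your plan replaces it. By Lemma~\ref{st:curvature}, $Z$ is then the compactified induced quotient. Its groups are normalizer restrictions of a pointwise $A_2$ or $2A_1$ inside ADE products, namely classical $A,B,D$ groups plus the tabulated exceptional restrictions. The order bound $384$ on a cusp character kernel of rank at most $4$ removes the non-classical cusp types: Gaussian $G(4,4,4)$, parameters three and six, $F_4$, $G_{29}$, $G_{31}$, and $G_{32}$. The $B_4$, $D_4$ and $A_4$ fixed-curve comparisons at $t=u/2$, $a=6u$ then rule out $\lambda=1/2$, which gives $L|_Z\ge H$.

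\paragraph{Slip at $q=4$ and the coordinate curves.}
At $q=4$ there is an arithmetic slip: $4\mid b$ gives only $L\cdot\beta_J=b/16\ge1/4$, hence ratio $2/(1/4)=8$, not $6$.
\begin{itemize}
\item Outside the Gaussian field, $2L$ is already integral and gives $L\cdot\beta_J\ge1/2$.
\item In the Gaussian case the paper needs an extra argument. The rank-$3$ pointwise ADE group on the $\mathbb P^4$ collision has real normalizer restriction. The induced reflection group's lifts in dimension five have scalar kernel dividing $\gcd(10,4)=2$. The cusp frame therefore makes the restricted $2L$ integral, and the slope is at least $1/2$.
\end{itemize}

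Your parity detour for the coordinate curves is unnecessary, and the fix is already in what you cite. Lemma~\ref{st:ade-induction} makes $4L$ integral in every nonuniform case, so $L\cdot\gamma_i\in\frac14\mathbb Z$. Together with $L\cdot\gamma_i\ge5/16$ from Lemma~\ref{sc:moving-coordinate}, this forces $L\cdot\gamma_i\ge1/2$. No such improvement is available or claimed in the compact case, which is why the paper stops at $\frac{32}{5}$ there.
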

\begin{proof}
A slope below one on the closed locus $Z=\mathbb P^5$ where three labels coincide induces a quotient by Lemma~\ref{st:curvature}.
Its pointwise group is $A_2$ or $2A_1$.
The effective normalizer restrictions are products of classical $A,B,D$ groups; the exceptional restrictions are
\[
\begin{array}{c|c|c}
 J&F&N_J(F)/F\\\hline
 E_7&A_2&A_5\times\mathbb Z/2\\
 E_7&2A_1&A_1+B_4\\
 E_6&A_2&A_2\wr(\mathbb Z/2)\\
 E_6&2A_1&B_3
\end{array}
\]
and $E_6/A_1=A_5$ when $A_1$ is a separate pointwise factor.
Smoothness shows that the effective restrictions are generated by reflections.
The cusp character kernel is a finite group of this type and rank at most 4, hence order at most $384$.
This excludes the nonclassical cusps: the Gaussian $G(4,4,4)$ lift has order $1536$; parameter three or six has nonreal character; $F_4$ has kernel order at least $576$; $G_{32}$ lifts in its entirety; and the $G_{29},G_{31}$ kernels have order at least $480,2880$, respectively.
For the $G_{31}$ torsor the residual orbit has size six and fiber character kernel order at least $480$.
Thus only $A_4,B_4,D_4$ cusp types remain.
The integrality of $2L$, or of $4L$ and $24c_2^{\rm orb}=360L^2$, makes $1/2$ the only possible slope below one.
At that slope put $t=u/2$ and $a=6u$.
For $B_4$, Euler and $h\le-4t+4(1-g)-2\sigma$ leave $g=0,u=1$; the four end tuples give $h=0<u$.
For $D_4$ the equations give
\[
 7u+2s+7\sigma=16(1-g),\qquad
 (u,t,s,R,\sigma)=(1,1/2,1,2,1),(2,1,1,6,0).
\]
The curve is a line or smooth conic, whose normal bundle has largest summand $1$ or $4$.
The column of the quotient differential associated with the quadratic invariant gives a nonzero map of degree $-2t+R+s=2$ or $5$ into that normal bundle, a contradiction.
Finally $A_4$ gives $5h=3t-\sigma<5u$.
Consequently $L|_Z\ge H$.
On a Gaussian collision $\mathbb P^4$ of four labels, the rank 3 pointwise ADE group has real normalizer restriction.
The induced group is generated by reflections and its lifts in dimension five have scalar kernel dividing $\gcd(10,4)=2$.
The same frame at the cusp therefore proves that the restriction of the original $2L$ is integral.
Its slope is at least $1/2$.
The coordinate bound $5/16$, combined with the actual integral $2L$ or $4L$, gives coordinate degree at least $1/2$.
The four degrees against \(-K_W\) $2,6,4,2$ now have $L$-degree bounds $1/2,1,1,1/2$, so Proposition~\ref{sg:mori} proves the assertion.
\end{proof}

\begin{theorem}\label{st:ten-main}
For every ten weights satisfying~\eqref{split-weights}, the quotient is not the Baily--Borel compactification of an arithmetic ball quotient.
\end{theorem}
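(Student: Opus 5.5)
The proof will mirror Theorems~\ref{sn:compact-terminal} and~\ref{sn:nonuniform-terminal}, now with $d=7$ and $N=8$. I assume an isomorphism $X_\mu\simeq(\mathbb B^7/\Gamma)^{\mathrm{BB}}$ and use the small chamber model $\pi:W\to X_\mu$ of Lemma~\ref{sg:resolution}, whose exceptional codimension is at least four. By Proposition~\ref{sg:arithmetic}, $\Gamma$ and every stabilizer are generated by reflections.

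\emph{Step 1: reduce to parabolics of $A_7$.} I enlarge an irreducible reflection factor through full actual point stabilizers, as in Lemma~\ref{sn:enlargement}. A persistent factor of rank $s\le5$ is excluded by Proposition~\ref{sg:persistent}, which forces $s\le2$ and a closed $\mathbb P^{7-s}$ with $N=\mathcal O(-1)^s$. I then apply the projective space comparisons already cited in Lemma~\ref{st:ade-induction}, namely Propositions~\ref{st:p6}, \ref{st:p5-sixth}--\ref{st:p5-quarter} and \ref{st:compact-projective}. In the nonuniform case, Lemma~\ref{st:ade-induction} already reduces the stabilizers to ADE products and gives integrality of $2L$ or $4L$; Lemma~\ref{st:ade-nef} then gives $K_W+6L$ nef. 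In the compact case I obtain the analogous reduction from Lemma~\ref{sc:compact-monomial}, with $r=6$ and $n=10=r+4$, which removes the monomial rank~6 curves. I also use Lemma~\ref{st:initial-nef} ($K_W+\tfrac{32}{5}L$ nef) and a Chern-integrality quantization as in Lemma~\ref{sn:compact-chern}, run with $N=8$ via Proposition~\ref{sc:chern-integrality}. The aim is to push the compact bound down to $K_W+6L$ nef as well. After this, the rank 7 groups are $A_7,D_7,E_7$ and, in the compact or non-ADE cases, the monomial groups $B_7,G(m,p,7)$. Each contains a full rank 6 parabolic disk of type $A_6,D_6,E_6,B_6$ or $G(m,p,6)$.

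\emph{Step 2: exclude the exceptional and $D$ parabolics.} For $E_7$ I take its full $E_6$ disk and use the row $E_6\subset E_7$ of Proposition~\ref{sc:parabolic-flags}, with orders $(2,2,2,2,3,3)$ and source degrees $(2,5,6,8,9,12)$. This produces a successive flag of quotients, exactly as Lemma~\ref{sn:d5-flag} did for $D_5$. The coefficient elimination of Lemma~\ref{sc:coefficient-elimination} then shows that the target $h$ associated with the invariant of greatest degree is positive and that $f^*T_W$ is nef. I then play $a=-K_W\cdot C$ against $a\le6t$ (from the nef bound), against the marked decompositions of Proposition~\ref{sg:mori}, and against $a=(10-2\ell)e+2\delta$ from \eqref{st:ten-git}. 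The same outcome as Proposition~\ref{sn:positive-target} should follow. For $D_7$ and $B_7$, the $D_6$ and $B_6$ disks are handled like Lemmas~\ref{sn:nonuniform-d5} and~\ref{sn:nonuniform-b5}. There the product direction splits off as a direct summand, and Proposition~\ref{sg:kernel} forces its degree to equal $-e$. The finitely many remaining integer rows are then killed by evaluation-rank counting together with the parity of $a$.

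\emph{Step 3: conclude with type $A$.} Only $A_7$ and its full $A_6$ disk remain. Proposition~\ref{sc:type-a} with $r=6$ gives
\[
 a\ge\tfrac{33}{2}t+\sigma>6t,
\]
which contradicts the nef bound $K_W+6L$, and this proves the theorem.

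The main obstacle is Step~2 for $E_6\subset E_7$, together with the compact nef estimate. The flag argument needs the analogue of the local obstruction through the missing exponents of the axis ring. It also needs the cusp divisions from Proposition~\ref{sc:root-theta} at the $E_6$ ends. If those divisions turn out weaker than needed, I would first try the tangent and normal bundle tests of Proposition~\ref{sc:bundle-tests} on the finite rows left over.
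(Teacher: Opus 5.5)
Your skeleton matches the paper's: a nef bound, exclusion of every factor not of type $A$, and the closing inequality $a\ge\frac{33}{2}t+\sigma$ of Proposition~\ref{sc:type-a}. However, two steps have genuine gaps.

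\textbf{Rank-6 factors that never enlarge.} You only follow factors that reach rank $7$. Proposition~\ref{sg:persistent} and the projective comparisons remove persistent factors of rank at most $d-2=5$. A factor that stops at rank $d-1=6$ is different: its fixed disk has only product enlargements (and, in the nonuniform case, ends), and it must be excluded on that disk. Your Step~3 claim that only $A_7$ and its $A_6$ disk remain assumes this case away. Two such cases are untouched by your plan.
\begin{itemize}
\item Compact $G_{34}$ disks. They are not monomial, so Lemma~\ref{sc:compact-monomial} does not apply; the nonuniform ones fall under Lemma~\ref{st:ade-induction}. The paper kills the compact ones by $a\ge252t$.
\item $E_6$ disks without $E_7$ points, i.e.\ $R=0$. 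Your $E_6$ argument is modelled on Lemma~\ref{sn:d5-flag}, whose conclusion requires $R>0$, so it gives nothing here. With ends, this is the delicate half of Lemma~\ref{st:e6-terminal}. Two end divisions leave only the row $(g,t,s,\sigma,a,h)=(0,1,4,0,4,0)$. That row is excluded by splitting $N_f^{\mathrm{sat}}=N^+\oplus N^-$ under the odd involution, with $\operatorname{rank}N^-=2$ and $\deg N^-=-2$. The theta flags give $\operatorname{Hom}(N^+,\mathcal O(-1))=0$. Proposition~\ref{sg:kernel} then makes every summand of $N^+$ positive, which is incompatible with the nonzero map associated with the invariant of greatest degree to $E_{12}(3S)\simeq\mathcal O$.
\end{itemize}

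\textbf{The $E_6\subset E_7$ step.} Here you rely on an analogue of the $D_5\subset E_6$ obstruction ($u^3/2\notin\mathbb C\{u^2,u^5\}$). You do not have that analogue, and the paper does not use one. Saying that the outcome of Proposition~\ref{sn:positive-target} should follow is not an argument. The missing idea is a second inverse row on the kernel of the row of greatest degree.
\begin{itemize}
\item With $a\le32t/5$ and the parities of $a$ and $2t+R$, the case $h\ge0$ has no solution.
\item For $h<0$, the coefficients in the five lower directions have residual degrees $-10t+R+2s,\ldots,-3t+R/2+s$. They vanish by degree, apart from the row $(t,R,s,\sigma,a,h)=(3/2,1,4,1/2,-4,-3)$, which fails the evaluation rank required by \eqref{st:ten-git}.
\item Hence the Hom space to the target of greatest degree is one-dimensional, and $e\ge-h$.
\item On its kernel, the degree-nine row extends to a line of degree $h_9=-9t+5R/2+2s$. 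Since $h+h_9\le-R/8-5\sigma/12<0$, we get $h_9<e$, contradicting Proposition~\ref{sg:kernel}.
\end{itemize}

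\textbf{The compact nef bound.} Your attempt to push the compact bound down to $K_W+6L$ is unsupported. The Chern quantization only acts on collision spaces $\mathbb P^k$ with $k\ge3$, while coordinate curves stay at $L\cdot\gamma\ge5/16$. It is also unnecessary, for three reasons:
\begin{itemize}
\item $33/2>32/5$, so the final type $A$ comparison already contradicts the weaker bound;
\item the $E_6$ numerics run with $a\le32t/5$;
\item the paper removes compact $D_6$ and $G(3,3,6)$ by the monomial comparison against that same bound, giving $a\ge44t/5$ and $a\ge74t/5$ respectively.
\end{itemize}
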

\begin{proof}
For compact groups the reduction for maximal persistent factors and Lemma~\ref{sc:compact-monomial} remove the monomial factors outside the parabolics of $A_7,B_7,D_7,G(3,3,7),E_7,G_{34}$.
Factors of rank at most 5 are excluded by Propositions~\ref{st:p6} and~\ref{st:compact-projective} and equality of basic degrees.
The compact $B_6$ comparison is included in Lemma~\ref{sc:compact-monomial}.
At this stage $a\le32t/5$ by Lemma~\ref{st:initial-nef}.
In the nonuniform cases, Lemmas~\ref{st:ade-induction} and~\ref{st:ade-nef} give ADE stabilizers and $a\le6t<32t/5$.
The common $E_6$ comparison in Lemma~\ref{st:e6-terminal} below excludes $E_7$, then the remaining $E_6$ curves.
For compact groups $G_{34}$ gives $a\ge252t$, and the ordinary monomial comparison then gives $a\ge74t/5$ for $G(3,3,6)$ and $a\ge44t/5$ for $D_6$.
Each contradicts the compact bound.
For nonuniform $D_6$, the cusps have width one and generator degrees $(1^4,2^3)$, and the common equations are
\[
 2g-2+s+R/2+\sigma=2t,\quad
 a=9s+2-2g-36t+11R/2,\quad h=-10t+R+2s.
\]
They reduce to $g=0$, $j=2t=1,2,3$.
The full GIT intervals leave only
\[
 (j,s,R,\sigma,a,h)=(1,2,0,1,2,-1),(2,2,4,0,6,-2),
 (2,3,2,0,4,-2),(2,4,0,0,2,-2).
\]
The product contributes one map.
The flag for the minimal polarization of Proposition~\ref{sc:root-theta}, after two end divisions, leaves symmetric degrees $h+ij-2s$, less than $R$ for $i<5$ and equal to $R$ for $i=5$.
The interior zeros therefore leave at most one symmetric map.
But~\eqref{st:ten-git} requires $\ell\ge4$ and at least three independent maps, a contradiction.
Finally Proposition~\ref{sc:type-a} gives $a\ge33t/2+\sigma$ for the last $A_6$ disk.
This contradicts either available nef bound and completes the ten-point case without any assumption on branch support or cusp width.
\end{proof}

\section{Eleven or more labels}\label{sh:section}
Write \(d=n-3\), \(r=d-1\), and \(N=d+1\).
On the smooth small model \(\pi:W\to X_\mu\), put \(L=\pi^*\mathcal L\).
For a fixed curve \(f:C\to W\) of the full arithmetic normalizer, put
\[
 a=-K_W\cdot f_*C,\qquad t=L\cdot f_*C>0,\qquad g=g(C).
\]
No sign condition on \(a\) will be used before it follows from the tangent comparison.
We use strong induction on the number of labels; the cases of nine and ten labels are proved in Sections~\ref{sn:nine} and~\ref{st:ten}.
If an irreducible factor of rank at most \(d-2\) persists, Proposition~\ref{sg:persistent} identifies its fixed compactification with the original merged weighted quotient on \(n-s\) labels, where \(s\le2\).
Thus \(9\le n-s<n\), and the induction hypothesis excludes every such persistent factor.
This uses the original weights and their actual fixed quotient; no ball structure is transferred to the small model.
All sharper nef bounds below are established only after their required stabilizers have been excluded.

\subsection{The common monomial calculation}
\begin{proposition}\label{sh:projective}
For \(7\le k\le9\), projective space \(\mathbb P^k\) is not the Baily--Borel compactification of an arithmetic quotient of \(\mathbb B^k\).
\end{proposition}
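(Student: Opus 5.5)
Suppose \(\mathbb P^k\simeq(\mathbb B^k/\Gamma)^{\mathrm{BB}}\) with \(7\le k\le9\), and put \(N=k+1\). I would play the rigid projective-space geometry against the orbifold Chern-class identities. First, Proposition~\ref{sg:arithmetic} applies to \(W=\mathbb P^k\) directly: it is smooth and simply connected, and the cusps form a finite set. Hence \(\Gamma\) and all finite and cusp stabilizers are generated by reflections, the coarse interior is smooth, \(\mathcal L^c\) is an actual ample line bundle, and \(L=\lambda\,\mathcal O(1)\) with \(\lambda>0\) rational. Writing \(K_{\mathbb P^k}=-N\mathcal O(1)\) and \(NL=K+B_{\rm orb}\) with \(B_{\rm orb}=\sum(1-1/e_i)B_i\) effective, we get \(\lambda>1\), since \(B_{\rm orb}\) is nonzero because \(\Gamma\) is nontrivial and generated by reflections. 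The integrality of \(c\lambda\) then leaves only finitely many denominators.

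Next I would fix the slope. Apply Proposition~\ref{sc:chern-integrality} on all of \(\mathbb P^k\), or on a closed \(\mathbb P^3\subset\mathbb P^k\) in general position, which avoids the cusps and the codimension-\(\ge4\) strata. With \(\mathcal T=b\,\mathcal O(1)\) when reflections have order two, the conditions \(N^2\mid b^3\) and \(L\cdot\text{line}\ge1/(2N^{1/3})\) combine with \(\lambda>1\) and the relation \(2N\lambda=b\). The remaining freedom is the reflection orders. I would bound these by \(c\mid w_K\) in the nonuniform case and by the factor \(c\) in the compact case, treating orders \(3,4,6\) through the analogous identity \(NL=K+\sum(1-1/e)B\).

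The main step would be a contradiction from a fixed curve, as in the earlier sections. Take a reflection and enlarge it through full stabilizers. Persistent factors of rank \(s\le k-2\) are handled by Proposition~\ref{sg:persistent}: their image is a linear \(\mathbb P^{k-s}\) with normal bundle \(\mathcal O(-1)^s\). This is impossible in \(\mathbb P^k\), whose tangent bundle is ample, so every linear subspace has normal bundle \(\mathcal O(1)^s\). (Lemma~\ref{sc:negative-normal} plays the same role.) This is the clean advantage of projective space. We are then left with a rank-\((k-1)\) disk. For type \(A\), Proposition~\ref{sc:type-a} gives \(a\ge\frac{(k-1)^2-3}{2}t\). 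But on \(\mathbb P^k\) any curve has \(a=N\deg C\) and \(t=\lambda\deg C\), so \(a/t=N/\lambda<N\). This contradicts the bound, since \(((k-1)^2-3)/2>k+1\) for \(k\ge7\).

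For the non-\(A\) families \(G(m,p,k-1)\) and the exceptional groups \(E_6,E_7,E_8,G_{34}\) in the relevant ranks, I would instead use Proposition~\ref{sc:monomial-comparison} together with the ampleness of \(f^*T_{\mathbb P^k}\). On \(\mathbb P^1\) this bundle is a sum of lines of positive degree, so every map to a nonpositive line vanishes. This forces \(h>0\) for the selected inverse row, and combining that with \eqref{ew-comparison-canonical} gives an \(a/t\) far above \(N\). The compact case without ends reduces to the argument of Lemma~\ref{sc:compact-monomial}.

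I expect the main obstacle to be the cusp contributions \(\sum_cB_c\) and \(\nu_c\) for the non-\(A\) monomial types. Bounding \(B_c\ge\gamma\nu_c\) uniformly, as was done in Lemma~\ref{sn:parameters}, is where I would start.
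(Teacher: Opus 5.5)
There is a genuine gap in the step that carries your whole argument. From $NL=K_{\mathbb P^k}+B_{\rm orb}$ and $K_{\mathbb P^k}=-N\mathcal O(1)$ you get only $\deg B_{\rm orb}=N(1+\lambda)$. That is positive for every $\lambda>0$, so it gives no lower bound on $\lambda$, let alone $\lambda>1$. The canonical formula does not exclude small slopes. For $\mathrm{PSL}(2,\mathbb Z)$, whose compactification is $\mathbb P^1$, one has $2\deg L=-2+1+\tfrac12+\tfrac23=\tfrac16$, so $\lambda=1/12$. The paper needs all of Propositions~\ref{st:p6} and~\ref{st:compact-projective} just to reach $\lambda\ge1$ on $\mathbb P^6$ and $\lambda\ge2/3$ on a compact $\mathbb P^5$. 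Hence $a/t=N/\lambda<N$ is unavailable. With it go your type $A$ contradiction, the step ``$a/t$ far above $N$'' for the non-$A$ groups, and your Chern paragraph, which also uses $\lambda>1$.

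The repair forces a specific order of steps.

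\emph{Before the stabilizers are known.} The only slope information is $\lambda\ge1/w$, from the actual bundle $w_KL$ in the nonuniform case, i.e.\ $a\le(k+1)wt$. In the compact case there is nothing uniform, since the CM field and $m$ are arbitrary. So $G_{34}$ (for $k=7$) and the monomial groups must be removed using only the Euler inequality $h\ge u=a/(k+1)$. For disks without ends, the logarithmic identity forces $h\le0$, which already contradicts this. Otherwise you need a finite enumeration over the full cusp records, with $a\in(k+1)\mathbb Z_{>0}$ and $a\le(k+1)wt$.

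\emph{After the non-ADE exclusions.} Once all reflections have order two and stabilizer orders are bounded, Chern integrality gives $\lambda\ge1/2$, i.e.\ $a\le16t,18t,20t$ for $k=7,8,9$. This is what excludes the $E_6,D_6$ disks for $k=7$ and the $E_7,D_7$ disks for $k=8$. Your proposed input $N^2\mid b^3$ is too weak at $k=7$: for $N=8$ it gives only $4\mid b$, so $\lambda\ge1/4$ and $a\le32t$. That does not contradict the bound $a\ge33t/2$ of Proposition~\ref{sc:type-a}, and it leaves the $D_6$ comparison $a\ge32t$ borderline. The paper instead uses integrality of $2880\,c_4^{\rm orb}$ to get $8\mid b$.

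\emph{The last type $A$ step.} On $\mathbb P^k$, use the Euler bound $h\ge u>0$ directly rather than Proposition~\ref{sc:type-a}, whose proof runs through the marked GIT family. Combined with $kh=a-\tfrac{(k-2)(k+1)}2t-\sigma$, it excludes the last $A_{k-1}$ disk as soon as $\lambda>2/((k-2)(k+1))$.
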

\begin{proof}
For a nonconstant curve in \(\mathbb P^k\), set \(u=\deg f^*\mathcal O(1)>0\).
The Euler sequence makes every nonzero line quotient of \(f^*T_{\mathbb P^k}\) have degree at least \(u=a/(k+1)\).
The argument in Proposition~\ref{sg:arithmetic} proving generation by reflections and arithmeticity applies to this smooth simply connected rational compactification.
In particular, a persistent reflection factor cannot occur: on a complete curve in its fixed quotient, away from the finite boundary, an inverse invariant differential gives a quotient of negative degree.
Parabolic classification therefore leaves corank 1 monomial curves, the exceptional groups \(G_{34},E_6,E_7,E_8\) in the indicated low ranks, and finally type \(A\).
Appendix~\ref{sf:projective} performs these comparisons with all full cusp records.
The decisive Chern bounds are \(a/t\le16\) for \(k=7\), \(a/t\le18\) for \(k=8\), and \(a/t\le20\) for \(k=9\).
They make the last type \(A_{k-1}\) inverse differential negative, contrary to the Euler sequence.
All compact monomial parameters \(m\ge2\) are included; no restriction on their CM field is made.
\end{proof}

For \(n=11,12\), Proposition~\ref{sh:projective} excludes \(W=\mathbb P^d\).
If the source \(W\) is projective space, then the pullback of an ample line bundle \(A_X\) is \(\pi^*A_X=\mathcal O(k)\) with \(k>0\), hence is ample.
Thus \(\pi\) has finite fibers, and a finite birational morphism onto normal \(X_\mu\) is an isomorphism.
Corollary~\ref{sg:initial-nef} then gives \(a\le rwt\) in the nonuniform case.
For \(n\ge13\), we retain the weaker curvewise bound \(a\le(r+2)wt\) of Lemma~\ref{sg:curve-bound} until a nonpositive tangent quotient has been found.
Thus the argument for large \(n\) needs no auxiliary theorem for projective space.

For the disk group \(G(m,p,r)\), use
\[
 D=\frac{mr(r-1)}2+\frac{mr}{p},\quad
 \eta=r-1+\frac1p,\quad
 k=\begin{cases}mr&p=1,\\m(r-1)&p>1.\end{cases}.
\]
At an end let \(\alpha_c\) be its full normalized section tuple, \(\kappa_c\) its actual fiber character order, \(B_c=\sum_i\alpha_{c,i}/\kappa_c\), and \(\nu_c=\lfloor\max_i\alpha_{c,i}/\kappa_c\rfloor\).
Write \(R\) for irreducible enlargements, \(s\) for ends, and \(\sigma\) for the sum of the remaining disk inertia corrections.
The common comparison gives
\begin{align}
 2g-2+s+(1-1/m)R+\sigma&=2t,\label{sh:monomial-log}\\
 a&=\sum_c(B_c-1)+2-2g-Dt+\eta R,\label{sh:monomial-a}\\
 \label{sh:monomial-h}
 j=mt&\in\mathbb Z_{>0},\qquad h=-kt+\sum_c\nu_c+R.
\end{align}
Here \(h\) is the degree of an actual target of the inverse differential, with its finite character and coarse residue retained.
These equations are used together with the homomorphism from the tangent bundle to a line of negative degree
\begin{equation}\label{sh:git}
 a=(n-2\ell)e+2\delta,\qquad
 e\ge-h,\quad e>0,\quad \delta\in\mathbb Z_{\ge0}.
\end{equation}
The coefficient count, raw cusp table, and exact integer ranges are recorded once in Appendix~\ref{sf:finite-domain}.

\begin{proposition}\label{sh:monomial}
If \(r=n-4\ge9\), no monomial fixed disk or irreducible smooth monomial cusp occurs.
\end{proposition}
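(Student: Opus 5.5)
The plan is to run the same squeeze used for \(G(m,p,5)\) in Lemma~\ref{sn:parameters}, now uniformly in \(r\ge9\). The equations \eqref{sh:monomial-log}, \eqref{sh:monomial-a}, \eqref{sh:monomial-h} give a lower bound for \(a\) in terms of \(h\); an upper bound for \(a\) then comes from Lemma~\ref{sg:curve-bound}. We fix a full monomial disk with group \(G(m,p,r)\), \(m\ge2\), of parameter at most six, as in Lemma~\ref{sn:enlargement}. Its ends are smooth cusps; the lift fixing the point for the full group (Lemma~\ref{sg:cusp-lift}) reduces an irreducible smooth monomial cusp to such a disk, so it suffices to treat disks.

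\emph{Step 1.} First bound the cusp contributions. From the raw section degrees \((m,2m,\dots)\) of a monomial cusp, as tabulated in Appendix~\ref{sf:finite-domain}, we expect \(B_c\ge\gamma\nu_c\). Here \(\gamma\) is roughly \(r-1+1/p\), up to small corrections by the age. Eliminating \(s\) with \eqref{sh:monomial-log} should then give an estimate of the shape
\[
 a-(n-4)h\ \ge\ c_1(r)\,t+c_2R+\sigma-c_3(1-g),
\]
with \(c_1(r)\) quadratic in \(r\), of order \(D-kr\approx mr^2/2\). Its leading term comes from \(D t\) in \eqref{sh:monomial-a} being much larger than \((n-4)kt\).

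\emph{Step 2.} Compare with Lemma~\ref{sg:curve-bound}, using the actual ample bundle \(wL\) with \(w\le6\), so \(b=wt\). The bound \(a\le(r+2)wt\) together with Step~1 forces \(h<0\). Then \(a\le(n-4)b=rwt\), and \(a-(n-4)q>2b\) holds for \(q=h\). The lemma therefore produces a repeated negative section with \(h<e\) and \(e+\delta\le b\), and every target of degree below \(e\) vanishes on \(F\).

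\emph{Step 3.} Produce a second independent negative target. The next inverse row is \(dg_r=df_r+v\,df_{r-1}\), or the product row \(Z\) when \(p>1\). By Proposition~\ref{sc:monomial-comparison} this row has only a simple pole at enlargements and a pole of order \(\nu_c\) at ends, so it gives a target of degree \(h+j\). This is below \(e\) when \(e\) is large relative to \(j\), and in that case \(\ell\ge3\) and \(a\le(n-6)e+2\delta\). Combining this with \(a\ge\) the quadratic lower bound and \(e\le wt\) should give a contradiction for \(r\ge9\), because \(c_1(r)\) outgrows \(rw\). In the complementary regime \(2e\le j\), the equality \(a=(n-4)e+2\delta\) from \(\ell=2\), fed back into \eqref{sh:monomial-a}, should leave a finite set that fails integrality of \(j\), \(h\), and the age of the product line.

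I expect the main obstacle to be Step~1 at the ends. The inequality \(B_c\ge\gamma\nu_c\) must hold for every torsor and linearization, including nontrivial fiber characters \(\kappa_c>1\). The second division in Step~3 also needs the actual derivative projections at the cusp (Proposition~\ref{sc:root-theta} style), not just the degree count. For this I would first try the uniform inequality from the raw degrees of \(G(m,1,r)\) given in Appendix~\ref{sf:finite-domain}, and fall back on the finite table there when \(m=4,6\) and \(p>1\).
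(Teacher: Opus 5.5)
There is a genuine gap, though not where you expect it. Your end inequality \(B_c\ge\gamma\nu_c\) is essentially the paper's \((r-1)\max_i\alpha_{c,i}\le\sum_i\alpha_{c,i}\), and it does give \(h<0\) and then \(a\le rwt\) as you say. The problems are elsewhere.

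\emph{The cocompact case.} Step~2 uses \(m\le6\) and the actual multiple \(wL\) with \(w\le6\). Both restrictions come from \(F=\mathbb Q\), so they hold only for nonuniform lattices; Lemma~\ref{sn:enlargement} is a nonuniform nine-point statement. For a cocompact lattice the CM field is arbitrary, so \(m\) and \(|C_0|\) are unbounded, and a slope comparison of your type has no content. The paper removes every disk without ends, whether the ambient lattice is compact or nonuniform, by Lemma~\ref{sc:compact-monomial}. That lemma forces \(g=0\) (a one-dimensional Hom space against \(a<0\)) and then uses the explicit splitting of Lemma~\ref{sc:normal-splitting}.

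\emph{The claimed uniform contradiction in Step~3.} It fails at the bottom of the range. Using \(h\ge-e\), \(e+\delta\le wt\) and \(\ell\ge3\), your ingredients give at best
\[
 (rk-D)t-O(s)\le a-rh\le(2r+4-2\ell)e+2\delta\le(2r-2)wt,
\]
and only \(2rwt\) if \(\ell=2\). Note that the positive quadratic term is \((rk-D)t\), so it is \(rk\) that dominates \(D\), not the reverse.
\begin{itemize}
\item For \(G(2,1,r)\) one has \(rk-D=r(r-1)\), and \(m=2\) is compatible with the Eisenstein field, \(w=6\). You then need \(r>12\), so \(9\le r\le12\) survive.
\item \(G(2,2,r)\), with \(rk-D=r(r-2)\), survives through \(r=13\).
\item \(m=3\), \(r=9\) survives for small \(j\).
\end{itemize}
Nothing in your plan shows that this residue ``fails integrality.''

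\emph{How the paper reaches a finite domain.} It uses a different mechanism. The upper cusp bound \(\sum_i\alpha_{c,i}\le rm+1\) makes
\[
 a\le(2rm+2)(1-g)-\{r(r-5)/2+r/p\}j+\cdots
\]
negative. Meanwhile the coefficient count bounds \(\ell\) from above: \(\ell\le4\) for \(g\ge1\), \(\ell\le6\) for \(j\ge6\), and \(\ell\le14\) in the remaining range. Then \eqref{sh:git} forces \(a>0\), a contradiction.

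Even so, \(9\le r\le28\) with \(j\le5\) remain. The surviving 170 rows are excluded only by geometric input:
\begin{itemize}
\item the tangent and normal bundle tests of Proposition~\ref{sc:bundle-tests} on cyclic covers;
\item the exact Eisenstein width \eqref{sf:eisenstein-width};
\item the absence of full fixed points on the Gaussian torsor.
\end{itemize}
Your proposal contains neither the upper bound on \(\ell\) nor these residual tests.
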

\begin{proof}
Disks without ends are excluded by Lemma~\ref{sc:compact-monomial}.
For a nonuniform realization, the arithmetic trace restricts \(m\) to \(2,3,4,6\).
Descending in \(m\) excludes smaller end parameters, and every end has group \(G(m,q_c,r)\), \(q_c\mid p\), with \(\kappa_c\mid p/q_c\).
The cusp bounds
\[
 (r-1)\max\alpha_c\le\sum_i\alpha_{c,i}\le rm+1
\]
and the weak bound \(a\le(r+2)wt\) give
\begin{align*}
 (r-1)h&\le\{(r+2)w+D+2-(r-1)k\}t
 -(1+1/p-1/m)R-\sigma,\\
 a&\le(2rm+2)(1-g)
 -\{r(r-5)/2+r/p\}j
 +\{r-1+1/p-r(m-1)\}R.
\end{align*}
For \(m\ge3\), the first coefficient is at most \(-3r^2/2+21r/2+14\) if \(p=1\), or \(-3r^2/2+12r+11\) otherwise.
For \(m=2\), \(h\le-(r-3)j+4(1-g)\le-2\).
Thus \(h<0\) without a prior assumption on \(a\) or \(W\ne\mathbb P^d\).
Lemma~\ref{sg:curve-bound} now strengthens the bound to \(a\le rwt\), so the following finite domain is unchanged.
The coefficient count gives \(\ell\le4\) for \(g\ge1\), or \(\ell\le6\) for \(j\ge6\), and then \eqref{sh:git} gives \(a>0\).
The displayed upper bound excludes \(g\ge1\) and \(j\ge6\), the latter by \(3r^2-26r-1\ge8\).
For \(r\ge29\), the remaining bound \(\ell\le14\) contradicts \(a\le-r^2/2+43r/3+2<0\).
For \(r\ge10,j\ge5\), the contradiction is \((15r^2-142r-6)/6>0\).
The complete residual domain is therefore \(9\le r\le28\), with \(j\le5\) at \(r=9\) and \(j\le4\) otherwise.
The 170 rows in Appendix~\ref{sf:large-rows} are excluded by the common bundle tests, the exact Eisenstein width, and the absent full fixed points on the Gaussian torsor.
The cusp lifts in Appendix~\ref{sf:records} then exclude the corresponding cusps.
\end{proof}

\subsection{\texorpdfstring{The terminal comparison for \(n\ge13\)}{The terminal comparison for n>=13}}
\begin{proposition}\label{sh:large}
The nonexistence theorem holds for every \(n\ge13\).
\end{proposition}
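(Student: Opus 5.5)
We argue by strong induction on \(n\), the cases \(n=9,\ldots,12\) being given by Theorems~\ref{sn:compact-terminal}, \ref{sn:nonuniform-terminal}, \ref{st:ten-main} and Propositions~\ref{sh:eleven}, \ref{sh:twelve}. Suppose \(X_\mu\simeq(\mathbb B^d/\Gamma)^{\mathrm{BB}}\) with \(n\ge13\), so \(N=n-2\ge11\).

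\textbf{Step 1: reduce the stabilizers to type \(A\).} By Proposition~\ref{sg:arithmetic}, \(\Gamma\) and all full point and cusp stabilizers are generated by reflections. Enlarge an irreducible reflection factor through full actual point stabilizers. If the enlargement stops in rank at most \(d-2\), Proposition~\ref{sg:persistent} identifies the fixed compactification with a merged weighted quotient on \(n-s\) labels, \(s\le2\), hence \(9\le n-s<n\); the induction hypothesis excludes it. Every factor therefore reaches corank one, and the full rank groups lie in the field column of the reflection classification. Proposition~\ref{sh:monomial} (valid since \(r=n-4\ge9\)) removes all monomial disks and smooth monomial cusps; Lemma~\ref{sc:compact-monomial} covers the compact ones. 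For \(r\ge9\) no exceptional primitive groups remain (\(E_8\), \(G_{34}\), and the like have rank at most 8). Hence only \(A_r\) and \(A_{r+1}\) and their products occur. All reflections then have order two, and \(\mathcal T=2NL=2K_W+B\) is integral by Proposition~\ref{sc:chern-integrality}, with denominators \(M_2=12\) and \(M_3=24\).

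\textbf{Step 2: the nef bound \(K_W+C_NL\).} By Proposition~\ref{sg:mori}, it suffices to check the \(K_W\)-negative rays: coordinate curves \(\gamma_i\) with \(-K_W\cdot\gamma_i=2\), and lines \(\beta_J\) in \(Z_J=\mathbb P^{n-|J|-2}\) with \(-K_W\cdot\beta_J=n-2|J|\). We include \(W=\mathbb P^d\) as \(Z_J\) with \(|J|=1\).
\begin{itemize}
\item On each closed \(\mathbb P^k\) with \(k\ge3\), write \(\mathcal T|=b\,\mathcal O(1)\). Then \eqref{sc:chern-divisibility} forces \(N^2\mid b^3\), so \(b\) is divisible by \(\prod p^{\lceil 2v_p(N)/3\rceil}=b_N\). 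This gives \(L\cdot\mathrm{line}=b/(2N)\ge b_N/(2N)\). Since \(n-2|J|\le N\), the ratio \((-K_W)/L\) on these lines is at most \(2N^2/b_N=C_N\).
\item For the small \(Z_J\) with \(k\le2\), use the \(\mathcal O(-1)\) normal bundle together with Lemma~\ref{sc:negative-normal}, which gives \(\lambda\ge1\) when the normal rank is not filled. If instead the fixed compactification is exactly \(Z\), its \(H\)-degree is small and the bound is immediate.
\item For \(\gamma_i\), use \eqref{sc:coordinate-bound}, \(L\gamma_i\ge(n-5)/(2(n-2))\). This gives ratio \(\le 4N/(N-3)\), far below \(C_N\), improved further by integrality of \(2NL\).
\end{itemize}
Hence \(a\le C_Nt\) on every curve.

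\textbf{Step 3: type \(A\) comparison and numerics.} A reflection exists, so enlarging it yields a full \(A_r\) disk with ends that are smooth \(A_r\) cusps of degree one and enlargements \(A_{r+1}\) or \(A_r\times\mu_e\). Proposition~\ref{sc:type-a} gives \(a\ge\frac{r^2-3}{2}t\). With \(r=N-2\), we have \(\frac{r^2-3}{2}=\frac{N^2-4N+1}{2}>C_N\) for \(N\ge11\). Checking this inequality amounts to \(b_N\ge N^{2/3}\), so \(C_N\le2N^{4/3}\), which is below \((N^2-4N+1)/2\) once \(N\ge 11\); the small cases can be checked directly. This contradicts Step 2.

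\textbf{Main obstacle.} The hardest part is making Step 2 uniform. The integrality of \(12c_2^{\rm orb}\) and \(24c_3^{\rm orb}\) must be valid along each closed \(Z_J\), including excess intersections and a possible projective-space chamber. It must also remain valid in the nonuniform case, where cusps are removed, which requires codimension at least four via Lemma~\ref{sg:resolution}. I would first establish the type \(A\) denominators on stabilizer strata of codimension \(\le3\), then apply the extension clause of Proposition~\ref{sc:chern-integrality} across cusps.
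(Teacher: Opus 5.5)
This is essentially the paper's proof: induction with Proposition~\ref{sg:persistent} to remove persistent factors, Proposition~\ref{sh:monomial} for the monomial factors, Chern integrality of \(2NL\) giving \(K_W+C_NL\) nef, and Proposition~\ref{sc:type-a} on the final \(A_{d-1}\) disk. There are two slips to fix.

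First, the estimate \(2N^{4/3}<(N^2-4N+1)/2\) fails for \(N=11,12,13\) and holds only from \(N=14\). For those three values you must use \(b_N=N\), hence \(C_N=2N\), as the paper does for \(N\le14\).

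Second, your bullet for collision spaces with \(k\le2\) is vacuous. A \(K_W\)-negative \(Z_J\) has \(|J|<n/2\), so \(\dim Z_J=n-|J|-2\ge(n-3)/2\ge5\). The Chern divisibility therefore applies to every relevant ray, including the singleton case \(Z=W=\mathbb P^d\).
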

\begin{proof}
By the induction hypothesis, no irreducible factor of rank at most \(d-2\) persists.
A maximal factor outside type \(A\) must therefore lie in a monomial group of rank \(d-1\) or \(d\), and Proposition~\ref{sh:monomial} excludes it.
Parabolic classification now gives an \(A_{d-1}\) disk, with larger groups \(A_d\) and products, and cusps of width one whose section degrees for the full group are \(1^d\).
All finite stabilizers are products of type \(A\).
Put
\[
 b_N=\prod_{q\mid N,\ q\ {\rm prime}}q^{\lceil2v_q(N)/3\rceil},
 \qquad C_N=\frac{2N^2}{b_N}.
\]
For the integral \(T=2NL\), Chern integrality gives \(N^2\mid b^3\) on every collision space \(T|_Z=bH\) of dimension at least three.
Thus \(b_N\mid b\), and every \(K_W\)-negative collision line has
\[
 \frac{-K_W\cdot\ell}{L\cdot\ell}\le\frac{2N^2}{b_N}=C_N.
\]
This includes the singleton collision \(Z=W=\mathbb P^d\), if it exists.
Every negative collision space here has dimension at least five, so these Chern restrictions are legitimate.
For coordinate curves, Lemma~\ref{sc:moving-coordinate} gives
\[
 L\cdot\gamma\ge\frac{N-3}{2N},\qquad
 \frac{-K_W\cdot\gamma}{L\cdot\gamma}\le\frac{4N}{N-3}<C_N,
\]
where the last inequality follows from \(b_N\le N\).
The Mori generators in Proposition~\ref{sg:mori} therefore give \(K_W+C_NL\) nef.
There is no preliminary exclusion of \(W=\mathbb P^d\) in this argument.

The type \(A_{d-1}\) comparison, Proposition~\ref{sc:type-a}, gives
\[
 a\ge\frac{N^2-4N+1}{2}t+\sigma.
\]
For \(N=9,10,11,12,13,14\), one has \(b_N=N\), and \(C_N=2N<(N^2-4N+1)/2\).
For \(N\ge15\), the inequalities \(b_N\ge N^{2/3}\) and \(N^{1/3}<N/6\) give
\[
 C_N\le2N^{4/3}<\frac{N^2}{3}<\frac{N^2-4N+1}{2}.
\]
The last strict inequality is \(N^2-12N+3>0\), valid for \(N\ge15\).
This contradicts \(a\le C_Nt\) for every \(n\ge13\).
\end{proof}

\subsection{The two remaining high-dimensional cases}
\begin{proposition}\label{sh:twelve}
The nonexistence theorem holds for twelve labels.
\end{proposition}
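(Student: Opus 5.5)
The plan follows the structure of Proposition~\ref{sh:large}, with the inputs available at \(n=12\) (so \(d=9\), \(r=8\), \(N=10\)).

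\emph{Step 1: reductions.} By strong induction from the nine-, ten- and eleven-label cases, Proposition~\ref{sg:persistent} excludes every persistent irreducible factor of rank at most \(d-2=7\). Its fixed compactification is a merged weighted quotient on \(10\) or \(11\) labels. Proposition~\ref{sh:projective} with \(k=9\) excludes \(W=\mathbb P^9\), so Corollary~\ref{sg:initial-nef} gives \(a\le 8wt\) in the nonuniform case. In the compact case one uses instead the coordinate bound of Lemma~\ref{sc:moving-coordinate} and Proposition~\ref{st:compact-projective} on the collision rays. A maximal factor therefore has rank \(8\) or \(9\). Proposition~\ref{sh:monomial} needs \(r\ge9\) and so does not apply at \(r=8\). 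I would therefore run the monomial comparison of Proposition~\ref{sc:monomial-comparison} for \(G(m,p,8)\) directly, in two parts:
\begin{itemize}
\item Disks without ends are excluded by Lemma~\ref{sc:compact-monomial}.
\item For disks with ends, use \eqref{sh:monomial-log}--\eqref{sh:monomial-h} with \eqref{sh:git}. The same coefficient estimates as in the proof of Proposition~\ref{sh:monomial} give \(h<0\), \(g=0\) and small \(j\). The finitely many residual rows are then excluded by the bundle tests of Proposition~\ref{sc:bundle-tests} and the finite ranges of Appendix~\ref{sf:finite}.
\end{itemize}

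\emph{Step 2: exceptional stabilizers.} The remaining nonmonomial, non-type-\(A\) stabilizers in rank \(8,9\) are \(E_8\) and the parabolics \(E_7\subset E_8\). I would handle \(E_7\) disks as in Proposition~\ref{sh:eleven}. The parabolic flag orders \((3,3,3,4,4,5,6)\) of Proposition~\ref{sc:parabolic-flags} at \(E_8\) points, together with the root theta divisions of Proposition~\ref{sc:root-theta} at ends, give successive residual degrees. These kill all but the last inverse row, so the Hom space has dimension one and \(\ell=2\). This contradicts the lower bound \(a\ge(n-4)e\) from Proposition~\ref{sg:negative} combined with the determinant identity of Proposition~\ref{sc:curve-determinant}.

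\emph{Step 3: type \(A\).} Once only type-\(A\) stabilizers remain, I would copy the terminal argument. For \(N=10\) we have \(b_N=10\), so Chern integrality (Proposition~\ref{sc:chern-integrality}) on collision spaces of dimension at least three gives \(K_W+20L\) nef, using coordinate curves with ratio \(4N/(N-3)=40/7<20\). Proposition~\ref{sc:type-a} for the \(A_8\) disk gives
\[
 a\ge\tfrac{100-40+1}{2}\,t+\sigma=\tfrac{61}{2}t>20t,
\]
which is the desired contradiction.

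\emph{Main obstacle.} I expect the hard step to be the legitimacy of the Chern quantization on every negative collision space. In particular, every collision space must have dimension at least three: the collision of four labels is \(\mathbb P^6\), and coincidences of larger short sets give smaller spaces. Where the dimension drops below three, the \(K_W\)-degree \(n-2|J|\) of the line is already nonpositive, so it needs no bound. The finite row checks in Step 1 at \(r=8\) are the other laborious point, and I would first try to push them to \(h<0,\ \ell\le 3\) uniformly in \(m\).
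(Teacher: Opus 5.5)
\textbf{There is a genuine gap in Step~2.} Your Steps~1 and~3 follow the paper's route. Rank-$8$ monomial disks are handled by Lemma~\ref{sc:compact-monomial} and by the finite rows under $a\le 8wt$. The end is Chern quantization of $T=20L$ on collision spaces, followed by Proposition~\ref{sc:type-a} for an $A_8$ disk. Your $C_{10}=20$ suffices, since $61/2>20$; the paper gets $K_W+16L$ because $W\ne\mathbb P^9$ caps the $-K_W$-degree of collision lines at $8$.

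Two caveats on Step~1. The estimates of Proposition~\ref{sh:monomial} do not transfer verbatim to $r=8$: for instance, its exclusion of $j\ge6$ uses $3r^2-26r-1\ge8$, which fails at $r=8$. Also, many of the 117 rows are killed not by the bundle tests but by other arguments: the exact Eisenstein width, the absent full fixed points on the Gaussian torsor, lifts of entire groups at cusps, and explicit theta computations for $B_8$ and $G(4,4,8)$. Your citation of Appendix~\ref{sf:finite} has to carry all of that.

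The problem in Step~2 is that you import the eleven-label configuration into $d=9$, where it does not exist. In that configuration an $E_7$ disk passes through $E_8$ points and is controlled by the $E_7\subset E_8$ row $(3,3,3,4,4,5,6)$ of Proposition~\ref{sc:parabolic-flags}. For $N=10$, however:
\begin{itemize}
\item $E_7$ fixes a two-dimensional ball, so there is no $E_7$ disk;
\item $E_8$ fixes the disk;
\item $E_8$ is not parabolic in any irreducible rank-$9$ reflection group (these are $S_{10}$ and the $G(m,p,9)$), so an irreducible factor that reaches $E_8$ stops there.
\end{itemize}
The curve to exclude is therefore the $E_8$ disk itself. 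It has no irreducible enlargement points ($R=0$), only product points and width-one cusps with section tuple $(1,2,2,3,3,4,4,5,6)$. Its data are $2g-2+s+\sigma=2t$, $a=29s+2-2g-128t$, $h=-30t+6s$, $2t\in\mathbb Z_{>0}$, with $a$ even. There are no interior flag zeros, so all vanishing must come from the ends.

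The missing idea is the $E_8$ cusp theta computation:
\begin{itemize}
\item Parity gives $h+2t<0$, so the top inverse row kills both $df$ and the quadratic direction.
\item The expansion $\Theta=E_4+c(E_2E_4-E_6)p_2+O(z^4)$ with $c\ne0$, together with $E_4^3-E_6^2=1728\Delta\ne0$, shows that the radial or the quadratic equation forces a second cusp zero in every horizontal coefficient.
\item The residual degrees $(d_i-30)t+4s$ then leave at most two directions if $g\ge1$, and at most five if $g=0$, $t\ge1$.
\item Hence $\ell\le3$ or $\ell\le6$, and \eqref{sh:git} gives $a\ge0$. This contradicts $a\le60(1-g)-70t<0$. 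The case $t=1/2$ ($s=0,2$, $a=-62,-4$) is checked directly.
\end{itemize}
Note that this argument does not produce a one-dimensional Hom space, contrary to what you assert.

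Without this exclusion the proof is incomplete. A factor whose enlargement ends at $E_8$ is neither persistent, nor monomial, nor of type $A$, so Steps~1 and~3 never see it. Moreover $A_8$ is not parabolic in $E_8$, so such a factor does not feed the final type-$A$ comparison either.
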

\begin{proof}
Here \(d=9,r=8\), and descent of \(\mathcal O(1,\ldots,1)\) gives \(a\in2\mathbb Z\).
An \(E_8\) disk has polynomial degrees \((2,8,12,14,18,20,24,30)\), section tuple \((1,2,2,3,3,4,4,5,6)\), and
\[
 2g-2+s+\sigma=2t,\quad
 a=29s+2-2g-128t,\quad
 h=-30t+6s,\quad 2t\in\mathbb Z_{>0}.
\]
These equations and parity give \(h+2t<0\).
Thus the projection of the inverse differential to the coordinate of greatest invariant degree kills both \(df\) and the quadratic invariant.
The unique theta section of degree one of \(E_8\) satisfies
\[
 \Theta=E_4+c(E_2E_4-E_6)p_2+O(z^4),\qquad c\ne0.
\]
If \(E_4=0\), then \(E_4^3-E_6^2=1728\Delta\ne0\) makes its quadratic coefficient nonzero.
Consequently the radial or the quadratic equation forces a second cusp zero in every horizontal coefficient.
Their residual degrees are \((d_i-30)t+4s\).
For \(g\ge1\) at most two directions remain; for \(g=0,t\ge1\) at most five remain.
Thus \(\ell\le3\) or \(6\), contradicting \eqref{sh:git} and \(a\le60(1-g)-70t\).
At \(t=1/2\), the possible \(s=0,2\) give \(a=-62,-4\) and again too few directions.
This excludes \(E_8\) points, disks, and cusps before the monomial reduction.

Lemma~\ref{sc:compact-monomial} excludes monomial disks without ends.
For nonuniform monomial disks, \eqref{sh:monomial-log}--\eqref{sh:monomial-h}, \(a\le8wt\), and the coefficient count imply \(g=0,h<0,1\le j\le7\).
Appendix~\ref{sf:twelve-rows} gives the complete 117-row exclusion.
The induction hypothesis excludes persistent factors of smaller rank, so every remaining finite stabilizer is a product of type \(A\) groups.
For \(T=20L\) restricted as \(bH\) to a collision \(\mathbb P^k\), \(k\ge3\), Chern integrality gives
\[
 \frac{27b^2}{20},\ \frac{9b^3}{25}\in\mathbb Z,\qquad 10\mid b.
\]
Thus its lines have \(t\ge1/2\); their degree against \(-K_W\) is at most eight.
Axes have \(a=2,t\ge7/20\) by Lemma~\ref{sc:moving-coordinate}.
The Mori generators therefore give \(K_W+16L\) nef, including balanced crepant rays.
For the resulting \(A_8\) disk, Proposition~\ref{sc:type-a} gives \(a\ge61t/2+\sigma>16t\), the required contradiction.
\end{proof}

\begin{proposition}\label{sh:eleven}
The nonexistence theorem holds for eleven labels.
\end{proposition}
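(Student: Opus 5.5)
Here $n=11$, so $d=8$, $r=7$, $N=9$, and I follow the template of Proposition~\ref{sh:twelve}. First, the induction hypothesis (nine and ten labels, Theorems~\ref{sn:compact-terminal}, \ref{sn:nonuniform-terminal}, \ref{st:ten-main}) together with Proposition~\ref{sg:persistent} removes every persistent irreducible factor of rank at most $6$. Proposition~\ref{sh:projective} with $k=8$ excludes $W=\mathbb P^8$, so Corollary~\ref{sg:initial-nef} gives $a\le7wt$ in the nonuniform case. The remaining maximal irreducible factors have rank $7$ or $8$: monomial groups $G(m,p,7),G(m,p,8)$, the exceptional $E_7$ (as parabolic of $E_8$), $E_8$ itself, and type $A$.

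The exceptional step comes next, and I expect it to be the main obstacle, which is why the table lists Appendix~\ref{sf:finite} and Proposition~\ref{sc:parabolic-flags}. For an $E_7$ disk I write the logarithmic, determinant and inverse-row equations from Proposition~\ref{sc:curve-determinant} using degrees $(2,6,8,10,12,14,18)$, with $E_8$ enlargements of axis order $e=2$. The successive orders $(3,3,3,4,4,5,6)$ from Proposition~\ref{sc:parabolic-flags} control the interior points. At the ends, Proposition~\ref{sc:root-theta} gives the divisions for the $E_7$ section tuple $(1,2,2,2,3,3,3,4)$ and width one.

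Each coefficient degree is then compared with \eqref{sh:git}, $a=(11-2\ell)e+2\delta$, exactly as in Lemma~\ref{sn:compact-g34}. For $g\ge1$ the low coefficients vanish, so $\ell$ is bounded and $a>0$, which contradicts the negative determinant formula. For $g=0$ I get a finite list of $(t,R,s,\sigma)$, which I check against the rank bound and the sorted test \eqref{sc:sorted-test}. The difficulty is the rows with $h\ge0$. For these I would use the extra $E_8$ zeros on $F$ to produce two independent rows of degree below $e$, which cannot factor through a rank-one marked part. The $E_8$ points themselves die once their $E_7$ disks are gone.

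Monomial factors are handled as in the twelve-label proof. Disks without ends are removed by Lemma~\ref{sc:compact-monomial} ($r=7\ge5$). Nonuniform disks satisfy \eqref{sh:monomial-log}--\eqref{sh:monomial-h} with $a\le7wt$. I expect the coefficient count to force $g=0$, $h<0$ and small $j$, leaving a finite residual table; I exclude it with the bundle tests of Proposition~\ref{sc:bundle-tests} and the cusp records. Higher-order reflections then disappear, so the scalar kernel divides $2$ and $2L$, and thus $T=18L$, is integral.

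Only type $A$ stabilizers remain, and the final step uses Proposition~\ref{sc:chern-integrality} with $N=9$: this gives $9\mid b$ on every collision $\mathbb P^k$ with $k\ge3$. Every $K_W$-negative collision line therefore has $L$-degree at least $1/2$ and $-K_W$-degree at most $7$. Coordinate curves satisfy $L\cdot\gamma\ge 1/3$ by \eqref{sc:coordinate-bound}, so Proposition~\ref{sg:mori} yields $K_W+14L$ nef. For the $A_7$ disk, Proposition~\ref{sc:type-a} gives $a\ge23t+\sigma>14t$, which is the contradiction.
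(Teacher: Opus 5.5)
There is a genuine gap. You postpone every improvement of the nef bound until only type $A$ stabilizers remain. As a result, your exceptional and monomial steps have only the preliminary bound $a\le7wt$, which is as weak as $42t$ over the Eisenstein field, and that is not enough. For $r=7$ this bound, together with the standard tests, removes only the groups with reflections of order greater than two: $G(6,p,7)$, $G(3,1,7)$, $G(3,3,7)$ and $G(4,1,7)$. The $247$ rows of $B_7,D_7,G(4,2,7),G(4,4,7)$ survive, and $205$ of them are excluded in the paper only by $a>10t$. So ``as in the twelve-label proof'' does not go through for rank $7$.

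The same problem affects $E_8$. On the $E_7$ disk one has $a=18s+\frac{57}{2}R+\sigma-72t$. The paper needs the budget $\frac{57}{2}R+18s+\sigma\le82t$ for three things: to force $h<0$ and $R+s<4t$, to make every residual degree negative so that Hom is one-dimensional, and finally to get $h_{14}+h=11R+7s-32t\le\frac7{18}\cdot82t-32t<0$, which contradicts Proposition~\ref{sg:kernel}. With $a\le14t$ this last inequality already fails. Two further points in your $E_7$ step are wrong as stated:
\begin{itemize}
\item For $R>0$ your $g\ge1$ step fails. The logarithmic identity gives $a=36(1-g)-36t+\frac{39}{2}R-17\sigma$, which is positive for large $R$, so there is no negative determinant formula to contradict.
\item The $E_7$ section tuple is $(1,1,2,2,2,3,3,4)$, not $(1,2,2,2,3,3,3,4)$.
\end{itemize}

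The missing idea is an intermediate nef bound $K_W+10L$, proved before $E_8$ and the order-two monomial groups are treated. After the preliminary tests, every irreducible factor lies in the parabolic class generated by $A_8,B_8,D_8,G(4,2,8),G(4,4,8),E_8$. There all reflections have order two, and stabilizers of rank at most three have orders dividing $384$, not $24$. So $T=18L=2K_W+B$ and $384c_3^{\rm orb}$ are integral, which gives $9\mid b$ and $L\cdot\ell\ge\frac12$ on collision lines.

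That alone gives ratio $14$ on the two-label collision ray, of $-K_W$-degree $7$. The paper therefore treats the divisor $E=\mathbb P^7$ separately:
\begin{itemize}
\item $\pi|_E$ is an isomorphism.
\item $E$ cannot be a branch component, by Proposition~\ref{sh:projective}.
\item Lemma~\ref{sc:negative-normal}, with normal bundle $\mathcal O(-1)$, gives $L|_{\mathbb P^7}\ge H$.
\end{itemize}
All Mori generators then have ratio at most $10$. This bound is what excludes $E_8$, the remaining monomial rows, the last $D_7$ row ($a=6$, $t=\frac12$) and the $E_7$ disks with ends. Your final type $A$ step is correct, since $a\ge23t+\sigma>14t$, but everything before it depends on this intermediate bound.
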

\begin{proof}
Here \(d=8,r=7\).
First use only the preliminary bound \(a\le7wt\).
The monomial equations give \(g=0,h<0,1\le j\le7\); Appendix~\ref{sf:eleven-rows} records the full 413-row domain.
Its preliminary tests, the compact monomial lemma except for \(D_7\) with possible primitive enlargements, and the induction hypothesis for persistent factors restrict every irreducible finite factor to the parabolic class generated by
\[
 A_8,\ B_8,\ D_8,\ G(4,2,8),\ G(4,4,8),\ E_8.
\]
In this class all reflections have order two, and products of total rank at most 3 have orders dividing \(384\).
Thus \(T=18L=2K_W+B\) is integral and \(384c_3^{\rm orb}=-32256L^3\) extends across the cusps and the small exceptional locus.
On a collision \(\mathbb P^k\), \(k\ge3\), with \(T=bH\), this says
\[
 \frac{448b^3}{81}\in\mathbb Z,\qquad 9\mid b,\qquad L\cdot\ell\ge\frac12.
\]
The closed divisor where two labels coincide is \(E=\mathbb P^7\).
Relative nefness of \(K_W\) makes \(\pi|_E\) finite, and normality of its original merged GIT image makes this restriction an isomorphism.
It cannot be a branch component, by Proposition~\ref{sh:projective}; its normal line is \(\mathcal O(-1)\), so Lemma~\ref{sc:negative-normal} gives \(L|_{\mathbb P^7}\ge H\).
The remaining collision rays have degree against \(-K_W\) at most five, and axes have \(t\ge1/3\).
The Mori generators therefore give
\begin{equation}\label{sh:eleven-nef}
 K_W+10L\ \text{nef}.
\end{equation}
This argument precedes the exclusion of \(E_8\).

At an \(E_8\) point take its \(E_7\) parabolic disk and let \(R>0\) count its \(E_8\) points.
Its cusps of width one and the characters of the full groups give
\[
 2g-2+s+R/2+\sigma=2t,\quad 2t\in\mathbb Z_{>0},\quad
 a=18s+\frac{57}{2}R+\sigma-72t.
\]
The target associated with the invariant of greatest degree has \(h=-18t+6R+4s\), but \eqref{sh:eleven-nef} gives
\begin{equation}\label{sh:eleven-budget}
 \frac{57}{2}R+18s+\sigma\le82t.
\end{equation}
If \(h\ge0\), these inequalities give \(5t/2-2\le R\le2t/3\), hence \(t\le12/11\).
If \(R+s\ge4t\), they give \(4t-4\le R\le20t/21\), hence \(t\le21/16\).
In either case \(2t\in\mathbb Z\) leaves \(t=1/2,1\), forcing \(R<1\).
Therefore \(h<0\) and \(R+s<4t\).
The intrinsic flags and derivatives of the theta sections associated with the root lattice give the following residual degrees:
\[
\begin{array}{c|r@{\qquad}c|r}
d_i&\deg&d_i&\deg\\ \hline
2&-16t+3R+3s&10&-8t+2R+2s\\
6&-12t+3R+2s&12&-6t+2R+s\\
8&-10t+3R+2s&14&-4t+R+s
\end{array}
\]
All are negative; use \(3R+2s\le82t/9\) and \(2R+s\le328t/57\) for the middle entries.
The coefficient of the invariant of degree eighteen is constant, so the Hom space is one-dimensional.
The GIT kernel \(K_0\) consequently has no quotient of degree less than \(e\ge-h\).
But its projection to the invariant of degree fourteen has degree \(h_{14}=-14t+5R+3s\), and
\[
 h_{14}+h=11R+7s-32t
 \le\frac7{18}\left(\frac{57}{2}R+18s\right)-32t
 \le-\frac t9<0.
\]
This contradiction excludes \(E_8\) in every genus and CM field.

Now the terminal tests in Appendix~\ref{sf:eleven-rows}, using \eqref{sh:eleven-nef}, exclude all monomial factors of ranks 7 and 8 and their full cusps.
For \(D_7\), no \(E_7\) enlargement is possible, since \(\det D_7/\det E_7=2\) is not an integral index square; its possible \(E_8\) enlargements have just been excluded.
Its last nonuniform row has \(a=6,t=1/2\), contrary to \(a\le10t\), and the compact lemma now applies.
An \(E_7\) disk has no enlargement left.
Without ends its projection of the inverse differential to the coordinate of greatest invariant degree gives \(a\ge126t\).
With ends,
\[
 a=36(1-g)-36t-17\sigma,\quad h=-18t+4s,\quad 2t\in\mathbb Z_{>0}.
\]
Two cusp divisions leave at most two directions for \(g\ge1\), or four for \(g=0,t\ge1\), forcing \(a>0\) when the displayed formula gives \(a\le0\).
At \(t=1/2\), \(s=1,2,3\) give \((a,h)=(-16,-5),(1,-1),(18,3)\).
Only \(s=2\) survives positivity and \eqref{sh:eleven-nef}.
Then \eqref{sh:git} forces \(\ell=5,e=1,\delta=0\).
Four independent maps to \(\mathcal O(-1)\), after two divisions, have coefficient degrees \((-4,-2,-1,0,1,2,4)\).
After the constant coefficient of the invariant of degree ten vanishes, three independent covectors remain; the next projection defined by the theta sections associated with the root lattice forces a third division and leaves only degrees fourteen and eighteen, a contradiction.
All persistent smaller factors are excluded by the induction hypothesis, including type \(A\).
Thus an \(A_7\) disk exists.
Proposition~\ref{sc:type-a} gives \(a\ge23t+\sigma>10t\), contradicting \eqref{sh:eleven-nef}.
\end{proof}

\begin{proof}[Proof of Theorem~\ref{si:main}]
The case of nine labels is proved by Theorems~\ref{sn:compact-terminal} and~\ref{sn:nonuniform-terminal}, and the case of ten labels by Theorem~\ref{st:ten-main}.
Assume the result for every smaller number of labels, starting at nine.
Proposition~\ref{sg:persistent} excludes each persistent irreducible factor of rank at most \(d-2\), because its fixed compactification is the original quotient with \(s=1\) or \(2\) labels merged.
Propositions~\ref{sh:eleven}, \ref{sh:twelve}, and~\ref{sh:large} exclude the remaining cases \(n=11\), \(12\), and \(n\ge13\), respectively.
The small model of Lemma~\ref{sg:resolution} and the arguments avoiding the finite boundary include all stability walls.
\end{proof}

\appendix
\section{Theta derivatives and quotient differentials}\label{sa:flags}
This appendix proves Propositions~\ref{sc:root-theta} and~\ref{sc:parabolic-flags}.
The degrees of polynomial invariants, the degrees of generators of the ring of invariant sections, and the weights of Jacobi forms remain distinct throughout.

\subsection{The root theta projection}
\begin{proof}[Proof of Proposition~\ref{sc:root-theta}]
Write the Wirthmuller generators as \(\phi_0,\ldots,\phi_r\), of Jacobi weights \(0,-d_1,\ldots,-d_r\) and indices \(1,m_1,\ldots,m_r\).
The relative and fiber statements of \cite[Theorem~3.6(ii)--(iv)]{Wirthmuller}, at level three, identify the section ring on each elliptic fiber with the specialization of the polynomial Jacobi algebra.
For linear coordinates \(z_a\), set \(\mathcal J_{0i}=m_i\phi_i\), \(m_0=1\), and \(\mathcal J_{ai}=\partial_{z_a}\phi_i\).
The Jacobian formula is
\[
 \det\mathcal J=c\prod_{\alpha\in R^+}
 \frac{\vartheta(\tau,(\alpha,z))}{\eta(\tau)^3},\qquad c\ne0.
\]
\cite[Propositions~2.2--2.3, Theorem~3.1, Section~4]{WangJacobi}.
Induction on Taylor degree in the Jacobi transformation law kills terms of \(\phi_i\) below degree \(d_i\).
The inhomogeneous term only uses lower terms already zero, leaving a holomorphic modular form of negative weight.
If \(\phi_0(\tau,0)=0\), the Jacobian would have order at least \(|R^+|+2\), contradicting the displayed root product.
Put \(r_i=\phi_i/\phi_0^{m_i}\), and write \(r_i=\sum_k a_{ik}(\tau)p_k\) modulo the square of the invariant maximal ideal.
The coefficient \(a_{ik}\) has modular weight \(d_k-d_i\).
Negative weights vanish, weight zero coefficients are constant, and the leading Jacobian makes each equal degree diagonal block invertible.
For type \(D_r\), the index one form \(\prod_i\vartheta(\tau,z_i)/\eta(\tau)^{3r}\) is odd under one coordinate sign change.
Averaging separates the product \(P\) from a symmetric invariant of the same degree.
Thus the indicated projections are triangular with invertible diagonal blocks.
Polynomial generation shows that degree at most \(j\) derivatives are spanned by the \(dr_i\) with \(m_i\le j\).
This proves the projection statement while retaining higher primitive components.

For the translation assertion, put \(\lambda_\Theta(x)=t_x^*\Theta\otimes\Theta^{-1}\).
Primitive simple roots give \(A_\tau^{W(R)}=\ker\lambda_\Theta\).
At a common fixed point with trivial fiber character, a theta group lift of \(t_x\) commutes with the Weyl linearization: its scalar commutator is the ratio of the characters at \(x\) and zero.
It therefore transports the actual section spaces and their projections.
The same comparison preserves eigenspaces of any further automorphism fixing both points with trivial fiber action.
The modular action of \(-I\) gives parity \((-1)^{d_i}\).
\end{proof}

For the \(D_v\) cusp models used below, \(v=5,6\), the actual polarization and trivial fiber character can be recovered without choosing an isogenous substitute.
Adjacent simple roots give \(Q(D_v)_{W(D_v)}=0\) and \(\operatorname{Pic}^0(A)^{W(D_v)}=0\).
Invariant positive integral alternating forms are integral multiples of the root form tensored with the elliptic form.
Connected reflection divisors through the origin, together with generation of the entire group of lifts by reflections, force trivial fiber character.
The minimal section degrees are \((1^4,2^{v-3})\) \cite{Rains}.
Every higher Veronese has a singular vertex: no nonidentity scalar is a pseudoreflection on the four degree one coordinates.
Consequently the actual smooth cusp uses the minimal bundle.
Proposition~\ref{sc:root-theta} gives
\[
 F_1=\mathbb C\,dP\oplus F_1^{\rm sym},\qquad
 F_1^{\rm sym}\longrightarrow\langle df_1,df_2\rangle
\]
isomorphically, uniformly in the elliptic parameter and at all relevant translated fixed points.

\subsection{Why the algebra comparison preserves the flags}\label{sa:ade-flags}
Let \(G\) be a finite complex reflection group with well-generated irreducible factors and reflections of order two, and let \(\pi:V\to V/G\).
The natural Saito product makes \(\operatorname{coker}d\pi\) a finite birational unital algebra on the reduced reflection arrangement.
Here is the localization argument needed to use that product.
Euler multiplication has matrix \(U=x_rI+U_0(x_1,\ldots,x_{r-1})\) and determinant the discriminant \cite[Theorem~5.6 and its proof]{KatoManoSekiguchi2020}.
Its characteristic polynomial is the discriminant with \(x_r\) translated by the spectral variable.
Generic roots on every discriminant component are distinct \cite[Theorem~5.3]{Bessis2015}.
There the product is diagonal in canonical coordinates and the quotient is a fold.
The square differential matrix has nonzero determinant, so its image is isomorphic to its locally free source.
Multiplication preserves this image at every point of codimension one by the fold description.
The coefficients in a basis of the image therefore extend by Hartogs, as in \cite[Proposition~2.3]{GrangerMondSchulze2012}.
Its square presentation has reduced determinant, so its cokernel is Cohen--Macaulay and torsion-free of rank 1 on each arrangement component.

For a finite birational unital \(S\)-algebra \(M\), one has \(\operatorname{End}_S(M)=M\).
An endomorphism is multiplication in the total quotient ring, and its value at one belongs to \(M\).
An isomorphism of the underlying modules thus conjugates their endomorphism algebras; the original map differs from the resulting algebra isomorphism by a unit.
At a point with stabilizer \(P\), the quotient germ is the product of its fixed space and the normal \(P\)-quotient.
The fixed factor contributes zero to the cokernel, and parabolic subgroups remain well-generated \cite[Lemma~2.7]{Bessis2015}.
This proves localization, up to a unit, for the actual normal quotient.

For the Weyl inclusions, let \(F(x,t)\) be the corresponding semiuniversal deformation with base \(B\), discriminant \(D\), and critical incidence map \(\rho:\{F=d_xF=0\}\to B\).
Kodaira--Spencer identifies \(\mathcal T_B/\operatorname{Der}_B(-\log D)\) with \(\rho_*\mathcal O_{\{F=d_xF=0\}}\) \cite[Introduction, (0.1)]{GrangerMondSchulze2012}.
Euler multiplication has the same image \cite[Lemma~4.6]{KatoManoSekiguchi2020}.
Apply the preceding endomorphism argument before taking fibers to compare these two products.
Apply it also on the reflection arrangement to compare the GMS product with the natural Saito product.
Only then use the algebra fiber comparison of \cite[Proposition~4.21, Theorem~4.23]{GrangerMondSchulze2012}.
Thus the actual quotient differential fiber is the local Tjurina algebra up to a unit.
The sheaves on the deformation base and on the reflection space have not been identified as sheaves on the same space.

The complete flags are intrinsic ideals, with \(\mathfrak m\) the maximal ideal:
\[
\begin{array}{c|l}
D_5&\mathfrak m^3\subset\mathfrak m^2\subset
 \operatorname{Ann}(\mathfrak m^2)\subset\mathfrak m\subset A\\
E_6&\mathfrak m^3\subset\mathfrak mJ\subset\mathfrak m^2
 \subset J+\mathfrak m^2\subset\mathfrak m\subset A\\
E_7&\mathfrak m^4\subset\mathfrak m^3\subset\mathfrak mI
 \subset\mathfrak m^2\subset I\subset\mathfrak m\subset A\\
G_{33}&\mathfrak mS\subset\mathfrak m^2\subset S\subset\mathfrak m\subset A .
\end{array}
\]
The first three algebras and their ordered bases are
\[
\begin{array}{c|c|c}
D_5&\mathbb C[x,y]/(xy,x^2+y^3)&(y^3,y^2,x,y,1)\\
E_6&\mathbb C[x,y]/(x^2,y^3)&(xy^2,xy,y^2,x,y,1)\\
E_7&\mathbb C[x,y]/(x^2+y^3,xy^2)&(y^4,y^3,xy,y^2,x,y,1).
\end{array}
\]
For \(E_6\), \(J\) is the closure of square-zero elements in \(\mathfrak m\setminus\mathfrak m^2\), here the ideal \((x)\); for \(E_7\), \(I=\operatorname{Ann}(\mathfrak m^3)\).
Direct multiplication shows each flag is the sequence of initial spans of its ordered basis.
It is therefore preserved both by algebra isomorphisms and by unit multiplication.

Brieskorn identifies the weighted deformation base with the Weyl quotient \cite[p.~282]{Brieskorn1971}.
For \(E_7\subset E_8\) and \(E_6\subset E_7\), after splitting off a square use respectively
\[
\begin{array}{c|c}
F_a&(h_i)\\ \hline
y^3+ayz^3+z^5&(z^4,z^3,yz,z^2,y,z,1)\\
y^3+az^4+yz^3&(yz^2,yz,z^2,y,z,1),
\end{array}\qquad a\ne0 .
\]
The same ideals give their initial spans; in the second row \(J=(y-z^2/(8a))\).
Normal parameter monomials differ only in their first entries, \(yz^2\) and \(z^3\).
Their classes have diagonal coefficients \(-5/(3a)\) and \(-3/(4a)\), respectively, and all other diagonal coefficients one.

For \(D_5\subset E_6\), use
\begin{align}
 F_T&=y^3+z^4+T_2yz^2+T_5yz+T_6z^2+T_8y+T_9z+T_{12},\notag\\
 T(u)&=(-3u^2,3u^5,-3u^6/2,-3u^8/4,u^9,-3u^{12}/16).
 \label{sa:d5-axis}
\end{align}
At its \(D_5\) point put \(Z=z-u^3/2\), \(w=y-uZ\).
Then \(F_{T(u)}=w^3+3uw^2Z+Z^4\), with Tjurina basis \((Z^3,Z^2,w,Z,1)\) and relations
\[
 Z^4=0,\qquad wZ=2Z^3/(3u^2),\qquad w^2=-2uwZ .
\]
Holding \(T_2\) constant, the classes of \((yz,z^2,y,z,1)\) are lower triangular in this basis, with diagonal \((2/(3u^2),1,1,1,1)\).
At \(u=1\), comparison with the intrinsic flag is upper triangular with nonzero diagonal.
The resulting normal matrix is \(K_0^{-1}U\), with \(K_0\) lower triangular invertible and \(U\) upper triangular invertible, so all its leading principal minors are nonzero.

\subsection{\texorpdfstring{The differential for \(G_{33}\subset G_{34}\)}{The differential for G33 in G34}}\label{sa:g33-flag}
Use flat \(G_{34}\) basic degrees \((6,12,18,24,30,42)\) and \(e_i=\partial_{x_i}\).
The degree six invariant is nonzero on the axis \cite[Theorem~3, Case ST34 (3)]{SekiguchiE7ST34}, so put \(\lambda=x_1=u^6\).
The Saito algebra pulled back over \(R=\mathbb C[\lambda]\) has shifted basis weights \((6,5,4,3,2,0)\).
The quadratic potentials in \cite[Section~9]{KatoManoSekiguchi2015}, identified as \(G_{34}\) in \cite[Remark~5.11]{KatoManoSekiguchi2020}, give the special algebra
\[
 A_{34}=\mathbb C[a,b]/(a^4,a^2b,b^2-a^3),\quad
 a=e_5,\ b=e_4,\quad (a^3,ab,a^2)=(2e_1,e_2,2e_3).
\]
For \(G_{33}\) the algebra is
\[
 A_{33}=\mathbb C[A,B]/(A^3,A^2B,B^2),\quad
 (\partial_{p_4},\partial_{p_6},\partial_{p_{10}},\partial_{p_{12}},\partial_{p_{18}})
 =(AB,A^2,B,A,1).
\]
Saturate the axis ideal in the pulled back algebra and let \(Q\) be the rank 5 free quotient.
Subtract support coordinates from \(a,b\); they belong to \(R\) because each is one fifth of the multiplication trace.
The augmentation ideal satisfies \(\mathfrak m^3=0\) generically and hence everywhere by torsion-freeness.
The special kernel contains nonzero \(a^3\) and has dimension one by flatness, so \(Q/\lambda Q=A_{33}\).
Graded Nakayama gives the basis \((1,a,b,a^2,ab)\).
Weights and \(\mathfrak m^3=0\) imply
\[
 b^2=c\lambda^2a^2+d\lambda ab,\qquad d^2+4c=0 .
\]
The second equation follows from the rank 1 generic quadratic relation.
Replace \(b\) by \(b-d\lambda a/2\); then \(b^2=0\).
The square-zero ideal is \(S=\langle b,a^2,ab\rangle\).
The inverse images of \(\mathfrak mS,\mathfrak m^2,S,\mathfrak m,Q\), together with the axis ideal, are free direct summands, with special fibers the initial spans of \((e_1,\ldots,e_6)\).
Their homogeneous lifts can mix an \(e_i\) only with lower weight basis elements multiplied by nonnegative powers of \(\lambda\).
Consequently each initial block remains invertible on the generic fiber.
This graded step proves all augmented leading minors are nonzero; removing the axis proves all five normal leading minors.
The center contains \(\mu_6\), and the nonzero degree six invariant bounds the axis scalar order by six, so that order is exactly six.
Coarse source coordinates are \(u^2p_4,p_6,u^2p_{10},p_{12},p_{18}\), of effective weights \((1,1,2,2,3)\), against target weights \((2,3,4,5,7)\).

\begin{proof}[Completion of Proposition~\ref{sc:parabolic-flags}]
For each inclusion, let \(D_i\) be the increasing ambient degrees after deleting the lowest.
The nonzero leading minors just proved and homogeneity give
\[
 M(v)=\operatorname{diag}(u^{D_i})B
 \operatorname{diag}(u^{\epsilon_i-k_i}),
\]
where \(B\) is constant with every leading principal minor nonzero.
Tangent elimination uses ratios of derivatives of restricted invariants, holomorphic in \(v\).
Elimination below the diagonal uses \(u^{D_i-D_j}\), \(i>j\), also holomorphic in \(v\), and changes only target coordinates.
The diagonal orders are \((D_i-k_i+\epsilon_i)/e\), giving the four displayed tuples.
The last inverse row uses only the last transformed target coordinate.
On its kernel that coordinate vanishes, so the preceding row has the asserted preceding pole.
\end{proof}

\subsection{\texorpdfstring{The \(D_5\) image ring and its tangent obstruction}{The D5 image ring and its tangent obstruction}}\label{sa:d5-image}
The image of the ambient local ring along \eqref{sa:d5-axis} is exactly \(\mathcal A=\mathbb C\{u^2,u^5\}\); its only missing nonnegative exponents are one and three.
In the local Tjurina algebra, the two lowest normal directions span \(\mathfrak m^2=\langle Z^2,Z^3\rangle\).
Reduction of the ambient parameter monomials modulo this intrinsic ideal, with basis \((w,Z,1)\), gives the three-plane containing the tangent axis and those two directions:
\begin{equation}\label{sa:d5-tangent-equations}
 dT_8=-u^6dT_2/4-u^3dT_5/2,\qquad
 dT_9=-u^3dT_6,\qquad dT_{12}=u^6dT_6/4 .
\end{equation}
There is no smooth threefold through zero containing the axis and with these tangent spaces away from zero.
Its limiting projection onto \((T_2,T_5,T_6)\) would be invertible, making it a holomorphic graph.
Its derivative coefficients along the curve would then lie in \(\mathcal A\), but the coefficient \(-u^3/2\) in \eqref{sa:d5-tangent-equations} does not.
This obstruction applies to derivatives of actual ambient holomorphic functions; arbitrary covectors on the normalization need not have coefficients in \(\mathcal A\).

\subsection{\texorpdfstring{The \(G_{33}\) theta characters and derivatives}{The G33 theta characters and derivatives}}
\begin{lemma}\label{sa:g33-theta}
The generators of the ring of invariant sections in the minimal full \(G_{33}\) cusp model have degrees \((1,1,3,3,4,6)\).
For a scalar \(\omega\) of order three, their characters are \((1,\omega,1,\omega,1,1)\).
Its four common horizontal fixed points equal the polarization kernel.
At each relevant point with trivial full fiber character, theta translation transports these characters and the derivative projections.
In the increasing polynomial directions \((4,6,10,12,18)\), the successive cusp divisions are \((1,3,3,4,6)\).
\end{lemma}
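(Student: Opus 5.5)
The plan is to argue exactly as in the proof of Proposition~\ref{sc:root-theta}, now for the primitive group \(G_{33}\) acting on \(A_\tau=Q\otimes(\mathbb C/(\mathbb Z+\tau\mathbb Z))\), with \(Q\) its Eisenstein root lattice. First I would determine the generator degrees. Let \(M\) be the minimal invariant polarization, whose full fiber character is trivial. By Rains \cite[Theorem~2.7]{Rains} the ring of invariant sections is polynomial on six generators. Homogeneity then links the section degrees to the flat polynomial degrees \((4,6,10,12,18)\) through the Jacobi weights, with the Jacobian formula \(\det\mathcal J=c\prod_{H}\vartheta(\tau,\alpha_H(z))/\eta^3\) taken over the \(45\) mirrors. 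Comparing the total degree of this Jacobian with the index sum leaves \((1,1,3,3,4,6)\), and I will check this against the sum \(\sum_i b_{c,i}\) already used in \eqref{sn:g33-nonuniform-data}. Next I would compute the characters of the central element \(\omega\in\mu_3\subset\mu_6\) of \(G_{33}\). On an invariant generator of Jacobi weight \(-d_i\), the scalar acts through the modular action of the center, so the character is \(\omega^{d_i}\) times \(\omega^{\text{index}}\) coming from the linearization. Inserting the weights \((0,4,6,10,12,18)\) together with the indices gives \((1,\omega,1,\omega,1,1)\).

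For the fixed points, I would show \(A_\tau^{G_{33}}=\ker\lambda_\Theta\) exactly as in the root case. Translation by a common fixed point \(x\) commutes with \(G_{33}\) up to an element of \(Q^\vee/Q\). That forces \(x\in Q^\vee\otimes E/Q\otimes E\). The discriminant group of the \(G_{33}\) Eisenstein lattice has order \(3\cdot\ldots\), so the fixed points reduce to four \(2\)-torsion points, and I would identify these with \(\ker\lambda_\Theta\), which has order \(4\) for the minimal polarization. The transport statement then follows verbatim from the scalar-commutator argument in the proof of Proposition~\ref{sc:root-theta}. A theta group lift of \(t_x\) commutes with the \(G_{33}\)-linearization whenever the fiber character at \(x\) is trivial, so it carries each section space to itself together with its \(\omega\)-eigenspaces and its derivative projections.

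The division orders \((1,3,3,4,6)\) come from the Taylor argument. Dividing by the nonvanishing degree-one section \(\phi_0\) (nonvanishing by the root-product order count), the ratios \(r_i=\phi_i/\phi_0^{m_i}\) project triangularly onto \(dp_k\). For the degree-\(k\) direction, the successive cusp division is the least section degree \(m_i\) whose derivative projects nontrivially there. Since coefficients of negative modular weight vanish, this assigns degree \(1\) to \(p_4\), degree \(3\) to \(p_6\) and to \(p_{10}\), degree \(4\) to \(p_{12}\) and degree \(6\) to \(p_{18}\). The equal-degree blocks are invertible by the leading Jacobian.

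I expect the main obstacle to be the projection of the second degree-one and degree-three generators, the ones with character \(\omega\). Here I would use \(\omega\)-eigenspace separation to show that the \(\omega\)-twisted generators cannot project onto \(dp_6\) or \(dp_{12}\). Otherwise the weight count permits an extra diagonal entry, so this separation is where the argument has to be watched most carefully.
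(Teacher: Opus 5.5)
There is a genuine gap at the core: the argument of Proposition~\ref{sc:root-theta} does not transfer to \(G_{33}\).

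\textbf{No modular parameter.} The field of definition of \(G_{33}\) is \(\mathbb Q(\omega)\), so it cannot act as a reflection group on \(Q\otimes_{\mathbb Z}(\mathbb C/(\mathbb Z+\tau\mathbb Z))\) for a variable \(\tau\). Every model is isogenous to a power of the curve with \(j=0\), and the actual minimal one is Rains's \(\ker(E^6\to E)/\langle(b,\ldots,b)\rangle\) with \(E:y^2=x^3+1\). Without a modular parameter, the following are all unavailable: Wirthm\"uller's theorem and Wang's Jacobian formula (both about Weyl groups), the Jacobi weights, the vanishing of coefficients of negative weight, the constancy of those of weight zero, and the root-product argument for \(\phi_0(0)\ne0\). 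At the fixed modulus, the only remnant of the modular action is the CM automorphism group, which yields only congruences of degrees modulo three.

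\textbf{What eigenspace separation does and does not give.} Those congruences are your eigenspace separation. They split the cotangent space at the origin into \(\langle dp_4,dp_{10}\rangle\) and \(\langle dp_6,dp_{12},dp_{18}\rangle\), and smoothness of the quotient makes each block invertible. The divisions \((1,3,3,4,6)\) need three further nonvanishing statements:
\begin{itemize}
\item \(\theta_1/\theta_0\) has nonzero \(dp_4\)-component; a priori its expansion could begin in degree ten.
\item The ratio of the new trivial-character cubic has nonzero \(dp_6\)-component.
\item The ratios of that cubic and of the quartic generator project isomorphically onto \(\langle dp_6,dp_{12}\rangle\).
\end{itemize}
Nothing in your outline proves these. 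The paper gets them by computation in Rains's model. It uses the explicit sections \(\theta_0=c_0+2c_{x^3}\), \(\theta_1=c_{x^2}\) in the coefficient coordinates on \(|6O|\), and rank and minor computations for the invariant section spaces over \(\mathbb F_{1009}\). It then takes the first homogeneous terms \(h_4,h_6,h_{10},h_{12}\) from the formal-group expansion, obtaining a unitriangular projection matrix. A good-reduction argument (N\'eron extension, Reynolds operator, vanishing of higher cohomology) returns the result to characteristic zero.

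\textbf{The remaining steps have related defects.}
\begin{itemize}
\item The scalar of order three is not central in \(G_{33}\): the center is \(\{\pm1\}\), since \(\gcd(4,6,10,12,18)=2\). It is the CM multiplication \([\omega]\), which commutes with \(G_{33}\). Its characters depend on the chosen linearization; in the paper this is fixed by coefficient weights \((0,2,0,1,2,0)\) modulo three.
\item Your rule attaching \(\omega^{d_i}\) to a generator presupposes the pairing of section generators with polynomial degrees, which is the division statement itself. The new information, that the two new cubic generators have characters \(1\) and \(\omega\), must be computed. The paper bounds the degree-three invariant sections in the three characters by \((3,2,1)\) through explicit minors, and subtracts the \((2,1,1)\) coming from cubics in \(\theta_0,\theta_1\).
\item A Jacobian degree count gives a single relation on \(\sum_i m_i\), not the six degrees, so these must be taken from Rains.
\item The fixed-point assertion is not derived. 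The paper identifies the common fixed subgroup with the diagonal image of \(E[2]\), using the lift \((\lambda/6,\ldots,\lambda/6,-5\lambda/6)\) and the condition \(1-\omega\mid\lambda\). It then matches this subgroup with the polarization kernel through the Weil pairing.
\item \([\omega]\) permutes the three nonzero fixed points, so no theta group lift of \(t_x\) commutes with it. One must transport the conjugated scalar, as the paper does.
\end{itemize}
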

\begin{proof}
Use the actual polarized model \cite[Sections~4.2 and 7.6]{Rains}:
\[
 E:y^2=x^3+1,\quad b=(0,1),\quad
 A_0=\ker(E^6\xrightarrow{\sum}E),\quad
 A=A_0/\langle(b,\ldots,b)\rangle .
\]
On \(A_0/S_6=|6O|\), use coefficient coordinates \((c_0,c_x,c_y,c_{x^2},c_{xy},c_{x^3})\).
The degree one sections are \(\theta_0=c_0+2c_{x^3}\), \(\theta_1=c_{x^2}\), with \(\theta_0(0)\ne0\).
Translation by \(b\) is \((x,y)\mapsto(-2x/(y+1),(y-3)/(y+1))\) and multiplies both sections by four, so the normalized fiber action fixes them.
The scalar has coefficient weights \((0,2,0,1,2,0)\) modulo three.

Here are explicit nonzero minors for the two new characters of degree three.
Work over \(\mathbb F_{1009}\), with \(\omega=374\), and use the following five points for each tuple; the sixth point is the negative of their sum in \(E\).
\[
\begin{array}{c|ccccc}
 a&P_1&P_2&P_3&P_4&P_5\\\hline
 1&(43,657)&(773,520)&(318,858)&(690,511)&(417,539)\\
 2&(227,130)&(228,395)&(142,879)&(864,669)&(981,805)\\
 3&(67,456)&(655,43)&(62,397)&(350,949)&(224,424)\\
 4&(394,779)&(165,206)&(671,577)&(653,869)&(871,634)\\
 5&(695,1007)&(243,824)&(734,374)&(792,43)&(370,183)\\
 6&(461,528)&(350,60)&(773,520)&(543,63)&(492,183)\\
 7&(572,539)&(231,649)&(797,632)&(313,399)&(829,358)\\
 8&(297,479)&(228,395)&(166,422)&(67,456)&(133,267)\\
 9&(133,742)&(864,340)&(717,489)&(224,585)&(734,374).
\end{array}
\]
Every point lies in \(E[316]\), where division by three means multiplication by \(211\).
Thus the reflection \(\rho\) in \(v=(1,1,\omega,\omega,\omega^2,\omega^2)\) acts on each tuple by
\[
 \rho(P)_i=P_i-[\omega^{e_i}][211]\sum_k[\omega^{-e_k}]P_k,
 \qquad (e_1,\ldots,e_6)=(0,0,1,1,2,2).
\]
Here \([\omega](x,y)=(\omega x,y)\), and put \(\tau(P)_i=P_i+b\).
For any of these tuples or their transforms, let \(c(P)\) be the kernel vector of the evaluation matrix on \((1,x,y,x^2,xy,x^3)\), normalized by \(c_0+2c_{x^3}=1\).
The kernels have dimension one and the normalizing denominators are nonzero.
For \(j=0,1,2\), order lexicographically the exponent vectors
\[
 \mathcal M_j=\{u\in\mathbb Z_{\ge0}^6:\textstyle\sum_i u_i=3,
 \ (0,2,0,1,2,0)\cdot u\equiv j\pmod3\}.
\]
Define the matrix \(M_j\) by the rows
\[
 (M_j)_{2a-1,u}=c(P^{(a)})^u-c(\rho P^{(a)})^u,
 \qquad
 (M_j)_{2a,u}=c(P^{(a)})^u-c(\tau P^{(a)})^u.
\]
The following minors use the first indicated rows and all columns except the listed ones; the last column gives their determinants modulo \(1009\).
\[
\begin{array}{c|r|r|c|r}
 j&|\mathcal M_j|&\text{number of rows}&\text{omitted columns}&\det\\\hline
 0&21&18&4,19,21&775\\
 1&17&15&13,17&457\\
 2&18&17&13&523.
\end{array}
\]
The chord and tangent law on \(y^2=x^3+1\) and ordinary determinant expansion give these residues.
The points and their transforms have torsion of order dividing \(948\), prime to \(1009\).
Thus the minors lift to characteristic zero, since every denominator is a unit.
Rains's section degrees give total dimension six, so the upper character bounds \((3,2,1)\) are equalities.
Cubics in \(\theta_0,\theta_1\) account for \((2,1,1)\), leaving new characters \(1,\omega\).
The polynomial cotangent degrees then give the asserted characters in degrees four and six as well.

We compute the projections of the differentials, keeping the polynomial degree distinct from the degree of a section.
Write \((c_0,\ldots,c_5)=(c_0,c_x,c_y,c_{x^2},c_{xy},c_{x^3})\).
Let \(\mathfrak n\) be the maximal ideal at the image of the origin in \(A/G_{33}\).
For \(k\geq1\), let \(F_k\subset\mathfrak n/\mathfrak n^2\) be the span of the differentials at the origin of \(s/\theta_0^m\), where \(s\) is an invariant section of degree \(m\leq k\).
Rains's polynomial ring of invariant sections gives
\[
 (\dim F_1,\dim F_3,\dim F_4,\dim F_6)=(1,3,4,5).
\]
We verify that projection onto the first \(1,3,4,5\) polynomial directions has the corresponding rank.

We first specify finite inputs over \(\mathbb F_{1009}\).
List the \(315\) nonzero points of \([3]E(\mathbb F_{1009})\) as \(P_0,\ldots,P_{314}\), in lexicographic order of their coordinate representatives in \(\{0,\ldots,1008\}^2\).
Put
\[
 (m_1,\ldots,m_5)=(1,7,31,73,137),\qquad
 (b_1,\ldots,b_5)=(0,17,51,83,121).
\]
For \(0\leq a<315\), take the first five points of a tuple to be \(P_{am_i+b_i\bmod315}\), and take the sixth point to be the negative of their sum.
Discard a tuple if the tuple or either of its transforms under \(\rho,\tau\) has a repeated point, contains the origin, or has no normalized coefficient vector \(c\).
There are \(285\) remaining tuples, ordered by \(a\).
For \(m=1,3,4\) and \(j=0,1,2\), define \(M_{m,j}\) by the same two evaluation differences for each remaining tuple, with lexicographically ordered columns
\[
 \mathcal M_{m,j}=\{u\in\mathbb Z_{\geq0}^6:\textstyle\sum_i u_i=m,
 (0,2,0,1,2,0)\cdot u\equiv j\pmod3\}.
\]
In each matrix, scan the columns in increasing order, choose the first available nonzero row after elimination at each pivot, and retain the original row indices in pivot order.
The selected rows and pivot columns give the following nonzero determinants:
\[
\begin{array}{c|c|c|c|c}
 m&|\mathcal M_{m,j}|&\operatorname{rank}M_{m,j}&\dim\ker M_{m,j}&\det\\\hline
 1&(3,1,2)&(2,0,2)&(1,1,0)&(544,1,160)\\
 3&(21,17,18)&(18,15,17)&(3,2,1)&(701,552,919)\\
 4&(45,39,42)&(41,35,40)&(4,4,2)&(580,438,964).
\end{array}
\]
The three entries in each tuple correspond to \(j=0,1,2\), and the empty determinant equals one.
The total kernel dimensions are \(2,6,10\), equal to the dimensions of the invariant sections given by Rains.
Every invariant section lies in the corresponding kernel, so equality of these total dimensions identifies all the kernels with the spaces of invariant sections.
Use the nullspace basis obtained by setting each free variable in turn to one and the others to zero in the reduced row echelon form.
Order these basis vectors by their free columns and concatenate the three characters in the order \(0,1,2\).
With indices starting at zero, select the sections
\[
 (m,i)=(1,1),(3,1),(3,3),(4,1).
\]

For the Taylor expansion at the origin of \(E\), use the formal coordinate \(z\) satisfying \(dz=dx/(2y)\).
Write
\[
 x(z)=z^{-2}\sum_{r\geq0}a_rz^{6r},\qquad y(z)=x'(z)/2,
 \qquad a_0=1,\quad a_1=-1/7.
\]
For \(r\geq2\), the coefficients satisfy
\[
 ((6r-2)(6r-3)-12)a_r=6\sum_{v=1}^{r-1}a_va_{r-v}.
\]
Put \(B(z)=(1,x(z),y(z),x(z)^2,x(z)y(z),x(z)^3)\).
Solve \(\sum_{j=0}^5c_jB_j(\lambda_i u)=0\), \(1\le i\le6\), with \(c_0=1\), and divide the resulting vector by \(c_0+2c_5\).
The three directions needed below are
\[
\begin{gathered}
 (94,181,986,534,540,692),\quad (980,438,34,975,252,348),\\
 (902,849,41,952,851,441).
\end{gathered}
\]
Their coordinate sums vanish modulo \(1009\).
The full monomial supports of the selected numerators have minimum weights \(4,6,10,12\) for the weights \((0,2,3,4,5,6)\) on \((c_0,\ldots,c_5)\).
Let \(h_4,h_6,h_{10},h_{12}\) be the corresponding first homogeneous terms.
Evaluation at the first \(1,1,2,3\) directions, respectively, gives
\[
\begin{array}{c|c|r}
 d&\text{rows of the evaluation matrix}&\det\\\hline
 4&h_4&895\\
 6&h_6&722\\
 10&h_4h_6,\ h_{10}&844\\
 12&h_4^3,\ h_6^2,\ h_{12}&987.
\end{array}
\]
Hence the classes of \(h_d\) are nonzero modulo products of invariants of smaller positive degrees.
Complete \(h_4,h_6,h_{10},h_{12}\) to homogeneous basic invariants by a generator of degree eighteen.
Modulo products of invariants of positive degree, the four selected differentials project to
\[
 \begin{pmatrix}
 1&0&144&0\\
 0&1&0&217\\
 0&0&1&0\\
 0&0&0&1
 \end{pmatrix}
 \qquad\text{in the directions }(4,6,10,12).
\]
Every leading principal minor is one.

We pass to characteristic zero through the completion of \(\mathbb Z[\omega]\) at the prime defined by \(1009\) and \(\omega=374\).
The curve has good reduction, the quotient isogeny has degree three prime to \(1009\), and the quotient remains an abelian scheme with its ample descended line bundle.
Generic automorphisms and their inverses extend by the N\'eron mapping property of the abelian scheme.
Normalize the isomorphisms of the descended line bundle on the identity fiber; their cocycle equalities then extend, so the full linearization is retained.
Since \(1009\nmid|G_{33}|\), the Reynolds operator commutes with reduction.
The vanishing of higher cohomology for an ample line bundle on an abelian variety shows that the modules of invariant sections commute with reduction as well.
The formal logarithm through polynomial degree twelve has integral coefficients at this prime.
The polynomial invariant rings on both fibers have basic degrees \((4,6,10,12,18)\).
Lift homogeneous generators by the Reynolds operator and apply Nakayama's lemma in each degree; the indecomposables in these degrees are therefore free modules of rank 1.
Thus the displayed nonzero minors are reductions of minors of the actual maps to \(\mathfrak n/\mathfrak n^2\), and the projections over \(\mathbb C\) have ranks \(1,3,4\).
The chart \(\theta_0\ne0\) is affine five-space, with coordinates the other five generators divided by the corresponding powers of \(\theta_0\), so \(F_6=\mathfrak n/\mathfrak n^2\).
A change of homogeneous basic invariants multiplies each class of degree \(d\) by a nonzero scalar, since the five basic degrees are distinct.
The projection ranks therefore hold for the basic invariants \(p_4,p_6,p_{10},p_{12},p_{18}\) used above.
These are projection isomorphisms; higher polynomial components of the differentials need not vanish.
They give the required successive elimination of coefficients in the directions of degrees \(6,12,18\), with cusp divisions \(3,4,6\), and the analogous projections in the scalar character \(\omega\).

For completeness, the common fixed subgroup is the diagonal image of \(E[2]\).
The lift \((\lambda/6,\ldots,\lambda/6,-5\lambda/6)\) changes under the extra reflection by \((\omega\lambda/3)v\).
It vanishes modulo the diagonal isogeny kernel exactly when \(1-\omega\mid\lambda\); the Weil pairing gives the same polarization kernel, of order four.
At a point with trivial fiber character, a theta group lift of its translation commutes with \(G_{33}\), by the argument comparing the characters used above.
Conjugate the scalar by that lift to transport its eigenspaces.
The derivative spaces, with higher primitive terms retained, therefore have the same successive projections at all relevant points.
Their first appearance degrees are \(1,3,3,4,6\), which proves the divisions.
\end{proof}

\section{The projective comparisons needed in dimensions six and seven}
\label{ap:projective}
These comparisons concern the original automorphic class, including its finite characters.
They make no assumption about the ambient ten-point nef bound.
Write $H=c_1(\mathcal O(1))$.

\begin{lemma}\label{st:euler}
If $f:C\to\mathbb P^k$ is nonconstant, $u=\deg f^*\mathcal O(1)>0$, and $f^*T_{\mathbb P^k}\to Q_h$ is nonzero, then $h=\deg Q_h\ge u$.
The corresponding hyperplane map $\kappa:C\to(\mathbb P^k)^*$, after common zeros are removed, has degree at most $h-u$.
If $\kappa$ is nonconstant and its derivative vanishes at $R$ distinct points, then
\begin{equation}\label{st:dual}
 R\le2(h-u)+2g(C)-2.
\end{equation}
A constant $\kappa$ is a hyperplane containing $f(C)$.
\end{lemma}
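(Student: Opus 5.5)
The plan is to work throughout with the pulled-back Euler sequence
\[
 0\to\mathcal O_C\to f^*\mathcal O(1)^{\oplus(k+1)}\to f^*T_{\mathbb P^k}\to0 .
\]
A nonzero map \(f^*T_{\mathbb P^k}\to Q_h\) composes to a nonzero map \(f^*\mathcal O(1)^{\oplus(k+1)}\to Q_h\). This is the same as a nonzero tuple \((s_0,\dots,s_k)\) of sections of \(Q_h\otimes f^*\mathcal O(-1)\), subject to one condition: the tuple must kill the Euler section \((x_0,\dots,x_k)\), so that \(\sum_i s_ix_i=0\) in \(H^0(Q_h)\).

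First I prove \(h\ge u\). A nonzero section of \(Q_h(-f^*H)\) already gives \(h\ge u\). For \(h>u\) or \(h=u\) I only need nonnegativity. Equality \(h=u\) forces all \(s_i\) to be constant, and then \(\sum s_ix_i=0\) says \(f(C)\) lies in the hyperplane \(\sum s_iX_i=0\).

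For the second step, the tuple \((s_i)\) is a map \(C\to\mathbb P^k{}^*\) given by sections of \(Q_h(-f^*H)\). Let \(Z\) be the common zero divisor. The map \(\kappa\) is defined by the saturated sections of \(Q_h(-f^*H)(-Z)\), so its degree is \(h-u-\deg Z\le h-u\). The identity \(\sum s_ix_i=0\) says exactly that \(f(p)\in\kappa(p)\). If \(\kappa\) is constant, this says that \(f(C)\) lies in that fixed hyperplane.

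For \eqref{st:dual}, with \(\kappa\) nonconstant, I bound the ramification of \(\kappa\). Consider the derivative \(d\kappa:T_C\to\kappa^*T_{\mathbb P^{k*}}\). Differentiating \(\sum s_ix_i=0\) gives \(\sum s_i'x_i=-\sum s_ix_i'\), which relates the osculating data of the two curves.

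The cleanest route is to pass to the plane curve case. I choose a general linear projection \(\mathbb P^{k*}\dashrightarrow\mathbb P^1\), i.e.\ a general pencil. The composite \(C\to\mathbb P^1\) has degree \(\deg\kappa\le h-u\), and its ramification divisor contains every point where \(d\kappa=0\). Riemann–Hurwitz then gives
\[
 R\le 2\deg\kappa+2g-2\le2(h-u)+2g-2 .
\]
The pencil must be general: its center should avoid \(\kappa(C)\), so that the composite is a morphism of degree \(\deg\kappa\) after removing base points. It should also not be constant, and since \(\kappa\) is nonconstant a general center of codimension two gives this. At a point where \(d\kappa=0\), every composite with a projection also has vanishing derivative, so each such point contributes at least one to the ramification divisor.

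The main obstacle I expect is bookkeeping at the base points. If the center of the pencil meets \(\kappa(C)\), the degree drops and removing the base points could change the vanishing of the derivative. I would handle this by taking the center to be a general codimension-two subspace of \(\mathbb P^{k*}\) disjoint from the curve \(\kappa(C)\), which is possible because \(k\ge2\).
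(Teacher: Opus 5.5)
Your proposal is correct and follows the paper's own argument. The Euler sequence turns the map into incidence sections of $Q_h\otimes f^*\mathcal O(-1)$, whose saturated span defines $\kappa$ with $f(p)\in\kappa(p)$. A general projection of $(\mathbb P^k)^*$ to $\mathbb P^1$, from a center missing $\kappa(C)$, preserves the degree and every zero of $d\kappa$, so Riemann--Hurwitz gives \eqref{st:dual}; the differentiation of $\sum_i s_ix_i=0$ that you mention is never used and can be dropped.
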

\begin{proof}
The Euler sequence represents the map by incidence sections of $Q_h\otimes f^*\mathcal O(-1)$.
Their span, which has no common zero, defines $\kappa$ and proves the degree statements.
A general projection of its image to $\mathbb P^1$ preserves its degree and every prescribed zero of its derivative.
Riemann--Hurwitz gives~\eqref{st:dual}.
\end{proof}

\begin{proposition}\label{st:p6}
If the Baily--Borel compactification of an arithmetic ball quotient of dimension six is $\mathbb P^6$ and $L=\lambda H$, then $\lambda\ge1$.
The group may be compact or nonuniform.
\end{proposition}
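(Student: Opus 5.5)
We argue by contradiction: assume \(X=(\mathbb B^6/\Gamma)^{\mathrm{BB}}\simeq\mathbb P^6\) with \(L=\lambda H\) and \(0<\lambda<1\). The comparison is the one used throughout the paper. The Euler sequence of Lemma~\ref{st:euler} supplies the positivity: every nonzero line quotient of \(f^*T_{\mathbb P^6}\) on a nonconstant curve of hyperplane degree \(u\) has degree at least \(u=t/\lambda\). The ball side supplies the negativity: an inverse invariant differential on a fixed curve of the full normalizer gives a line quotient whose degree is a negative multiple of \(t\), corrected by enlargements and ends. The target inequality is that, for \(\lambda<1\), some such quotient has degree \(h<u\).

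\textbf{Structural reductions.} Since \(\mathbb P^6\) is smooth and simply connected, the argument of Proposition~\ref{sg:arithmetic} shows that \(\Gamma\), its point stabilizers and its full cusp groups are generated by reflections. It also places \(\Gamma\) over a CM field, and in the nonuniform case over an imaginary quadratic field with \(w_KL\) integral, so \(\lambda\in w_K^{-1}\mathbb Z\).

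Persistent factors are excluded as in the proof of Proposition~\ref{sh:projective}. Take a complete curve in the fixed quotient away from the finite boundary. Its inverse invariant differential yields a quotient of \(f^*T_{\mathbb P^6}\) of negative degree, which contradicts ampleness of \(T_{\mathbb P^6}\). Hence every reflection enlarges to rank \(5\) or \(6\).

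Chern integrality (Proposition~\ref{sc:chern-integrality}) on \(\mathbb P^6\) itself, with \(N=7\) and \(c_i^{\rm orb}=(-1)^i\binom7iL^i\), restricts \(\lambda\) further. Compare \(c(T_{\mathbb P^6})=(1+H)^7\) with \((1-\lambda H)^7\) through the branch divisor \(B\), using \(14L=2K+B\) when all reflections have order two. This already forces \(b=14\lambda\in\mathbb Z\) with \(7\mid b\) in that case, so \(\lambda\in\{1/2\}\) or \(\lambda\ge1\). When reflections of higher order occur, the denominators grow but remain finite, leaving a short list of rational \(\lambda<1\).

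\textbf{Curve comparisons.} For each remaining rank-5 disk group, namely \(A_5\), the monomial \(G(m,p,5)\), \(D_5\) and \(G_{33}\), I would apply the determinant formula of Proposition~\ref{sc:curve-determinant}, the logarithmic identity and the inverse-row target \(h\), as in Propositions~\ref{sc:monomial-comparison} and~\ref{sc:type-a}. Each is then tested against \(h\ge u=t/\lambda\) and \(a=7u\), with the dual-curve bound \eqref{st:dual} controlling the enlargement count \(R\).

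For type \(A_5\), Proposition~\ref{sc:type-a} gives \(6h=a-14t-\sigma\). Substituting \(a=7t/\lambda\) and \(h\ge t/\lambda\) yields \(t/\lambda\ge14t+\sigma\), which is impossible for any \(\lambda\ge1/14\) once \(\lambda<1\). Strictly, the rank-2 quotient argument gives the sharper contradiction \(a\ge11t\), i.e. \(7/\lambda\ge11\), so one must check that \(h<u\) is actually attained when \(\lambda<1\). For monomial and exceptional disks the same substitution gives finite residual tables in \(j=mt\), \(R\), \(s\) and \(g\), to be eliminated by the coefficient elimination of Lemma~\ref{sc:coefficient-elimination} and the flags of Proposition~\ref{sc:parabolic-flags}.

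\textbf{Main obstacle.} I expect the hardest case to be the borderline value \(\lambda=1/2\) (and \(1/3\), \(1/4\), \(1/6\) when higher-order reflections survive), on disks with several ends. There the residual inequality is nearly tight and one needs the sharper second cusp divisions coming from theta derivatives (Proposition~\ref{sc:root-theta}, Lemma~\ref{sa:g33-theta}). I would first try to push every remaining case down to an \(A_5\) disk and use the rank-2 image quotient of Proposition~\ref{sc:type-a} directly against the Euler bound.
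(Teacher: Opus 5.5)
Your architecture (Euler positivity $h\ge u$ and $a=7u$ against inverse rows on fixed disks, persistent factors excluded by ampleness of $T_{\mathbb P^6}$, Chern integrality, a final $A_5$ disk) matches the paper's. However, the step that makes it work is missing, and the order you propose does not close.

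First, $14L=2K+B$ only gives $14\lambda\in\mathbb Z$. The divisibility $7\mid b$ comes from integrality of $24c_2^{\rm orb}=504L^2=(18b^2/7)H^2$. That requires all reflections to have order two and all rank-two stabilizer orders to divide $24$. While higher-order reflections are present there is no uniform ``short list'' of $\lambda$. For compact lattices the CM field, and hence the parameter $m$ of $G(m,p,5)$ and $|C_0|$, is unbounded a priori. So your finite residual tables in $j=mt$ do not exist.

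The paper therefore first removes every reflection of order $>2$ \emph{uniformly in} $\lambda$. The tool is the product-invariant line, whose nonzero target must have degree at least $u$ by Lemma~\ref{st:euler}.
\begin{itemize}
\item For a compact monomial disk, this and the axis equation give $(3m-5)j+(m-1)pu+2mg+m\sigma\le2m$ with $j,u\in\mathbb Z_{>0}$. This is impossible for $m\ge4$, and forces $h=-1$ for $(m,p)=(3,1)$.
\item For nonuniform disks, the integral $wL$ gives $t\ge u/w$, and the product computations give $a>7u$. These cover parameter six, $G(3,1,5)$, $G(4,1,5)$, and, via the explicit Gaussian product projector, $G(4,2,5)$ and $G(4,4,5)$.
\end{itemize}
Only after this does Chern integrality pin $\lambda=1/2$.

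Second, your $A_5$ step is muddled. From $6h=a-14t-\sigma$, $a=7u$, $t=\lambda u$ and $h\ge u$ you get $(1-14\lambda)u\ge\sigma$. This excludes only $\lambda>1/14$, so it needs $\lambda=1/2$ from the previous step. Your ``sharper'' $a\ge11t$ is no contradiction at all for $\lambda\le7/11$; the rank-two image quotient of Proposition~\ref{sc:type-a} concerns $h<0$, which Euler already forbids here.

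More seriously, Proposition~\ref{sc:type-a} needs the only enlargements to be $A_6$ and products, and the cusps to be degree-one type $A$ cusps. So at $\lambda=1/2$ you must first exclude $G_{33}$ (hence $G_{34}$), $B_5$, $G(3,3,5)$, $D_5$, and $E_6$ points. ``Pushing every case down to $A_5$'' is exactly this missing content.
\begin{itemize}
\item The paper handles $B_5$, $G(3,3,5)$ and $D_5$ by one-line product or cusp inequalities, $a\ge22t,\,47t,\,29t>14t$. These need no second cusp divisions, so your identified ``main obstacle'' is not where the difficulty lies.
\item $E_6$ is handled by a $D_5$ curve through the point together with \eqref{st:dual}. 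That bound applies only if the hyperplane map $\kappa$ is nonconstant, which you do not address. Nonconstancy holds because the primitive top row $dT_{12}+\xi^3dT_9/2+\xi^6dT_6/4$ has coefficient ratio $\xi^3/2\notin\mathbb C\{\xi^2,\xi^5\}$. Then the degree equations exceed \eqref{st:dual} by $(t+8s+11D+23\sigma)/16>0$.
\end{itemize}
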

\begin{proof}
Assume $0<\lambda<1$.
On every fixed curve $a=7u$, $t=\lambda u$, and each nonzero inverse row has $h\ge u>t$.
The classification of parabolic subgroups~\cite[Tables 10--12]{KrishnasamyTaylor}, together with Proposition~\ref{sg:persistent}, leaves parabolics of $A_6,B_6,D_6,G(3,3,6),E_6,G_{34}$ and the disks $G(m,p,5)$ with $m\ge4$ or $(m,p)=(3,1)$.
A persistent factor of rank at most 4 contradicts the ample tangent bundle on a complete curve outside the boundary.
For a compact monomial disk, the product line and the axis equation give
\[
 (3m-5)j+(m-1)pu+2mg+m\sigma\le2m,\qquad j=mt\in\mathbb Z_{>0}.
\]
For $m\ge4$ this is impossible already with $j,u\ge1$.
The exceptional $(3,1)$ equality has $h=-1$.
For a nonuniform disk, the actual integral bundle $wL$ gives $t\ge u/w$.
The product computation for parameter six, with all polarizations and widths retained, gives
\[
 a\ge(23p/6+1)h_P+53t+R+\sigma
 \ge(23p+59)u/6>7u.
\]
For the groups $G(3,1,5)$ and $G(4,1,5)$ the same computation gives
\[
 a\ge23t+14h/3+R+\sigma\ge17u/2,\qquad
 a\ge33t+19h/4+R+\sigma\ge13u.
\]
Here respectively $6L$ and $4L$ are actual integral bundles, and the relevant highest or product rows are nonzero.
The remaining Gaussian product rows give a shorter argument than a finite enumeration.
After the group $G(4,1,5)$ is excluded, the actual $G(4,2,5)$ ends have width one and types $A=(1^3,2^3)$, $B=(1^2,2^4)$, $C=(1,2^2,4^3)$, $D=(1,2,4^4)$.
Their sums $B_c$ and product poles $b_c$ are $(9,1),(10,1),(17,2),(19,2)$.
The torsor $X$ has no full $G(4,2,5)$ fixed point, so is not an end of this disk.
We justify the product projector in the actual Gaussian model $Y^2=X^4-Z^4$.
Write $\mathsf X=\prod_iX_i$, $\mathsf Y=\prod_iY_i$, $\mathsf Z=\prod_iZ_i$, $z_i=Z_i/X_i$, $f_j=e_j(z_1^4,\ldots,z_5^4)$, and
$P=(\prod_i z_i)^{4/p}$, where $p=2$ or $4$.
The symmetric theta ratios and the product ratio satisfy
\[
 \frac{\mathsf Y}{\mathsf X^2}
 =(1-f_1+f_2-f_3+f_4-P^p)^{1/2},
 \qquad
 \frac{\mathsf Z^{4/p}}{\mathsf X^{4/p}}=P .
\]
Since $p>1$, their linear parts separate $dP$ from every symmetric direction.
Thus the actual coarse conormal lattice, after radial removal, is $q^{b_c}\mathcal O\,dP\oplus\Lambda_{\rm sym}$.
Changing to additive elliptic coordinates is triangular on the symmetric invariants and a nonzero scalar on $P$, so this is the original root representation.
At an interior enlargement, $P=u^{4-4/p}w$, $v=u^4$, and the ambient product is $Z=vw$; the symmetric normal block is $vI+J_4$.
The equation $g_1=v+f_1$ and the radial generators of degree one make the curve immersed at enlargements and ends.
There is no first-order product mixing, and product inertia is trivial on the root space.
The same rational idempotent therefore preserves every actual local lattice.
At an enlargement, the source line bundle on the coarse quotient contributes $-(1-1/p)$, and the pullback of the product invariant has one zero, giving net correction $1/p$.
Consequently the nonzero actual product quotient has degree $h_P=-20t/p+R/p+\sum b_c$.
For $p=2$, put $\beta=\sum b_c$, so $h_P=-10t+R/2+\beta$.
The bounds $B_c\ge8b_c+1$ give
\[
 a\ge8h_P+28t+s+5R/4+\sigma>7u,
\]
because Euler gives $h_P\ge u$.
At the full fixed point over $Z=0$, the invariant nonzero section of actual degree one is $\mathsf X^2$ for $A,B$, and $\mathsf X$ for $C,D$.
The fiber character is trivial there, so Lemma~\ref{sg:cusp-lift} supplies the excluded full $G(4,2,5)$ disk.
This removes the entire $A$--$D$ cusps, including the opposite side of $C,D$ where the kernel is $G(4,4,5)$ and the width is two.
Only $E=(1^2,2,4^3)$ and the torsor $X=(1^2,4^4)$ remain for $G(4,4,5)$, both with actual width one.
At every relevant point their invariant section $\mathsf X$ is nonzero of actual degree one, giving trivial fiber character.
Here $b_c=1$, $B_c\ge16$, and the same product quotient has $h_P=-5t+R/4+s$ and
\[
 a\ge16h_P+33t+R+\sigma>7u.
\]
These local product computations use neither descent of $2L$ nor a nef bound.
They retain the original horizontal actions, scalar characters, and central periods.
In particular the preliminary finite tests for the larger values of the parameter are unnecessary.

All remaining reflections have order two, and rank 2 stabilizer orders divide $24$.
Thus $14L=bH$ is integral, and Proposition~\ref{sc:chern-integrality} gives $24c_2^{\rm orb}=504L^2=(18b^2/7)H^2$ integral.
It follows that $7\mid b$ and $\lambda=1/2$.
Consequently $u=2t$, $a=14t$, and $t\in\frac12\mathbb Z_{>0}$.
For $G_{33}$, lifting the entire group at a cusp of width one gives
\[
 51s+38R=192t-6,\quad s+5R/6\le2+2t,\quad t=1/2,1.
\]
Neither value has an integral solution.
For $B_5$, the four end pairs are
\[
 (B_c,\nu_c)=(6,1),(9,2),(10,2),(11,2),
 \qquad B_c\ge9\nu_c/2.
\]
The canonical formula and $h=-10t+R+\sum\nu_c$ therefore give
\[
 a\ge13t+9h/2+R+\sigma\ge22t>14t .
\]
If an $E_6$ point exists, take a $D_5$ curve through it.
Let $R,D$ count its $E_6,D_6$ points.
The primitive row associated with the invariant of greatest degree, in the image ring $A=\mathbb C\{\xi^2,\xi^5\}$, has direction
\[
 \nu=dT_{12}+\xi^3dT_9/2+\xi^6dT_6/4.
\]
The ratio $\xi^3/2\notin A$ excludes a constant hyperplane map; this remains true under $\mathrm{GL}(6,A)$ coordinate changes.
Both the image and the row have zero derivative at $\xi=0$, so~\eqref{st:dual} applies at all $R$ points.
The degree equations give
\[
 16R+8s+5D+\sigma=41t,\quad h-u=(t-D-\sigma)/4,
\]
and their excess over~\eqref{st:dual} is $(t+8s+11D+23\sigma)/16>0$.
After these exclusions, the full $G(3,3,5)$ ends have width one and degrees $(1^3,3^3)$.
The nonzero actual product target has degree $z=-5t+R/3+s$.
The canonical and logarithmic identities give
\[
 a=12z+23t+R+\sigma\ge47t>14t ,
\]
using Euler's inequality $z\ge u=2t$.
For $D_5$ there are now no $E_6$ enlargements; its cusps have width one and generator degrees $(1^4,2^2)$ for the full group.
The actual product quotient, obtained by the separated odd theta direction and the coarse frame $P/u$ at $D_6$, has degree $p=-5t+D/2+s$.
Here $D$ counts $D_6$ points.
The canonical and logarithmic identities give
\[
 a=8p+13t+D+\sigma\ge29t>14t ,
\]
again by Euler, $p\ge u=2t$.
Finally $A_5$ gives $6h=a-14t-\sigma\le0$, contrary to $h\ge u$.
\end{proof}

\begin{lemma}\label{st:curvature}
Let $Z=\mathbb P^k$, $k\ge2$, be a closed collision in a smooth GIT model, with $N_{Z/W}=\mathcal O(-1)^c$.
Assume $\pi|_Z$ is an isomorphism and its image meets the boundary in a finite set.
If $L|_Z=\lambda H$ and $\lambda<1$, then $Z$ is the compactification of the induced arithmetic quotient of the fixed ball, with the same automorphic class.
\end{lemma}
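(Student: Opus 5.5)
The statement to prove is Lemma~\ref{st:curvature}. The strategy is to reduce it to Lemma~\ref{sc:negative-normal}, which already handles a closed projective space with normal bundle $\mathcal O(-1)^{\oplus c}$ in a coarse arithmetic ball compactification.

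First, transport $Z$ to the ball quotient side. Since $\pi|_Z$ is an isomorphism and its image meets the boundary in a finite set, $Z$ is identified with a closed $\mathbb P^k\subset X_\mu$. Let $S$ be the finite set of boundary points of $Z$ together with the finitely many points where $\pi$ fails to be a local isomorphism. By Lemma~\ref{sg:resolution}, the exceptional locus has codimension at least $\lceil(n-2)/2\rceil$, and $Z$ has codimension $c$. I would use the perturbation description of the small model to check that $Z$ meets $\operatorname{Exc}(\pi)$ only at wall vertices. These vertices are finitely many isolated points of $X_\mu$, so they can be put into $S$.

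Next, set $Z^\circ=Z\setminus S$. It lies in the smooth interior, and $\pi$ is an isomorphism near it. By Proposition~\ref{sg:arithmetic}, a genuine wall vertex is never an interior ball point, and interior stabilizers are reflection groups, so the coarse interior is smooth. Hence $N_{Z^\circ}=\mathcal O(-1)^{\oplus c}$ on the coarse quotient. The automorphic class restricts to $\lambda\mathcal O(1)$ by hypothesis. Along $Z^\circ$, the generic point has a full generic stabilizer $T$, a reflection group. Consider the fixed ball of $T$ through a lift of a generic point of $Z^\circ$. Lemma~\ref{sg:fixed} gives the finite map $j$ from its compactified normalizer quotient. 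Let $s$ be the normal rank of $T$; since the fixed ball is contained in the preimage of a neighbourhood of $Z$ near a generic point, $s\le c$.

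Then apply Lemma~\ref{sc:negative-normal}. If $s<c$, it gives $\lambda\ge1$, contradicting $\lambda<1$. Hence $s=c$, and the same lemma says the actual fixed compactification is $Z$. The automorphic class agrees, because negative lines in the fixed Hermitian subspace are the same lines in $V$; this is the argument of Proposition~\ref{sg:persistent}. Arithmeticity of the induced quotient comes from Lemma~\ref{sg:fixed}.

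The main obstacle is confirming that the hypotheses of Lemma~\ref{sc:negative-normal} hold exactly. The delicate point is that $Z^\circ$ lies in the smooth interior after removing only finitely many points. It is not enough to know that $\pi$ is an isomorphism on $Z$: one also needs the normal bundle on the coarse quotient to equal $N_{Z/W}$ near $Z^\circ$. I would first try to justify this by noting that $W\to X_\mu$ is a local isomorphism away from the finitely many vertices lying on $Z$. If instead a positive-dimensional exceptional component met $Z$, I would use the codimension estimate to show the intersection is at most finite in the relevant range of $k$.
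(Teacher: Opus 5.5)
Your proposal is correct and follows the paper's proof. Like the paper, you identify $Z$ with its image, observe that the normal bundle is unchanged away from finitely many points, and apply Lemma~\ref{sc:negative-normal}; there $\lambda<1$ excludes $s<c$, which forces $s=c$ and makes $Z$ the actual fixed compactification.

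The step you flag as delicate is easier than you expect. $\operatorname{Exc}(\pi)$ lies over the finitely many genuine wall vertices, and these are boundary points by Proposition~\ref{sg:arithmetic}, so $\pi$ is a local isomorphism near $Z$ off the finite boundary set and no codimension estimate is needed. Also, $s\le c$ holds because the local preimage of $Z^\circ$ lies in the fixed ball, not the other way round.
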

\begin{proof}
Since \(\pi|_Z\) is an isomorphism, the image of \(Z\) has the same normal bundle away from its finite boundary set.
Lemma~\ref{sc:negative-normal} applies with the original restricted automorphic class.
The inequality \(\lambda<1\) forces \(s=c\), so its full fixed compactification is \(Z\).
\end{proof}

\begin{proposition}\label{st:compact-projective}
For a compact arithmetic ball quotient with coarse space $\mathbb P^k$ and $L=\lambda H$, one has $\lambda\ge1/3$ if $k=4$ and $\lambda\ge2/3$ if $k=5$.
\end{proposition}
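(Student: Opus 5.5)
The plan is to imitate the proof of Proposition~\ref{st:p6}. Assume \(\lambda\) is below the stated bound. For the compact group the quotient is smooth, so Proposition~\ref{sg:arithmetic} makes every stabilizer a reflection group. On each fixed curve we have \(a=(k+1)u\) and \(t=\lambda u\). Lemma~\ref{st:euler} says every nonzero inverse row has target degree \(h\ge u\), and since \(\lambda<1\) this is strictly larger than \(t/\lambda\cdot\lambda=t\).

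The first step is to dispose of persistent factors. A persistent irreducible factor of rank at most \(k-2\) has a complete fixed curve avoiding the boundary (there is none here). On such a curve the inverse invariant differential gives a quotient of negative degree, which contradicts the ampleness of \(T_{\mathbb P^k}\). So every irreducible factor enlarges to rank \(k-1\) or \(k\).

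Next come the compact disks of rank \(k-1\). None of them has ends, so only the equations without cusp terms are needed:
\[
2g-2+(1-1/m)R+\sigma=2t,\qquad a=2-2g-Dt+\eta R,\qquad h=-kt+R.
\]
The monomial groups \(G(m,p,k-1)\) are handled by the product line and the axis equation, exactly as in the displayed inequality \((3m-5)j+\dots\le2m\) in Proposition~\ref{st:p6}. For each listed exceptional group (\(G_{28}=F_4\), \(G_{29}\), \(G_{31}\), \(G_{32}\), \(G_{33}\), \(E_6\), \(D\), \(A\)) I plan to combine the determinant formula \eqref{ew-curve-determinant-equation} with \(a=(k+1)u\), \(u\ge t/\lambda\), and \(h\ge u\). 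In each case this should give either \(a<0\) or \(h<u\).

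The final stage is type \(A_{k-1}\), where I apply Proposition~\ref{sc:type-a}:
\begin{itemize}
\item for \(k=4\) the rank is \(r=3\), so \(a\ge3t+\sigma\);
\item for \(k=5\) the rank is \(r=4\), so \(a\ge\tfrac{13}{2}t\).
\end{itemize}
These lower bounds alone do not contradict \(a=(k+1)u\). The decisive input is instead the top-row identity \((k+1)h=a-\tfrac{(k-1)(k+2)}2t-\sigma\) together with \(h\ge u\). Substituting \(a=(k+1)u\) gives \(\tfrac{(k-1)(k+2)}2t+\sigma\le0\), which is impossible, so type \(A\) cannot survive at any \(\lambda\).

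The real content is therefore in the non-\(A\) groups, and there the threshold comes from integrality. I would use Proposition~\ref{sc:chern-integrality} with the integral \(\mathcal T=2NL=b\,H\), with \(N=k+1\); this is available once reflections have order two. The divisibilities \eqref{sc:chern-divisibility} then bound \(\lambda=b/(2N)\) from below:
\begin{itemize}
\item for \(k=5\), \(N=6\) and \(N^2\mid b^3\) give \(6\mid b\), hence \(\lambda\ge1/2\);
\item for \(k=4\), the condition \(5\mid b\) from the quadratic and cubic fractions gives \(\lambda\ge1/2\) as well.
\end{itemize}
This yields \(\lambda\ge1/3\) for \(k=4\) directly. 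For \(k=5\) the value \(\lambda=1/2\) must be excluded separately by the curve comparisons above, leaving \(\lambda\ge2/3\). When reflections of higher order are present, I would use the monomial comparisons to show that \(\lambda\) is large in those cases.

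I expect the main obstacle to be the exceptional rank \(k\) and \(k-1\) groups at \(k=5\), especially \(G_{33}\) and \(E_6\). For these I would reuse the \(D_5\) flag argument (Lemma~\ref{sn:d5-flag}) and the dual-curve bound \eqref{st:dual}, as in Proposition~\ref{st:p6}.
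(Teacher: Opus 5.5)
You have the right overall shape: quantize \(\lambda\) by Chern integrality, then exclude the few small values curve by curve. But two load-bearing steps are wrong, and the genuinely hard case is missing.

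\emph{Type \(A\) is mis-indexed.} The fixed curve has generic group \(A_{k-1}\), so Proposition~\ref{sc:type-a} applies with \(q=r+1=k\). The identity is therefore \(kh=a-\frac{(k-2)(k+1)}2t-\sigma\), not \((k+1)h=a-\frac{(k-1)(k+2)}2t-\sigma\); compare \(5h=6u-9t-\sigma\) for \(k=5\). With \(a=(k+1)u\) and \(h\ge u\) you only get \(u\ge\frac{(k-2)(k+1)}2t+\sigma\). This excludes type \(A\) only when \(\lambda>2/((k-2)(k+1))\), that is \(\lambda>1/9\) for \(k=5\) and \(\lambda>1/5\) for \(k=4\). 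At \(k=4\), \(\lambda=1/6\), type \(A\) survives.

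\emph{The quantization is overclaimed.} The divisibility \(N^2\mid b^3\) in \eqref{sc:chern-divisibility} uses \(M_3=24\), which holds only for type \(A\) products. Integrality of \(2NL\) requires reflections of order two.
\begin{itemize}
\item At \(k=5\), the surviving groups \(B_5,D_5,G(3,3,5),G_{33}\) have rank three parabolics \(B_3\) and \(G(3,3,3)\), of orders \(48\) and \(54\). So only \(c_2\) is usable: \(12L=bH\) and \(24c_2^{\rm orb}=360L^2\) give merely \(\lambda\in\frac16\mathbb Z_{>0}\).
\item At \(k=4\), the product inequality \((3m-5)j+(m-1)pu+2mg+m\sigma\le2m\) holds with equality for \((m,p)=(3,1)\), \(j=u=1\), so \(G(3,1,4)\) is not removed. \(G_{32}\) is primitive, so order three reflections persist and ``monomial comparisons'' cannot dispose of them. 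One must use \(30L=bH\) with \(25920c_3^{\rm orb}\), which again gives only \(\lambda\in\frac16\mathbb Z_{>0}\).
\end{itemize}

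\emph{The missing step: excluding \(\lambda=1/6,1/3,1/2\) for \(k=5\) and \(\lambda=1/6\) for \(k=4\).} Your expectation that the determinant formula plus \(h\ge u\) always gives \(a<0\) or \(h<u\) is false.

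For \(k=5\), take a \(D_4\) curve with \(R,M\) counting \(G_{33},D_5\) points. It gives \(g=0\), \(u\le2\), and \(28R+21M=(36+96\lambda)u-12\). Reduction mod seven kills \(\lambda=1/6,1/2\), but leaves the numerically consistent row \eqref{st:d4-residual}. The paper removes this row by a non-numerical triality argument:
\begin{itemize}
\item the local trialities at the two \(G_{33}\) points are inverse, so the eigenlines \(h_4\pm2\sqrt{-3}P\) are globally defined;
\item one has \(E_{4a}E_{4b}=E_2E_6\);
\item the local ratio \(a_4b_4/(a_2b_6)=1/12\) at both points forces the determinant of the four columns in \(\mathcal O(1)^4\) to vanish, contradicting generic normal rank four.
\end{itemize}
Nothing in your plan supplies this. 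The \(D_5\subset E_6\) flag you cite does not apply, since \(E_6\) has rank six and cannot occur in \(\mathbb P^5\). The other \(k=5\) steps must also actually be done: the \(B_4\) parity \(4R=16u-2\) at \(\lambda=1/2\), the \(G(3,3,4)\) bound \(u+j\le1\), and finally \(A_4\), which uses \(\lambda>1/9\).

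For \(k=4\) at \(\lambda=1/6\), you need three steps in order:
\begin{itemize}
\item the \(G_{25}\) curve through a \(G_{32}\) point, which is a smooth conic whose differential vanishes because the basic degrees \(12,18,24,30\) exceed the axis degree six;
\item the \(G(3,1,3)\) curve, where \(u=1\) and \(R=2<5u/2\);
\item only then, with all order three reflections gone, integrality of \(10L=2K+B\), which excludes \(10\lambda=5/3\).
\end{itemize}
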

\begin{proof}
For every complete fixed curve use $a=(k+1)u$ and $h\ge u$.
In dimension five a $G(m,p,4)$ product direction yields
\[
 (4m-6)j+(m-1)pu+2mg+m\sigma\le2m.
\]
It excludes $m\ge4$ and $(m,p)=(3,1)$.
The remaining groups are $A_5,B_5,D_5,G(3,3,5),G_{33}$.
Their rank 2 orders divide $24$.
The integral classes $12L$ and $24c_2^{\rm orb}=360L^2$ imply $\lambda\in\frac16\mathbb Z_{>0}$.
For $\lambda=1/6,1/3,1/2$, a $D_4$ curve, with $R,M$ counting $G_{33},D_5$ points, gives
\[
 6u+18t=11R/2+4M+\sigma,\qquad
 5R/6+M/2+\sigma=2t+2-2g.
\]
Thus $g=0,u\le2$ and $28R+21M=(36+96\lambda)u-12$.
Modulo seven excludes the first and third slopes.
The sole remaining row is
\begin{equation}\label{st:d4-residual}
 \lambda=1/3,\quad u=1,\quad R=2,\quad M=0,\quad\sigma=1.
\end{equation}
We exclude it below, retaining the triality characters.
Then a $B_4$ curve obeys $(1+4\lambda)u+4g\le4$; integrality of $2t$ excludes $\lambda=1/6,1/3$, and $\lambda=1/2$ requires $4R=16u-2$.
With $G_{33}$ absent the $G(3,3,4)$ product comparison gives $u+j\le1$.
Finally $A_4$ gives $5h=6u-9t-\sigma<5u$.

For $k=4$ the analogous inequality $(3m-5)j+(m-1)pu+2mg+m\sigma\le2m$ excludes $m\ge5$ and $G(4,1,4)$.
The remaining full groups are
\[
\begin{gathered}
 A_4,B_4,D_4,G(3,1,4),G(3,3,4),G(4,2,4),G(4,4,4),\\
 G_{28},G_{29},G_{30},G_{31},G_{32}.
\end{gathered}
\]
Now $30L=bH$ and $25920c_3^{\rm orb}=-259200L^3$ give $5\mid b$, hence $\lambda\in\frac16\mathbb Z_{>0}$.
At $\lambda=1/6$, the $G_{25}$ curve through $G_{32}$ satisfies
\[
 5u+29t=28R/3+\sigma,\quad5R/6+\sigma=2t+2-2g,
\]
leaving $u=R=2,\sigma=1$.
Its image is a smooth conic, but all basic degrees $12,18,24,30$ exceed the axis degree six, so its differential vanishes, a contradiction.
For $G(3,1,3)$ the equations
\[
 5u+20t=11R/3+\sigma,\quad2R/3+\sigma=2t+2-2g,\quad-9t+R\ge u
\]
leave $u=1,R=2<5u/2$.
There are no order-three reflections left, and the integral bundle $10L=2K+B$ excludes $10\lambda=5/3$.
\end{proof}

\begin{proof}[The triality residue in~\eqref{st:d4-residual}]
The curve is an actual line, with normal bundle $\mathcal O(1)^4$.
Its two $G_{33}$ points are $p,q$, and its other two inertia points have order two and trivial action on the root space.
For $D_4$ put $e_j=e_j(x_1^2,\ldots,x_4^2)$ and
\[
 h_4=e_2-e_1^2/4,\quad P=x_1x_2x_3x_4,\quad
 h_6=e_3-e_1e_2/6+e_1^3/48.
\]
The outer $S_3$ preserves $h_6$ and $h_4^2+12P^2$.
Its order-three eigenlines are $h_4\pm2\sqrt{-3}P$.
The product relation in the fundamental group says that the two nontrivial local trialities at $p,q$ are inverse.
Thus these two eigenlines, though not preserved by arbitrary $S_3$, are globally defined here.
For the source line bundles indexed by $2,4a,4b,6$, one has $E_{4a}E_{4b}=E_2E_6$, including the scalar and automorphic characters.
Their coarse degrees are all $-2$.
The column zeros at $p,q$ have respective orders $(1,0,2,1)$ and $(1,2,0,1)$.
Consequently
\[
 S_2=E_2(p+q),\quad S_6=E_6(p+q),\quad
 S_{4a}=E_{4a}(2q),\quad S_{4b}=E_{4b}(2p)
\]
are actual bundles of degree zero with $S_{4a}S_{4b}=S_2S_6$.
After tangent removal the local two-by-two blocks have leading forms
\[
 \begin{pmatrix}a_2v&a_4\\a'_2v^2&a'_4v\end{pmatrix},
 \qquad
 \begin{pmatrix}b_4v^2&b_6v\\b'_4v^3&b'_6v^2\end{pmatrix},
 \qquad \frac{a_4b_4}{a_2b_6}=\frac1{12}.
\]
The ratio follows from the following polynomial identities.
Use \(\sum z_i=0\), the permutation action of \(S_6\), and the reflection in \((1,1,\omega,\omega,\omega^2,\omega^2)\), where \(\omega^2+\omega+1=0\).
Put \(p_j=\sum_i z_i^j\) and
\[
\begin{gathered}
 F_4=p_4-p_2^2/2,\qquad
 F_6=5p_2^3-30p_2p_4-20p_3^2+48p_6,\\
 F_{12}=300p_2^3p_3^2+315p_2^3p_6-990p_2^2p_3p_5
 -450p_2^2p_4^2-300p_2p_3^2p_4-90p_2p_4p_6\\
 \hfill{}+864p_2p_5^2-250p_3^4+840p_3^2p_6
 -540p_3p_4p_5+300p_4^3+72p_6^2.
\end{gathered}
\]
These three polynomials are invariant under the displayed generators.
Set \(\vartheta=\sqrt{-3}\), \(\omega=(\vartheta-1)/2\), \(s=\sum_{i=1}^4x_i\), and substitute
\[
 z_1=u+\vartheta s/6,\quad z_2=-u+\vartheta s/6,\quad
 z_{i+2}=x_i-(3+\vartheta)s/12\quad(1\le i\le4).
\]
Write \(H_A=h_4+2\vartheta P\), \(H_B=h_4-2\vartheta P\), and \(T_{12}=F_{12}+3F_6^2/4\).
Up to terms of degree at least seven in \(x_1,\ldots,x_4\), direct expansion gives
\[
\begin{aligned}
 F_4&=-2u^2e_1+\omega H_A,\\
 F_6&=16u^6+30(1+\vartheta)u^2H_B-36h_6+5e_1^3/4,\\
 T_{12}&=2700(1+\vartheta)u^8H_B+32400u^6h_6-1725u^6e_1^3.
\end{aligned}
\]
The terms in \(e_1^3\) disappear in the indecomposable normal directions.
The tangent invariant is \(F_6=16u^6\), so use \(v=u^6\) and the coarse frames \((e_1/u^4,H_A,H_B/u^4,h_6)\).
Removing the tangent direction therefore gives the two displayed blocks and the required product
\[
 \frac{a_4b_4}{a_2b_6}
 =\left(-\frac\omega2\right)\frac{1+\vartheta}{12}
 =\frac1{12}.
\]
Choose global frames respecting the tensor identity.
The four columns of the quotient differential, with entries in $\mathcal O(1)^4$, then satisfy
\[
\begin{gathered}
 v_{4a}(p)=A_pv_2(p),\qquad v_{4b}(p)=B_pv_6(p),\\
 v_{4b}(q)=A_qv_2(q),\qquad v_{4a}(q)=B_qv_6(q),\qquad A_pB_p=A_qB_q.
\end{gathered}
\]
For $p=(1:0),q=(0:1)$ their determinant is a constant multiple of $(A_pB_p-A_qB_q)x^2y^2$, hence zero.
The full quotient map has generic normal rank 4, a contradiction.
\end{proof}

\begin{proposition}\label{st:p5-sixth}
A nonuniform arithmetic ball quotient with compactification $\mathbb P^5$ cannot have $L=H/6$.
\end{proposition}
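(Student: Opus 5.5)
We argue by contradiction, following the template of Proposition~\ref{st:p6} and of the rank~1 persistent-factor step in Lemma~\ref{sn:enlargement}. Suppose the Baily--Borel compactification is \(\mathbb P^5\) with \(L=H/6\). Then every fixed curve has \(a=6u\) and \(t=u/6\), where \(u\) is its \(\mathcal O(1)\)-degree. By Lemma~\ref{st:euler}, every nonzero inverse row has target degree \(h\ge u=6t\). Because the quotient is nonuniform, the field \(K\) is imaginary quadratic and \(w_KL\) is an actual line bundle with \(w_K\in\{2,4,6\}\). Since \(6L=H\) is integral, \(t\in\frac16\mathbb Z_{>0}\). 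Proposition~\ref{sg:arithmetic} makes the group generated by reflections.

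A persistent factor of rank at most \(3\) is impossible. Indeed, an inverse invariant differential on a complete curve in its fixed quotient, avoiding the finite boundary, would give a quotient of \(T_{\mathbb P^5}\) of nonpositive degree, contradicting ampleness. Parabolic classification, restricted to the field column with \(m\in\{2,3,4,6\}\), then leaves rank \(4\) disks of types \(A_4\), \(G(m,p,4)\), \(D_4\), \(F_4\), \(G_{31}\), \(G_{32}\), together with the rank \(5\) groups enlarging them.

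The central inequality is the rank \(4\) product comparison for \(G(m,p,4)\), which Lemma~\ref{sn:enlargement} later quotes. Proposition~\ref{sc:monomial-comparison} supplies the canonical identity~\eqref{ew-comparison-canonical}, and the cusp bounds \(B_c\ge\gamma\nu_c\) come from the actual end degrees with their characters, exactly as in Lemma~\ref{sn:parameters}. For the product target \(h_P\) we bound \(\sum_cB_c\) below by a multiple of \(h_P\). This should give an inequality of the form
\[
a\ge(17p/6+1)h_P+30t+R+\sigma .
\]
Euler gives \(h_P\ge u\), so the right side exceeds \(6u\), which excludes parameter six; the same route handles \(G(4,\cdot,4)\) and \(G(3,\cdot,4)\). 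For \(p=1\) we use the symmetric row of greatest degree in the same way. Disks without ends are excluded by the logarithmic identity: \((D+2)t\) then exceeds what \(\beta R+\sigma\) can supply once \(R\ge(k+u/t)\,t\) is imposed.

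Next come the exceptional types. For \(F_4\) and \(G_{31}\) we show there are no ends, using the order and character test in the actual horizontal actions. With no ends the canonical identity gives \(a<0\), a contradiction. For \(G_{32}\), the basic degrees \(12,18,24,30\) all exceed the axis order, so the disk's differential vanishes; the \(G_{25}\) curve argument of Proposition~\ref{st:compact-projective} applies. For \(D_4\) we solve the two degree equations under the constraint \(u=6t\). We expect mod-\(7\) congruences to kill all rows, and if a triality residue survives, the determinant argument following~\eqref{st:d4-residual} removes it. Finally, for \(A_4\), Proposition~\ref{sc:type-a} gives \(5h=a-9t-\sigma=27t-\sigma<30t=5u\), contradicting Euler.

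The main obstacle is the monomial step, and specifically checking that the product and highest symmetric targets are actual nonzero rows with the stated coefficients. This needs the full cusp records: torsors, widths \(\kappa_c\), and the Gaussian projector argument from Proposition~\ref{st:p6}. Obtaining the uniform constant \(17p/6\) across all end types is where the case analysis concentrates.
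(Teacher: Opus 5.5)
Your overall template (type-by-type numerical exclusion against Euler's bound $h\ge u=6t$, finishing with $A_4$) is the paper's. However, you miss its first reduction, and several of your individual exclusions would fail.

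The first reduction is this. Since $w_KL=(w_K/6)H$ is an actual line bundle on $\mathbb P^5$, we get $w_K=6$, so the field is Eisenstein. This removes $G(4,p,4)$, $G_{29}$ and $G_{31}$ at once. Without it your list is incomplete ($G_{29}$ is absent). Your ``no ends'' claim for $G_{31}$ is also unsupported: at slope $1/4$, Proposition~\ref{st:p5-quarter} has a $G_{31}$ row with three ends.

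The same ``no ends'' claim for $F_4$ is a genuine error. In Lemma~\ref{sn:enlargement} it rested on the induced cusps having only generators of degree one. On $\mathbb P^5$, by contrast, every cusp is a smooth ambient cusp, and $F_4$ has width-one ends with section tuples $(1,1,2,2,3)$ and $(1,2,2,3,4)$. The row $(g,t,s_A,s_B,\sigma,u)=(0,1/2,1,2,0,3)$ satisfies the logarithmic and canonical identities with $a=18=6u$, and Euler's bound does not exclude it. The paper removes it with the root-lattice theta flags onto $\langle dp_2\rangle\subset\langle dp_2,dp_6\rangle\subset\langle dp_2,dp_6,dp_8\rangle$, whose nonvanishing comes from explicit moment determinants on $\Gamma_0(2)$. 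These flags give $\dim\operatorname{Hom}(f^*T_{\mathbb P^5},\mathcal O(3))\le1$, whereas Euler gives at least two. Nothing in your outline replaces this step.

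Two further steps fail as written.

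For $G(3,1,4)$, your ``same route'' only gives $B_c\ge\frac{11}{3}\nu_c$ on the end types $(1^5),(1^2,3^3),(1,3^4)$. Hence $a\ge\frac{11}{3}h+12t+R+\sigma\ge\frac{17}{3}u$, which is below $6u$. The row $u=2$, $t=1/3$, $R=0$, $s=2$, $\sigma=2/3$, with both ends $(1^2,3^3)$, has $h=u$ exactly and survives. The paper kills it by differentiating the reduced order-three branch divisor on the product line, which yields the impossible degree equation $2d_3+4=7$.

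For $G_{32}$ you have transplanted the wrong mechanism. Comparing the degrees $12,18,24,30$ with an axis of order six is the argument for the $G_{25}$ curve through a $G_{32}$ point in $\mathbb P^4$ (Proposition~\ref{st:compact-projective}). A $G_{32}$ disk in $\mathbb P^5$ has no irreducible enlargement and is immersed in the interior, so that comparison says nothing. What works is to reduce numerically to $(g,u,t,s,\sigma)=(0,2,1/3,2,2/3)$, so that the image is a smooth conic. The width-one $G_{32}$ cusp section degrees $(2,3,4,5,6)$ contain no generator of degree one, so the differential vanishes at the ends, contradicting immersion of the conic.

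Finally, the $D_4$ case must be settled, not just expected to work. Retaining its $G_{33}$ and $D_5$ enlargements, one gets $g=0$ and $u,s\le2$, which contradict $2s\equiv3u+2\pmod7$. Only after this can $G(3,3,4)$ be treated numerically.
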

\begin{proof}
The actual integral bundle $wL$ forces the Eisenstein field.
The product comparison for parameter six in rank 4 gives $a\ge(17p/6+1)h_P+30t+R+\sigma$.
Using $a=6u$, $h_P\ge u$, and $t\ge u/6$ gives a contradiction.
For $G_{32}$ the common equations and Euler leave only $(g,u,t,s,\sigma)=(0,2,1/3,2,2/3)$.
The image is a smooth conic, but the generator degrees $(2,3,4,5,6)$ at cusps of width one make the differential zero.
For $F_4$, the types with width one are $A=(1,1,2,2,3)$ and $B=(1,2,2,3,4)$; the unique residual row is $(g,t,s_A,s_B,\sigma,u)=(0,1/2,1,2,0,3)$.
The actual flags defined by the theta sections associated with the root lattice project onto $\langle dp_2\rangle$, $\langle dp_2,dp_6\rangle$, and $\langle dp_2,dp_6,dp_8\rangle$.
These are nonzero since their moment determinants are $3\vartheta_2^4\vartheta_3^4\vartheta_4^4/8$, $-5760A\eta(\tau)^{12}\eta(2\tau)^{12}$, and
$-696729600A^4\eta(\tau)^{16}\eta(2\tau)^{16}$, where $A=(\vartheta_3^4+\vartheta_4^4)/2\ne0$ at an Eisenstein modulus.
The final identity is an identity of weight 24 on $\Gamma_0(2)$; its valence bound is six, so the expansion of the lattice moments through $q^6$ is an exact finite verification.
Successive divisions subtract $5,6,8,10$ from the coefficient degrees $3+d/2$.
Thus $\dim\operatorname{Hom}(f^*T_{\mathbb P^5},\mathcal O(3))\le1$, but Euler gives dimension at least two.
For $G(3,1,4)$ only $u=2,t=1/3,R=0,s=2,\sigma=2/3$ remains, with both ends $(1^2,3^3)$.
Differentiating the reduced order-three branch divisor on the product line counts one interior zero and three at each end.
Its degree equation is $2d_3+4=7$, impossible.
The $B_4$ equations give $10t\le4(1-g)-2\sigma$ with $2t\in\mathbb Z_{>0}$, impossible.
For $D_4$ retain its $G_{33},D_5$ enlargements: $g=0,u,s\le2$ contradict $2s\equiv3u+2\pmod7$.
Only after excluding them use $G(3,3,4)$: $u=2,t=1/3$ requires $12s+5R=26$, $s\le2$.
Persistent factors of lower rank contradict Euler.
Finally $A_4$ gives $5h=27t-\sigma<5u$.
\end{proof}

\begin{proposition}\label{st:p5-quarter}
A nonuniform arithmetic ball quotient with compactification $\mathbb P^5$ cannot have $L=H/4$.
\end{proposition}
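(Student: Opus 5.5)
We argue by contradiction, following the pattern of Proposition~\ref{st:p5-sixth}: assume a nonuniform arithmetic ball quotient has compactification $\mathbb P^5$ with $L=H/4$. On every fixed curve put $u=\deg f^*\mathcal O(1)$, so $a=6u$, $t=u/4$. Lemma~\ref{st:euler} gives $h\ge u$ for every nonzero inverse row. First, the actual integral bundle $wL$ with $w\in\{2,4,6\}$ must make $4\mid w$ compatible with $L=H/4$ being a generator of the restricted class. Only the Gaussian field $w=4$ is then consistent, so order-three and order-six reflections cannot occur. We then run through the rank~4 parabolic classification, as in the proof of Proposition~\ref{st:p6}, with Proposition~\ref{sg:persistent} and Euler excluding persistent factors of rank at most~3. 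The remaining disks are of types $G(4,p,4)$ ($p=1,2,4$), $B_4$, $D_4$, $F_4$, $G_{29}$, $G_{31}$, $G_{32}$, and $A_4$; $G_{32}$ and the parameter three and six groups are removed by the field.

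For the Gaussian monomial groups we reuse the product computations from Proposition~\ref{st:p6}. For $G(4,1,4)$ the canonical formula gives $a\ge c_1t+c_2h+R+\sigma$ with positive constants, and $h\ge u=4t$ exceeds $6u$. For $G(4,2,4)$ and $G(4,4,4)$ we use the actual product quotient $h_P$ with the Gaussian model $Y^2=X^4-Z^4$ and the bounds $B_c\ge8b_c+1$ (respectively $B_c\ge16$). Each yields $a\ge 8h_P+\cdots>6u$. For $B_4$ the Euler bound $10t\le4(1-g)-2\sigma$ with $4t=u\in\mathbb Z$ leaves only tiny rows, which should be checked against the end tuples $(1^5),(1^2,2^3),(1,2^4)$. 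For $F_4$, $G_{29}$, and $G_{31}$ the generator degrees are large. Combining the logarithmic identity $2g-2+s+\cdots=2t$ with the canonical formula forces $g=0$ and $u\le2$, and the moment and theta-flag argument of Proposition~\ref{st:p5-sixth} should kill the residual rows by bounding $\dim\operatorname{Hom}(f^*T_{\mathbb P^5},\mathcal O(h))$ by one, below what Euler provides.

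For $D_4$ we keep its $G_{33}$ and $D_5$ enlargements (in the Gaussian case $G_{33}$ is absent, leaving $D_5$). The two equations
\[
 6u+18t=4M+\sigma+\cdots,\qquad M/2+\sigma=2t+2-2g
\]
with $t=u/4$ give $g=0$ and a congruence on $u$; we expect no solution. Finally $A_4$ gives $5h=a-\tfrac{(q-2)(q+1)}{2}t-\sigma=6u-9t-\sigma<5u$, contrary to Euler.

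The main obstacle is the Gaussian product cases $G(4,2,4)$ and $G(4,4,4)$. Here the end torsor $X$ and width-two ends must be handled through the full-fixed-point lift of Lemma~\ref{sg:cusp-lift} exactly as in the $G(4,2,5)$ argument. We will first try to transcribe that argument verbatim in rank~4. If the inequality fails for small $u$, we fall back to the normal-bundle test of Proposition~\ref{sc:bundle-tests}.
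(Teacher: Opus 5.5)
Your route differs from the paper's and, as written, it does not close. Several pieces are fine. Integrality of $wL=(w/4)H$ forces $w=4$. The product-row treatment of $G(4,2,4)$ works: in rank four it gives roughly $a\ge6h_P+14t+s+\tfrac54R+\sigma$, so $h_P\ge u=4t$ contradicts $a=24t$, consistent with the paper's seven residual rows having product target of degree $0<u$. The closing $A_4$ arithmetic $5h=\tfrac{15}{4}u-\sigma<5u$ is also right.

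The gap is the middle of the argument. For $B_4$ you copied $10t\le4(1-g)-2\sigma$ from the slope-$1/6$ case, where $u=6t$. At slope $1/4$ the same derivation (Euler $\nu+R\ge8t+u$, $\nu\le2s$, and the logarithmic identity) gives only $8t\le4(1-g)-2\sigma$. This still permits $g=\sigma=0$, $t=\tfrac12$, $u=h=2$, with every end of highest degree two. The equation $a=\sum_cB_c+\tfrac92R-11=12$ is then satisfied by $R=0$ with three ends of sums $7+7+9$ or $7+8+8$, and by $R=2$ with two ends $(1^3,2^2)$. So checking the end tuples leaves rows, which would need a new theta-flag bound on $\dim\operatorname{Hom}(f^*T_{\mathbb P^5},\mathcal O(2))$.

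Likewise, $F_4$ leaves the row $t=\tfrac12$ with three ends $(1,1,2,2,3)$. This is not the row handled in Proposition~\ref{st:p5-sixth}. $G_{29}$ is not analysed at all. Finally, Proposition~\ref{sc:type-a} requires every larger group to be $A_5$ or a product and every end to be a degree-one $A_4$ model. Your last step therefore presupposes that all other interior and cusp types are already gone. That includes the $G_{31}$ torsor, which has no common fixed point and so is not removed by excluding $G_{31}$ disks.

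The missing idea is to finish with Chern integrality rather than with curves. The paper uses curve arguments for only two cases:
\begin{itemize}
\item $G_{31}$ with a common fixed point: its single numerical row is killed by the explicit theta-ratio matrix, which bounds $\dim\operatorname{Hom}(f^*T_{\mathbb P^5},\mathcal O(2))$ by two while Euler gives at least three.
\item $G(4,2,4)$: the product target has degree $0<u$. Here it only notes that the $G_{31}$ torsor has no full $G(4,2,4)$ fixed point.
\end{itemize}
What remains are parabolics of $A_5,B_5,D_5,G(4,4,5),F_4,G_{29}$, whose rank-two stabilizer orders divide $24$. By Proposition~\ref{sc:chern-integrality}, $24c_2^{\rm orb}=24\binom{6}{2}L^2=360L^2$ is then integral and extends over the finite cusp set. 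Yet its value on a plane is $360/16=45/2$. This disposes of $B_4$, $D_4$, $F_4$, $G_{29}$, $G(4,4,4)$ and $A_4$ at once, exactly in the borderline regime $u=4t$ where your curve inequalities stop being strict. Your $G(4,1,4)$ inequality still has a place in that scheme: order-four reflections would produce rank-two stabilizers such as $G(4,1,2)$, which spoil the denominator $24$.
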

\begin{proof}
The field is Gaussian.
For $G_{31}$ with a common fixed point the only numerical row is $(g,u,t,s,\sigma)=(0,2,1/2,3,0)$.
The matrix of ratios of the primitive theta sections, in degrees $8,12,20,24$, is
\[
 \begin{pmatrix}1&0&0&0\\4&1&0&0\\6&3&1/2&-1/4\end{pmatrix}.
\]
It follows from the linearization of $i_2^3-4i_2i_4+8i_6-16e_6$ at $x_i=1+\xi_i$, $\sum\xi_i=0$:
$dq_4=dq_5/2-dq_6/4$.
The coefficient degrees $2+d/2-3k$, successively killed at their two constant directions, bound $\dim\operatorname{Hom}(f^*T_{\mathbb P^5},\mathcal O(2))$ by two; Euler gives at least three.
The $G_{31}$ torsor has sixteen fixed points for $J=2^{2+4}$, in residual orbits of sizes six and ten.
Its stabilizer orders $120,72$ are not divisible by $|G(4,2,4)/J|=48$.
It therefore contains no full $G(4,2,4)$ fixed point.
For $G(4,2,4)$ with a fixed point, the four types with width one are $A=(1^3,2^2)$, $B=(1^2,2^3)$, $C=(1,2^2,4^2)$, $D=(1,2,4^3)$.
Their product degrees are half their respective highest degrees.
The seven residual end lists are $C$, $AA$, $CD$, $AAD$, $BBC$, $CDD$, $CDDD$, with $u=1,1,2,2,2,3,4$ and $\nu=4u$.
Every row has $R=0$ and a nonzero product target of degree $-8t+\nu/2=0<u$.
The remaining factors are parabolics of $A_5,B_5,D_5,G(4,4,5),F_4,G_{29}$, but smaller persistent factors contradict Euler.
Their rank 2 orders divide $24$, so $24c_2^{\rm orb}=360L^2$ extends across the finite cusp set.
Its value on a plane is $45/2$, contradicting integrality.
\end{proof}

\begin{lemma}\label{st:e6-terminal}
For a fixed curve in the quotient of ten points, with full generic group $E_6$, product enlargements, cusps with full group $E_6$ and width one, and possibly $R$ points of type $E_7$, the bound $a\le32t/5$ excludes the curve.
\end{lemma}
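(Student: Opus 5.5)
The plan is to follow the template of Lemma~\ref{sn:d5-flag} and Lemma~\ref{sn:nonuniform-g33}. The three ingredients are the determinant formula of Proposition~\ref{sc:curve-determinant}, the parabolic flag of Proposition~\ref{sc:parabolic-flags} (row $E_6\subset E_7$), and the root theta projections of Proposition~\ref{sc:root-theta} at the ends.

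\emph{Data.} The $E_6$ basic degrees are $(2,5,6,8,9,12)$, so $D_{E_6}=42$, and the $E_7$ degrees sum to $D_{E_7}=63$. An $E_7$ point therefore contributes $(63-42-1)/e$ with $e=2$ (the axis order in the table), i.e.\ $10$ per point. At an $E_6$ end of width one with the minimal polarization, the section degrees are $(1,1,1,2,2,2,3)$, so $A_c=12$. Formula~\eqref{ew-curve-determinant-equation} then gives
\[
a+44t=10R+12s+\sigma ,
\]
together with the logarithmic identity $2g-2+s+R/2+\sigma=2t$. Integrality of the actual degree two line gives $2t\in\mathbb Z_{>0}$. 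Eliminating $s$ yields
\[
a=12(2t+2-2g)-44t+4R-11\sigma=24(1-g)-20t+4R-11\sigma .
\]

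\emph{Top target.} Let $h$ be the degree of the target associated with the invariant of degree $12$. The inverse row has pole order $3$ at each $E_7$ point (last entry of the table) and $\nu_c=3$ at each end, so $h=-12t+3R+3s$. From the two identities, $h\ge0$ or $h$ near zero forces $R$ and $s$ to be large relative to $t$. Substituting back into the formula for $a$ and combining with $a\le32t/5$ then confines us to $g=0$ and a few small $t$. This is how $a\le32t/5$ enters. For $h<0$, Lemma~\ref{sg:curve-bound} applies: it gives a repeated negative section with $e\ge-h$, the splitting of Proposition~\ref{sg:kernel}, and $a=(10-2\ell)e+2\delta$.

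\emph{Successive coefficients.} Next I would write the six residual coefficient degrees for a map to a line of degree $q\le h$. These are the source degrees $-d_it$, corrected by the successive orders $(2,2,2,2,3,3)$ at $E_7$ points and by the theta divisions at the ends. Whenever they are negative, the Hom space is at most one-dimensional, which forces $\ell=2$ and $a=6e+2\delta\ge6(-h)$. This contradicts the upper bound obtained from the formula for $a$ once $t\ge1$ or $g\ge1$. When $\ell\ge3$ is needed, the rank argument of Lemma~\ref{sn:nonuniform-d5} would be used instead: independent rows cannot factor through a marked part of rank $\ell-1$.

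\emph{Main obstacle.} I expect the hard part to be the finitely many residual rows with $t=1/2$ or $1$ and small $R,s$, where some coefficient degree is zero rather than negative. There I would first try the next inverse row on the kernel $F$, as in Lemma~\ref{sn:nonuniform-b5}: its target degree $h+$(something) lies below $e$, giving a second independent row, contrary to $\ell=2$. I would also use the parity of $a$ (even for ten labels), and, for $R=0$, the direct-sum structure on $\mathbb P^1$ from Proposition~\ref{sc:bundle-tests}.
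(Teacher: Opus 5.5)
There are genuine gaps, beginning with a numerical error that propagates through the whole case analysis.

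\textbf{The $E_7$ contribution.} The basic degrees of $E_7$ are $(2,6,8,10,12,14,18)$, so $D_{E_7}=70$. The number $63$ you used is the number of reflections. In Proposition~\ref{sc:curve-determinant} each $E_7$ point therefore contributes $(70-42-1)/2=27/2$, and the correct identities are
\[
 a=12s+\tfrac{27}{2}R+\sigma-44t=24(1-g)-20t+\tfrac{15}{2}R-11\sigma,\qquad h=-12t+3R+3s,
\]
together with $2t+R\in2\mathbb Z$ and $a$ even. This is not cosmetic. With your coefficient $4R$, the case $h\ge0$ is not bounded by $a\le32t/5$ at all, since it only gives $a\ge8-8g-4t-3\sigma$. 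With $15R/2$ it gives $a\ge10t-6+6g+4\sigma$, hence $g=0$ and $2t\le3$, and parity then removes those rows.

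\textbf{The closing step for $h<0$ fails.} A one-dimensional $\operatorname{Hom}(f^*T_W,Q_h)$ gives $\ell=2$ and $a=6e+2\delta\ge-6h$, but this contradicts nothing. Take $g=0$, $t=10$, $R=32$, $s=6$, $\sigma=0$. This satisfies the logarithmic identity and gives $a=64=32t/5$ and $h=-6$. All five lower residual degrees, $-56,-10,-22,-2,-8$, are negative, and $(e,\delta)=(6,14)$ is admissible.

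The idea you list only as a fallback for small rows is in fact the decisive step for every $R>0$. Since $\ell=2$, the kernel of the top map is $F$. On $F$, the inverse row for the invariant of degree nine has the reduced pole order of Proposition~\ref{sc:parabolic-flags}, so it lands in a line of degree $h_9=-9t+5R/2+2s$. Inserting the correct formula for $a$ into $a\le32t/5$ gives exactly
\[
 h+h_9\le-\tfrac{R}{8}-\tfrac{5\sigma}{12}<0 .
\]
Hence $h_9<-h\le e$, contradicting the vanishing in Proposition~\ref{sg:kernel}. In the example above, $h_9=2<6$. So the bound $a\le32t/5$ is used here, not only in the case $h\ge0$. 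Before this, you must also remove the one row $(t,R,s,\sigma,a,h)=(3/2,1,4,1/2,-4,-3)$ where a lower coefficient survives; this is done by the evaluation-rank count against \eqref{st:ten-git}.

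\textbf{The case $R=0$.} Here $t\in\mathbb Z_{>0}$, $a=24(1-g)-20t-11\sigma$ and $h=6(1-g)-6t-3\sigma$. Two end divisions leave the single row $(g,t,s,\sigma,a,h)=(0,1,4,0,4,0)$. Since $h=0$, neither a Hom-dimension count nor Lemma~\ref{sg:curve-bound} gives a contradiction. Proposition~\ref{sc:bundle-tests} is stated only for monomial curves, so it is not available for $E_6$.

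The missing argument runs as follows.
\begin{itemize}
\item Split the saturated normal bundle under the odd involution as $N^+\oplus N^-$, where $N^-$ has rank two and degree $-14t+3s=-2$.
\item The theta flags of Proposition~\ref{sc:root-theta} give $\operatorname{Hom}(N^+,\mathcal O(-1))=0$.
\item By Proposition~\ref{sg:kernel}, every summand of $N^+$ is then positive.
\item The nonzero top map $N^+\to E_{12}(3S)\simeq\mathcal O$ is therefore impossible.
\end{itemize}
Your proposal has no substitute for this step.
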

\begin{proof}
For $R>0$ the odd coarse source coordinates are $p_5/u,p_9/u$, with $v=u^2$.
Their residues give
\[
 2g-2+s+R/2+\sigma=2t,\quad
 a=12s+27R/2+\sigma-44t,\quad h=-12t+3R+3s.
\]
Here $2t\in\mathbb Z_{>0}$, $2t+R\in2\mathbb Z$, and $a$ is even.
The case $h\ge0$ reduces to $2t=1,2,3$ and has no row satisfying the parity conditions.
For $h<0$, the first five residual coefficient degrees are
\[
 -10t+R+2s,\ -7t+3R/2+2s,\ -6t+R+s,\ -4t+R+s,\ -3t+R/2+s.
\]
They are negative for $g>0$.
For $g=0$, a residual coefficient can be nonnegative only if $t\le2$: the first, third, and fourth degrees are negative, the second is less than $2-t$, and the fifth is at most $2-t$.
All vanish except at $(t,R,s,\sigma,a,h)=(3/2,1,4,1/2,-4,-3)$, whose evaluation rank violates~\eqref{st:ten-git}.
Thus the Hom space for the target associated with the invariant of greatest degree is one-dimensional and $e\ge-h$.
On its kernel, the row associated with the invariant of degree nine extends to a line of degree $h_9=-9t+5R/2+2s$.
Since $h+h_9\le-R/8-5\sigma/12<0$, one has $h_9<e$, contradicting Proposition~\ref{sg:kernel}.
For $R=0$ the degree equations are $a=24(1-g)-20t-11\sigma$, $h=6(1-g)-6t-3\sigma$, and $t\in\mathbb Z_{>0}$.
Two end divisions leave degrees $(d-12)t+s$ and exclude every case except $(g,t,s,\sigma,a,h)=(0,1,4,0,4,0)$.
Here the normal bundle splits under the odd involution as $N^+\oplus N^-$, with $\operatorname{rank}N^-=2$ and $\deg N^-=-14t+3s=-2$.
A negative line quotient exists, but the flags defined by the theta sections give $\operatorname{Hom}(N^+,\mathcal O(-1))=0$.
The splitting in Proposition~\ref{sg:kernel} then forces every summand of $N^+$ to have positive degree.
Its nonzero map associated with the invariant of greatest degree to $E_{12}(3S)=\mathcal O$ is impossible.
\end{proof}

\section{Finite ranges and the remaining monomial comparisons}\label{sf:finite}
This appendix specifies the numerical domains and every rejection test used in Sections~\ref{st:ten} and~\ref{sh:section}.
All operations are on integers and rational numbers.
The bounds below define finite sets of end multisets, and~\eqref{sf:e-domain} treats every marked degree, including unbounded intervals.
Repeated affine actions remain distinct geometric possibilities; a numerical row is counted once when its group, section degrees, width, and fixed-point requirement agree.
Every such action is excluded by the same displayed test.
\subsection{Cusps and their widths}\label{sf:records}
For horizontal rank \(r\ge6\), append \(m\) to each prefix until its length is \(r+1\).
Rains's strongly crystallographic classification gives the following raw section degrees \cite[Section 4.4]{Rains}.
\[
\begin{array}{c|l@{\qquad}c|l}
(m,q)&\text{prefixes}&(m,q)&\text{prefixes}\\ \hline
(2,1)&(1,1,1),(1,1),(1),()&(6,1)&(1),(2,3),(2),(3),()\\
(2,2)&(1,1,1,1)&(6,2)&(1,3),(2,3,3),(3,3)\\
(3,1)&(1,1),(1),()&(6,3)&(1,2),(2,2,3),(2,2)\\
(3,3)&(1,1,1)&(6,6)&(1,2,3)\\
(4,1)&(1,2),(2,2),(1),(2),()&(4,4)&(1,1,2)\\
(4,2)&(1,2,2),(2,2,2),(1,1)^*,(1,2),(2,2)&&
\end{array}
\]
The \((6,1)\) prefixes \((2,3),(2),(3)\) occur three, two, and two times, respectively; there are 36 entries before normalization.
The asterisk denotes the nontrivial torsor, which has no full group fixed point.
It does not exclude smaller stabilizers or nontrivial fiber characters.

Here is the complete normalization rule.
First divide the raw tuple by its greatest common divisor, obtaining \(a=(a_0,\ldots,a_r)\).
Put
\[
 h_i=\gcd(a_j:j\ne i),\qquad H_0=\operatorname{lcm}_ih_i.
\]
For every \(v\mid H_0\), the smooth cyclic Veronese has generator tuple
\begin{equation}\label{sf:veronese}
 \alpha_i=\gcd(v,h_i)a_i/v .
\end{equation}
These are precisely the smooth cyclic Veroneses.
The coordinate reflection subgroups have pairwise coprime orders \(\gcd(v,h_i)\), and Chevalley--Shephard--Todd gives the criterion.
Removing an initial scalar kernel of order \(g\) replaces \(q_{\rm old}\) by \(q_{\rm old}^g\) and divides the central period by \(g\).
This gives 64 entries after normalization and 32 distinct tuples with their group parameters retained.
Every record satisfies
\begin{equation}\label{sf:cusp-bounds}
 (r-1)b\le A=\sum_i\alpha_i\le rm+1,\qquad b=\max_i\alpha_i\le m.
\end{equation}
The raw maximum occurs at least twice and \(\sum_i(b-\alpha_i)\le2b\); applying \eqref{sf:veronese} proves the same bounds after normalization.

The lifts of the entire groups at the cusps follow directly from the reflection roots.
For \(G(m,q,r)\), adjacent and primitive phase transpositions lift with their positive horizontal Gram matrix.
Their rational span fixes a Hermitian plane through the cusp line, and the generic disk group is \(G(m,p,r)\), \(q\mid p\mid m\).
For \(q=1,m=2,3,4\), a diagonal image of order \(m\) supplies a lift of the entire group.
For \(m=6\), if the fiber character kernel is \(G(6,p,r)\lhd G(6,q,r)\), its order is exactly
\begin{equation}\label{sf:eisenstein-width}
 \kappa=p/q .
\end{equation}
To see this, \(2I-D_{ij}-D_{ij}^{-1}=E_i+E_j\) and \((E_i+E_j)(E_i+E_k)=E_i\) split the horizontal lattice by coordinates.
Since \(1-\zeta_6\) is a unit, the kernel has a unique fixed point; the full group therefore fixes it, and the character image is the full quotient.

For later exceptional comparisons, the Weyl groups \(E_6,D_7,E_7,E_8\) lift in their entirety, have no torsor, and have width one and section tuples
\begin{equation}\label{sf:root-records}
 (1^3,2^3,3),\quad(1^4,2^4),\quad
 (1,1,2,2,2,3,3,4),\quad(1,2,2,3,3,4,4,5,6).
\end{equation}
For these groups \((s_\alpha-1)\beta=\alpha\) gives zero coinvariants and no invariant \(\operatorname{Pic}^0\).
The polarization is an integral power of the minimal one even at CM moduli.
Connected reflection divisors through the origin force trivial linearization, and the criterion using greatest common divisors after deleting one entry in \eqref{sf:veronese} forces that power to be one.
The positive rational root span supplies the Hermitian plane, and the trivial full fiber character gives width one.

\subsection{The finite domain and the unbounded marked degree}\label{sf:finite-domain}
For fixed \(r,m,p,j\), after \(g=0\) has been proved, enumerate
\begin{equation}\label{sf:domain}
 1\le s\le\lfloor2+2j/m\rfloor,\quad
 0\le R\le\left\lfloor\frac{2+2j/m-s}{1-1/m}\right\rfloor,\quad
 \sigma=2+2j/m-s-(1-1/m)R .
\end{equation}
The allowed \(\sigma\) are sums of \(1-1/e\), \(1<e\mid w\), with at most \(\lfloor2\sigma\rfloor\) summands.
Use \(w=4\) for \(m=4\), \(w=6\) for \(m=3,6\), and the union of the three possibilities \(w=2,4,6\) for \(m=2\).
At each end independently use every record above with \(q_c\mid p\) and every \(\kappa_c\mid p/q_c\), before applying the stricter exclusions using the actual widths.
The quantities \(B_c,\nu_c,a,h\) are those in \eqref{sh:monomial-log}--\eqref{sh:monomial-h}.
When all widths are one, the product line imposes
\[
 rj/p+(1-1/p)R\in\mathbb Z.
\]
Take only integral \(a\), impose \(a\le rwt\), and impose even \(a\) for twelve labels.

For completeness, the coefficient bound used in the enumeration is as follows.
Let \(v_c\) be the zero forced at the \(c\)-th end after the map kills \(df\), and set \(V=\sum_cv_c\), \(\nu=\sum_c\nu_c\).
If \(\kappa_c\ne1\), use \(v_c=1\); if \(\kappa_c=1\), use \(v_c=\min_i\alpha_{c,i}\).
In the latter case, if the tuple has a unique generator of degree one and the other degrees have a common divisor greater than one, the radial equation instead forces the least of those other degrees.
Nonvanishing follows from the greatest common divisor test using the actual width in Lemma~\ref{sc:complete-curves}.
The symmetric coefficients have degrees
\[
 R+\nu-V-(r-1-i)j\quad(p>1),\qquad
 R+\nu-V-(r-i)j\quad(p=1).
\]
The triangular enlargement matrix imposes \(R\) zeros.
There is at most one extra product coefficient.
Thus the bounds
\begin{equation}\label{sf:labels}
 \ell\le\left\lfloor\frac{\nu-s}{j}\right\rfloor+3,\qquad
 \ell\le\left\lfloor\frac{\nu-V}{j}\right\rfloor+2\quad(p=1)
\end{equation}
are valid.
The sharper count used in Appendices~\ref{sf:large-rows}--\ref{sf:eleven-rows} is
\begin{equation}\label{sf:sharp-labels}
 \ell\le1+\max\left(0,\min\left(q_{\rm sym},
 \left\lfloor\frac{\nu-V}{j}\right\rfloor+1\right)\right)+\epsilon_P ,
\end{equation}
where \(q_{\rm sym}=r\) for \(p=1\), and \(r-1\) otherwise.
Here \(\epsilon_P=0\) for \(p=1\); for \(p>1\), take \(\epsilon_P=1\) unless the following test forces \(\epsilon_P=0\).
When all widths are one and \(p>1\), the product line has degree \(E_P=-rj/p-(1-1/p)R\), and \(h-E_P-V<R\) forces \(\epsilon_P=0\).

There is no search cutoff on the marked integer \(e\).
For each permitted \(\ell\), solve exactly
\begin{equation}\label{sf:e-domain}
 e\ge\max(1,-h),\qquad
 a-(n-2\ell)e\in2\mathbb Z_{\ge0}.
\end{equation}
If \(n-2\ell>0\), this is a finite interval with its actual parity condition.
If \(n-2\ell=0\), it is either empty or the entire lower half-line, depending on \(a\in2\mathbb Z_{\ge0}\).
If \(n-2\ell<0\), it is an unbounded half-line with the indicated congruence.
When the coefficient is even and \(a\) is even, both parities of \(e\) must be retained.
All bundle tests are monotone in \(e\), so failure at the smallest permitted integer excludes its whole interval or half-line.
In particular, no positivity condition on \(a\) is added to the enumeration.

\subsection{Ranks at least 9: the 170 rows}\label{sf:large-rows}
The analytic range proved in Proposition~\ref{sh:monomial} is
\[
 9\le r\le28,\qquad
 1\le j\le
 \begin{cases}5&r=9,\\4&r\ge10.\end{cases}.
\]
With \eqref{sf:sharp-labels}, the domain \eqref{sf:domain}--\eqref{sf:e-domain} gives
\[
\begin{array}{c|rrrr}
r&9&10&11&14\\ \hline
\text{rows}&127&41&1&1
\end{array}
\]
and zero rows in every other rank.
At rank 9 the nonzero counts by \((m,p)\) are \(3,4,4,30,3,4,79\) at \((3,1),(3,3),(4,2),(4,4),(6,1),(6,2),(6,6)\), respectively.
The remaining tangent tests have the following inputs.
Here \(\rho\) is the cover degree and \(\beta\) its total allowed pole order.
\[
\begin{array}{c|c|c|c|c|c|c|c|c}
r&(m,p)&j&s&\rho&\beta&a&\ell&e_{\min}\\ \hline
9&(4,2)&2&3&1&12&17&5&5\\
9&(4,4)&2&3&2&20&6,7,8,9&6&6\\
9&(6,1)&1,2,3&j&1&6j&7j+2&\le6&3j\\
9&(6,6)&3&3&2&30&8,7&7&10,9\\
9&(3,1)&1&2&1&6&7&6&3\\
9&(3,3)&1&2&1&6&9&5,6&3\\
10&(6,2)&2&2&1&12&12&6,7&6\\
10&(4,4)&1&2&2&12&8,10&6,7&3\\
10&(4,4)&2&3&1&12&12,14,16,18&6&6
\end{array}
\]
For the tangent test, sort
\[
 2+\beta,\quad \beta-\rho ij\ (1\le i<r),\quad
 \beta-\rho rj/p,
\]
with the last entry interpreted as the \(i=r\) symmetric entry when \(p=1\).
The required positive rank and degree are \(n-\ell-2\) and \(\rho a+(\ell-1)\rho e\).
Rows 1,2,3,5,7 have available/required ranks \((4,6),(3,5),(4,5),(4,5),(4,5)\).
Row 6 gives \(5<6\), or \(9+5e\ge24>23\).
Row 4 requires degree \(122>95\) in rank 4.
The last two rows give \(5<6\), or \(2a+6\rho e\ge52>45\).
Every comparison improves as \(e\) increases.
The other rank 10 rows with parameter six violate \eqref{sf:eisenstein-width}.
The unique rank 11 and rank 14 rows are
\[
 (m,p,j,s,R,\sigma,a,h)
 =(4,2,1,2,0,1/2,4,-4),\quad
 (4,2,1,2,0,1/2,10,-5).
\]
They require respectively one and two nonexistent full fixed points on the Gaussian torsor.
Only the two rank 9 rows
\[
 (m,p,R,a,t,h)=(3,1,1,16\ {\rm or}\ 18,1/3,-2)
\]
remain.
They have two cusps of width one, with section degrees one and three.
The comparison using the normal bundle has normalized target degrees \(7-i\), \(1\le i\le9\).
For \(\ell=5,\ldots,10\), required degrees \(22,24,26,28,30,32\) exceed available sums \(20,18,15,11,6,0\).
For \(\ell<5\), too few target summands have degree at least \(e\ge2\).
This completes all 170 exclusions.

\subsection{Rank 8: the 117 rows}\label{sf:twelve-rows}
For twelve labels \(a\) is even and \(a\le8wt\).
The bounds
\[
 h\le-5j+2m(1-g)+(2-m)R-m\sigma,\qquad
 a\le(16m+2)(1-g)-(D-16m)t
\]
show that \(g\ge1\) is impossible: the coefficient count gives \(\ell\le4\) and \(a>0\), but the second bound gives \(a<0\).
If \(h\ge0\), then \(j\le2\); all records and widths in that finite range contradict integral \(a\le8wt\).
If \(j\ge8\), then \(\ell\le6\) and \(a\ge0\), contrary to \(a\le16m+2-j(12+8/p)<0\).
Thus \(g=0,h<0,1\le j\le7\).
For this rank, enlarge the correction condition to \(\sigma=0\) or \(\sigma\ge1/2\), as a necessary condition containing the exact semigroup.
Using \eqref{sf:sharp-labels} gives
\[
\begin{array}{c|rrrrrrrrr}
(m,p)&(2,1)&(2,2)&(3,1)&(3,3)&(4,2)&(4,4)&(6,2)&(6,3)&(6,6)\\ \hline
\text{rows}&9&20&2&2&12&40&1&8&23 .
\end{array}
\]
The pairs \((4,1),(6,1)\) leave no row.
Two rows have \(a=0\), and 27 domains for the marked degree are unbounded; these are retained.

Apply Proposition~\ref{sc:bundle-tests} with the actual widths and the sharper comparison using the normal bundle whenever the end is immersed.
Lifts of the entire groups exclude \(G(4,1,8)\) and \(G(3,1,8)\) cusps after the comparisons on their fixed curves.
The tuple \((1,1,4^7)\) is excluded only when its width requires an absent full \(G(4,2,8)\) fixed point.
Equation~\eqref{sf:eisenstein-width} removes 29 rows with parameter six.
For the three remaining rows with parameter six,
\[
 T=G(6,3,8),\quad t=1/2,\quad s=3,\quad
 R=\sigma=0,\quad h=-6,\quad a=20,18,16,
\]
the end types are \(A=(1,1,3^7)\), \(B=(1,2,6^7)\), and \(C=(2,2,3,6^6)\), with combinations \(ABB,ABC,ACC\).
Width one forces a nonzero generator of degree one at \(A,B\), and nonzero generators of degrees two and three at \(C\).
Thus the actual critical degrees are \(c_f=0,1,2\), respectively.
The normal targets in Proposition~\ref{sc:bundle-tests} have normalized degrees
\[
 (12,9,7,6,3,0,-3,-6).
\]
Each row has \(\ell=5,e\ge6\), so its positive normal part has rank 4 and degree at least \(42,39,36\), respectively.
All exceed the sum \(12+9+7+6=34\).
No assertion of immersion at those ends is required.

The two residual Gaussian comparisons using the normal bundles have
\[
\begin{array}{c|c|c|c|c}
T&a&j&s&\text{widths}\\ \hline
G(4,4,8)&6&1&2&(2,1)\\
G(4,4,8)&10&2&3&(1,1,1).
\end{array}
\]
On the double and identity covers the upper bounds on the degrees of the normal targets are \((8,6,6,4,2,0,-2,-4)\).
Only \(\ell=5,e'=4\) remains, and its positive part has rank 4 and degree \(24=8+6+6+4\).
It would have to equal the four largest target lines.
The product direction instead has pole at most one at every full \(G(4,4,8)\) fixed point with trivial fiber character.
Write
\[
 \Omega=\{z\in\mathbb Z[i]^8:\textstyle\sum z_i\in(1-i)\mathbb Z[i]\},
 \quad A=\mathbb C^8/\Omega,\quad
 M=p^*\mathcal O([0])^{\boxtimes8},
\]
where \(p:A\to E_i^8\) has degree two.
The character summands of \(p_*\mathcal O_A\) give odd and even theta products.
The even product centered at \((1+i)/2\) is nonzero at the origin; their ratio has initial indecomposable term \(c\prod z_i\), \(c\ne0\).
Changing one coordinate sign separates this term from symmetric invariants.
The four fixed points are \(\Omega^\#/\Omega=K(M)\), with \(\Omega^\#=\mathbb Z[i]^8+\mathbb Z(1+i)(1,\ldots,1)/2\).
Normalized translations with trivial fiber character preserve \(M\) and transport the calculation.
Thus the contraction contains \(q\,d(\prod z_i)\) at each required fixed point.
This lowers the product line from degree six to zero on the double cover, or to at most three on the identity cover, contradicting its required minimum four.

The remaining \(B_8\) row is
\[
 a=8,\quad t=1/2,\quad s=3,\quad R=\sigma=0,\quad h=-2,
 \qquad\alpha_c=(1^3,2^6).
\]
For the minimal \(D_8\) polarization, three even and one odd theta sections of degree one give the first two independent derivative projections.
For a map to \(\mathcal O(-2)\), the first cusp division leaves coefficient degrees \(i-5\), so those first two projections vanish.
The radial equation forces a second division in all coefficients of degree one.
The resulting degrees \(i-8\) leave only the projection of the inverse differential to the coordinate of greatest invariant degree.
Hom dimension one now gives \(a\ge8(-h)=16\), a contradiction.
All twenty \(D_8\) rows require a \(B_8\) end.
The complete rejection counts are
\[
\begin{array}{l|r@{\qquad}l|r}
\text{bundle tests}&16&B_8\text{ theta}&1\\
B_8\text{ ends}&20&G(3,1,8)\text{ tests/ends}&2+2\\
G(4,1,8)\text{ ends}&33&\text{Gaussian torsors/product tests}&9+2\\
\text{Eisenstein widths}&29&\substack{\text{normal bundles over}\\\text{the Eisenstein field}}&3 .
\end{array}
\]
These sum to 117.

\subsection{Rank 7: the 413 rows}\label{sf:eleven-rows}
Use the preliminary bound \(a\le7wt\), deferring \(D_7\) when an \(E_8\) enlargement is possible.
The inequalities
\[
 h\le-4j+2m(1-g)+(2-m)R-m\sigma,\qquad
 a\le(14m+2)(1-g)-(D-14m)t
\]
exclude \(g\ge1\) by \(\ell\le4\), and leave no \(h\ge0\) record with \(j\le3\).
For \(j\ge8\), the inequalities \(a\ge-h\ge4j-12\), \(a\le86-49j/6\), and \(\ell\le5\) leave only \(m=p=6,j=8\), where \(-h\ge24>a\).
Hence \(g=0,h<0,1\le j\le7\).
Use the relaxed necessary condition \(\sigma=0\) or \(\sigma\ge1/2\), which contains every actual correction.
With \eqref{sf:sharp-labels}, the full domain gives
\[
\begin{array}{c|rrrrrrrrrr}
(m,p)&(2,1)&(2,2)&(3,1)&(3,3)&(4,1)&(4,2)&(4,4)&(6,2)&(6,3)&(6,6)\\ \hline
\text{rows}&21&43&2&5&1&51&132&18&16&124 .
\end{array}
\]
All 25 unbounded domains for the marked degree are retained.
The exclusions have two stages, with different available nef bounds.
Before \eqref{sh:eleven-nef}, use only \(a\le7wt\); the following tests exclude 166 rows.
\[
\begin{array}{l|r|l}
 \text{groups}&\text{number}&\text{test}\\\hline
 G(6,p,7)&148&\kappa_c=p/q_c\\
 G(6,p,7)&10&\text{cyclic covers}\\
 G(3,1,7)&2&\text{normal bundle}\\
 G(3,3,7)&5&\text{excluded }G(3,1,7)\text{ ends}\\
 G(4,1,7)&1&\text{normal bundle}.
\end{array}
\]
These preliminary tests suffice for the list of possible factors used to prove \eqref{sh:eleven-nef}; no row is excluded here using \(a\le10t\).
Keep the other 247 rows, including \(D_7\), until \eqref{sh:eleven-nef} and the exclusion of \(E_8\) have been proved.
The new bound excludes 205 of these rows by \(a>10t\).
The remaining 42 rows are excluded by a \(B_7\) end in one case, a \(G(4,1,7)\) end in 25 cases, absence of a full fixed point on the Gaussian torsor in eight cases, and the comparison using the normal bundle in eight cases.
The total is \(166+205+42=413\).
The ten rows tested using cyclic covers have \(R=0\) and at most two nontrivial widths.
At the least permitted \(e\), one of the two monotone inequalities in Proposition~\ref{sc:bundle-tests} fails.
The same comparison using the normal bundle excludes the two \(G(3,1,7)\) rows and the \(G(4,1,7)\) row, and lifts of the entire groups exclude the corresponding cusps.
The fixed-point rejection again concerns only \((1,1,4^6)\) when a full \(G(4,2,7)\) fixed point is required on its torsor.
In the eight Gaussian normal cases, \(R=\sigma=0\) and the section degrees are one, two, and four.
The actual width is the least nonzero degree, so the end is immersed in \(Q=q^\kappa\).
Seven cases have \(j=2,s=3\) and one cusp of width two.
After the double cover, the upper bounds on the degrees of the normal targets \((12,9,8,4,0,-4,-8)\) have sum at most 29 in rank 3, but the marked positive part requires 36.
The last case \(j=4,s=4,a=9,\ell=5,e\ge9\) has width one and requires 43 against the same bound.
The arguments for \(D_7\) and for \(E_7\) without enlargements are given explicitly in Proposition~\ref{sh:eleven}, after the bound \(a\le10t\) has been established.

\subsection{Projective space}\label{sf:projective}
Here we prove the auxiliary assertion of Proposition~\ref{sh:projective} independently of the theorem for weighted quotients.
On \(\mathbb P^k\), put \(a=(k+1)u\), \(u>0\).
The Euler sequence gives \(h\ge u\) for every nonzero inverse line quotient.
A persistent factor of rank at most \(k-2\) contradicts this on a complete curve avoiding the finite boundary.
Parabolic classification thus leaves the corank 1 curves listed below.
At a smooth cusp its spherical link makes the horizontal group irreducible.

For \(k=7\), first remove the primitive group \(G_{34}\), before applying ordinary monomial formulas at its possible horizontal subgroups.
Its polynomial degrees are \((6,12,18,24,30,42)\), its section tuple is \((1,3,4,6,7,9,12)\), and
\[
 a=41s+2-2g-132t,\quad h=-42t+12s,\quad6t\in\mathbb Z_{>0}.
\]
There is no torsor, and the reflection divisors give a lift of the entire group with trivial character at the origin.
Without ends \(h<0\), excluding compact disks immediately.
With ends the logarithmic identity is \(2g-2+s+\sigma=2t\).
Its combination with \(h\ge a/8>0\) gives \(94t+55\sigma\le108(1-g)\), hence \(g=0\) and in particular \(6t\le9\).
In that complete finite range, \(a\in8\mathbb Z_{>0}\), \(a\le48t\), and \(h\ge a/8\) leave no possibility.
The lift of the entire group then excludes \(G_{34}\) cusps as well.

Now consider the monomial groups.
In both the compact and nonuniform comparisons, defer \(D_6\) and \(D_7\) until their possible \(E_7\) and \(E_8\) enlargements have been removed below.
For disks without ends and every \(m\ge2\), the logarithmic identity gives
\[
 h\le-(r-1)j+\frac{2j+2m}{m-1}\le7-r,\qquad r=k-1\ge7.
\]
For \(r=6\), use \(h\le4-4j\) for \(B_6\) and \(h\le-5j+(2j+2m)/(m-1)\) for \(m\ge3\); these again contradict \(h>0\).
For nonuniform disks, use the complete cusp records above in descending parameter order.
For \(r=6,7\), omit \(D_r\) for the later ADE comparison.
The cusp sum bound and the logarithmic equation give
\[
 a\le(2rm+2)(1-g)-(D-2rm)t,
\]
since the omitted coefficient of \(R\) is \(\eta-r(m-1)=2r-1+1/p-rm\le0\).
As \(a=(r+2)u\ge r+2\) and \(D>2rm\) for \(r=6,7\), this proves \(g=0\) and
\[
 1\le j\le
 \left\lfloor\frac{mr(2m-1)}{D-2rm}\right\rfloor.
\]
These are the following complete \(j\)-ranges:
\[
\begin{array}{c|rrrrrrrrrr}
(m,p)&(2,1)&(3,1)&(3,3)&(4,1)&(4,2)&(4,4)&(6,1)&(6,2)&(6,3)&(6,6)\\ \hline
r=6&2&3&6&4&7&9&7&11&13&16\\
r=7&1&2&3&3&4&5&5&7&8&9 .
\end{array}
\]
In \eqref{sf:domain}, impose \(a\in(k+1)\mathbb Z_{>0}\), \(a\le(k+1)wt\), and \(h\ge a/(k+1)\).
For every listed pair there are zero survivors.
The comparisons of the orders of the root groups exclude exceptional ends for these non-ADE monomial groups.
In rank 7, for example, \(m^6\,7!>576\,7!\) for \(m\ge3\), but \(|W(E_7)|/|G(2,1,7)|=9/2\) is not integral.

For \(r=8\), there is no primitive rank 9 enlargement and
\[
 h\le2m(1-g)-(r-3)j+(2-m)R-m\sigma.
\]
The estimate is nonpositive unless \(g=0\) and \((m,j)=(3,1),(4,1),(6,1),(6,2)\).
For \((m,j)=(3,1)\), \(s=2\) gives \(R\le1,h\le0\), and \(s=1\) gives \(R\le2,h\le-2\).
For \((m,j)=(4,1),s=1\), one has \(h\le-1\).
For \((6,1),s=1\), one has \(h\le0\), but \(s=2\) is impossible because it requires \(5R/6+\sigma=1/3\).
For \((6,2)\), the two cases have \(h\le-6,-2\).
The sole remaining case \(m=4,j=1,s=2\) has \(h>0\) only when \(p>1,R=0,\nu_1=\nu_2=4\), both widths one, and \(B_c\ge28\).
Thus \(a\ge24\), contrary to \(a\le40t=10\).
This excludes all these monomial disks and then their full cusps.

For \(k=7\), after these non-ADE groups have been removed, the orders of stabilizers of rank 4 in the remaining ADE factors divide \(2880\).
For \(16L=bH\), Chern integrality gives \(1575b^4/512\in\mathbb Z\), hence \(8\mid b\) and \(a\le16t\).
An \(E_7\) point or \(E_6\) cusp gives an \(E_6\) disk with
\[
 2g-2+s+R/2+\sigma=2t,\quad
 a=11s+13R+2-2g-42t,\quad
 h=-12t+3R+3s,\quad2t\in\mathbb Z_{>0}.
\]
The coefficient \(13R\) includes the sign character of the odd invariant degrees.
Eliminating \(s\) in the Euler inequality gives \(10t+22\sigma\le42(1-g)\), hence \(g=0,2t\le8\).
The conditions \(a\in8\mathbb Z_{>0}\), \(a\le16t\) leave only \((t,s,R,\sigma,a,h)=(3/2,3,4,0,24,3)\), whose odd line would have degree \(-19/2\).
For \(D_6\), the symmetric inverse gives \(5h=a-12t-R-\sigma\).
Together with \(h\ge a/8\), this requires \(a\ge32t\), a contradiction.
The last \(A_6\) disk has \(7h=a-20t-\sigma<0\).

For \(k=8\), the orders of stabilizers of rank 3 in the ADE groups divide \(24\).
The integral \(18L=bH\) and \(24c_3^{\rm orb}=-2016L^3\) give \(28b^3/81\in\mathbb Z\), hence \(9\mid b\) and \(a\le18t\).
An \(E_8\) point or \(E_7\) cusp gives an \(E_7\) disk.
At \(E_8\), the coarse parameter is \(v=u^2\).
Homogeneity gives pole order six for the row of the inverse differential associated with the invariant of degree eighteen, and determinant order 28; the invariant directions have trivial fiber character on the coarse quotient.
Consequently
\[
 2g-2+s+R/2+\sigma=2t,\quad
 a=17s+28R-70t+2-2g,\quad
 h=-18t+6R+4s,\quad2t\in\mathbb Z_{>0}.
\]
Elimination gives \(9h=2a-18t-3R-2\sigma\).
The Euler inequality and \(a\le18t\) force \(R=\sigma=0,a=18t,s=5t,3t=2-2g\).
Thus \(g=0,t=2/3\), contradicting \(2t\in\mathbb Z\).
For the remaining \(D_7\) disk, \(6h=a-21t-R-\sigma\) gives \(a\ge63t+3R+3\sigma\), again impossible.
The last \(A_7\) disk has \(8h=a-27t-\sigma<0\).

For \(k=9\), the only remaining primitive group is \(E_8\).
Its tuple in \eqref{sf:root-records}, with width one and the full group, gives
\[
 2g-2+s+\sigma=2t,\quad a=29s+2-2g-128t,\quad
 2t\in\mathbb Z_{>0},\quad a\in10\mathbb Z_{>0}.
\]
The upper bound \(a\le60(1-g)-70t\) forces \(g=0,t=1/2,s=3\), and then \(a=25\), a contradiction.
Only type \(A\) groups remain in \(\mathbb P^9\).
Write \(20L=bH\).
Chern integrality gives \(27b^2/20,9b^3/25\in\mathbb Z\), hence \(10\mid b\) and \(a/t\le20\).
The last \(A_8\) disk satisfies \(9h=a-35t-\sigma<0\), contrary to the Euler lower bound.
This completes the independent exclusions of \(\mathbb P^7,\mathbb P^8,\mathbb P^9\).

\section*{Acknowledgments}
The first author was supported by JST SPRING, Grant Number \mbox{JPMJSP2119}.
The second author was supported by JSPS KAKENHI Grant Number \mbox{JP26KJ0476}.

\printbibliography

@article{ALA1,
  author = {Auffarth, Robert and Lucchini Arteche, Giancarlo},
  title = {Smooth quotients of abelian varieties by finite groups},
  journal = {Ann. Sc. Norm. Super. Pisa Cl. Sci. (5)},
  series = {5},
  volume = {21},
  number = {1--2},
  year = {2020},
  pages = {673--694},
  doi = {10.2422/2036-2145.201812_008},
  eprint = {1801.00028},
  eprinttype = {arxiv},
  eprintclass = {math.AG},
  url = {https://arxiv.org/abs/1801.00028}
}

@article{ALA2,
  author = {Auffarth, Robert and Lucchini Arteche, Giancarlo},
  title = {Smooth quotients of complex tori by finite groups},
  journal = {Math. Z.},
  volume = {300},
  number = {2},
  year = {2022},
  pages = {1071--1091},
  doi = {10.1007/s00209-021-02826-3},
  eprint = {1912.05327},
  eprinttype = {arxiv},
  eprintclass = {math.AG},
  url = {https://arxiv.org/abs/1912.05327v6}
}

@article{BailyBorel,
  author = {Baily, Jr., Walter L. and Borel, Armand},
  title = {Compactification of arithmetic quotients of bounded symmetric domains},
  journal = {Ann. of Math. (2)},
  series = {2},
  volume = {84},
  number = {3},
  year = {1966},
  pages = {442--528},
  doi = {10.2307/1970457},
  url = {https://annals.math.princeton.edu/1966/84-3/p11}
}

@article{BorelDensity,
  author = {Borel, Armand},
  title = {Density properties for certain subgroups of semi-simple groups without compact components},
  journal = {Ann. of Math. (2)},
  series = {2},
  volume = {72},
  number = {1},
  year = {1960},
  pages = {179--188},
  doi = {10.2307/1970150},
  url = {https://www.jstor.org/stable/1970150}
}

@article{BorelHarishChandra,
  author = {Borel, Armand and Harish-Chandra},
  title = {Arithmetic subgroups of algebraic groups},
  journal = {Ann. of Math. (2)},
  series = {2},
  volume = {75},
  number = {3},
  year = {1962},
  pages = {485--535},
  url = {https://www.mathi.uni-heidelberg.de/~wienhard/retreat10/references/borel\_harishchandra\_2.pdf}
}

@misc{BrownHiggins,
  author = {Brown, Ronald and Higgins, Philip J.},
  title = {The fundamental groupoid of the quotient of a {Hausdorff} space by a discontinuous action of a discrete group is the orbit groupoid of the induced action},
  year = {2002},
  howpublished = {arXiv:math/0212271v1},
  eprint = {math/0212271},
  eprinttype = {arxiv},
  eprintclass = {math.AT},
  url = {https://arxiv.org/abs/math/0212271v1}
}

@article{DM,
  author = {Deligne, Pierre and Mostow, George D.},
  title = {Monodromy of hypergeometric functions and non-lattice integral monodromy},
  journal = {Publ. Math. Inst. Hautes {\'E}tudes Sci.},
  volume = {63},
  year = {1986},
  pages = {5--89},
  url = {https://www.numdam.org/item/PMIHES\_1986\_\_63\_\_5\_0/}
}

@incollection{GeckMalle,
  author = {Geck, Meinolf and Malle, Gunter},
  title = {Reflection groups},
  booktitle = {Handbook of Algebra},
  editor = {Hazewinkel, Michiel},
  volume = {4},
  publisher = {Elsevier},
  year = {2006},
  pages = {337--383},
  eprint = {math/0311012},
  eprinttype = {arxiv},
  url = {https://arxiv.org/abs/math/0311012v2},
  address = {Amsterdam}
}

@article{HKM,
  author = {Hulek, Klaus and Kond{\=o}, Shigeyuki and Maeda, Yota},
  title = {Compactifications of the ancestral {Eisenstein} {Deligne--Mostow} variety},
  journal = {J. Algebraic Geom.},
  year = {2026},
  doi = {10.1090/jag/867},
  eprint = {2403.18345},
  eprinttype = {arxiv},
  eprintclass = {math.AG},
  url = {https://arxiv.org/abs/2403.18345v3},
  note = {Published online July 2, 2026}
}

@article{HMSV,
  author = {Howard, Benjamin and Millson, John and Snowden, Andrew and Vakil, Ravi},
  title = {The equations for the moduli space of {$n$} points on the line},
  journal = {Duke Math. J.},
  volume = {146},
  number = {2},
  year = {2009},
  pages = {175--226},
  doi = {10.1215/00127094-2008-063},
  url = {https://math.stanford.edu/~vakil/files/hmsv030708.pdf}
}

@article{Hu,
  author = {Hu, Hongsheng},
  title = {On exterior powers of reflection representations, {II}},
  journal = {Canad. J. Math.},
  year = {2025},
  doi = {10.4153/S0008414X25000215},
  eprint = {2401.08215},
  eprinttype = {arxiv},
  eprintclass = {math.RT},
  url = {https://arxiv.org/abs/2401.08215v2},
  note = {Published online March 24, 2025}
}

@book{Kobayashi,
  author = {Kobayashi, Shoshichi},
  title = {Differential Geometry of Complex Vector Bundles},
  series = {Publications of the Mathematical Society of Japan},
  number = {15},
  publisher = {Princeton University Press},
  year = {1987}
}

@article{KrishnasamyTaylor,
  author = {Krishnasamy, Muraleedaran and Taylor, D. E.},
  title = {Normalisers of parabolic subgroups in finite unitary reflection groups},
  journal = {J. Algebra},
  volume = {504},
  year = {2018},
  pages = {479--505},
  doi = {10.1016/j.jalgebra.2018.02.026},
  eprint = {1712.09563},
  eprinttype = {arxiv},
  url = {https://arxiv.org/abs/1712.09563}
}

@article{Looijenga,
  author = {Looijenga, Eduard},
  title = {Compactifications defined by arrangements, {I}: The ball quotient case},
  journal = {Duke Math. J.},
  volume = {118},
  number = {1},
  year = {2003},
  pages = {151--187},
  doi = {10.1215/S0012-7094-03-11816-5},
  eprint = {math/0106228},
  eprinttype = {arxiv},
  url = {https://arxiv.org/abs/math/0106228v2}
}

@article{Lozovanu,
  author = {Lozovanu, Victor},
  title = {Multiplicities of irreducible theta divisors},
  journal = {J. Differential Geom.},
  volume = {128},
  number = {3},
  year = {2024},
  pages = {1149--1176},
  doi = {10.4310/jdg/1729092456},
  eprint = {2002.04360},
  eprinttype = {arxiv},
  url = {https://arxiv.org/abs/2002.04360v3},
  note = {Numbered references are to arXiv:2002.04360v3}
}

@book{Meyer,
  author = {Serre, Jean-Pierre},
  title = {A Course in Arithmetic},
  series = {Graduate Texts in Mathematics},
  number = {7},
  publisher = {Springer-Verlag},
  address = {New York},
  year = {1973},
  doi = {10.1007/978-1-4684-9884-4},
  url = {https://www.math.purdue.edu/~jlipman/MA598/Serre-Course\%20in\%20Arithmetic.pdf}
}

@incollection{MokCompactification,
  author = {Mok, Ngaiming},
  title = {Projective algebraicity of minimal compactifications of complex-hyperbolic space forms of finite volume},
  booktitle = {Perspectives in Analysis, Geometry, and Topology},
  editor = {Itenberg, Ilia and J{\"o}ricke, Burglind and Passare, Mikael},
  series = {Progress in Mathematics},
  volume = {296},
  publisher = {Birkh{\"a}user},
  year = {2012},
  pages = {331--354},
  doi = {10.1007/978-0-8176-8277-4_14},
  url = {https://www-fourier.univ-grenoble-alpes.fr/~demailly/source\_files/envelope/viro\_volume/chap14.pdf},
  address = {Boston, MA},
  note = {Numbered references are to the 24-page uncorrected proof}
}

@misc{NakanoSevenConfigurations2026,
  author = {Nakano, Ryunosuke and Matsukawa, Hisato},
  title = {Seven ordered points and the nonexistence of arithmetic ball quotient compactifications},
  year = {2026},
  date = {2026-10-16},
  eprint = {2610.23227},
  eprinttype = {arxiv},
  eprintclass = {math.AG},
  note = {arXiv:2610.23227v1},
  url = {https://arxiv.org/abs/2610.23227v1}
}

@article{Ohmoto,
  author = {Ohmoto, Toru},
  title = {Equivariant {Chern} classes of singular algebraic varieties with group actions},
  journal = {Math. Proc. Cambridge Philos. Soc.},
  volume = {140},
  year = {2006},
  pages = {115--134},
  doi = {10.1017/S0305004105008820},
  url = {https://eprints.lib.hokudai.ac.jp/dspace/bitstream/2115/5513/1/MPCPS140.pdf}
}

@misc{Rains,
  author = {Rains, Eric M.},
  title = {Quotients of abelian varieties by reflection groups},
  year = {2024},
  howpublished = {arXiv:2303.04786v2},
  eprint = {2303.04786},
  eprinttype = {arxiv},
  eprintclass = {math.AG},
  url = {https://arxiv.org/abs/2303.04786v2},
  note = {Version 2}
}

@article{Stover,
  author = {Stover, Matthew},
  title = {Arithmeticity of complex hyperbolic triangle groups},
  journal = {Pacific J. Math.},
  volume = {257},
  number = {1},
  year = {2012},
  pages = {243--256},
  doi = {10.2140/pjm.2012.257.243},
  url = {https://msp.org/pjm/2012/257-1/pjm-v257-n1-p14-p.pdf}
}

@article{WangJacobi,
  author = {Wang, Haowu},
  title = {Weyl invariant {Jacobi} forms: a new approach},
  journal = {Adv. Math.},
  volume = {384},
  year = {2021},
  pages = {107752},
  eid = {107752},
  doi = {10.1016/j.aim.2021.107752},
  eprint = {2007.16033},
  eprinttype = {arxiv},
  url = {https://arxiv.org/abs/2007.16033v2}
}

@article{WeakFactorization,
  author = {Abramovich, Dan and Karu, Kalle and Matsuki, Kenji and W{\l}odarczyk, Jaros{\l}aw},
  title = {Torification and factorization of birational maps},
  journal = {J. Amer. Math. Soc.},
  volume = {15},
  number = {3},
  year = {2002},
  pages = {531--572},
  eprint = {math/9904135},
  eprinttype = {arxiv},
  url = {https://arxiv.org/abs/math/9904135v4}
}

@article{Wirthmuller,
  author = {Wirthm{\"u}ller, Klaus},
  title = {Root systems and {Jacobi} forms},
  journal = {Compos. Math.},
  volume = {82},
  number = {3},
  year = {1992},
  pages = {293--354},
  url = {https://www.numdam.org/item/CM\_1992\_\_82\_3\_293\_0/}
}

@inproceedings{Brieskorn1971,
  author = {Brieskorn, Egbert},
  title = {Singular elements of semi-simple algebraic groups},
  booktitle = {Actes du Congr\`es International des Math\'ematiciens (Nice, 1970), Tome 2},
  publisher = {Gauthier-Villars},
  address = {Paris},
  year = {1971},
  pages = {279--284},
  url = {https://www.mathunion.org/fileadmin/ICM/Proceedings/ICM1970.2/ICM1970.2.ocr.pdf}
}

@article{GrangerMondSchulze2012,
  author = {Granger, Michel and Mond, David and Schulze, Mathias},
  title = {Partial normalizations of {Coxeter} arrangements and discriminants},
  journal = {Mosc. Math. J.},
  volume = {12},
  number = {2},
  year = {2012},
  pages = {335--367},
  doi = {10.17323/1609-4514-2012-12-2-335-367},
  eprint = {1108.0718},
  eprinttype = {arxiv},
  note = {Numbered references are to arXiv:1108.0718v3, amended postprint}
}

@article{KatoManoSekiguchi2015,
  author = {Kato, Mitsuo and Mano, Toshiyuki and Sekiguchi, Jiro},
  title = {Flat structures without potentials},
  journal = {Rev. Roumaine Math. Pures Appl.},
  volume = {60},
  number = {4},
  year = {2015},
  pages = {481--505},
  url = {https://imar.ro/journals/Revue_Mathematique/pdfs/2015/4/6.pdf}
}

@article{KatoManoSekiguchi2020,
  author = {Kato, Mitsuo and Mano, Toshiyuki and Sekiguchi, Jiro},
  title = {Flat structure on the space of isomonodromic deformations},
  journal = {SIGMA Symmetry Integrability Geom. Methods Appl.},
  volume = {16},
  year = {2020},
  eid = {110},
  pagetotal = {36},
  doi = {10.3842/SIGMA.2020.110},
  url = {https://sigma-journal.com/2020/110/sigma20-110.pdf}
}

@article{SekiguchiE7ST34,
  author = {Sekiguchi, Jiro},
  title = {Simple singularity of type {$E_7$} and the complex reflection group {ST34}},
  journal = {Indag. Math. (N.S.)},
  volume = {36},
  number = {2},
  year = {2025},
  pages = {567--592},
  doi = {10.1016/j.indag.2024.07.008},
  eprint = {2311.16629},
  eprinttype = {arxiv},
  note = {Numbered references are to arXiv:2311.16629v1}
}

@article{Bessis2015,
  author = {Bessis, David},
  title = {Finite complex reflection arrangements are {$K(\pi,1)$}},
  journal = {Ann. of Math. (2)},
  volume = {181},
  number = {3},
  year = {2015},
  pages = {809--904},
  doi = {10.4007/annals.2015.181.3.1},
  eprint = {math/0610777},
  eprinttype = {arxiv},
  note = {Numbered references are to arXiv:math/0610777v5}
}

@article{HulekMaedaEight,
  author = {Hulek, Klaus and Maeda, Yota},
  title = {Revisiting the moduli space of 8 points on {$\mathbb P^1$}},
  journal = {Adv. Math.},
  volume = {463},
  year = {2025},
  pages = {110126},
  eid = {110126},
  doi = {10.1016/j.aim.2025.110126},
  eprint = {2211.00052},
  eprinttype = {arxiv},
  eprintclass = {math.AG},
  url = {https://arxiv.org/abs/2211.00052v3}
}

\bigskip
\noindent Department of Mathematics, Graduate School of Science, Hokkaido University, Sapporo 060-0810, Japan.

\smallskip
\noindent Ryunosuke Nakano: \href{mailto:nakano.ryunosuke.i3@elms.hokudai.ac.jp}{\texttt{nakano.ryunosuke.i3@elms.hokudai.ac.jp}}.

\smallskip
\noindent Hisato Matsukawa: \href{mailto:matsukawa.hisato.f4@elms.hokudai.ac.jp}{\texttt{matsukawa.hisato.f4@elms.hokudai.ac.jp}}.

\end{document}